\documentclass[12pt,a4paper]{article}
\usepackage{amsmath}
\usepackage{amssymb}
\usepackage{cases}
\usepackage{bbm}
\usepackage{mathrsfs}
\usepackage{graphicx}
\usepackage{amsfonts}

\usepackage{verbatim}
\usepackage{enumerate}
\usepackage{theorem}
\usepackage{color}
\usepackage[colorlinks, linkcolor=red, citecolor=blue]{hyperref}
\usepackage{cite}
\makeatletter
\def\tank#1{\protected@xdef\@thanks{\@thanks
		\protect\footnotetext[0]{#1}}}
\def\bigfoot{
	
	\@footnotetext}
\makeatother

\newcommand{\ea}{\end{array}}
\allowdisplaybreaks

\newtheorem{theorem}{Theorem}[section]
\newtheorem{lemma}{Lemma}[section]

\newtheorem{proposition}[theorem]{Proposition}

\newtheorem{remark}{Remark}[section]

\newtheorem{definition}[theorem]{Definition}

\newtheorem{Assumption}{Assumption}[section]

\def\<{{\langle}}
\def\>{{\rangle}}

{\theorembodyfont{\rmfamily}
}

\title{Stochastic Scalar Conservation Laws on Moving Hypersurfaces}

\author{{Ping Chen}$^1$\footnote{E-mail:2025800007@hfut.edu.cn}~~~ {Tusheng Zhang}$^{2,3}$\footnote{E-mail:tusheng.zhang@manchester.ac.uk}
\\
\small  1. School of Mathematics,
\small  Hefei University of Technology,\\
\small  Hefei, Anhui 230009, China.\\
\small  2. School of Mathematics,
\small  University of Science and Technology of China,\\
\small  Hefei, Anhui 230026, China.\\
\small  3. Department of Mathematics, University of Manchester,\\
\small  Oxford Road, Manchester, M13 9PL, UK.
}
\date{}
\newenvironment{proof}{\par\noindent{\bf Proof:}}{\hspace*{\fill}$\blacksquare$\par}

\begin{document}
\maketitle
\noindent \textbf{Abstract:}
We establish the well-posedness of stochastic scalar conservation laws on moving hypersurfaces driven by Brownian motion. To handle the interaction between stochastic forcing and evolving geometry, we derive an It\^{o} formula on moving surfaces and introduce the notion of generalized entropy solutions incorporating the relevant stochastic interaction terms. A martingale entropy solution is constructed via the vanishing-viscosity method, based on a uniform $L^\infty$-bound in space and time, an $L^1$-estimate for the spatial gradient, an $L^1$-continuity estimate in time, and a suitable tightness argument. Pathwise uniqueness is established by adapting Kruzhkov's doubling-of-variables method to moving hypersurfaces, yielding an $L^1$-contraction property. Finally, together with the Yamada-Watanabe theorem, these results yield the well-posedness of the problem.



\vspace{4mm}


\vspace{3mm}
\noindent \textbf{Key Words:}
scalar conservation laws; moving hypersurfaces; generalized entropy solution; stochastic forcing.
\numberwithin{equation}{section}
\vskip 0.3cm
\noindent \textbf{AMS Mathematics Subject Classification:} 60H15, 35L65, 58J45, 76N10.
\hypersetup{linkcolor=black}
\tableofcontents
\hypersetup{linkcolor=red}
\section{Introduction}

Partial differential equations (PDEs) on time-dependent domains or evolving surfaces have attracted considerable attention, as they provide realistic mathematical frameworks for describing a wide range of physical and biological phenomena; see, for example, \cite{EAKDS,ESV}. Conservation laws constitute a fundamental class of nonlinear PDEs. Derived from basic balance principles, they describe the transport and redistribution of conserved quantities such as mass, momentum, energy, and chemical concentration. In addition to their broad applications in continuum mechanics, fluid dynamics, traffic flow, and biological systems, conservation laws are of considerable mathematical interest because nonlinear effects may generate discontinuities even when the initial values are smooth. Consequently, classical solutions generally cease to exist globally in time, and suitable notions of weak and entropy solutions are required to ensure well-posedness. Within this broader context, conservation laws on moving surfaces provide natural frameworks for modelling the transport and redistribution of quantities over evolving interfaces, particularly when surface-bound processes interact strongly with changes in the underlying geometry. More generally, such equations arise, for instance, in models of cell migration, where reaction-diffusion processes on an evolving cell membrane can generate localized activation signals for pseudopod formation and, when coupled with a mechanical model and a level set method, enables the simulation of both random cell movement and chemotaxis.

In this paper, we are concerned with the well-posedness of the stochastic scalar conservation laws on moving hypersurfaces $ \mathop{\scalebox{1.5}[1.5]{$\cup$}}_{t\in[0,T]} \{t\}\times\Gamma_t$, which is written as follows
\begin{align}
\left\{
\begin{aligned}\label{eq74}
&\overset{\scalebox{0.4}{$\bullet$}}{u}(t,x)+u(t,x)\nabla_{\Gamma_t}\cdot v(t,x) + \nabla_{\Gamma_t} \cdot \big( f((t,x), u(t,x))\big) = \sigma(t,x)\overset{\scalebox{0.4}{$\bullet$}}{B_t}, \quad (t,x)\in Q_T,\\
&u(0,y)=u_{0}(y), \quad y\in\Gamma_0.
\end{aligned}
\right.
\end{align}
The space--time surface $Q_T \subseteq [0,T]\times \mathbb{R}^{n+1}$ is defined as
\begin{equation}\label{eq197}
Q_T=\ \mathop{\scalebox{1.5}[1.5]{$\cup$}}_{t\in[0,T]} \{t\}\times\Gamma_t.
\end{equation}
In this equation, $u: Q_T \to \mathbb{R}$ is a random function, and the dot stands for a material derivative. The vector field $v: Q_T \to \mathbb{R}^{n+1}$ represents the velocity of the evolving surface $\Gamma_t$, and $\nabla_{\Gamma_t}$ denotes the surface gradient on $\Gamma_t$. $f: Q_T \times \mathbb{R} \to \mathbb{R}^{n+1}$ is the given flux function. Moreover, $\{ B_t \}_{t \in [0,T]}$ is a one-dimensional standard Brownian motion defined
on a filtered probability space $(\Omega,\mathcal{F}, \{\mathcal{F}_{t}\}_{t\in [0,T]},\mathbb{P})$, where $\{\mathcal{F}_{t}\}_{t\in[0,T]}$ satisfies the usual conditions.
The function $\sigma(t,x): Q_T  \to \mathbb{R}$ is a given coefficient independent of the solution $u$. The derivation of the equation (\ref{eq74}) can be found in \cite{GDT}.

Parallel to the deterministic theory, substantial effort has been devoted to the study of conservation laws subject to stochastic forcing on fixed Euclidean domains or surfaces. 
The resulting theory of stochastic conservation laws has developed into an active research area at the intersection of nonlinear PDEs, stochastic analysis, and probability theory. A fundamental challenge is to formulate an appropriate concept of entropy solutions that are compatible with both the nonlinear hyperbolic structure of the equation and the stochastic forcing.

For scalar conservation laws with additive noise, Kim \cite{K} extended Kruzhkov's entropy formulation and used vanishing viscosity together with compensated compactness arguments to establish the existence of stochastic weak entropy solutions. Uniqueness was obtained by means of a stochastic version of Kruzhkov's doubling-of-variables argument. Vallet and Wittbold \cite{VW} subsequently extended the additive-noise theory to multi-dimensional initial-boundary value problems with Dirichlet boundary conditions.
These works already reveal an important distinction between deterministic and stochastic entropy theories: stochastic forcing introduces additional interactions that must be properly reflected in the entropy framework, particularly in uniqueness arguments.

For conservation laws with multiplicative noise
on the whole space, Feng and Nualart \cite{FN} introduced the notion of a stochastic strong entropy solution. The additional strong entropy condition allowed them to establish uniqueness, whereas existence was proved, through vanishing viscosity and compensated compactness arguments in one spatial dimension. Chen et al. \cite{CDK} later considered multi-dimensional problems and obtained well-posedness under a uniform spatial BV estimate. Using a kinetic formulation, Debussche and Vovelle \cite{DV} established existence and uniqueness of entropy solutions on the $d$-dimensional torus. Their results were further extended by Galimberti and Karlsen \cite{GK} to stochastic conservation laws on Riemannian manifolds. Further developments  of stochastic conservation laws can be found in \cite{LW,H} and references therein.

The preceding results provide a substantial theory of stochastic conservation laws when the underlying domain or surfaces is fixed. By contrast, the theory on evolving geometries is much less developed. For moving domains, Neves \cite{N} studied multi-dimensional scalar conservation laws in noncylindrical Lipschitz domains and established existence and uniqueness for $L^{\infty}$ initial-boundary data. The existence proof was based on vanishing viscosity and Young measure techniques, whereas uniqueness was obtained through Kruzhkov's doubling-of-variables method.  For moving surfaces, the principal deterministic well-posedness result relevant to the present work is due to Dziuk et al. \cite{GDT}. They established existence by combining vanishing viscosity with uniform estimates in the $H^{1,1}$-norm and a compactness argument, and proved uniqueness by adapting the Kruzhkov's doubling-of-variables technique to moving surfaces.

The purpose of the present paper is to establish the well-posedness of stochastic conservation laws on moving surfaces extend the work in \cite{GDT} to the stochastic setting. Such an extension is not a direct consequence of either the deterministic theory on moving surfaces or the stochastic theory on fixed domains. Indeed, the moving of the hypersurface generates additional geometric transport and deformation terms, while the stochastic forcing introduces interaction terms that must be retained when comparing two solutions. These effects enter simultaneously into the entropy formulation, the approximation procedure, and the limiting arguments required in the existence and uniqueness proofs.

More specifically, several difficulties must be overcome. First, the entropy inequalities must retain the stochastic interaction terms required to derive the $L^1$-contraction property. A formulation obtained by directly transferring the deterministic entropy inequalities to the stochastic setting would not contain sufficient information about the interaction of the noise terms. Second, the notion of entropy solutions must be compatible with the vanishing-viscosity approximation. In particular, we seek to avoid imposing an additional strong entropy condition of the type introduced in the literature.
A further difficulty lies in deriving the compactness estimates for the viscous approximations, due to the combined effects of the stochastic forcing and the time-dependent geometry of the moving hypersurface. 

To overcome these difficulties, we first establish an It\^{o} formula adapted to moving hypersurfaces, and then introduce a class of generalized entropy solutions whose entropy inequalities are designed to retain the stochastic interaction terms required for the comparison principle. Within this framework, existence is proved by adapting the vanishing-viscosity method used in \cite{GDT} to the stochastic setting. More precisely, we derive a uniform $L^{\infty}$-estimate in space and time, an $L^{1}$-estimate for the spatial gradient, and an $L^{1}$-continuity estimate in time for the viscous approximations. The first two are obtained by introducing a transformed variable that removes the stochastic forcing, while the time-continuity estimate is established by pulling the equation back to a fixed reference hypersurface. Together with a suitable tightness argument, these estimates yield the existence of a martingale entropy solution. To prove the pathwise uniqueness, we adapt Kruzhkov's doubling-of-variables method to moving hypersurfaces and establish the corresponding $L^1$-contraction property. Finally, combining the existence result and pathwise uniqueness with the Yamada-Watanabe theorem, we establish the well-posedness of stochastic scalar conservation laws with additive noise on moving hypersurfaces.

The remainder of the paper is organized as follows. Section 2 provides the necessary preliminaries on moving hypersurfaces, including the corresponding It\^{o} formula. In Section 3, we introduce the notion of generalized entropy solutions and state the main results. Section 4 is devoted to the stochastic viscous problem underlying the approximation procedure, while Section 5 establishes the uniform estimates required for the subsequent tightness argument. The existence of a martingale entropy solution is proved in Section 6, and pathwise uniqueness is established in Section 7. Section 8 is the appendix containing some of the long technical proofs.

\vskip 0.3cm
Here are some conventions used throughout this paper:


(i) We let $C$ represent a {generic} positive constant whose value may change from line to line. The dependence of constants on parameters, if necessary, will be indicated, e.g. $C_\epsilon$.

(ii) For any metric space $E_1$ and $E_2$, let $C^k(E_1,E_2)$ be the space of k-times continuously differentiable functions from $E_1$ to $E_2$. When $E_2= \mathbb{R}$, we simply write $C^k(E_1)$. Moreover, let $C_c^k(E_1)$ denote the subspace of $C^k(E_1)$ consisting of compactly supported functions, and denote the support of a function by $supp(\cdot)$. For $f \in C^2(\mathbb{R})$, $f^\prime$ and $f^{\prime\prime}$ stand for the first and second derivatives of $f$, respectively. For $f \in C^2(\mathbb{R}^m)$, $\nabla f$ stands for the gradient of $f$.

\section{Preliminaries}
We begin with some preliminaries on the hypersurfaces. We follow the description outlined in \cite{GCM}. Let $n \in \mathbb{N}$. $\Gamma\subseteq \mathbb{R}^{n+1}$ is an $n$-dimensional compact $C^k$ hypersurface if there exists a finite collection of local parametrizations $\{ X^\alpha\}_{\alpha \leq N}$ such that for each $\alpha \leq N$, $X^\alpha \in C^k(U^\alpha, \mathbb{R}^{n+1})$, where $U^\alpha$ are connected open sets in $\mathbb{R}^n$, satisfying the following conditions:
\begin{itemize}
\item[(i)] $X^\alpha$ is injective and rank$(\nabla X^{\alpha}(\theta))=n$ for any $\theta \in U^\alpha$ and $\alpha \leq N$.
\item[(ii)] For each $\alpha$, there exists an open set $\tilde{V}^\alpha$ in $\mathbb{R}^{n+1}$ such that $X^{\alpha}(U^\alpha)=\tilde{V}^\alpha \cap \Gamma$.
\item[(iii)] $\{V^\alpha:=X^{\alpha}(U^\alpha)\}_{\alpha \leq N}$ is an open cover of $\Gamma$.
\end{itemize}

A function $\xi: \Gamma \rightarrow \mathbb{R}$ is $k$-times continuously differentiable if all the functions $\xi \circ X^\alpha: U^\alpha \rightarrow \mathbb{R}$, $\alpha \leq N$, are $k$-times continuously differentiable. We denote by $C^k(\Gamma)$ the set of functions $\xi: \Gamma \rightarrow \mathbb{R}$ that is $k$-times continuously differentiable.

For every $\alpha \leq N$, we define the first fundamental form $\{g^\alpha_{ij}(\theta)\}_{i,j=1,\cdots,n}$, $\theta \in U^\alpha$ by
\[ g^\alpha_{ij}(\theta) = \frac{\partial X^\alpha}{\partial \theta_i}(\theta) \cdot \frac{\partial X^\alpha}{\partial \theta_j}(\theta), \ i,j=1, \cdots, n. \]
We also denote the inverse of $\{ g^\alpha_{ij}(\theta)\}_{i,j=1,\cdots, n}$ as $\{ g^{ij}_{\alpha}(\theta) \}_{i,j=1, \cdots, n}$ and $\mathrm{det}(g^\alpha_{ij}(\theta))$ as $g^{\alpha}(\theta)$ for $\theta \in U^\alpha$.

Using local parametrizations, one can express the Laplace-Beltrami operator on $\Gamma$ locally. For $\xi \in C^2(\Gamma)$, we have
\[ (\Delta_{\Gamma}\xi)(X^{\alpha}(\theta)) = \frac{1}{\sqrt{g^\alpha(\theta)}}\frac{\partial}{\partial \theta_j}\big( g^{ij}_{\alpha}(\theta)\sqrt{g^\alpha(\theta)}\frac{\partial \xi\circ X^\alpha}{\partial \theta_i}(\theta) \big), \ \theta \in U^\alpha, \]
here and below, we use the Einstein summation convention (i.e., repeated indices imply summation).
The tangential gradient of $\xi: \Gamma \rightarrow \mathbb{R}$ is locally given by
\[ (\nabla_{\Gamma}\xi)(X^\alpha(\theta)) = g^{ij}_{\alpha}(\theta)\frac{\partial \xi\circ X^\alpha}{\partial \theta_j}(\theta)\frac{\partial X^\alpha}{\partial \theta_i}(\theta), \ \theta \in U^\alpha. \]
Sometimes, we will use the notation
\[ \nabla_{\Gamma}\xi=(\underline{D}_{1}\xi, \cdots, \underline{D}_{n+1}\xi)  \]
for the $n+1$ components of the tangential gradient, i.e. $\underline{D}_{k}\xi =  g^{ij}_{\alpha}(\theta)\frac{\partial \xi\circ X^\alpha}{\partial \theta_j}(\theta)\frac{\partial X^\alpha_k}{\partial \theta_i}(\theta)$. Then the divergence of $g:\Gamma \rightarrow \mathbb{R}^{n+1}$ is given by
\[ \nabla_{\Gamma} \cdot g = \sum_{i=1}^{n+1}\underline{D}_{i}g_i.  \]
Moreover, it holds that
\[ \Delta_{\Gamma}\xi = \nabla_{\Gamma}\cdot \nabla_{\Gamma}\xi = \sum_{i=1}^{n+1}\underline{D}_{i}\underline{D}_{i}\xi. \]

Let $\nu$ denote the unit normal vector on $\Gamma$, and define the matrix
\[ \mathcal{H} = \nabla_{\Gamma}\nu, \]
whose components are given by
\[\mathcal{H}_{ij} = (\nabla_{\Gamma}\nu)_{ij} = \underline{D}_{i}\nu_{j} = \underline{D}_{j}\nu_i, \quad i,j=1,\cdots, n+1.  \]
The mean curvature $H$ of $\Gamma$ is defined as the trace of $\mathcal{H}$, namely,
\[ H= \sum_{j=1}^{n+1} \mathcal{H}_{jj}. \]

The following result concerns the commutation of tangential derivatives, and the proof is provided in \cite{GDT}.
\begin{lemma}\label{lemma2.6}
For $\xi \in C^2(\Gamma)$, we have that
\[ \underline{D}_{i}\underline{D}_{k}\xi = \underline{D}_{k}\underline{D}_{i}\xi + \mathcal{H}_{kl}\underline{D}_{l}\xi\nu_i - \mathcal{H}_{il}\underline{D}_{l}\xi\nu_k, \quad i,k=1, \cdots, n+1.  \]
\end{lemma}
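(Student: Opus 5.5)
The plan is to extend $\xi$ off $\Gamma$ and do all differentiation in $\mathbb{R}^{n+1}$, where ordinary partial derivatives commute. Let $\tilde\xi$ be a $C^2$ extension of $\xi$ to a neighbourhood of $\Gamma$; for instance, take $\tilde\xi$ constant along normals via the closest-point projection, or any $C^2$ extension, since tangential derivatives do not depend on the choice. Similarly, let $\nu$ be extended, e.g. $\nu=\nabla d$ with $d$ the signed distance function. Then on $\Gamma$
\[
\underline{D}_k\xi=\partial_k\tilde\xi-\nu_k\nu_l\partial_l\tilde\xi .
\]
This expression is itself a function defined off $\Gamma$, so it is a legitimate extension of $\underline{D}_k\xi$. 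Applying $\underline{D}_i=\partial_i-\nu_i\nu_m\partial_m$ to it gives
\[
\underline{D}_i\underline{D}_k\xi=\underline{D}_i(\partial_k\tilde\xi)-\underline{D}_i(\nu_k)\,\nu_l\partial_l\tilde\xi-\nu_k\,\underline{D}_i(\nu_l)\,\partial_l\tilde\xi-\nu_k\nu_l\,\underline{D}_i(\partial_l\tilde\xi).
\]

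The key steps, in order, are as follows.

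\textbf{Step 1: expand the symmetric part.} Write $\underline{D}_i(\partial_k\tilde\xi)=\partial_i\partial_k\tilde\xi-\nu_i\nu_m\partial_m\partial_k\tilde\xi$.

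\textbf{Step 2: choose the extension to kill normal derivatives.} Take $\tilde\xi$ constant in the normal direction, so $\nu_l\partial_l\tilde\xi=0$ near $\Gamma$. Then $\partial_l\tilde\xi=\underline{D}_l\xi$ on $\Gamma$ and the second term above vanishes. Differentiating $\nu_l\partial_l\tilde\xi=0$ gives $\nu_l\partial_i\partial_l\tilde\xi=-\partial_i\nu_l\,\partial_l\tilde\xi$.

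\textbf{Step 3: form the difference.} Compute $\underline{D}_i\underline{D}_k\xi-\underline{D}_k\underline{D}_i\xi$. The Hessian term $\partial_i\partial_k\tilde\xi$ cancels by symmetry. The remaining terms are of the form $\nu_i\nu_m\partial_m\partial_k\tilde\xi$, $\nu_k\mathcal{H}_{il}\underline{D}_l\xi$, and $\nu_k\nu_l\underline{D}_i\partial_l\tilde\xi$, together with their $i\leftrightarrow k$ swaps.

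\textbf{Step 4: rewrite via the identity from Step 2.} Using $\nu_m\partial_m\partial_k\tilde\xi=-\partial_k\nu_m\,\partial_m\tilde\xi$ and $\nu_l\underline{D}_i\partial_l\tilde\xi=-\underline{D}_i\nu_l\,\partial_l\tilde\xi=-\mathcal{H}_{il}\underline{D}_l\xi$, each remaining term becomes a product of $\nu$, $\mathcal{H}$ and $\nabla_\Gamma\xi$.

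\textbf{Step 5: reduce to tangential derivatives of $\nu$.} With $\nu=\nabla d$ and $|\nabla d|=1$, one has $\partial_k\nu_m\,\nu_m=0$ and $\partial_k\nu_m=\partial_m\nu_k$. Hence $\partial_k\nu_m$ coincides with $\underline{D}_k\nu_m=\mathcal{H}_{km}$ on $\Gamma$. Here the symmetry $\mathcal{H}_{ij}=\mathcal{H}_{ji}$ stated in the text is used.

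\textbf{Step 6: collect terms.} Collecting, the $\nu_i\nu_k$-terms cancel pairwise. What survives is $\mathcal{H}_{kl}\underline{D}_l\xi\,\nu_i-\mathcal{H}_{il}\underline{D}_l\xi\,\nu_k$, which is exactly the claimed identity.

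The main obstacle is the sign and index bookkeeping in Steps 4 and 5, in particular confirming that $\partial_k\nu_m=\mathcal{H}_{km}$ for the distance-function extension, and that the $\nu_i\nu_k$-terms cancel rather than double. If this becomes messy, the fallback is to verify the identity in a local parametrization $X^\alpha$ using the Gauss formula $\partial_i\partial_jX=\Gamma^k_{ij}\partial_kX+h_{ij}\nu$. A further point to check is that the $C^2$ regularity of $\Gamma$ suffices for $d\in C^2$.
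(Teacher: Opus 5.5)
Your argument is correct for $C^3$ surfaces (see the second paragraph), but it is organized differently from the standard proof the paper relies on. The paper itself only cites \cite{GDT}.

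\textbf{The standard route.} One takes an \emph{arbitrary} $C^2$ extension $\tilde\xi$ and any $C^1$ extension of $\nu$, and expands $\underline{D}_i(\partial_k\tilde\xi-\nu_k\nu_l\partial_l\tilde\xi)$. Everything except $-\nu_k\mathcal{H}_{il}\partial_l\tilde\xi$ is symmetric in $(i,k)$: the second-derivative terms together by symmetry of the Hessian, and $\mathcal{H}_{ik}\nu_l\partial_l\tilde\xi$ by symmetry of $\mathcal{H}$. Antisymmetrizing, and using $\mathcal{H}_{il}\nu_l=\tfrac12\underline{D}_i|\nu|^2=0$ to replace $\partial_l\tilde\xi$ by $\underline{D}_l\xi$, gives the lemma.

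\textbf{Your route.} The normal-constant extension with $\nu=\nabla d$ kills the normal derivative identically, and this simplifies the bookkeeping more than your Step 6 suggests. Since $\nu_l\partial_l\tilde\xi\equiv0$ near $\Gamma$, your two terms $-\nu_k\underline{D}_i(\nu_l)\partial_l\tilde\xi$ and $-\nu_k\nu_l\underline{D}_i(\partial_l\tilde\xi)$ sum to $-\nu_k\underline{D}_i(\nu_l\partial_l\tilde\xi)=0$. So they cancel rather than double, and there are no $\nu_i\nu_k$-terms left to cancel. Together with $-\nu_i\nu_m\partial_m\partial_k\tilde\xi=\nu_i\,\partial_k\nu_m\,\partial_m\tilde\xi$, you obtain the explicit formula $\underline{D}_i\underline{D}_k\xi=\partial_i\partial_k\tilde\xi+\nu_i\mathcal{H}_{kl}\underline{D}_l\xi$ on $\Gamma$. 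The lemma is then read off by antisymmetrizing. On this route the symmetry of $\mathcal{H}$ is not an input at all: the indices already match, and for $\nabla d$ the symmetry is automatic. The general route does need it.

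\textbf{Regularity.} The price of your route is regularity, and the point you flag is not the right one to check. For a $C^2$ hypersurface one does have $d\in C^2$. But then the closest-point projection $p=\mathrm{id}-d\,\nabla d$ is only $C^1$, so $\tilde\xi=\xi\circ p$ is in general only $C^1$. Both the Hessian symmetry in Step 3 and the differentiation of $\nu_l\partial_l\tilde\xi=0$ in Step 2 then fail. Your proof needs $d\in C^3$, i.e.\ $\Gamma\in C^3$.

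That is exactly the paper's standing assumption on the surfaces $\Gamma_t$, so your argument covers every use of the lemma in the paper. For a merely $C^2$ hypersurface, use the arbitrary-extension argument instead. A $C^2$ extension of $\xi$ exists, e.g.\ by writing $\Gamma$ locally as a $C^2$ graph, extending $\xi$ constantly in the graph direction, and patching with a partition of unity.
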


We now state the formula for integration by parts on a hypersurface $\Gamma$. The proof can be found in \cite{GCM}. 
\begin{lemma}\label{lemma2.3}
Assume that $\Gamma$ is a closed hypersurface and that $\xi\in C^1(\Gamma)$. Then we have
$$\int_\Gamma \nabla_\Gamma \xi \mathrm{dvol}_\Gamma=\int_{\Gamma}\xi H\nu \mathrm{dvol}_\Gamma,$$
where the volume measure $\mathrm{dvol}_{\Gamma}$ is associated with the first fundamental form $g_{ij}$ on $\Gamma$.
\end{lemma}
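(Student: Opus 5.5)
The plan is to reduce the identity to the divergence theorem on a closed hypersurface. I would first establish, componentwise, that for a tangential vector field $w:\Gamma\to\mathbb{R}^{n+1}$ (i.e.\ $w\cdot\nu=0$) of class $C^1$ one has $\int_\Gamma \nabla_\Gamma\cdot w\,\mathrm{dvol}_\Gamma=0$. Then, for fixed $i\in\{1,\dots,n+1\}$, I would apply this to the tangential part of $\xi e_i$, namely
\[ w=\xi e_i-\xi\nu_i\nu, \]
which is $C^1$ provided $\Gamma$ is $C^2$ (so that $\nu\in C^1$).

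To prove the tangential divergence theorem, I would use a partition of unity $\{\chi^\alpha\}$ subordinate to the cover $\{V^\alpha\}$ and write $w=\sum_\alpha\chi^\alpha w$. In the chart $X^\alpha$, expand the tangential field as $w\circ X^\alpha=w^j\,\partial_{\theta_j}X^\alpha$. The surface divergence then has the local form
\[ (\nabla_\Gamma\cdot w)\circ X^\alpha=\frac{1}{\sqrt{g^\alpha}}\partial_{\theta_j}\big(\sqrt{g^\alpha}\,w^j\big). \]
This is the same computation underlying the stated formula for $\Delta_\Gamma$, and it can be checked from the definition $\underline{D}_k=g^{ij}\partial_{\theta_j}(\cdot)\,\partial_{\theta_i}X_k$ together with the identity $\partial_{\theta_j}\log\sqrt{g}=g^{kl}\,\partial_{\theta_j}\partial_{\theta_k}X\cdot\partial_{\theta_l}X$. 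Integrating against $\sqrt{g^\alpha}\,d\theta$, each term becomes the integral of an exact derivative of a compactly supported function on $U^\alpha$, so it vanishes. Summing over $\alpha$ gives the claim. Here closedness of $\Gamma$ is used, since there is no boundary and hence no boundary terms arise.

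Next, I would compute $\nabla_\Gamma\cdot w$ directly:
\[ \nabla_\Gamma\cdot(\xi e_i)=\underline{D}_i\xi, \qquad \nabla_\Gamma\cdot(\xi\nu_i\nu)=\nu\cdot\nabla_\Gamma(\xi\nu_i)+\xi\nu_i\,\nabla_\Gamma\cdot\nu=\xi\nu_iH, \]
using $\nu\cdot\nabla_\Gamma(\cdot)=0$ and $\nabla_\Gamma\cdot\nu=\sum_j\mathcal{H}_{jj}=H$. Hence $0=\int_\Gamma(\underline{D}_i\xi-\xi H\nu_i)\,\mathrm{dvol}_\Gamma$ for each $i$, which is the statement.

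I expect the main obstacle to be the local divergence formula in the chart, in particular the verification of the product/Christoffel identity for $\sqrt{g}$. To make this step cleaner, I would first try an alternative: extend $w$ to a tubular neighbourhood and invoke the Euclidean divergence theorem on a thin shell. However, the chart computation seems more self-contained given the preliminaries. One must also ensure that the regularity assumption ($C^2$ surface, $\xi\in C^1$) suffices for $w\in C^1$.
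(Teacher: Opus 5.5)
Your proof is correct; it differs from the paper only in how the computation is organized. The paper does not prove this lemma itself (it cites \cite{GCM}), so the natural comparison is with the direct chart computation. The paper invokes that computation's key identity in the proof of Lemma~\ref{buchonglemma1}: $\frac{1}{\sqrt{g}}\partial_{\theta_l}\bigl(g^{kl}\sqrt{g}\,\partial_{\theta_k}X_j\bigr)=-H\nu_j$, i.e.\ $\Delta_\Gamma x_j=-H\nu_j$. With this identity, for $\xi$ supported in $V^\alpha$ one integrates $\underline{D}_j\xi=g^{kl}\partial_{\theta_l}(\xi\circ X^\alpha)\,\partial_{\theta_k}X^\alpha_j$ by parts against $\sqrt{g^\alpha}\,d\theta$, then sums over the partition of unity.

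Your argument is the same computation reorganized. Since $e_i-\nu_i\nu=\nabla_\Gamma x_i$, your field is $w=\xi\nabla_\Gamma x_i$, with $\nabla_\Gamma\cdot w=\underline{D}_i\xi+\xi\Delta_\Gamma x_i$. Your extrinsic calculation $\nabla_\Gamma\cdot(\xi\nu_i\nu)=\xi\nu_iH$ is exactly what proves $\Delta_\Gamma x_i=-H\nu_i$.

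\begin{itemize}
\item The direct route is shorter once that coordinate identity is in hand, and the paper needs the identity anyway.
\item Your ordering lets $H$ enter only through its definition $H=\mathrm{tr}(\nabla_\Gamma\nu)$, so the coordinate identity never has to be verified.
\item Your intermediate result, $\int_\Gamma\nabla_\Gamma\cdot w\,\mathrm{dvol}_\Gamma=0$ for tangential $C^1$ fields, is exactly the form in which the paper later uses Lemma~\ref{lemma2.3} together with Assumption~\ref{Assumption1}(B1), e.g.\ in \eqref{fdexiding1} and \eqref{eq83}. The paper runs that implication in the opposite direction, via $\int_\Gamma\nabla_\Gamma\cdot q\,\mathrm{dvol}_\Gamma=\int_\Gamma H\,q\cdot\nu\,\mathrm{dvol}_\Gamma=0$.
\end{itemize}

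The step you flag as the main obstacle is already settled by the identity you wrote down. It is Jacobi's formula $\partial_{\theta_j}\log\sqrt{g}=\tfrac12 g^{kl}\partial_{\theta_j}g_{kl}$ combined with the symmetry of $g^{kl}$. Orientation is also not an issue, since $\xi\nu_i\nu$ and $\xi H\nu_i$ are unchanged under $\nu\mapsto-\nu$.
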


Below, we introduce some function spaces on a hypersurface $\Gamma$.

For $p \in [1, \infty]$, we denote by $L^p(\Gamma)$ the space of measurable functions $\xi: \Gamma \rightarrow \mathbb{R}$ with
\[ \| \xi \|_{L^p(\Gamma)} = \big(\int_{\Gamma} |\xi|^p(y) \mathrm{dvol}_{\Gamma}(dy) \big)^{\frac{1}{p}} < +\infty,  \]
and for $p = \infty$, we mean the essential supremum norm. In particular, for $p \in [1, \infty)$, using local parametrizations, we can write $\| \xi\|_{L^p(\Gamma)}^p$ as
\[ \| \xi \|^p_{L^p(\Gamma)} = \sum_{\alpha =1}^N \int_{U^\alpha} \chi^{\alpha}(X^\alpha(\theta))|\xi|^p(X^\alpha(\theta))\sqrt{g^\alpha(\theta)}d\theta,  \]
where $\{ \chi^\alpha\}_{\alpha \leq N}$ is a partition of unity with regard to the open cover $\{V^\alpha\}_{\alpha \leq N}$ on $\Gamma$. $L^p(\Gamma)$ is a Banach space for $p \in [1, \infty]$, and for $p=2$, it is a Hilbert space with inner product $ \langle \cdot,\cdot \rangle_{\Gamma}$. For $p \in [1, \infty)$, the Sobolev space $H^{1,p}(\Gamma)$ is the closure of $C^1(\Gamma)$ under the norm
\[ \| \xi\|_{H^{1,p}(\Gamma)} = \big( \| \xi \|_{L^p(\Gamma)}^p + \|\nabla_{\Gamma}\xi \|_{L^p(\Gamma)}^p \big)^{\frac{1}{p}}.  \]
For $p=2$, we use the notation $H^1(\Gamma) = H^{1,2}(\Gamma)$.
The space $H^{2}(\Gamma)$ is the closure of $C^2(\Gamma)$ under the norm
\[ \|\xi\|_{H^{2}(\Gamma)} = \big( \|\xi\|_{L^2(\Gamma)}^2 + \| \nabla_{\Gamma}\xi\|_{L^2(\Gamma)}^2
                                + \sum_{i,j =1}^{n+1}\|\underline{D}_{i}\underline{D}_{j}\xi \|_{L^2(\Gamma)}^2 \big)^{\frac{1}{2}}.  \]
Let $H^{-1}(\Gamma)$ be the dual space of $H^1(\Gamma)$, and denote by $ _{H^{-1}(\Gamma)}\langle \cdot , \cdot \rangle_{H^{1}(\Gamma)}$ the dual pair between $H^{-1}(\Gamma)$ and $H^1(\Gamma)$.

\subsection{The geometry of moving hypersurfaces}
In this subsection, we will provide a brief overview of the moving hypersurfaces as outlined in Section 5 of \cite{GCM}. Let $\{ \Gamma_t \}_{t \in [0,T]}$ be a family $C^3$ compact hypersurfaces without boundary, indexed by $t \in [0,T]$. The initial surface $\Gamma_0$ is transported by the function $G(\cdot, \cdot): [0,T] \times \Gamma_0 \rightarrow \mathbb{R}^{n+1}$, where $G \in C^2([0,T]; C^2(\Gamma_0))$, satisfying that $G(t, \cdot): \Gamma_0 \rightarrow \Gamma_t$ is a $C^2$ diffeomorphism between $\Gamma_0$ and $\Gamma_t$ for every $t \in [0,T]$ and $G(0, \cdot) = Id$. Furthermore, the velocity of the evolving of $\Gamma_t$ is denoted by
\[ v(t, G(t, \cdot)) = \frac{\partial G}{\partial t}(t,\cdot).  \]
Assume that $\{ X^\alpha \}_{\alpha \leq N}$ is a local parametrization of $\Gamma_0$. Then for every $t \in [0,T]$, by using the diffeomorphism $G(t, \cdot)$, $\Gamma_t$ has a local parametrization of given by $\{X^\alpha_t: U^\alpha \rightarrow G(t, V^\alpha) \}_{\alpha \leq N}$ where $V^\alpha = X^\alpha(U^\alpha)$, $X^\alpha_t(\theta) = G(t, X^\alpha(\theta))$ for $\theta \in U^\alpha$ and the related first fundamental form
\[ g^{t, \alpha}_{ij}(\theta) = \frac{\partial X^\alpha_t}{\partial \theta_i}(\theta) \cdot \frac{\partial X^\alpha_t}{\partial \theta_j}(\theta), \quad \theta \in U^\alpha, \quad i,j=1, \cdots, n, \quad \alpha \leq N.  \]
Moreover, we denote the inverse of $\{g^{t, \alpha}_{ij}(\theta)\}_{i,j=1,\cdots, n}$ as $\{ g^{ij}_{t,\alpha}(\theta)\}_{i,j=1,\cdots, n}$ and $\mathrm{det}(g^{t, \alpha}_{ij}(\theta))$ as $g^\alpha_t(\theta)$ for $\theta \in U^\alpha$. We further assume that each $U^\alpha$ is convex.

Consider the space-time surface given by
\[Q_T=\ \mathop{\scalebox{1.5}[1.5]{$\cup$}}_{t\in[0,T]} \{t\}\times\Gamma_t. \]
For a function $\xi: Q_T \rightarrow \mathbb{R}$, we define a pullback transformation $\xi \mapsto\widetilde{\xi}$ that satisfies
\begin{equation}\label{chongxing4}
 \widetilde{\xi}(t,y)=\xi(t,G(t,y)), (t,y) \in [0,T]\times \Gamma_0.
 \end{equation}
Furthermore, the local parametric representation of $\xi$ on the  domain $U^\alpha$ is defined as
\begin{equation}\label{chongxing5}
\widetilde{\xi}^{\alpha}_t(\theta)=\xi(t,X_t^{\alpha}(\theta)), (t,\theta) \in [0,T]\times U^\alpha, 1\leq \alpha \leq N.
\end{equation}
We denote by $C^k(Q_T)$ the set of functions $\xi: Q_T \rightarrow \mathbb{R}$ such that $\widetilde{\xi}$ is $k$-times continuously differentiable.
An appropriate time derivative, which is usually called ``material derivative", is defined on this moving surface $Q_T$, that is,
\[\overset{\scalebox{0.4}{$\bullet$}}{\xi} = \frac{\partial \xi}{\partial t} + v \cdot \nabla \xi = \Big( \frac{d}{dt}\xi(t, G(t, \cdot)) \Big)\circ G^{-1}, \quad  \xi \in C^1(Q_T). \]
The following formulae for the differentiation of time-dependent surface integrals are called transport formulae, and are proved in \cite{GCM}.
\begin{lemma}\label{lemma2.1}
Assume that $\xi$ is a function on $Q_T$ such that all the following quantities exist. Then
$$\frac{d}{dt}\int_{\Gamma_t}\xi\mathrm{dvol}_{\Gamma_t}=\int_{\Gamma_t}\overset{\scalebox{0.4}{$\bullet$}}{\xi}+\xi\nabla_{\Gamma_t}\cdot v \mathrm{dvol}_{\Gamma_t}.$$
\end{lemma}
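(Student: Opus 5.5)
We prove the transport formula of Lemma \ref{lemma2.1} by pulling the integral back to the fixed reference surface $\Gamma_0$. Using a partition of unity $\{\chi^\alpha\}_{\alpha\le N}$ subordinate to $\{V^\alpha\}$ on $\Gamma_0$, transported to $\Gamma_t$ as $\chi^\alpha\circ G(t,\cdot)^{-1}$, we write
\[
\int_{\Gamma_t}\xi\,\mathrm{dvol}_{\Gamma_t}=\sum_{\alpha=1}^N\int_{U^\alpha}\chi^\alpha(X^\alpha(\theta))\,\widetilde{\xi}^\alpha_t(\theta)\sqrt{g^\alpha_t(\theta)}\,d\theta .
\]
The weights $\chi^\alpha(X^\alpha(\theta))$ do not depend on $t$, and the domains $U^\alpha$ are fixed. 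Under the standing regularity assumptions ($G\in C^2([0,T];C^2(\Gamma_0))$ and $\xi$ regular enough), we may differentiate under the integral sign. By the definition of the material derivative, $\partial_t\widetilde{\xi}^\alpha_t(\theta)=\overset{\scalebox{0.4}{$\bullet$}}{\xi}(t,X^\alpha_t(\theta))$. Hence the product rule gives one term that is exactly $\int_{\Gamma_t}\overset{\scalebox{0.4}{$\bullet$}}{\xi}\,\mathrm{dvol}_{\Gamma_t}$, plus a second term containing $\partial_t\sqrt{g^\alpha_t}$.

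The key step is the identity
\[
\partial_t\sqrt{g^\alpha_t(\theta)}=\big(\nabla_{\Gamma_t}\cdot v\big)(t,X^\alpha_t(\theta))\sqrt{g^\alpha_t(\theta)} .
\]
To prove it, use Jacobi's formula $\partial_t g^\alpha_t=g^\alpha_t\,g^{ij}_{t,\alpha}\,\partial_t g^{t,\alpha}_{ij}$. Then note that $\partial_t\partial_{\theta_i}X^\alpha_t=\partial_{\theta_i}(v(t,X^\alpha_t))$, since $\partial_tX^\alpha_t=v(t,X^\alpha_t)$ and $G$ is $C^2$, so the mixed derivatives commute. This yields
\[
\partial_t g^{t,\alpha}_{ij}=\partial_{\theta_i}(v\circ X^\alpha_t)\cdot\partial_{\theta_j}X^\alpha_t+\partial_{\theta_i}X^\alpha_t\cdot\partial_{\theta_j}(v\circ X^\alpha_t).
\]
Contracting with $g^{ij}_{t,\alpha}$ and using symmetry gives
\[
\tfrac12\,g^{ij}_{t,\alpha}\,\partial_t g^{t,\alpha}_{ij}=g^{ij}_{t,\alpha}\,\partial_{\theta_j}(v_k\circ X^\alpha_t)\,\partial_{\theta_i}X^\alpha_{t,k}=\sum_k\underline{D}_k v_k=\nabla_{\Gamma_t}\cdot v,
\]
by the local formula for the tangential gradient on $\Gamma_t$. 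Therefore $\partial_t\sqrt{g^\alpha_t}=\tfrac12\sqrt{g^\alpha_t}\,g^{ij}_{t,\alpha}\partial_t g^{t,\alpha}_{ij}=(\nabla_{\Gamma_t}\cdot v)\sqrt{g^\alpha_t}$. Substituting back and summing over $\alpha$ reassembles the global integral $\int_{\Gamma_t}\xi\nabla_{\Gamma_t}\cdot v\,\mathrm{dvol}_{\Gamma_t}$.

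The main obstacle is this computation of $\partial_t\sqrt{g^\alpha_t}$ and the identification of the coordinate contraction with the intrinsic divergence. Here one must remember that $v$ is given as a function on $Q_T$ (through $v\circ G$) and need not be tangential. This causes no difficulty, because the formula $\underline{D}_k v_k=g^{ij}\partial_{\theta_j}(v_k\circ X)\partial_{\theta_i}X_k$ applies componentwise to any vector field, normal part included; this is exactly how the mean-curvature contribution of the normal velocity appears.

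The remaining bookkeeping points are routine:
\begin{itemize}
\item The partition of unity is $t$-independent in $\theta$-coordinates, because it is pulled back through the same $G$ that defines $X^\alpha_t$.
\item Differentiation under the integral sign is justified by dominated convergence, using compactness of $\overline{U^\alpha}\cap\operatorname{supp}\chi^\alpha\circ X^\alpha$ and continuity of the derivatives.
\end{itemize}
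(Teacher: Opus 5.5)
Your proof is correct. It is essentially the standard local-coordinate argument of \cite{GCM}, which the paper cites instead of proving the lemma: pull back to the fixed parameter domains with a $t$-independent partition of unity, then use Jacobi's formula to get $\partial_t\sqrt{g^\alpha_t}=\sqrt{g^\alpha_t}\,\widetilde{\nabla_{\Gamma_t}\cdot v}^\alpha$, which is exactly the identity the paper later invokes in the proof of Lemma \ref{buchonglemma1}. The extra regularity you assume to differentiate under the integral sign (e.g.\ $\widetilde{\xi}$ continuously differentiable) is the natural reading of the lemma's informal hypothesis that ``all quantities exist.''
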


Before the end of this subsection, we present the following formula for the commutation of tangential derivatives and material derivatives. A proof is given in the Appendix of \cite{GDT}.
\begin{lemma}\label{lemma2.5}
For $\xi \in C^2(Q_T)$, we have that
\[ (\underline{D}_{l}\xi)^{\scalebox{0.4}{$\bullet$}} = \underline{D}_{l}\overset{\scalebox{0.4}{$\bullet$}}{\xi} - A_{lr}(v)\underline{D}_{r}\xi  \]
with the matrix
\[ A_{lr}(v) = \underline{D}_{l}v_r - \nu_s\nu_l\underline{D}_{r}v_s, \quad l ,r=1,\cdots, n+1.   \]
\end{lemma}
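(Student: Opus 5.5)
We prove Lemma \ref{lemma2.5} by working in the pulled-back coordinates. There the material derivative is an ordinary time derivative at fixed $\theta \in U^\alpha$, so the whole computation reduces to differentiating in $t$ the local formula for $\underline{D}_l$. Fix $\alpha$ and write $X_t = X^\alpha_t$, $g^{ij}_t = g^{ij}_{t,\alpha}$, and $\widetilde{\xi}_t(\theta) = \xi(t, X_t(\theta))$. By the local expression of the tangential gradient,
\[
(\underline{D}_l\xi)(t,X_t(\theta)) = g^{ij}_t(\theta)\,\partial_{\theta_j}\widetilde{\xi}_t(\theta)\,\partial_{\theta_i}X_{t,l}(\theta),
\]
and $(\underline{D}_l\xi)^{\scalebox{0.4}{$\bullet$}}(t,X_t(\theta))$ is the $t$-derivative of this expression at fixed $\theta$. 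The product rule produces three terms:
\begin{enumerate}
\item[(a)] $g^{ij}_t\,\partial_{\theta_j}\partial_t\widetilde{\xi}_t\,\partial_{\theta_i}X_{t,l}$. Since $\partial_t\widetilde{\xi}_t = \widetilde{\overset{\scalebox{0.4}{$\bullet$}}{\xi}}$, this term equals exactly $\underline{D}_l\overset{\scalebox{0.4}{$\bullet$}}{\xi}$.
\item[(b)] $g^{ij}_t\,\partial_{\theta_j}\widetilde{\xi}_t\,\partial_{\theta_i}(v_l\circ X_t)$, using $\partial_t X_t = v\circ X_t$.
\item[(c)] $\partial_t g^{ij}_t\,\partial_{\theta_j}\widetilde{\xi}_t\,\partial_{\theta_i}X_{t,l}$.
\end{enumerate}
Regularity is not an issue: $G\in C^2$ in $t$ and space and $\xi\in C^2(Q_T)$ justify swapping $\partial_t$ and $\partial_\theta$.

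Next we rewrite (b) and (c) as tangential derivatives. Using $\partial_t g^{ij} = -g^{ik}(\partial_t g_{km})g^{mj}$ and
\[
\partial_t g_{km} = \partial_{\theta_k}(v\circ X_t)\cdot\partial_{\theta_m}X_t + \partial_{\theta_k}X_t\cdot\partial_{\theta_m}(v\circ X_t),
\]
term (c) becomes a sum of two contractions. These are translated with two identities. First, for any function $w$ on $\Gamma_t$, $\partial_{\theta_k}(w\circ X_t) = \underline{D}_s w\,\partial_{\theta_k}X_{t,s}$, because $\nabla_{\Gamma_t}w$ is tangential. Second, $g^{ij}\partial_{\theta_i}X_{t,l}\,\partial_{\theta_j}X_{t,r} = P_{lr} = \delta_{lr}-\nu_l\nu_r$, the tangential projection.

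With these, (b) becomes $P_{ls}\,\underline{D}_s v_l$-type contractions against $\underline{D}\xi$, and (c) becomes
\[
-P_{ls}\,\underline{D}_s v_p\,\underline{D}_p\xi \;-\; \underline{D}_r v_p\,P_{pl}\,\underline{D}_r\xi,
\]
up to index bookkeeping. We then combine (b) and (c), using $\nu\cdot\nabla_{\Gamma_t}\xi = 0$ and $P\,\underline{D}\xi = \underline{D}\xi$. Two of the contributions cancel, leaving
\[
-\bigl(\underline{D}_l v_r - \nu_s\nu_l\,\underline{D}_r v_s\bigr)\underline{D}_r\xi = -A_{lr}(v)\,\underline{D}_r\xi .
\]

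The main obstacle is the index bookkeeping in this last step: correctly distinguishing $\underline{D}_l v_r$ from $\underline{D}_r v_l$, and seeing where the normal correction $\nu_s\nu_l\underline{D}_r v_s$ comes from. Because $A$ is not symmetric, this needs care. The normal correction should arise from the projection $P_{pl}$ acting on the non-tangential part of $\underline{D}_r v$, since $v$ is not necessarily tangential. We would verify it by checking normal and tangential components separately: contracting the final identity with $\nu_l$ must give consistency with $\nu\cdot\underline{D}\xi = 0$ differentiated in time.

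Finally, the result is independent of the chart, since both sides are intrinsically defined. The identity therefore holds on all of $Q_T$.
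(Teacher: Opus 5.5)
Your proposal is correct, and the index bookkeeping you flag as the main obstacle closes in two lines.

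Term (b) needs no projection. Since $\partial_{\theta_i}(v_l\circ X_t)=\underline{D}_r v_l\,\partial_{\theta_i}X_{t,r}$, it equals $\underline{D}_r v_l\,\underline{D}_r\xi$; your phrase ``$P_{ls}\underline{D}_s v_l$-type contractions'' (with $l$ both free and summed) is muddled. Your formula for (c) is exactly right. Its first contraction reduces via $P_{ls}\underline{D}_s v_r=\underline{D}_l v_r$, that is, $\nu_s\underline{D}_s v_r=0$. Its second splits along $P_{pl}=\delta_{pl}-\nu_p\nu_l$. Hence
\begin{align*}
\text{(b)}+\text{(c)}&=\underline{D}_r v_l\,\underline{D}_r\xi-\underline{D}_l v_r\,\underline{D}_r\xi-\underline{D}_r v_l\,\underline{D}_r\xi+\nu_p\nu_l\,\underline{D}_r v_p\,\underline{D}_r\xi\\
&=-\bigl(\underline{D}_l v_r-\nu_p\nu_l\,\underline{D}_r v_p\bigr)\underline{D}_r\xi=-A_{lr}(v)\,\underline{D}_r\xi .
\end{align*}
So (b) cancels the $\delta_{pl}$-part of the second term of (c). The normal correction is exactly the $\nu_p\nu_l$-part of $-P_{pl}$, as you guessed.

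The fact actually used is the tangentiality of $\nabla_{\Gamma_t}v_r$, not $\nu\cdot\nabla_{\Gamma_t}\xi=0$. The $\nu_l$-contraction check is unnecessary, though it is consistent: it reduces to $(\nu_r)^{\scalebox{0.4}{$\bullet$}}=-\nu_s\underline{D}_r v_s$.

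The paper contains no argument of its own for this lemma; it only cites the Appendix of \cite{GDT}. Your chart computation is therefore a self-contained alternative, and it fits the paper's framework well. It uses only objects already set up in Section 2.1:
\begin{itemize}
\item[(i)] $X^\alpha_t=G(t,X^\alpha)$ follows material points, so the material derivative is $\partial_t$ at fixed $\theta$ and $\partial_tX^\alpha_t=v\circ X^\alpha_t$, with $v$ possibly non-tangential;
\item[(ii)] the local formula for $\underline{D}_l$;
\item[(iii)] the projection identity $g^{ij}\partial_{\theta_i}X_{t,l}\,\partial_{\theta_j}X_{t,r}=\delta_{lr}-\nu_l\nu_r$.
\end{itemize}
It needs no extension of $\xi$, $v$ or $\nu$ to a neighbourhood of $\Gamma_t$ and no separate evolution law for the normal. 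Its regularity needs are exactly the stated ones: $\xi\in C^2(Q_T)$ and $G\in C^2([0,T];C^2(\Gamma_0))$, which justify exchanging $\partial_t$ and $\partial_\theta$. The price is chart dependence, which you handle correctly by noting that both sides are intrinsic. Citing \cite{GDT} keeps the paper short, but leaves the reader to match that source's setting with the parametrized one used here.
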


\subsection{It\^{o} formula on moving hypersurfaces}

In this subsection, we prove an It\^{o} formula for stochastic processes on the moving hypersurface $Q_T$. It plays a crucial role in establishing the existence of solutions to equation \eqref{eq74}. Recall the pullback transformation and its associated local parametric representation
given by \eqref{chongxing4} and \eqref{chongxing5}, respectively. Let us begin by introducing necessary function spaces on moving hypersurfaces.
	\begin{align*}
		C([0,T]; L^2(\Gamma_{\cdot})) :=& \big \{  \{ u(t) \} _{t\in[0,T]}: u(t) \in L^2(\Gamma_{t}) \  \mbox{for any} \ t \in [0,T]  \\ & \  \mbox{such that} \ \widetilde{u} \in  C([0,T], L^2(\Gamma_{0})) \big \} ,
	\end{align*}
\vskip -1.0cm
	
	\begin{align*}
		L^2([0,T]; L^2(\Gamma_{\cdot})) :=& \big \{  \{ u(t) \} _{t\in[0,T]}: u(t) \in  L^2(\Gamma_{t}) \  \mbox{for any} \ t \in [0,T]  \\ & \  \mbox{such that} \ \widetilde{u} \in  L^2([0,T]; L^2(\Gamma_{0}))  \big \} ,
	\end{align*}
\vskip -1.0cm	

\begin{align*}
		 H^1(Q_T) :=& \big \{ \varphi: Q_T \rightarrow \mathbb{R} |\ \widetilde{\varphi} \in H^1([0,T] \times \Gamma_{0})   \big \},
	\end{align*}	
Analogously, we can define $C([0,T]; H^1(\Gamma_{\cdot}))$, $L^2([0,T]; H^1(\Gamma_{\cdot}))$ and $L^2([0,T]; H^2(\Gamma_{\cdot}))$.

Consider SPDEs on the moving hypersurface $Q_T$ of the following form: for $i \in \{1,\cdots, m\}$,
\begin{align}
\left\{
\begin{aligned}\label{eq61}
&\overset{\scalebox{0.4}{$\bullet$}}{z^i}(t)+z^i(t)\nabla_{\Gamma_t}\cdot v(t) = \underline{D}_{j}R^{ij}(t) + S^i(t) +\Upsilon^i(t)\overset{\scalebox{0.4}{$\bullet$}}{B_t}, \quad (t,x)\in Q_T,\\
&z^i(0,y)=z^i_0(y), \quad y\in\Gamma_0,
\end{aligned}
\right.
\end{align}
where for each $i \in \{1, \cdots, m \}$, $j \in \{1,\cdots,n+1\}$, the predictable processes $R^{ij}$, $S^i$ and $\Upsilon^i$ belong to $L^2(\Omega; L^2([0,T]; L^2(\Gamma_{\cdot})))$.
We now introduce the definition of a solution to equation \eqref{eq61}.
\begin{definition}\label{buchongdingyi}
We say that a stochastic process $\{ z(t)\}_{t\in[0,T]}$ is a solution to equation \eqref{eq61} if for each $i \in \{1,\cdots, m\}$,
\begin{itemize}
\item[(i)] $z^i \in L^2(\Omega; C([0,T]; L^2(\Gamma_{\cdot}))) \cap L^2(\Omega; L^2([0,T]; H^1(\Gamma_{\cdot})))$.
\item[(ii)] $z^i$ is an ${L}^{2}(\Gamma_{\cdot})$-valued predictable random process with respect to the filtration $ \{ \mathcal{F}_t \} _{t\in[0,T]} $.
\item[(iii)] For any $\varphi \in H^1(Q_T)$, we have $\mathbb{P}$-a.s.
\begin{eqnarray}\label{eq'76}
& & \int_{\Gamma_t} z^i(t,x)\varphi(t,x)\mathrm{dvol}_{\Gamma_t}(dx)- \int_{\Gamma_0} z^i_0(y)\varphi(0,y)\mathrm{dvol}_{\Gamma_0}(dy) \nonumber\\
& =& \int_0^t\int_{\Gamma_s}z^i(s,x)\overset{\scalebox{0.4}{$\bullet$}}{\varphi}(s,x) \mathrm{dvol}_{\Gamma_s}(dx)ds - \int_0^t\int_{\Gamma_s}R^{ij}(s,x)\underline{D}_{j}\varphi(s,x)\mathrm{dvol}_{\Gamma_s}(dx)ds\nonumber\\
& & + \int_0^t\int_{\Gamma_s}R^{ij}(s,x)\varphi(s,x)(H\nu_j)(s,x)\mathrm{dvol}_{\Gamma_s}(dx)ds +\int_0^t\int_{\Gamma_s} S^i(s,x)\varphi(s,x)\mathrm{dvol}_{\Gamma_s}(dx)ds \nonumber\\
& & +\int_0^t\int_{\Gamma_s} \Upsilon^i(s,x)\varphi(s,x)\mathrm{dvol}_{\Gamma_s}(dx)dB_s\nonumber\\
\end{eqnarray}
for every $t \in [0,T]$.
\end{itemize}
\end{definition}

\begin{lemma}\label{buchonglemma1}
Assume that $z$ is a solution to equation \eqref{eq61}. Then for every $i \in \{1,2, \cdots, m\}$ and $\alpha \in \{1,2, \cdots, N\}$, the localized process $\widetilde{z}^{i,\alpha}$ satisfies the following equation in the distributional sense over $U^\alpha$:  
\begin{eqnarray}\label{eq62}
    d \widetilde{z}^{i,\alpha}_t(\theta)
 = -\widetilde{z}^{i,\alpha}_t(\theta)\widetilde{\nabla_{\Gamma_t}\cdot v}^\alpha(\theta)dt + g^{kl}_{t,\alpha}(\theta)\frac{\partial \widetilde{R}^{ij,\alpha}_t}{\partial \theta_l}(\theta)\frac{\partial X^{\alpha}_{t,j}}{\partial \theta_{k}}(\theta)dt + \widetilde{S}^{i,\alpha}_t(\theta)dt
    + \widetilde{\Upsilon}^{i,\alpha}_t(\theta)dB_t.
\end{eqnarray}
\end{lemma}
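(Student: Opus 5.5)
The plan is to test the weak formulation \eqref{eq'76} against functions that are localized to a single chart and constant along material trajectories in time, and then pull everything back to $U^\alpha$. Fix $\alpha$ and $\psi\in C_c^\infty(U^\alpha)$, and define $\varphi$ on $Q_T$ by
\[
\varphi(t,X^\alpha_t(\theta))=\frac{\psi(\theta)}{\sqrt{g^\alpha_t(\theta)}},
\]
extended by zero outside $G(t,V^\alpha)$. The regularity $G\in C^2([0,T];C^2(\Gamma_0))$ makes $g^\alpha_t$ a $C^1$ function of $(t,\theta)$ that is bounded away from zero on $\mathrm{supp}\,\psi$. Hence $\varphi\in H^1(Q_T)$ and it is an admissible test function.

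Next I would rewrite each term of \eqref{eq'76} in the coordinates $\theta$, using $\mathrm{dvol}_{\Gamma_t}=\sqrt{g^\alpha_t}\,d\theta$ on the chart.
\begin{itemize}
\item The left-hand side becomes $\int_{U^\alpha}\widetilde z^{i,\alpha}_t\psi\,d\theta-\int_{U^\alpha}\widetilde z^{i,\alpha}_0\psi\,d\theta$.
\item The material derivative is the $t$-derivative at fixed $\theta$, so $\overset{\scalebox{0.4}{$\bullet$}}{\varphi}=\psi\,\partial_t\big(g^\alpha_t\big)^{-1/2}$. For the Jacobian I would use the standard identity $\partial_t\sqrt{g^\alpha_t}=\widetilde{\nabla_{\Gamma_t}\cdot v}^\alpha\sqrt{g^\alpha_t}$, which is the local form of Lemma \ref{lemma2.1}. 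It gives $\overset{\scalebox{0.4}{$\bullet$}}{\varphi}\sqrt{g^\alpha_t}=-\psi\,\widetilde{\nabla_{\Gamma_t}\cdot v}^\alpha$, so the first integral on the right becomes $-\int_0^t\int_{U^\alpha}\widetilde z^{i,\alpha}_s\,\widetilde{\nabla_{\Gamma_s}\cdot v}^\alpha\,\psi\,d\theta\,ds$.
\item The $S$ term is simply $\int_0^t\int_{U^\alpha}\widetilde S^{i,\alpha}_s\psi\,d\theta\,ds$.
\item The stochastic term is $\int_0^t\int_{U^\alpha}\widetilde\Upsilon^{i,\alpha}_s\psi\,d\theta\,dB_s$.
\end{itemize}

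The main step is the flux term. I must show that
\[
-\int_{\Gamma_s}R^{ij}\underline D_j\varphi+\int_{\Gamma_s}R^{ij}\varphi H\nu_j
\]
equals $\int_{U^\alpha}g^{kl}_{s,\alpha}\,\partial_{\theta_l}\widetilde R^{ij,\alpha}_s\,\partial_{\theta_k}X^\alpha_{s,j}\,\psi\,d\theta$, understood distributionally when $R$ is only $L^2$. For smooth $R$ this follows from Lemma \ref{lemma2.3} applied to $R^{ij}\varphi$, which gives $\int\underline D_j(R^{ij}\varphi)=\int R^{ij}\varphi H\nu_j$. The left-hand combination is therefore $\int_{\Gamma_s}(\underline D_jR^{ij})\varphi\,\mathrm{dvol}$. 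In the chart $\underline D_jR^{ij}=g^{kl}\partial_l\widetilde R\,\partial_kX_j$, and the factor $\sqrt{g}$ cancels against the $1/\sqrt g$ built into $\varphi$. For general $R\in L^2$ I would read the right-hand side of \eqref{eq62} in the sense of distributions on $U^\alpha$, defining it by exactly this pairing. That amounts to moving $\partial_{\theta_l}$ onto the test function $\psi\,g^{kl}\partial_kX_j$ with the appropriate $\sqrt g$ weights. Equivalently, I would approximate $R$ by smooth predictable processes and pass to the limit in $L^2$ in each term, including the stochastic integral via the It\^o isometry.

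Collecting all terms gives, for every $\psi\in C_c^\infty(U^\alpha)$ and every $t$, $\mathbb P$-a.s., exactly the integrated weak form of \eqref{eq62}. A countable dense family of $\psi$ together with continuity in $t$ makes the null set independent of $\psi$ and $t$. I expect the main obstacle to be the flux term: justifying the integration by parts with the curvature term and handling the low regularity of $R$. The Jacobian identity should be routine.
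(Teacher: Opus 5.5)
Your proposal is correct and follows essentially the same route as the paper: the same test function $\varphi=\psi/\sqrt{g^\alpha_t}$ on the chart, the same Jacobian identity $\partial_t\sqrt{g^\alpha_t}=\sqrt{g^\alpha_t}\,\widetilde{\nabla_{\Gamma_t}\cdot v}^\alpha$ for the material-derivative term, and the flux terms reduced to $-\int_{U^\alpha}\widetilde R^{ij,\alpha}_s\,\partial_{\theta_l}\big(g^{kl}_{s,\alpha}\partial_{\theta_k}X^\alpha_{s,j}\,\psi\big)\,d\theta$. The only cosmetic difference is in how that flux reduction is obtained. The paper gets it directly for $L^2$ integrands from the local formula $\widetilde{(H\nu_j)}^\alpha_t=-\frac{1}{\sqrt{g^\alpha_t}}\partial_{\theta_l}\big(g^{kl}_{t,\alpha}\partial_{\theta_k}X^\alpha_{t,j}\sqrt{g^\alpha_t}\big)$, whereas you use Lemma \ref{lemma2.3} for smooth $R$ plus an $L^2$ density argument, which is equally valid.
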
\label{buchonglemma2}
\begin{proof}
For any $\psi \in C_c^{\infty}(U^\alpha)$, let
\begin{numcases}{\varphi(t,x)=}
  \frac{\psi((X^\alpha_t)^{-1}(x))}{\sqrt{g^\alpha_t((X^\alpha_t)^{-1}(x))}}, & $(t,x)\in \bigcup_{t \in [0,T]} \{t\}\times V_t^{\alpha}$,\nonumber\\
  0, & $(t,x) \in Q_T\setminus\bigcup_{t \in [0,T]} \{t\}\times V_t^{\alpha}$,\nonumber
\end{numcases}
where $ V_t^{\alpha} = X^\alpha_t(U^\alpha)$. It follows that $\varphi \in H^1(Q_T)$. Then by \eqref{eq'76} and the facts that $\frac{\partial \sqrt{g^\alpha_t(\theta)}}{\partial t} = \sqrt{g^\alpha_t(\theta)}\widetilde{\nabla_{\Gamma_t}\cdot v}^\alpha(\theta)$ and $\widetilde{(H\nu_j)}^\alpha_t(\theta)=-\frac{1}{\sqrt{g^{\alpha}_t(\theta)}}\frac{\partial}{\partial \theta_l}\big( g^{kl}_{t,\alpha}(\theta)\frac{\partial X^{\alpha}_{t,j}}{\partial \theta_{k}}(\theta)\sqrt{g^{\alpha}_t(\theta)} \big) $, we obtain that $\mathbb{P}$-a.s.
\begin{eqnarray*}\label{buchongeq2}
& & \int_{U^\alpha} \widetilde{z}^{i,\alpha}_t(\theta)\psi(\theta)d\theta- \int_{U^\alpha} \widetilde{z}^{i,\alpha}_0(\theta)\psi(\theta)d\theta \nonumber\\
& =& - \int_0^t\int_{U^\alpha}\widetilde{z}^{i,\alpha}_s(\theta)\widetilde{\nabla_{\Gamma_s}\cdot v}^\alpha(\theta)\psi(\theta)d\theta ds
 -\int_0^t\int_{U^\alpha} \widetilde{R}^{ij,\alpha}_s(\theta)g^{kl}_{s,\alpha}(\theta)\frac{\partial(\psi/\sqrt{g^{\alpha}_s})}{\partial \theta_l}(\theta)\frac{\partial X^{\alpha}_{s,j}}{\partial \theta_{k}}(\theta)\sqrt{g^{\alpha}_s(\theta)} d\theta ds\nonumber\\
& & -\int_0^t\int_{U^\alpha} \frac{\widetilde{R}^{ij,\alpha}_s(\theta)\psi(\theta)}{\sqrt{g^{\alpha}_s(\theta)} }\frac{\partial}{\partial \theta_l}\big( g^{kl}_{s,\alpha}(\theta)\frac{\partial X^{\alpha}_{s,j}}{\partial \theta_{k}}(\theta)\sqrt{g^{\alpha}_s(\theta)} \big)d\theta ds   + \int_0^t\int_{U^\alpha} \widetilde{S}^{i,\alpha}_s(\theta)\psi(\theta)d\theta ds \nonumber\\
& & +\int_0^t\int_{U^\alpha} \widetilde{\Upsilon}^{i,\alpha}_s( \theta)\psi(\theta)d\theta dB_s \nonumber\\
\end{eqnarray*}
for every $t \in [0,T]$. Since $\psi \in C_c^{\infty}(U^\alpha)$ is arbitrary, we conclude that $\widetilde{z}^{i,\alpha}$ satisfies equation \eqref{eq62} in the distributional sense over $U^\alpha$.
\end{proof}

\begin{lemma}\label{lemma2.2}
(It\^{o} formula) Assume that $z$ is a solution to equation \eqref{eq61}. For any $\varrho \in [C^2(Q_T)]^m$, let $\Psi(t,x)= \int_0^t \varrho(s, G(s, G^{-1}(t,x)))dB_s$, $(t,x) \in Q_T$. Then for any $\eta \in C^1(Q_T)$ and $\phi \in C^2(\mathbb{R}^m)$ with bounded second-order partial derivatives, we have $\mathbb{P}$-a.s.
\begin{eqnarray}\label{eq66}
& &  \langle \phi(z(t) - \Psi(t)), \eta(t) \rangle_{\Gamma_t} \nonumber\\
&=&  \langle \phi(z_0), \eta(0) \rangle_{\Gamma_0}
      + \int_0^t \langle \big( \phi(z(s) - \Psi(s))- \nabla\phi(z(s) - \Psi(s))\cdot z(s) \big)\nabla_{\Gamma_s}\cdot v(s), \eta(s) \rangle_{\Gamma_s}ds\nonumber\\
   & &+ \int_0^t \langle \phi(z(s) - \Psi(s)), \overset{\scalebox{0.4}{$\bullet$}}{\eta}(s) \rangle_{\Gamma_s}ds
      + \int_0^t \langle \nabla\phi(z(s) -\Psi(s))\cdot (\nabla_{\Gamma_s}\cdot R(s)), \eta(s) \rangle_{\Gamma_s}ds \nonumber\\
   & & + \int_0^t \langle \nabla\phi(z(s) - \Psi(s))\cdot S(s), \eta(s) \rangle_{\Gamma_s}ds
       + \int_0^t \langle \nabla\phi(z(s) - \Psi(s))\cdot \big( \Upsilon(s)- \varrho(s)\big), \eta(s)\rangle_{\Gamma_s}dB_s \nonumber\\
   & & +\frac{1}{2}\int_0^t \langle (\Upsilon(s)-\varrho(s) )^\top \nabla^2\phi(z(s) -\Psi(s))(\Upsilon(s)- \varrho(s)), \eta(s) \rangle_{\Gamma_s}ds,
\end{eqnarray}
for all $t \in [0,T]$.
\end{lemma}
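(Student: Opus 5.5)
The plan is to reduce everything to a fixed parameter domain via the local charts and Lemma \ref{buchonglemma1}, apply a finite-dimensional (Hilbert-space) It\^{o} formula there, and then glue the charts back together with a partition of unity.

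\textbf{Step 1: the process $\Psi$ in charts.} In the pullback, $\widetilde{\Psi}(t,y)=\int_0^t \widetilde{\varrho}(s,y)\,dB_s$. Hence, in the chart $\alpha$, $\widetilde{\Psi}^\alpha_t(\theta)=\int_0^t\widetilde{\varrho}^\alpha_s(\theta)dB_s$, so $d\widetilde{\Psi}^\alpha_t=\widetilde{\varrho}^\alpha_t\,dB_t$ pointwise in $\theta$. Together with \eqref{eq62}, this gives
\[
d(\widetilde{z}^{\alpha}-\widetilde{\Psi}^\alpha)=\big(-\widetilde{z}^\alpha\,\widetilde{\nabla_{\Gamma}\cdot v}^\alpha+\widetilde{\nabla_\Gamma\cdot R}^{\alpha}+\widetilde S^\alpha\big)dt+(\widetilde\Upsilon^\alpha-\widetilde\varrho^\alpha)dB_t ,
\]
in the distributional sense on $U^\alpha$. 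Here $\widetilde{\nabla_\Gamma\cdot R}^{\alpha}$ is the term $g^{kl}\partial_l\widetilde R^{ij,\alpha}\partial_k X^\alpha_{t,j}$, understood in $H^{-1}$.

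\textbf{Step 2: It\^{o} formula on the fixed domain.} Choose a partition of unity $\{\chi^\alpha\}$ on $\Gamma_0$ and transport it by $G$, so that it becomes time-independent in $\theta$. The test weight is $w^\alpha_t(\theta)=\chi^\alpha(X^\alpha(\theta))\,\widetilde\eta^\alpha_t(\theta)\sqrt{g^\alpha_t(\theta)}$, which is $C^1$ in $t$ with compact support in $U^\alpha$. Then
\[
\langle \phi(z-\Psi),\eta\rangle_{\Gamma_t}=\sum_\alpha\int_{U^\alpha}\phi(\widetilde z^\alpha_t-\widetilde\Psi^\alpha_t)\,w^\alpha_t\,d\theta .
\]
To each summand we apply the It\^{o} formula for $\int \phi(Y_t)w_t\,d\theta$, where $Y$ solves a linear SPDE with drift in $L^2(0,T;H^{-1})$. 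This is the Krylov / Pardoux variational It\^{o} formula; $\phi$ has bounded second derivatives and $\widetilde z\in L^2(H^1)$.

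To justify it, mollify in $\theta$: set $Y^\delta=Y*\rho_\delta$. Then $Y^\delta$ satisfies a pointwise Itô SDE in $\theta$, so we may apply the classical Itô formula to $\phi(Y^\delta_t(\theta))w_t(\theta)$ for each fixed $\theta$. Integrating in $\theta$ and letting $\delta\to0$ uses dominated convergence for the $dt$ terms, and the It\^{o} isometry for the $dB$ term.

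\textbf{Step 3: identify the terms.} The time derivative of the weight is
\[
\partial_t w^\alpha=\chi^\alpha\sqrt{g^\alpha_t}\big(\widetilde{\dot\eta}^\alpha+\widetilde\eta^\alpha\,\widetilde{\nabla_\Gamma\cdot v}^\alpha\big),
\]
using $\partial_t\sqrt{g}=\sqrt{g}\,\widetilde{\nabla\cdot v}$. This produces the $\langle\phi,\dot\eta\rangle$ term and the $\phi\,\nabla_\Gamma\cdot v$ part. The drift $-\widetilde z\,\widetilde{\nabla\cdot v}$ multiplied by $\nabla\phi$ gives $-\nabla\phi\cdot z\,\nabla_\Gamma\cdot v$. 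The remaining terms give the $R$, $S$, martingale and It\^{o}-correction terms, all with weight $(\Upsilon-\varrho)$. Summing over $\alpha$ and using $\sum\chi^\alpha=1$ converts everything back to the surface inner products in \eqref{eq66}.

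\textbf{Main obstacle.} The hard part is the $R$ term, which lives only in $H^{-1}$. The product $\nabla\phi(z-\Psi)\cdot\nabla_\Gamma\cdot R$ must be read as a duality pairing against $\nabla\phi(z-\Psi)\eta\in H^1$. This is well-defined because $\nabla^2\phi$ is bounded, $z\in L^2(H^1)$ and $\Psi$ is smooth in space. The commutator between mollification and the variable coefficients $g^{kl}$ and $\partial X$ must also be shown to vanish as $\delta\to0$ (a Friedrichs-type lemma). Finally, mollifying near the cutoff support is harmless because $w^\alpha$ is compactly supported in $U^\alpha$.
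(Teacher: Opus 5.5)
Your proposal is correct and follows the paper's proof: both localize through Lemma \ref{buchonglemma1} (using $d\widetilde{\Psi}^\alpha_t=\widetilde{\varrho}^\alpha_t\,dB_t$), apply It\^{o}'s formula to $\phi(\widetilde z^\alpha_t-\widetilde\Psi^\alpha_t)\,\widetilde\eta^\alpha_t\,\chi^\alpha(X^\alpha)\sqrt{g^\alpha_t}$, integrate in $\theta$, and sum over $\alpha$. The only difference is that you supply justification the paper leaves implicit — spatial mollification, a Friedrichs commutator estimate for the $H^{-1}$ term in $R$, and the passage $\delta\to0$ — so this adds rigor rather than taking a different route.
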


\begin{remark}\label{remark1}
The term involving $R$ in the above equality is understood as follows:
\begin{eqnarray*}\label{eq67}
& &  _{H^{-1}(\Gamma_s)}\langle \nabla\phi(z(s) -\Psi(s))\cdot (\nabla_{\Gamma_s}\cdot R(s)), \eta(s) \rangle_{H^1(\Gamma_s)} \nonumber\\
&=&_{H^{-1}(\Gamma_s)}\langle \partial_{y_i}\phi(z(s) -\Psi(s))\cdot \underline{D}_{j}R^{ij}(s), \eta(s) \rangle_{H^1(\Gamma_s)} \nonumber\\
&=&- \langle  R^{ij}(s)\partial^2_{y_iy_k}\phi(z(s) - \Psi(s))\underline{D}_{j}(z^k(s) - \Psi^k(s)), \eta(s) \rangle_{\Gamma_s}
   - \langle \underline{D}_{j}\eta(s)R^{ij}(s), \partial_{y_i}\phi(z(s)- \Psi(s)) \rangle_{\Gamma_s} \nonumber\\
   & &+ \langle R^{ij}(s)\partial_{y_i}\phi(z(s) - \Psi(s))H\nu_j, \eta(s) \rangle_{\Gamma_s},\nonumber\\
\end{eqnarray*}
where $ \partial_{y_i}\phi(y)$ and $\partial^2_{y_iy_k}\phi(y)$ stand for the first-order and second-order partial derivatives of $\phi$ with respect to $y_i$ and $(y_i, y_k)$, respectively.

\end{remark}

\begin{proof}
By Lemma \ref{buchonglemma1}, for each $i \in \{1,2, \cdots, m\}$ and $\alpha \in \{1,2, \cdots, N\}$, the process $\widetilde{z}^{i,\alpha}_t(\theta) - \Psi^i(t, X^\alpha_t(\theta))$ satisfies the following equation in the distributional sense over $U^\alpha$:
\begin{eqnarray*}\label{buchongeq3}
    d \big( \widetilde{z}^{i,\alpha}_t(\theta) - \Psi(t, X^\alpha_t(\theta)) \big)
 &=& -\widetilde{z}^{i,\alpha}_t(\theta)\widetilde{\nabla_{\Gamma_t}\cdot v}^\alpha(\theta)dt + g^{kl}_{t,\alpha}(\theta)\frac{\partial \widetilde{R}^{ij,\alpha}_t}{\partial \theta_l}(\theta)\frac{\partial X^{\alpha}_{t,j}}{\partial \theta_{k}}(\theta)dt  \nonumber\\
    & & + \widetilde{S}^{i,\alpha}_t(\theta)dt + \big( \widetilde{\Upsilon}^{i,\alpha}_t(\theta) - \varrho^i(t, X^\alpha_t(\theta)) \big)dB_t.
\end{eqnarray*}

Recall that $\{ \chi^\alpha\}_{\alpha \leq N}$ is a partition of unity with regard to the open cover $\{ V^\alpha = X^\alpha(U^\alpha) \}_{\alpha \leq N}$ on $\Gamma_0$. Then $\{\chi^\alpha_t= \chi^\alpha \circ G^{-1}(t, \cdot)\}_{\alpha \leq N}$ is a partition of unity with regard to the open cover $\{ G(t, V^\alpha) \}_{\alpha \leq N}$ on $\Gamma_t$. Now, for any $\eta \in C^1(Q_T)$ and $\phi \in C^2(\mathbb{R}^m)$ with bounded second-order partial derivatives, applying It\^o's formula to
\[ \phi\big( \widetilde{z}^{\alpha}_t(\theta)- \widetilde{\Psi}^{\alpha}_t(\theta) \big)\widetilde{\eta}^\alpha_t(\theta)\chi^\alpha(X^\alpha(\theta))\sqrt{g^\alpha_t(\theta)} \]
and integrating with respect to $\theta$ yields
\begin{eqnarray*}\label{eq65}
& &  \langle \chi^\alpha_t \phi(z(t) - \Psi(t)), \eta(t) \rangle_{\Gamma_t} \nonumber\\
&=&  \langle \chi^\alpha\phi(z_0), \eta(0) \rangle_{\Gamma_0}
      + \int_0^t \langle \chi^\alpha_s \big( \phi(z(s) - \Psi(s))- \nabla\phi(z(s) - \Psi(s))\cdot z(s) \big)\nabla_{\Gamma_s}\cdot v(s), \eta(s) \rangle_{\Gamma_s}ds\nonumber\\
   & &+ \int_0^t \langle \chi^\alpha_s\phi(z(s) - \Psi(s)), \overset{\scalebox{0.4}{$\bullet$}}{\eta}(s) \rangle_{\Gamma_s}ds
       + \int_0^t \langle \chi^\alpha_s \nabla\phi(z(s) -\Psi(s))\cdot (\nabla_{\Gamma_s}\cdot R(s)), \eta(s) \rangle_{\Gamma_s}ds \nonumber\\
   & & + \int_0^t \langle \chi^\alpha_s\nabla\phi(z(s) - \Psi(s))\cdot S(s), \eta(s) \rangle_{\Gamma_s}ds
       + \int_0^t \langle \chi^\alpha_s\nabla\phi(z(s) - \Psi(s))\cdot \big( \Upsilon(s)- \varrho(s)\big), \eta(s)\rangle_{\Gamma_s}dB_s \nonumber\\
   & &+ \frac{1}{2}\int_0^t \langle \chi^\alpha_s(\Upsilon(s) - \varrho(s))^\top \nabla^2\phi(z(s) -\Psi(s))(\Upsilon(s) - \varrho(s)), \eta(s) \rangle_{\Gamma_s}ds.
      \nonumber\\
\end{eqnarray*}
Summing both sides of the above equality with respect to $\alpha$ from $1$ to $N$  yields \eqref{eq66}, thereby completing the proof.

\end{proof}

\section{Stochastic generalized entropy solutions--definition and main results}

\subsection{Definition}
In this subsection, we introduce the precise definition of generalized entropy solutions. Let us begin by introducing the assumptions on the coefficients $f$ and $\sigma$ in equation \eqref{eq74}.
\begin{Assumption}\label{Assumption1}
Let $f: Q_T \times \mathbb{R} \rightarrow \mathbb{R}^{n+1}$ be twice continuously differentiable and satisfy the following conditions:
\begin{itemize}
  \item [(B1)]\label{aaumB1} $f((t,x),u)$ is a tangent vector at the surface $\Gamma_t$ for $t \in [0,T]$, $x \in \Gamma_t$ and $u \in \mathbb{R}$.
  \item [(B2)]\label{assumB2} For all $((t,x), u) \in Q_T \times \mathbb{R}$,
  \[ \nabla_{\Gamma_t}\cdot f((t, x), u)=0.  \]
  \item [(B3)]\label{assumB3} There exist $q_0 \geq 2$ and a positive constant $L$ such that for every $((t,x), u) \in Q_T \times \mathbb{R}$,
  \begin{enumerate}[(i)]\label{flux}
			\item\label{i} $|f((t,x),u)| \leq L(1 + |u|^{q_0})$,
            \item\label{ii} $|f_u((t,x),u)| \vee |\nabla_{\Gamma_t}f_u((t,x),u)| \vee |\overset{\scalebox{0.4}{$\bullet$}}{f_u}((t,x),u)| \leq L(1+ |u|)$,
			\item\label{iii}$|\nabla_{\Gamma_t}f((t,x),u)|\vee | \overset{\scalebox{0.4}{$\bullet$}}{f}((t,x),u)| \leq L(1+|u|^{q_0})$,
			\item \label{iv} $|f_{uu}((t,x),u)|\vee |\nabla_{\Gamma_t}^2f((t,x),u)|\vee |\nabla_{\Gamma_t}\overset{\scalebox{0.4}{$\bullet$}}{f}((t,x),u)|\vee |\overset{\scalebox{0.4}{$\bullet$$\bullet$}}{f}((t,x),u)| \leq L(1+|u|^{q_0})$.
  \end{enumerate}
  Here $f_u$ and $f_{uu}$ represent the first-order and second-order partial derivatives of $f$ with respect to the variable $u$, respectively.
\end{itemize}

\end{Assumption}

\begin{remark}
Condition (B2) is the so-called geometry-compatible condition. Moreover, conditions (B1) and (B2) are imposed in order to formulate the conservation law intrinsically on the moving hypersurface (see Section 3 of \cite{GDT} for details).
\end{remark}

\begin{Assumption}\label{Assumption2}
Let $\sigma: Q_T \rightarrow \mathbb{R}$ be four times continuously differentiable.
\end{Assumption}

We now introduce several functions associated with $f$. For any $\varrho \in C^2(Q_T)$ and $\phi \in C^2(\mathbb{R})$ with $\phi^{\prime\prime}$ bounded, we denote $q_{\varrho,\phi}: Q_T \times \mathbb{R} \times \mathbb{R} \rightarrow \mathbb{R}^{n+1}$ and $l_{\varrho,\phi}: Q_T \times \mathbb{R} \times \mathbb{R} \rightarrow \mathbb{R}$ by
\[q_{\varrho,\phi, k}((t,x),\lambda ; \mu) = \int_{\mu}^{\lambda} \phi^{\prime}(\tau - \mu)f_{ku}((t,x),\tau + \Psi(t,x))d\tau, \quad k=1,2,\cdots, n+1,   \]
and
\[ l_{\varrho,\phi}((t,x),\lambda ; \mu)= \int_{\mu}^{\lambda} \phi^{\prime}(\tau - \mu) \nabla_{\Gamma_t} \cdot \big(f_{u}((t,x),\tau + \Psi(t,x))\big)d\tau,  \]
respectively, where $\Psi(t,x) = \int_0^t \varrho(s, G(s, G^{-1}(t,x)))dB_s$. For simplicity, we write $q_{\varrho,\phi}((t,x),\lambda ; 0)$ and $l_{\varrho,\phi}((t,x),\lambda ; 0)$ as $q_{\varrho,\phi}((t,x),\lambda)$ and $l_{\varrho,\phi}((t,x),\lambda)$, respectively. 

\vskip 0.4cm
For the motivation of the definition of  the generalized entropy solution  given below (\eqref{eq72}), let us consider the following Lemma. We assume that $u_{0}^{\epsilon} \in H^{2}(\Gamma_0)\cap L^{\infty}(\Gamma_0)$ and
\begin{equation}\label{eq58}
\| u_{0}^{\epsilon} \|_{L^{\infty}(\Gamma_0)} + \| \nabla_{\Gamma_0}u_{0}^{\epsilon}\|_{L^1(\Gamma_0)} + \epsilon\| \nabla^2_{\Gamma_0} u_{0}^{\epsilon} \|_{L^1(\Gamma_0)} \leq C_0
\end{equation}
with a positive constant $C_0$ which is independent of the parameter $\epsilon \in (0,1)$.
\begin{lemma}\label{lemma2.4}
Assume that $u^\epsilon \in L^2(\Omega; C([0,T]; H^1(\Gamma_{\cdot}))) \cap L^2(\Omega; L^2([0,T]; H^2(\Gamma_{\cdot})))$ satisfies the following equation in $H^{-1}(\Gamma_{\cdot})$,
\begin{align}
\left\{
\begin{aligned}\label{eq68}
&\overset{\scalebox{0.4}{$\bullet$}}{u^\epsilon}(t)+u^\epsilon(t)\nabla_{\Gamma_t}\cdot v(t) + \nabla_{\Gamma_t} \cdot \big(f((t,\cdot), u^\epsilon(t))\big) = \epsilon\Delta_{\Gamma_t}u^\epsilon(t) + \sigma(t)\overset{\scalebox{0.4}{$\bullet$}}{B_t}, \quad (t,x)\in Q_T,\\
&u^\epsilon(0,y)=u_{0}^{\epsilon}(y), \quad y\in\Gamma_0.
\end{aligned}
\right.
\end{align}
If $\mathbb{P}$-a.s. $u^\epsilon \rightarrow u$ a.e. on $Q_T$ and $u^\epsilon_0 \rightarrow u_0$ a.e. on $\Gamma_0$ for $\epsilon \rightarrow 0$, 
then for any $\varrho \in C^2(Q_T)$, $\phi \in C^2(\mathbb{R})$ with $\phi^{\prime\prime} \geq 0$, and any $\eta \in C^1(Q_T)$ satisfying $\eta \geq 0$ and $\eta(\cdot, T)= 0$, $u$ satisfies the entropy condition
\begin{eqnarray}\label{eq72}
 & &  \langle \phi(u_0), \eta(0) \rangle_{\Gamma_0}
   + \int_0^T \langle \big(\phi(u(s)- \Psi(s))-u(s)\phi^{\prime}(u(s) - \Psi(s))\big)\nabla_{\Gamma_s}\cdot v(s), \eta(s) \rangle_{\Gamma_s} ds \nonumber\\
 & &  + \int_0^T \langle \phi(u(s)- \Psi(s)), \overset{\scalebox{0.4}{$\bullet$}}{\eta}(s) \rangle_{\Gamma_s} ds
 +\int_0^T \int_{\Gamma_s} q_{\varrho,\phi}((s,x),u(s,x)-\Psi(s,x))\cdot \nabla_{\Gamma_s}\eta(s,x)\mathrm{dvol}_{\Gamma_s}(dx)ds\nonumber\\
 & & -\int_0^T \langle \phi^{\prime}(u(s)- \Psi(s))f_u((s,\cdot), u(s))\cdot \nabla_{\Gamma_s}\Psi(s), \eta(s) \rangle_{\Gamma_s} ds
     +\int_0^T  \langle l_{\varrho,\phi}((s,\cdot),u(s) - \Psi(s)), \eta(s) \rangle_{\Gamma_s} ds \nonumber\\
 & &  + \int_0^T \langle \phi^{\prime}(u(s)- \Psi(s))(\sigma(s)- \varrho(s)), \eta(s) \rangle_{\Gamma_s} dB_s
      + \frac{1}{2}\int_0^T \langle \phi^{\prime\prime}(u(s)- \Psi(s))(\sigma(s)-\varrho(s))^2, \eta(s) \rangle_{\Gamma_s} ds \nonumber\\
& &  \geq 0,
\end{eqnarray}
where $\Psi(s,x) = \int_0^s \varrho(r, G(r, G^{-1}(s,x)))dB_r$.

\end{lemma}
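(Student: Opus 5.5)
Apply the It\^{o} formula of Lemma \ref{lemma2.2} to the viscous solution $u^\epsilon$ with $m=1$. In the notation of \eqref{eq61} take $z=u^\epsilon$, $R^{j}=\epsilon\underline{D}_j u^\epsilon - f_j((t,\cdot),u^\epsilon)$ (so that $\underline{D}_jR^j=\epsilon\Delta_{\Gamma_t}u^\epsilon-\nabla_{\Gamma_t}\cdot f$), $S=0$ and $\Upsilon=\sigma$. The regularity $u^\epsilon\in L^2(\Omega;C([0,T];H^1))\cap L^2(\Omega;L^2([0,T];H^2))$, together with the growth bounds (B3), guarantees that $R$ belongs to the class required in Definition \ref{buchongdingyi}. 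Writing \eqref{eq66} at $t=T$, where $\eta(T)=0$, gives an identity in which $\langle\phi(u_0^\epsilon),\eta(0)\rangle_{\Gamma_0}$ plus all the $ds$ and $dB_s$ integrals equals $0$. Every term of \eqref{eq72} already appears there, with $u$ replaced by $u^\epsilon$, except the $R$-term. That term must be rewritten as in Remark \ref{remark1} and then processed separately.

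\textbf{Step 1: the $R$-term.} Set $w=u^\epsilon-\Psi$.

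The viscous part is
\[
\epsilon\langle\phi'(w)\Delta_{\Gamma_s}u^\epsilon,\eta\rangle
=-\epsilon\langle\phi''(w)\nabla_{\Gamma_s} u^\epsilon\cdot\nabla_{\Gamma_s} w,\eta\rangle-\epsilon\langle\phi'(w)\nabla_{\Gamma_s} u^\epsilon,\nabla_{\Gamma_s}\eta\rangle,
\]
where the $H\nu$ term vanishes because $\nabla_{\Gamma}u^\epsilon$ is tangential. Split $\nabla_{\Gamma_s} u^\epsilon=\nabla_{\Gamma_s} w+\nabla_{\Gamma_s}\Psi$. The piece $-\epsilon\phi''(w)|\nabla_{\Gamma_s} w|^2\eta$ is $\le 0$ and is simply dropped, which is where the inequality comes from. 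The remaining pieces, $\epsilon\phi''(w)\nabla_{\Gamma_s} w\cdot\nabla_{\Gamma_s}\Psi\,\eta$ and $\epsilon\phi'(w)\nabla_{\Gamma_s} u^\epsilon\cdot\nabla_{\Gamma_s}\eta$, must tend to $0$.

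For the flux part, $-\phi'(w)\nabla_{\Gamma_s}\cdot f((s,\cdot),u^\epsilon)$, use the chain rule. The total divergence splits into an explicit $x$-divergence, which vanishes by (B2), and a part through $u^\epsilon$, namely $f_u\cdot\nabla_{\Gamma_s} u^\epsilon=f_u\cdot\nabla_{\Gamma_s} w+f_u\cdot\nabla_{\Gamma_s}\Psi$. The second piece is exactly the term $-\phi'(w)f_u(u)\cdot\nabla_{\Gamma_s}\Psi$ of \eqref{eq72}. For the first piece, observe that by definition
\[
\nabla_{\Gamma_s}\cdot\big[q_{\varrho,\phi}((s,x),w(s,x))\big]=\phi'(w)f_u((s,x),w+\Psi)\cdot\nabla_{\Gamma_s} w+l_{\varrho,\phi}((s,x),w),
\]
where $l$ collects the explicit $x$-derivatives, including those hitting $\Psi$ inside $f_u$. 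Integrating by parts with Lemma \ref{lemma2.3} (the $H\nu$ contribution drops by (B1), since $q$ is tangential) turns $-\langle\phi'(w)f_u\cdot\nabla_{\Gamma_s} w,\eta\rangle$ into $\langle q_{\varrho,\phi}(w),\nabla_{\Gamma_s}\eta\rangle+\langle l_{\varrho,\phi}(w),\eta\rangle$. These are precisely the $q$ and $l$ terms of \eqref{eq72}, so the sign conventions should be checked carefully at this point.

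\textbf{Step 2: passing $\epsilon\to0$.} All remaining terms are continuous functions of $u^\epsilon$ with polynomial growth, by (B3) and the boundedness of $\phi''$. Pass to the limit using the a.e. convergence $u^\epsilon\to u$ and Vitali or dominated convergence. The needed uniform integrability comes from energy estimates: $\sup_\epsilon E\sup_t\|u^\epsilon\|^p_{L^p(\Gamma_t)}<\infty$, obtained from the same It\^{o} formula with $\phi(r)=|r|^p$. Moments of $\Psi$ are controlled by the BDG inequality. The stochastic integral converges in $L^2(\Omega)$ by the It\^{o} isometry, hence $\mathbb{P}$-a.s. along a subsequence, which is sufficient.

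The vanishing of the $\epsilon$-terms uses the energy bound $\epsilon E\int_0^T\|\nabla_{\Gamma_s} u^\epsilon\|^2\,ds\le C$, which gives
\[
\epsilon\int_0^T\big|\langle\phi'(w)\nabla_{\Gamma_s} u^\epsilon,\nabla_{\Gamma_s}\eta\rangle\big|\,ds\le \sqrt{\epsilon}\,C\to0,
\]
and similarly for $\epsilon\phi''\nabla_{\Gamma_s} w\cdot\nabla_{\Gamma_s}\Psi$, since $\nabla_{\Gamma_s}\Psi$ has all moments. This convergence holds in $L^1(\Omega)$, so a.s. along a subsequence. The initial term converges because $u_0^\epsilon\to u_0$ a.e. and the $u_0^\epsilon$ are uniformly bounded by \eqref{eq58}. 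Once all limits are taken, \eqref{eq72} follows.

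The \textbf{main obstacle} is Step 2: justifying the a.s. limits of the stochastic integral and of the viscous terms from convergence in probability or in $L^1(\Omega)$. I expect to handle this by extracting a single common subsequence, and possibly by invoking uniform $L^\infty$ bounds on $u^\epsilon-\Psi_\sigma$ where $\Psi_\sigma$ is the noise built from $\sigma$, in place of the moment bounds.
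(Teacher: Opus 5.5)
Your proposal is correct and follows the paper's proof: apply the It\^{o} formula of Lemma \ref{lemma2.2} with $\eta(T)=0$, discard the nonnegative viscous dissipation term, rewrite the flux term through $q_{\varrho,\phi}$ and $l_{\varrho,\phi}$ exactly as in \eqref{eq70}, integrate by parts using Lemma \ref{lemma2.3} and (B1), and let $\epsilon\to0$. The differences are minor: you discard $\epsilon\phi''(u^\epsilon-\Psi)|\nabla_{\Gamma_s}(u^\epsilon-\Psi)|^2\eta$ instead of the paper's $\epsilon\phi''(u^\epsilon-\Psi)|\nabla_{\Gamma_s}u^\epsilon|^2\eta$ (either works, since the leftover cross term with $\nabla_{\Gamma_s}\Psi$ is $O(\sqrt{\epsilon})$ in expectation), and you spell out the $\epsilon\to0$ passage (uniform moment bounds, $\sqrt{\epsilon}$-estimates from the energy bound, a.s.\ convergence along a subsequence), which the paper compresses into one line and, in its application, supports with \eqref{eq194}--\eqref{eq195}.
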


\begin{proof}
For any $\varrho \in C^2(Q_T)$, let $\Psi(s,x) = \int_0^s \varrho(r, G(r, G^{-1}(s,x)))dB_r$. Then for any $\phi \in C^2(\mathbb{R})$ with $\phi^{\prime\prime} \geq 0$, and any $\eta \in C^2(Q_T)$ satisfying $\eta \geq 0$ and $\eta(\cdot, T)= 0$, it follows from Lemma \ref{lemma2.2} that
\begin{eqnarray}\label{eq69}
 & &  \langle \phi(u_0^\epsilon), \eta(0) \rangle_{\Gamma_0}
   + \int_0^T \langle \big(\phi(u^\epsilon(s)- \Psi(s))-u^\epsilon(s)\phi^{\prime}(u^\epsilon(s) - \Psi(s))\big)\nabla_{\Gamma_s}\cdot v(s), \eta(s) \rangle_{\Gamma_s} ds \nonumber\\
 & &  + \int_0^T \langle \phi(u^\epsilon(s)- \Psi(s)), \overset{\scalebox{0.4}{$\bullet$}}{\eta}(s) \rangle_{\Gamma_s} ds
  -\int_0^T \langle \phi^{\prime}(u^\epsilon(s)- \Psi(s))\nabla_{\Gamma_s}\cdot \big(f((s,\cdot), u^\epsilon(s))\big), \eta(s) \rangle_{\Gamma_s} ds\nonumber\\
  & &+ \epsilon\int_0^T\langle  \phi^{\prime\prime}(u^\epsilon(s)- \Psi(s))\nabla_{\Gamma_s}\Psi(s)\cdot \nabla_{\Gamma_s}u^\epsilon(s),  \eta(s) \rangle_{\Gamma_s} ds \nonumber\\
  & & - \epsilon \int_0^T \langle \nabla_{\Gamma_s}\eta(s)\cdot \nabla_{\Gamma_s}u^\epsilon(s), \phi^{\prime}(u^\epsilon(s)- \Psi(s))\rangle_{\Gamma_s} ds
      + \int_0^T \langle \phi^{\prime}(u^\epsilon(s)- \Psi(s))(\sigma(s)- \varrho(s)), \eta(s) \rangle_{\Gamma_s} dB_s \nonumber\\
 & &  + \frac{1}{2}\int_0^T \langle \phi^{\prime\prime}(u^\epsilon(s)- \Psi(s))(\sigma(s)-\varrho(s))^2, \eta(s) \rangle_{\Gamma_s} ds \nonumber\\
  & &= \epsilon \int_0^T \langle \phi^{\prime\prime}(u^\epsilon(s)- \Psi(s))|\nabla_{\Gamma_s}u^\epsilon(s)|^2, \eta(s)\rangle_{\Gamma_s} ds \geq 0.
 \end{eqnarray}

By Assumption \ref{Assumption1}(B2) and the definitions of $q_{\varrho,\phi}$ and $l_{\varrho,\phi}$, we observe that
\begin{eqnarray}\label{eq70}
  & &  \phi^{\prime}(u^\epsilon(s)- \Psi(s))\nabla_{\Gamma_s}\cdot \big(f((s,x), u^\epsilon(s))) \nonumber\\
&=&\phi^{\prime}(u^\epsilon(s)- \Psi(s))f_u((s,x), u^\epsilon(s))\cdot \nabla_{\Gamma_s}u^\epsilon(s) \nonumber\\
&=&\nabla_{\Gamma_s} \cdot \big(q_{\varrho,\phi}((s,x), u^\epsilon(s) - \Psi(s))\big) + \phi^{\prime}(u^\epsilon(s)- \Psi(s))f_u((s,x), u^\epsilon(s))\cdot \nabla_{\Gamma_s}\Psi(s)\nonumber\\
   & & -l_{\varrho,\phi}((s,x), u^\epsilon(s) - \Psi(s)).
\end{eqnarray}
Furthermore, using Lemma \ref{lemma2.3} and Assumption \ref{Assumption1}(B1), we get
\begin{eqnarray}\label{fdexiding1}
& & \int_0^T \langle \nabla_{\Gamma_s} \cdot \big(q_{\varrho,\phi}((s,x), u^\epsilon(s) - \Psi(s))\big) , u^\epsilon(s))\big), \eta(s) \rangle_{\Gamma_s} ds \nonumber\\
   & & = - \int_0^T \int_{\Gamma_s} q_{\varrho,\phi}((s,x),u(s,x)-\Psi(s,x))\cdot \nabla_{\Gamma_s}\eta(s,x)\mathrm{dvol}_{\Gamma_s}(dx)ds
\end{eqnarray}
Therefore, substituting \eqref{eq70} and \eqref{fdexiding1} into \eqref{eq69}, and then letting $\epsilon \rightarrow 0$, we get \eqref{eq72}. The proof is complete.
\end{proof}

Now, the property \eqref{eq72} motivates the definition of a generalized entropy solution below.
\begin{definition}\label{definition1}
Let $u_0 \in L^\infty(\Gamma_0)$. We say that a stochastic process $\{u(t)\}_{t\in[0,T]}$ is a generalized entropy solution to \eqref{eq74} if
\begin{itemize}
\item[(i)] $\mathbb{P}$-a.s. $u \in L^\infty(Q_T)$.
\item[(ii)] $u$ is a predictable random process with respect to the filtration $ \{ \mathcal{F}_t \} _{t\in[0,T]} $.
\item[(iii)] For any $\varrho \in C^2(Q_T)$, $\phi \in C^2(\mathbb{R})$ with $\phi^{\prime\prime} \geq 0$, and any $\eta \in C^1(Q_T)$ with $\eta \geq 0$ and $\eta(\cdot, T)= 0$, $\mathbb{P}$-a.s.
\begin{eqnarray}\label{eq73}
 & &  \langle \phi(u_0), \eta(0) \rangle_{\Gamma_0}
   + \int_0^T \langle \big(\phi(u(s)- \Psi(s))-u(s)\phi^{\prime}(u(s) - \Psi(s))\big)\nabla_{\Gamma_s}\cdot v(s), \eta(s) \rangle_{\Gamma_s} ds \nonumber\\
 & &  + \int_0^T \langle \phi(u(s)- \Psi(s)), \overset{\scalebox{0.4}{$\bullet$}}{\eta}(s) \rangle_{\Gamma_s} ds
   + \int_0^T \langle \phi^{\prime}(u(s)- \Psi(s))(\sigma(s)- \varrho(s)), \eta(s) \rangle_{\Gamma_s} dB_s \nonumber\\
 & & + \frac{1}{2}\int_0^T \langle \phi^{\prime\prime}(u(s)- \Psi(s))(\sigma(s)-\varrho(s))^2, \eta(s) \rangle_{\Gamma_s} ds \nonumber\\
 & & + \int_0^T \int_{\Gamma_s} q_{\varrho,\phi}((s,x),u(s,x)-\Psi(s,x))\cdot \nabla_{\Gamma_s}\eta(s,x) \mathrm{dvol}_{\Gamma_s}(dx)ds \nonumber\\
  & & -\int_0^T \langle \phi^{\prime}(u(s)- \Psi(s))f_u((s,\cdot), u(s))\cdot \nabla_{\Gamma_s}\Psi(s), \eta(s) \rangle_{\Gamma_s} ds\nonumber\\
  & &  +\int_0^T  \langle l_{\varrho,\phi}((s,\cdot),u(s) - \Psi(s)), \eta(s) \rangle_{\Gamma_s} ds
  \geq 0,
\end{eqnarray}
where $\Psi(s,x) = \int_0^s \varrho(r, G(r, G^{-1}(s,x)))dB_r$.
\end{itemize}

\end{definition}

Let $u$ be a generalized entropy solution to the stochastic conservation law \eqref{eq74}. Let
\[ V(t,x) = u(t,x) - w(t,x), \quad (t,x) \in Q_T, \]
where
\begin{equation*}\label{eq86}
w(t,x) = \int_0^t \sigma(s, G(s, G^{-1}(t,x))) dB_s.
\end{equation*}
Then $V$ satisfies the following Kruzkov entropy condition, which plays an important role in proving the uniqueness of solutions to \eqref{eq74}.
\begin{remark}\label{remark3}
For any $k \in\mathbb{R}$, and any $\eta \in C^2(Q_T)$ with $\eta \geq 0$ and $\eta(\cdot, T)= 0$, $V$ satisfies the following Kruzkov entropy condition:
\begin{eqnarray}\label{eq87}
& & \langle |u_0 - k|, \eta(0) \rangle_{\Gamma_0}
    - \int_0^T \langle \mathrm{sign}(V(s) - k)(k+ w(s))\nabla_{\Gamma_s}\cdot v(s), \eta(s)\rangle_{\Gamma_s} ds\nonumber\\
& & + \int_0^T \langle |V(s) - k|, \overset{\scalebox{0.4}{$\bullet$}}{\eta}(s) \rangle_{\Gamma_s} ds
    - \int_0^T \langle \mathrm{sign}(V(s) - k)f_u((s,\cdot), k+w(s))\cdot \nabla_{\Gamma_s}w(s), \eta(s) \rangle_{\Gamma_s}ds\nonumber\\
& &+ \int_0^T\int_{\Gamma_s}\mathrm{sign}(V(s,x) - k)\big( f((s,x), V(s,x) + w(s,x)) - f((s,x), k+ w(s,x)) \big) \nonumber\\
   & & \cdot \nabla_{\Gamma_s}\eta(s,x)\mathrm{dvol}_{\Gamma_s}(dx)ds
\geq 0.
\end{eqnarray}

\begin{proof}
Let $\zeta: \mathbb{R} \rightarrow \mathbb{R}$ be a nonnegative smooth function satisfying
\[ \zeta(0)=0, \quad \zeta(-r) = \zeta(r), \quad \zeta^{\prime\prime}\geq 0, \]
and
\begin{numcases}{\zeta^{\prime}(r)=}
	-1, \text{   when $r\leq -1$,}\nonumber\\
	\in [-1,1],\text{ when $|r|< 1$},\nonumber\\
	1,\ \text{      when $r\geq 1$}.\nonumber
\end{numcases}
For any $\epsilon > 0$, define $\zeta_\epsilon: \mathbb{R} \rightarrow \mathbb{R}$ by
\[ \zeta_{\epsilon}(r) = \epsilon\zeta(\frac{r}{\epsilon}).  \]
Then
\[  |r|-M_1\epsilon \leq \zeta_{\epsilon}(r) \leq |r| \ \text{and} \ |\zeta^{\prime\prime}_{\epsilon}(r)| \leq \frac{M_2}{\epsilon}I_{|r|\leq\epsilon},  \]
where
\[ M_1= \sup_{|r|\leq 1}\big||r|- \zeta(r)\big|, \quad M_2=\sup_{|r|\leq 1}|\zeta^{\prime\prime}(r)|.  \]

For any $\epsilon > 0$, $k \in \mathbb{R}$, and $\eta \in C^2(Q_T)$ with $\eta \geq 0$ and $\eta(\cdot, T)= 0$, taking $\phi(r) = \zeta_{\epsilon}(r-k)$ and $\Psi(s,x) =w(s,x)$ in \eqref{eq73}, we obtain
\begin{eqnarray}\label{eq88}
 & &  \langle \zeta_{\epsilon}(u_0 - k), \eta(0) \rangle_{\Gamma_0}
   + \int_0^T \langle \big(\zeta_\epsilon(V(s) - k)-(V(s)+w(s))\zeta_{\epsilon}^{\prime}(V(s) -k)\big)\nabla_{\Gamma_s}\cdot v(s), \eta(s) \rangle_{\Gamma_s} ds \nonumber\\
 & &  + \int_0^T \langle \zeta_{\epsilon}(V(s) -k), \overset{\scalebox{0.4}{$\bullet$}}{\eta}(s) \rangle_{\Gamma_s} ds
     + \int_0^T \int_{\Gamma_s} q_{\sigma,\zeta_{\epsilon}}((s,x),V(s,x);k)\cdot \nabla_{\Gamma_s}\eta(s,x) \mathrm{dvol}_{\Gamma_s}(dx)ds \nonumber\\
  & &   -\int_0^T \langle \zeta_{\epsilon}^{\prime}(V(s) - k)f_u((s,\cdot),V(s)+w(s))\cdot \nabla_{\Gamma_s}w(s), \eta(s) \rangle_{\Gamma_s} ds
  +\int_0^T  \langle l_{\sigma,\zeta_{\epsilon}}((s,\cdot),V(s); k), \eta(s) \rangle_{\Gamma_s} ds \nonumber\\
 & &  \geq 0.
\end{eqnarray}
Notice that for every $r \in \mathbb{R}$,
\[ \zeta_\epsilon(r) \rightarrow |r|, \quad \zeta^{\prime}_\epsilon(r) \rightarrow \mathrm{sign}(r) \ \text{as}\ \epsilon \rightarrow 0.  \]
Letting $\epsilon \rightarrow 0$ in \eqref{eq88}, we get \eqref{eq87}. The proof is complete.
\end{proof}
\end{remark}

\subsection{Statements of main results}
In this subsection, we state the main results of this paper.
\begin{proposition}\label{Thm3}
Let Assumptions \ref{Assumption1} and \ref{Assumption2} hold. Then for every $u_0 \in H^{1,1}(\Gamma_0) \cap L^{\infty}(\Gamma_0)$, there exists a martingale entropy solution $({\Omega}^*,{\mathcal{F}}^*,\{{\mathcal{F}}_t^*\}_{t \in [0,T]},\mathbb{P}^*,u^*,B^*)$ to the stochastic conservation law \eqref{eq74} started from $u_0$. Moreover, $\mathbb{P}^*$-a.s.
\begin{equation}\label{eq180}
\|u^*\|_{L^{\infty}(Q_T)} \leq e^{C^*}(1+ \| u_0 \|_{L^\infty(\Gamma_0)}),
\end{equation}
where
\begin{eqnarray*}
C^*&=& (T+1)(L_1+1)(1+\sup_{t\in[0,T]}\|\nabla_{\Gamma_t} \cdot v(t)\|_{L^{\infty}(\Gamma_t)} +\sup_{t\in[0,T]}\|w^*(t)\|_{L^{\infty}(\Gamma_t)}  \nonumber\\
      & & + \sup_{t\in[0,T]}\|\nabla_{\Gamma_t}w^*(t)\|_{L^{\infty}(\Gamma_t)} + \sup_{t\in[0,T]}\|\Delta_{\Gamma_t}w^*(t)\|_{L^{\infty}(\Gamma_t)}  ),
\end{eqnarray*}
and $w^{*}(t,x) = \int_0^t \sigma(s, G(s, G^{-1}(t,x))) dB^{*}_s, \ (t,x) \in Q_T$.
\end{proposition}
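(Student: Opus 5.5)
The plan is the vanishing-viscosity scheme announced in the introduction. The first step is to approximate the data. We choose $u_0^\epsilon \in H^2(\Gamma_0)\cap L^\infty(\Gamma_0)$ with $u_0^\epsilon\to u_0$ a.e. on $\Gamma_0$ and the uniform bound \eqref{eq58}. This can be done by mollifying in local charts and using the partition of unity, or by running a short heat flow $e^{\epsilon\Delta_{\Gamma_0}}u_0$, which preserves the $L^\infty$ and $H^{1,1}$ bounds up to constants. For each $\epsilon$ we then solve the viscous problem \eqref{eq68}. We substitute $u^\epsilon = V^\epsilon + w$, with $w(t,x)=\int_0^t\sigma(s,G(s,G^{-1}(t,x)))dB_s$. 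Since $\sigma\in C^4$, $w$ is pathwise regular in $x$ (continuous in $t$), and $V^\epsilon$ solves a random but pathwise parabolic equation on $Q_T$:
\[ \overset{\scalebox{0.4}{$\bullet$}}{V^\epsilon}+ (V^\epsilon+w)\nabla_{\Gamma_t}\cdot v+\nabla_{\Gamma_t}\cdot f(\cdot,V^\epsilon+w)=\epsilon\Delta_{\Gamma_t}(V^\epsilon+w). \]
The material derivative of $w$ cancels the noise by It\^o's formula, Lemma \ref{lemma2.2} with $\phi$ linear. This equation is solved by a Galerkin or fixed-point argument, pulled back to $\Gamma_0$, with predictability inherited from $w$.

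The second step is the pathwise uniform estimates. (a) For the $L^\infty$ bound, we apply a maximum principle to $V^\epsilon$, comparing with $\pm e^{Ct}(1+\|u_0\|_\infty)$. By (B2), $\nabla_{\Gamma}\cdot f(\cdot,V+w)=f_u\cdot(\nabla_\Gamma V+\nabla_\Gamma w)$, and at a maximum $\nabla_\Gamma V=0$. The remaining terms $f_u\cdot\nabla_\Gamma w$, $(V+w)\nabla\cdot v$ and $\epsilon\Delta_\Gamma w$ are linear in $|V|$ by (B3)(ii). Gronwall's lemma then yields exactly the constant $C^*$ in \eqref{eq180}. (b) For the gradient bound, we differentiate the equation with $\underline D_l$, using Lemmas \ref{lemma2.6} and \ref{lemma2.5}, and test with a smoothed $\mathrm{sign}(\underline D_l V^\epsilon)$. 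This gives a Gronwall bound for $\|\nabla_{\Gamma_t}V^\epsilon\|_{L^1}$ uniformly in $\epsilon$, depending on $\|\nabla^2 w\|_\infty$ and on \eqref{eq58}. (c) For time continuity, we pull back to $\Gamma_0$ and derive $\|\widetilde{u^\epsilon}(t+h)-\widetilde{u^\epsilon}(t)\|_{L^1(\Gamma_0)}\le C\sqrt h$ in expectation. This uses the equation together with (b), via the standard duality/mollification argument of \cite{GDT}; the noise increment is controlled by BDG.

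The third step is compactness. Estimates (a)–(c) together with Kolmogorov-type bounds on $B$ give tightness of the laws of $(\widetilde{u^\epsilon},B)$ in $L^1([0,T]\times\Gamma_0)\times C([0,T])$, by a Kolmogorov–Riesz criterion for $L^1$ on the compact $\Gamma_0$. Skorokhod's theorem, in Jakubowski's version if needed, then provides a new probability space with $u^{\epsilon,*}\to u^*$ a.e. and $B^{\epsilon,*}\to B^*$ uniformly. The equations and the filtration identification transfer by equality of laws. The $L^\infty$ bound persists in the limit with $w^*$ built from $B^*$, which gives \eqref{eq180}.

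Finally, we pass to the limit in the entropy inequality, following Lemma \ref{lemma2.4}. The main obstacle is the stochastic integral term $\int\langle\phi'(u-\Psi)(\sigma-\varrho),\eta\rangle dB$, together with $\Psi$ itself, since both depend on the changing Brownian motions $B^{\epsilon,*}$. We would use the standard lemma for convergence of stochastic integrals: if integrands converge in probability in $L^2(0,T)$ and $B^{\epsilon,*}\to B^*$ in $C$, then the integrals converge in probability. Applying it first to $\Psi^\epsilon\to\Psi^*$, and using uniform integrability from the $L^\infty$ bound and the polynomial growth (B3), handles every term and yields \eqref{eq73} $\mathbb P^*$-a.s.
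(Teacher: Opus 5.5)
Your overall plan matches the paper's: the shift $u^\epsilon=V^\epsilon+w$, a pathwise $L^\infty$ bound, an $L^1$ bound on $\nabla_{\Gamma}V^\epsilon$, time continuity in charts on $\Gamma_0$, tightness in $L^1$, Skorokhod, and the limit in the entropy inequality. But step (b), as you state it, does not close.

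If you test the differentiated equation componentwise with a smoothed $\mathrm{sign}(h_i)$, where $h_i=\underline{D}_iV^\epsilon$, the viscous term causes trouble. Applying Lemma \ref{lemma2.6} twice gives
\[
\underline{D}_i\Delta_{\Gamma}V^\epsilon=\Delta_{\Gamma}h_i+\big(\underline{D}_k(\mathcal{H}_{kl}h_l)+\mathcal{H}_{kl}\underline{D}_lh_k\big)\nu_i+\mathcal{H}_{kl}\mathcal{H}_{ik}h_l-\mathcal{H}_{il}Hh_l+\mathcal{H}_{il}\mathcal{H}_{kl}h_k .
\]
The normal part then contributes $\epsilon\int X\sum_i\nu_i\,\mathrm{sign}_\delta(h_i)$, where $X$ contains $\mathcal{H}_{kl}\underline{D}_kh_l$, i.e.\ second derivatives of $V^\epsilon$. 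Nothing in your estimates gives uniform control of $\epsilon\|\nabla^2_{\Gamma}V^\epsilon\|_{L^1}$. The dissipative term $-\epsilon\int|\nabla_{\Gamma}h_i|^2\mathrm{sign}_\delta'(h_i)$ cannot absorb it, because it lives only on $\{|h_i|<\delta\}$, where $|h|$ need not be small.

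The paper avoids this by testing with $\nabla S_\delta(h)$, where $S_\delta(h)=\phi_\delta(|h|)$ is a smoothed Euclidean norm of the whole vector $h$. Since $\partial_{\bar{x}_i}S_\delta(h)=\phi_\delta'(|h|)h_i/|h|$ is parallel to $h$ and $h\cdot\nu=0$, every $\nu_i$-term drops out. Convexity of $S_\delta$ then gives $\epsilon\int\Delta_{\Gamma}h_i\,\partial_{\bar{x}_i}S_\delta(h)\le 0$, and the main flux terms combine into $\phi_\delta'(|h|)\,\underline{D}_k(f_{ku}|h|)$. On a curved surface you need the rotation-invariant quantity $|\nabla_{\Gamma}V^\epsilon|$, not a sum of componentwise absolute values.

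Two smaller points.
\begin{itemize}
\item In (c) you cannot take expectations of the pathwise bounds directly. The constant in (b) comes from Gronwall with rate of order $(1+\|u^\epsilon\|_\infty^{q_0})(1+\|\nabla_\Gamma w\|_\infty)$, and $\|u^\epsilon\|_\infty$ itself is of order $e^{C^*}$. So the bound is doubly exponential in Gaussian suprema and there is no reason for it to be integrable. The paper localizes to $\Omega_M$ and uses $\mathbb{P}(\Omega_M^c)\le C/M$; this suffices because tightness only needs bounds in probability.
\item In (a), a pointwise maximum-principle argument needs $V^\epsilon$ to be classical, while the constructed solution is only in $L^2H^2$. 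The paper instead uses a Stampacchia argument: it tests the weak form with $(e^{-\lambda t}V^\epsilon-M^*)_+$, using the truncated flux $f_\epsilon$, which it also needs to solve the viscous problem.
\end{itemize}
On the positive side, your passage to the limit is more careful than the paper's. You take Skorokhod with $B^{\epsilon,*}\to B^*$ and apply the convergence lemma for stochastic integrals, including to $\Psi$. The paper keeps a single $B^*$ for all $n$ and simply refers back to Lemma \ref{lemma2.4}.
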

\begin{proof}
The proof of existence is based on the vanishing viscosity method. The argument is divided into three parts, which are respectively given in the next three sections. We first show in Section \ref{The regularized problem} that for every $\epsilon \in (0,1)$, the associated stochastic viscous conservation law \eqref{eq68} admits a unique solution $u^\epsilon$. Then, in Section \ref{A priori estimates for the regularized problem}, we establish a priori estimates for $u^\epsilon$ that are uniform in $\epsilon$, including the $L^\infty$-bound in both space and time, the $L^1$-estimate for the spatial gradient, and the $L^1$-continuity in time. These estimates imply that $\{u^\varepsilon\}_{\epsilon \in (0,1)}$ is tight in $L^1([0,T]; L^1(\Gamma_\cdot))$. Consequently, the existence of a martingale entropy solution to the stochastic conservation law \eqref{eq74} is proved in Section \ref{Existence for the conservation law}.
\end{proof}

\begin{proposition}\label{Thm5}($L^1$-contraction)
Let Assumptions \ref{Assumption1} and \ref{Assumption2} hold. Suppose that $u_1$, $u_2$ are two generalized entropy solutions to the stochastic conservation law \eqref{eq74} with initial values $u_{1,0}$, $u_{2,0} \in L^{\infty}(\Gamma_0)$, respectively. Then there exists a positive constant $C_{1}$ such that $\mathbb{P}$-a.s.
\begin{eqnarray*}
    & & \mathrm{ess\,sup}_{t\in [0,T]}\|u_{1}(t) - u_{2}(t)\|_{L^1(\Gamma_t)} \nonumber\\
&\leq& C_{1} e^{C_{1} (1+ \|u_1\|_{L^\infty(Q_T)} + \|u_2\|_{L^\infty(Q_T)})}\|u_{1,0} - u_{2,0}\|_{L^1(\Gamma_0)}.
\end{eqnarray*}
In particular, the generalized entropy solution to \eqref{eq74} is unique.
\end{proposition}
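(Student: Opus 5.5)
We work with the transformed variables $V_i = u_i - w$, $i=1,2$, where $w(t,x)=\int_0^t \sigma(s,G(s,G^{-1}(t,x)))dB_s$. By Remark \ref{remark3}, each $V_i$ satisfies, pathwise, the Kruzkov entropy inequality \eqref{eq87} with a random but fixed coefficient $w$. This inequality contains no stochastic integral. Consequently, for almost every fixed $\omega$ the problem becomes a deterministic comparison problem on $Q_T$ for the equation
\[ \overset{\scalebox{0.4}{$\bullet$}}{V}+(V+w)\nabla_{\Gamma_t}\cdot v+\nabla_{\Gamma_t}\cdot f((t,x),V+w)=0. \]
Here $w$ is $C^2$ in space, since $\sigma\in C^4$; it is only H\"older in time, but it enters \eqref{eq87} only through $w$, $\nabla_{\Gamma_s}w$ and $\nabla_{\Gamma_s}\cdot v$, never through $\overset{\scalebox{0.4}{$\bullet$}}{w}$. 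We fix such an $\omega$ in a full-measure set on which \eqref{eq87} holds for all $k$ and all $\eta$ in a countable dense family. A density argument is needed, because \eqref{eq87} is stated $\mathbb{P}$-a.s. for each fixed $(k,\eta)$.

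We then adapt Kruzhkov's doubling of variables in the form of \cite{GDT}. First, we pull everything back to $[0,T]\times\Gamma_0$ via $G$. Next, we take $V_1=V_1(t,x)$ with $k=V_2(s,y)$, and $V_2=V_2(s,y)$ with $k=V_1(t,x)$. The test function is
\[ \eta(t,x,s,y)=\psi\big(\tfrac{t+s}{2},x\big)\,\delta_h(t-s)\,\rho_h\big(\mathrm{dist}(G^{-1}(t,x),G^{-1}(s,y))\big), \]
where $\delta_h$ and $\rho_h$ are mollifiers and the spatial kernel is built on $\Gamma_0$ through the diffeomorphism. We integrate the first inequality in $(s,y)$ and the second in $(t,x)$, and add the two. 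Using symmetry of the kernel together with the commutation Lemmas \ref{lemma2.6} and \ref{lemma2.5}, the material-derivative and tangential-gradient terms falling on the kernel cancel up to terms of the form $O(1)\cdot|x-y|\,|\nabla\rho_h|$. These remainder terms are controlled by the Lipschitz bounds on $f$, $f_u$ and $\nabla_{\Gamma}f_u$ from (B3), by the spatial $C^1$ regularity of $w$, and by the $C^2$ regularity of $G$ and $v$.

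The term $\mathrm{sign}(V_1-V_2)(k+w)\nabla\cdot v$ and the term $f_u(k+w)\cdot\nabla w$ pair up with the corresponding terms of the other inequality. Their differences are bounded by $C\,|V_1-V_2|$ with $C$ depending on $L(1+\|u_1\|_\infty+\|u_2\|_\infty)$ and $\sup\|\nabla w\|_\infty$ (this is where (B3)(ii) and the $L^\infty$ bounds enter), plus errors vanishing as $h\to 0$. The flux terms combine into $\mathrm{sign}(V_1-V_2)(f(V_1+w)-f(V_2+w))\cdot\nabla_{\Gamma}\psi$. Letting $h\to 0$ by Lebesgue point arguments, we obtain the Kato-type inequality
\[ \int_0^T\!\!\langle |V_1-V_2|,\overset{\scalebox{0.4}{$\bullet$}}{\psi}+F\cdot\nabla_{\Gamma}\psi + C_*\psi\rangle\,ds + \langle |u_{1,0}-u_{2,0}|,\psi(0)\rangle\geq 0, \]
where $C_*\le C(1+\|u_1\|_\infty+\|u_2\|_\infty)$.

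Finally, we choose $\psi$ independent of $x$, equal to $1$ on $[0,t]$ and cut off linearly on $[t,t+\delta]$. Since $\overset{\scalebox{0.4}{$\bullet$}}{\psi}=\partial_t\psi$ and $\nabla_\Gamma\psi=0$, at Lebesgue points $t$ the inequality gives
\[ \|V_1(t)-V_2(t)\|_{L^1(\Gamma_t)}\le\|u_{1,0}-u_{2,0}\|_{L^1}+C_*\int_0^t\|V_1-V_2\|_{L^1}\,ds. \]
Gronwall's lemma then yields the estimate, since $V_1-V_2=u_1-u_2$. Uniqueness follows immediately. The main obstacle is the doubling step on moving surfaces, specifically making the kernel-derivative cancellation work despite the surface geometry, and controlling the $\nabla w$ cross terms. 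Here I would follow \cite{GDT} closely, working in the pulled-back coordinates on $\Gamma_0$ with local charts and a partition of unity, and treat $w$ as a spatially $C^2$ coefficient.
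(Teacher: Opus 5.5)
Your skeleton matches the paper's. You pass to $V_i=u_i-w$, use the pathwise Kruzhkov inequality \eqref{eq87}, double the variables in pulled-back chart coordinates with a partition of unity, take an $x$-independent test function and apply Gronwall. Your remark about fixing one null set for all $k$ and a countable family of test functions is a point the paper passes over.

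The step that does not go through is the limit with a \emph{single} parameter $h$ for both $\delta_h(t-s)$ and $\rho_h$. You argue that the time roughness of $w$ is harmless because $\overset{\scalebox{0.4}{$\bullet$}}{w}$ never appears. But in the doubled inequality one copy carries $w(t,\cdot)$ and the other $w(s,\cdot)$. So besides your $O(1)\cdot|x-y|\,|\nabla\rho_h|$ remainders, the flux terms produce contributions of the form
\[
\mathrm{sign}\big(\widetilde V^\alpha_{1,t}(\theta)-\widetilde V^\alpha_{2,s}(\beta)\big)\,c\big(\widetilde V^\alpha_{1,t}(\theta),\widetilde V^\alpha_{2,s}(\beta)\big)\,\big(\widetilde w^\alpha_t(\theta)-\widetilde w^\alpha_s(\theta)\big)\,\partial_{\theta_i}J_\delta(\theta-\beta)\,j_{\delta_0}(t-s),
\]
with $c$ built from $\widetilde a^\alpha_{iu}$. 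By \eqref{eq183} these are of size $\delta_0^{\kappa}/\delta$ with $\kappa<\tfrac12$. After the cancellation with the $f_u\cdot\nabla_{\Gamma}w$ source terms (the paper's $D_1,D_2$), what remains is a replacement error of the type $\frac{\delta_0^{\kappa}}{\delta}\int|\widetilde V^\alpha_{1,t}(\theta)-\widetilde V^\alpha_{1,s}(\beta)|\cdots$ (the paper's $D_{1,3}$). With $\delta_0=\delta=h$ this is $h^{\kappa-1}$ times an unquantified $L^1$-modulus of a merely $L^\infty$ solution, so no Lebesgue-point argument makes it vanish. The paper uses two independent scales: it lets the time scale $\delta_0\to0$ first at fixed $\delta$, and only then $\delta\to0$, exploiting the H\"older bound \eqref{eq183} on $\widetilde w^\alpha$ and $\nabla_\theta\widetilde w^\alpha$. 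You need the same decoupling.

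There are two further points.

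First, the initial term of your Kato inequality is not a Lebesgue-point consequence either. The doubled inequality contains $\int|u_{1,0}(x)-V_2(s,y)|\cdots$ over $s\in[0,h]$. With your symmetric time kernel it also contains the mirror term pairing $V_1(t,x)$ with $u_{2,0}$. Since a generalized entropy solution has no time continuity, you must first prove the weak initial trace $\frac1h\int_0^h\|V_i(t,G(t,\cdot))-u_{i,0}\|_{L^1(\Gamma_0)}\,dt\to0$. This is Lemma \ref{lemma4.1} of the paper, derived from \eqref{eq87} with $k=\widetilde u^\alpha_0(\beta)$. The paper's one-sided kernel, $\mathrm{supp}\, j\subset[-1,0]$, makes only one such term appear.

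Second, the $f_u\cdot\nabla_{\Gamma}w$ terms do not merely pair up. In the limit they cancel exactly against the $w$-mismatch part of the flux terms ($I_5+I_6+I_{7,3}+I_{8,3}\to0$), so no $\nabla w$ survives. Bounding them separately makes your $C_*$ depend on the random quantity $\sup_t\|\nabla_{\Gamma_t}w(t)\|_{L^\infty(\Gamma_t)}$. That still gives uniqueness, but it is weaker than the stated estimate, whose constant $C_1$ involves no such dependence.
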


\begin{proof}
The proof suitably adapts Kruzkov's method of doubling the variables to the case of moving surfaces. The details is given in Section \ref{Uniqueness for the conservative law}.
\end{proof}
\vskip 0.2cm
By the above propositions, we can further establish the well-posedness of equation \eqref{eq74} for general initial value $u_0 \in L^{\infty}(\Gamma_0)$.
\begin{theorem}\label{Thm4}
Let Assumptions \ref{Assumption1} and \ref{Assumption2} hold. Then for every $u_0 \in L^{\infty}(\Gamma_0)$, there exists a unique generalized entropy solution $\{u(t)\}_{t \in [0,T]}$ to the stochastic conservation law \eqref{eq74} started from $u_0$.
\end{theorem}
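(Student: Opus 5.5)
The plan is to combine Proposition \ref{Thm3}, Proposition \ref{Thm5} and the Yamada--Watanabe theorem, first for regular initial data and then for general $u_0 \in L^\infty(\Gamma_0)$ by approximation.

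\textbf{Regular data.} Take $u_0 \in H^{1,1}(\Gamma_0)\cap L^\infty(\Gamma_0)$. Proposition \ref{Thm3} gives a martingale entropy solution $(\Omega^*,\mathcal{F}^*,\{\mathcal{F}^*_t\},\mathbb{P}^*,u^*,B^*)$. Proposition \ref{Thm5} holds on any stochastic basis, so it gives pathwise uniqueness: two generalized entropy solutions on the same basis with the same Brownian motion and initial datum coincide $\mathbb{P}$-a.s. in $L^1([0,T];L^1(\Gamma_\cdot))$. I will then invoke the Yamada--Watanabe theorem in the form of Gy\"ongy--Krylov or Kurtz: weak existence plus pathwise uniqueness yields a strong solution on the originally given basis $(\Omega,\mathcal{F},\{\mathcal{F}_t\},\mathbb{P},B)$. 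Concretely, I would rerun the tightness argument of Section \ref{Existence for the conservation law} for pairs $(u^{\epsilon_n},u^{\epsilon_m},B)$ of viscous solutions on the original space. Any joint limit point is a pair of generalized entropy solutions driven by the same $B^*$, so by Proposition \ref{Thm5} it is supported on the diagonal. The Gy\"ongy--Krylov lemma then gives convergence of $u^\epsilon$ in probability on the original space, and Lemma \ref{lemma2.4} shows the limit satisfies \eqref{eq73}. Predictability and the $L^\infty$ bound \eqref{eq180} pass to the limit; the constant there is a.s. finite because $w$ and its surface derivatives are continuous in $(t,x)$ by Assumption \ref{Assumption2}.

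\textbf{General data.} For $u_0\in L^\infty(\Gamma_0)$, pick $u_{0,k}\in C^2(\Gamma_0)$ with $\|u_{0,k}\|_{L^\infty}\le \|u_0\|_{L^\infty}$ and $u_{0,k}\to u_0$ in $L^1(\Gamma_0)$ and a.e. These can be built by mollifying in local charts with the partition of unity. Let $u_k$ be the corresponding strong solutions. By \eqref{eq180}, $\|u_k\|_{L^\infty(Q_T)}\le e^{C}(1+\|u_0\|_{L^\infty})=:M(\omega)$ with $C$ independent of $k$ and a.s. finite. Proposition \ref{Thm5} then gives, a.s.,
\[
\mathrm{ess\,sup}_t\|u_k(t)-u_j(t)\|_{L^1(\Gamma_t)}\le C_1e^{C_1(1+2M)}\|u_{0,k}-u_{0,j}\|_{L^1(\Gamma_0)}.
\]
Hence $(u_k)$ is a.s. Cauchy in $L^\infty(0,T;L^1(\Gamma_\cdot))$, pulled back to $\Gamma_0$. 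Its limit $u$ is predictable and satisfies $\|u\|_{L^\infty}\le M$, and along a subsequence $u_k\to u$ a.e. on $Q_T$, a.s.

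\textbf{Passing to the limit and uniqueness.} Fix $\varrho,\phi,\eta$ and pass to the limit in \eqref{eq73}. The deterministic integrals converge by dominated convergence, since everything is bounded by $M$ and $\Psi$ is continuous. The stochastic integral converges in probability by the It\^o isometry plus a localization $\tau_R=\inf\{t:\ \cdots>R\}$. The delicate point is that the integrand $\phi'(u_k-\Psi)$ must be handled inside a stochastic integral while $M$ is random, so I would localize with stopping times bounding $\sup|w|$ and $\sup|\Psi|$. A subsequence then gives a.s. convergence, and since only countably many test functions suffice after a density argument, \eqref{eq73} holds a.s. for all of them. Uniqueness of the resulting generalized entropy solution is immediate from Proposition \ref{Thm5}. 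The main obstacle is the Yamada--Watanabe/Gy\"ongy--Krylov step: one must check that the joint tightness and identification of limits from Section \ref{Existence for the conservation law} work for pairs driven by a common noise.
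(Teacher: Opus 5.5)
Your proposal follows the paper's proof. You obtain existence for $H^{1,1}\cap L^\infty$ (here $C^2$) data from Proposition \ref{Thm3}, Proposition \ref{Thm5} and Yamada--Watanabe, approximate general $u_0\in L^\infty(\Gamma_0)$ using the $L^1$-contraction and the $\omega$-wise uniform $L^\infty$ bound, pass to the limit in \eqref{eq73}, and take uniqueness from Proposition \ref{Thm5}. Your Gy\"ongy--Krylov elaboration and pathwise (rather than $L^1(\Omega\times Q_T)$) convergence add detail to the same argument without changing it. The countable-density step is unnecessary, since Definition \ref{definition1} lets the exceptional null set depend on $(\varrho,\phi,\eta)$.
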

\begin{proof}
By Proposition \ref{Thm3}, Proposition \ref{Thm5} and the Yamada-Watanabe theorem, for every initial value $u_0 \in H^{1,1}(\Gamma_0) \cap L^{\infty}(\Gamma_0)$, there exists a unique generalized entropy solution $\{u(t)\}_{t \in [0,T]}$ to the stochastic conservation law \eqref{eq74} such that $\mathbb{P}$-a.s.
\begin{equation}\label{eq191}
\|u\|_{L^{\infty}(Q_T)} \leq e^{C_2}(1+ \| u_0 \|_{L^\infty(\Gamma_0)}),
\end{equation}
where
\begin{eqnarray*}
C_2&=& (T+1)(L_1+1)(1+\sup_{t\in[0,T]}\|\nabla_{\Gamma_t} \cdot v(t)\|_{L^{\infty}(\Gamma_t)} +\sup_{t\in[0,T]}\|w(t)\|_{L^{\infty}(\Gamma_t)} \nonumber\\
     & &  + \sup_{t\in[0,T]}\|\nabla_{\Gamma_t}w(t)\|_{L^{\infty}(\Gamma_t)} + \sup_{t\in[0,T]}\|\Delta_{\Gamma_t}w(t)\|_{L^{\infty}(\Gamma_t)}  ),
\end{eqnarray*}
and $w(t,x) = \int_0^t \sigma(s, G(s, G^{-1}(t,x))) dB_s, \ (t,x) \in Q_T$.

For a general initial value $u_0 \in L^{\infty}(\Gamma_0)$, there exists a sequence $\{u_0^n\}_{n \in \mathbb{N}} \subseteq H^{1,1}(\Gamma_0) \cap L^{\infty}(\Gamma_0)$ such that $u^n_0 \rightarrow u_0$ in $L^1(\Gamma_0)$ as $n \rightarrow \infty$ and $\|u^n_0\|_{L^\infty(\Gamma_0)} \leq  \|u_0\|_{L^\infty(\Gamma_0)}$. Let $u^n$ be the generalized entropy solution to equation \eqref{eq74} with initial value $u^n_0$. Then using the $L^1$ contraction property and \eqref{eq191}, we deduce that there exists $u \in L^1(\Omega \times Q_T; \mathbb{R})$ such that $u^n \rightarrow u$ in $L^1(\Omega \times Q_T; \mathbb{R})$ and for any $p \geq 1$,
\[ \mathbb{E}[\sup_{n \in \mathbb{N}}\| u^n \|^p_{L^\infty(Q_T)}] \leq \infty. \]
Similar to the proof of Lemma \ref{lemma2.4}, one can verify that $u$ is a generalized entropy solution to equation \eqref{eq74}. The uniqueness follows from Proposition \ref{Thm5}.
\end{proof}

\section{The viscous problem}\label{The regularized problem}

For any $\epsilon \in (0,1)$, define the function $f_\epsilon: Q_T \times \mathbb{R} \rightarrow \mathbb{R}^{n+1}$ as
\begin{equation}\label{buchong11}
 f_\epsilon((t,x), u) = \gamma(\epsilon|u|^2)f((t,x),u),
\end{equation}
where $\gamma: \mathbb{R} \rightarrow \mathbb{R}$ is a smooth function with compact support satisfying $0\leq \gamma \leq 1$, $\gamma(y) = 1$ for $|y|<1$ and $\gamma(y)=0$ for $|y|>2$, and $|\gamma^{\prime}| \leq 2$.
In order to solve the stochastic conservative law \eqref{eq74} with initial value $u_0 \in H^{1,1}(\Gamma_0) \cap L^{\infty}(\Gamma_0)$, we first consider the following stochastic viscous conservation law:
\begin{align}
\left\{
\begin{aligned}\label{eq75}
&\overset{\scalebox{0.4}{$\bullet$}}{u^\epsilon}(t)+u^\epsilon(t)\nabla_{\Gamma_t}\cdot v(t) + \nabla_{\Gamma_t} \cdot \big(f_{\epsilon}((t,\cdot), u^\epsilon(t))\big) = \epsilon\Delta_{\Gamma_t}u^\epsilon(t) + \sigma(t)\overset{\scalebox{0.4}{$\bullet$}}{B_t}, \quad (t,x)\in Q_T,\\
&u^\epsilon(0,y)=u_{0}^{\epsilon}(y), \quad y\in\Gamma_0,
\end{aligned}
\right.
\end{align}
where the initial value $u^\epsilon_0$ satisfies \eqref{eq58}.

We now introduce the definition of solution to equation \eqref{eq75}.
\begin{definition}\label{definition2}
We say that a stochastic process $\{u^\epsilon(t)\}_{t\in[0,T]}$ is a solution to equation \eqref{eq75} if
\begin{itemize}
\item[(i)] $u^\epsilon \in L^2(\Omega; C([0,T]; H^1(\Gamma_{\cdot}))) \cap L^2(\Omega; L^2([0,T]; H^2(\Gamma_{\cdot})))$.
\item[(ii)] $u^\epsilon$ is an an ${H}^{1}(\Gamma_{\cdot})$-valued predictable random process with respect to the filtration $ \{ \mathcal{F}_t \} _{t\in[0,T]} $.
\item[(iii)] For any $\varphi \in H^1(Q_T)$, we have $\mathbb{P}$-a.s.
\begin{eqnarray}\label{eq76}
& & \int_{\Gamma_t} u^\epsilon(t,x)\varphi(t,x)\mathrm{dvol}_{\Gamma_t}(dx)- \int_{\Gamma_0} u^\epsilon_0(y)\varphi(0,y)\mathrm{dvol}_{\Gamma_0}(dy) \nonumber\\
& =& \int_0^t\int_{\Gamma_s}u^\epsilon(s,x)\overset{\scalebox{0.4}{$\bullet$}}{\varphi}(s,x) \mathrm{dvol}_{\Gamma_s}(dx)ds + \int_0^t\int_{\Gamma_s}f_{\epsilon}((s,x),u^\epsilon(s,x))\cdot \nabla_{\Gamma_s}\varphi(s,x)\mathrm{dvol}_{\Gamma_s}(dx)ds\nonumber\\
& & -\epsilon\int_0^t\int_{\Gamma_s}\nabla_{\Gamma_s}u^\epsilon(s,x) \cdot \nabla_{\Gamma_s}\varphi(s,x)\mathrm{dvol}_{\Gamma_s}(dx)ds
+\int_0^t\int_{\Gamma_s}\sigma(s,x)\varphi(s,x)\mathrm{dvol}_{\Gamma_s}(dx)dB_s\nonumber\\
\end{eqnarray}
for every $t \in [0,T]$.
\end{itemize}
\end{definition}

With this definition, we have the following result.
\begin{proposition}\label{proposition2}
Let Assumptions \ref{Assumption1} and \ref{Assumption2} hold. Then for every $\epsilon \in (0,1)$, the stochastic viscous conservation law \eqref{eq75} admits a unique solution $u^\epsilon$. Moreover,
\[ u^\epsilon = V^\epsilon +w, \]
where
$w(t,x) = \int_0^t \sigma(s, G(s, G^{-1}(t,x)))dB_s$, $(t,x) \in Q_T$  and $V^\epsilon$ is the unique solution to the following equation:
\begin{align}
\left\{
\begin{aligned}\label{eq77}
&\overset{\scalebox{0.4}{$\bullet$}}{V^\epsilon}(t)+(V^\epsilon(t) + w(t))\nabla_{\Gamma_t}\cdot v(t) + \nabla_{\Gamma_t} \cdot \big(f_{\epsilon}((t,\cdot), V^\epsilon(t)+w(t))\big) = \epsilon\Delta_{\Gamma_t}(V^\epsilon(t) + w(t)), \ (t,x)\in Q_T,\\
&V^\epsilon(0,y)=u_{0}^{\epsilon}(y), \quad y\in\Gamma_0.
\end{aligned}
\right.
\end{align}
Here the definition of solutions to equation \eqref{eq77} is similar to Definition \ref{definition2}.
\end{proposition}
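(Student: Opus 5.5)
The plan is to reduce \eqref{eq75} to the random PDE \eqref{eq77}, which we solve pathwise. Then we check that $u^\epsilon=V^\epsilon+w$ solves \eqref{eq75}, and finally deduce uniqueness from a difference estimate.

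\textbf{Step 1: the equation for $w$.} Pulled back to $\Gamma_0$, $w$ reads
\[ \widetilde w(t,y)=\int_0^t\widetilde\sigma(s,y)\,dB_s . \]
Hence $d\widetilde w=\widetilde\sigma\,dB_t$, i.e.\ formally $\overset{\scalebox{0.4}{$\bullet$}}{w}=\sigma\overset{\scalebox{0.4}{$\bullet$}}{B_t}$. Since $\sigma\in C^4$ (Assumption \ref{Assumption2}) and $G\in C^2$, Kolmogorov's criterion, applied to the stochastic integral and to its $\theta$-derivatives in local charts, gives a version of $\widetilde w$ that is continuous in $t$ and has two continuous spatial derivatives. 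In particular $w\in C([0,T];H^2(\Gamma_\cdot))$ a.s., with
\[ \mathbb E\sup_t\|w(t)\|_{W^{2,\infty}}^p<\infty . \]
We also record, using the transport formula (Lemma \ref{lemma2.1}) and the stochastic Fubini theorem, the weak identity satisfied by $w$ against test functions $\varphi\in H^1(Q_T)$:
\[ \langle w(t),\varphi(t)\rangle_{\Gamma_t}=\int_0^t\langle w,\overset{\scalebox{0.4}{$\bullet$}}{\varphi}+\varphi\nabla_{\Gamma_s}\cdot v\rangle_{\Gamma_s}\,ds+\int_0^t\langle\sigma,\varphi\rangle_{\Gamma_s}\,dB_s . \]

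\textbf{Step 2: pathwise solution of \eqref{eq77}.} Fix $\omega$. Then \eqref{eq77} is a deterministic quasilinear parabolic equation on the moving surface with continuous-in-time coefficients, driven by the forcing terms
\[ -w\,\nabla_{\Gamma_t}\cdot v+\epsilon\Delta_{\Gamma_t}w \in C([0,T];L^2(\Gamma_\cdot)) . \]
The flux $f_\epsilon(\cdot,u)=\gamma(\epsilon|u|^2)f(\cdot,u)$ vanishes for $|u|>\sqrt{2/\epsilon}$, so by Assumption \ref{Assumption1} the map $u\mapsto f_\epsilon$ is globally Lipschitz and bounded, together with its spatial and $u$-derivatives. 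Pulling back to $\Gamma_0$ gives a uniformly parabolic problem. We would construct the solution by a Galerkin scheme, or by a Banach fixed point in $C([0,T];L^2)\cap L^2([0,T];H^1)$ via the linear solution operator on evolving surfaces (as in \cite{GCM,GDT}). Energy estimates then give $V^\epsilon\in C([0,T];L^2)\cap L^2(H^1)$.

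For the $H^1/H^2$ regularity we test with $-\Delta_{\Gamma_t}V^\epsilon$, using the commutator formulas of Lemma \ref{lemma2.6} and Lemma \ref{lemma2.5} and the assumption $u_0^\epsilon\in H^2(\Gamma_0)$. This yields $V^\epsilon\in C([0,T];H^1(\Gamma_\cdot))\cap L^2([0,T];H^2(\Gamma_\cdot))$, with bounds depending polynomially on $\sup_t\|w\|_{H^2}$. These bounds, combined with the moments of $w$ from Step 1, give the $L^2(\Omega)$ integrability required in Definition \ref{definition2}. Predictability holds because the solution map is continuous from the path of $w$ on $[0,t]$, hence adapted, and the solution is time-continuous. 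Uniqueness for \eqref{eq77} follows from an $L^2$ Gronwall argument applied to the difference of two solutions, using the Lipschitz property of $f_\epsilon$ and Young's inequality to absorb $\nabla_{\Gamma}\cdot$ terms into $\epsilon\|\nabla_\Gamma\cdot\|^2$.

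\textbf{Step 3: existence and uniqueness for \eqref{eq75}.} Adding the weak form of \eqref{eq77} and the identity for $w$ from Step 1 shows that $u^\epsilon=V^\epsilon+w$ satisfies \eqref{eq76}. The $\nabla_{\Gamma_t}\cdot v$ terms cancel, and the $\epsilon\Delta w$ term integrates by parts via Lemma \ref{lemma2.3}, where $H\nu$ terms vanish against tangent fields. Conversely, if $u^\epsilon$ is any solution of \eqref{eq75}, then $u^\epsilon-w$ satisfies \eqref{eq77}, because the $dB$ terms cancel. Uniqueness for \eqref{eq77} then gives uniqueness for \eqref{eq75}.

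\textbf{Main obstacle.} The main obstacle is the $H^2$-regularity and integrability in Step 2. This requires commuting surface derivatives with material derivatives on a moving surface and controlling the nonlinear flux term $\nabla_{\Gamma}\cdot f_\epsilon(\cdot,V+w)$, which involves $\nabla_\Gamma w$. The cutoff $\gamma$ makes every flux coefficient bounded, and I expect that this, together with Gronwall, closes the estimate.
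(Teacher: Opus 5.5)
Your proposal is correct and follows essentially the paper's route: solve \eqref{eq77} pathwise and set $u^\epsilon=V^\epsilon+w$, where the paper simply invokes existence and $H^1$/$H^2$ regularity results for parabolic equations on evolving surfaces from the literature, and your Galerkin/fixed-point construction plus the test with $-\Delta_{\Gamma_t}V^\epsilon$ reproduce them. For uniqueness the paper uses the same $L^2$ energy and Gronwall argument with the $C_\epsilon$-Lipschitz cutoff flux, applied directly to $u^\epsilon_1-u^\epsilon_2$ in \eqref{eq76} (where the additive noise cancels); this is equivalent to your reduction of any solution of \eqref{eq75} to a solution of \eqref{eq77}.
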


\begin{proof}
Using the similar arguments as in the proof of Theorem 4.4 in \cite{DGCM}, one can show that the equation \eqref{eq77} has a unique solution $V^\epsilon $ such that $\mathbb{P}$-a.s. $V^\epsilon  \in H^1(Q_T)$ and
\begin{eqnarray}\label{eq198}
& & \sup_{t \in [0,T]}\|V^\epsilon(t)\|^2_{L^2(\Gamma_t)} + \int_0^T\| V^\epsilon(t)\|^2_{L^2(\Gamma_t)}dt \nonumber\\
&\leq& C_{T,\epsilon}(1 + \| u_{0}^{\epsilon}\|^2_{L^2(\Gamma_0)} + \int_0^T\| w(t)\|^2_{L^2(\Gamma_t)}dt  + \int_0^T\| \nabla_{\Gamma_t}w(t)\|^2_{L^2(\Gamma_t)}dt  ),
\end{eqnarray}
and
\begin{eqnarray}\label{eq199}
& &  \int_0^T\| \overset{\scalebox{0.4}{$\bullet$}}{V^\epsilon}(t)\|^2_{L^2(\Gamma_t)}dt + \sup_{t \in [0,T]}\|\nabla_{\Gamma_t}V^\epsilon(t)\|^2_{L^2(\Gamma_t)}  \nonumber\\
&\leq& C_{T,\epsilon}(1 + \| u_{0}^{\epsilon}\|^2_{H^1(\Gamma_0)} + \int_0^T\| w(t)\|^2_{L^2(\Gamma_t)}dt  + \int_0^T\| \nabla_{\Gamma_t}w(t)\|^2_{L^2(\Gamma_t)}dt + \int_0^T\| \triangle_{\Gamma_t}w(t)\|^2_{L^2(\Gamma_t)}dt ).  \nonumber\\
\end{eqnarray}
Furthermore, by \eqref{eq198}, \eqref{eq199} and Theorem 4.5 in \cite{DGCM}, we get that $\mathbb{P}$-a.s. $V^\epsilon  \in  L^2([0,T]; H^2(\Gamma_{\cdot}))$ and
\begin{eqnarray}\label{eq200}
& &  \int_0^T\| {V^\epsilon}(t)\|^2_{H^2(\Gamma_t)}dt \nonumber\\
&\leq& C_{T,\epsilon}(1 + \| u_{0}^{\epsilon}\|^2_{H^1(\Gamma_0)} + \int_0^T\| w(t)\|^2_{L^2(\Gamma_t)}dt  + \int_0^T\| \nabla_{\Gamma_t}w(t)\|^2_{L^2(\Gamma_t)}dt + \int_0^T\| \triangle_{\Gamma_t}w(t)\|^2_{L^2(\Gamma_t)}dt ).  \nonumber\\
\end{eqnarray}
Combining \eqref{eq198}, \eqref{eq199} and \eqref{eq200}, it follows that
$$V^\epsilon \in L^2(\Omega; C([0,T]; H^1(\Gamma_{\cdot}))) \cap L^2(\Omega; L^2([0,T]; H^2(\Gamma_{\cdot}))).$$
Now, we denote $u^\epsilon = V^\epsilon +w$. Then $u^\epsilon \in L^2(\Omega; C([0,T]; H^1(\Gamma_{\cdot}))) \cap L^2(\Omega; L^2([0,T]; H^2(\Gamma_{\cdot})))$, and for any $\varphi \in H^1(Q_T)$, $u^\epsilon$ satisfies \eqref{eq76}. The other condition (ii)
in Definition \ref{definition2} follows easily. Thus, the stochastic process $u^\epsilon$ is a solution to the stochastic viscous conservation law \eqref{eq75}.

Below, we prove the uniqueness of equation \eqref{eq75}. For any solutions $u^\epsilon_1$ and $u^\epsilon_2$ of equation \eqref{eq75}. Take $\varphi = u^\epsilon_1 - u^\epsilon_2$ in \eqref{eq76}. Then we have for every $t \in [0,T]$,
\begin{eqnarray*}\label{eq78}
& & \int_{\Gamma_t} |u^\epsilon_{1}(t,x) - u^\epsilon_{2}(t,x)|^2 \mathrm{dvol}_{\Gamma_t}(dx)
    + \epsilon\int_0^t \int_{\Gamma_s} |\nabla_{\Gamma_s}(u^\epsilon_{1}(s,x) - u^\epsilon_{2}(s,x))|^2\mathrm{dvol}_{\Gamma_s}(dx)ds \nonumber\\
&=& \frac{1}{2}\int_0^t\int_{\Gamma_s}\big((u^\epsilon_{1}(s,x) - u^\epsilon_{2}(s,x))^2\big)^{\scalebox{0.4}{$\bullet$}}\mathrm{dvol}_{\Gamma_s}(dx)ds  \nonumber\\
    & & +\int_0^t\int_{\Gamma_s} \big(f_{\epsilon}((s,x), u^\epsilon_{1}(s,x)) - f_{\epsilon}((s,x), u^\epsilon_{2}(s,x))\big) \cdot \nabla_{\Gamma_s}(u^\epsilon_{1}(s,x) - u^\epsilon_{2}(s,x))\mathrm{dvol}_{\Gamma_s}(dx)ds  \nonumber\\
&\leq& \frac{1}{2}\int_{\Gamma_t} |u^\epsilon_{1}(t,x) - u^\epsilon_{2}(t,x)|^2 \mathrm{dvol}_{\Gamma_t}(dx)
    - \frac{1}{2}\int_0^t\int_{\Gamma_s}|u^\epsilon_{1}(s,x) - u^\epsilon_{2}(s,x)|^2\nabla_{\Gamma_s}\cdot v(s,x) \mathrm{dvol}_{\Gamma_s}(dx)ds  \nonumber\\
    & &+ C_\epsilon\int_0^t\int_{\Gamma_s}|u^\epsilon_{1}(s,x) - u^\epsilon_{2}(s,x)|\cdot|\nabla_{\Gamma_s}(u^\epsilon_{1}(s,x) - u^\epsilon_{2}(s,x))|\mathrm{dvol}_{\Gamma_s}(dx)ds \nonumber\\
&\leq&  \frac{1}{2}\int_{\Gamma_t} |u^\epsilon_{1}(t,x) - u^\epsilon_{2}(t,x)|^2 \mathrm{dvol}_{\Gamma_t}(dx)
        + C_\epsilon \int_0^t\int_{\Gamma_s}|u^\epsilon_{1}(s,x) - u^\epsilon_{2}(s,x)|^2\mathrm{dvol}_{\Gamma_s}(dx)ds\nonumber\\
    & &+\frac{\epsilon}{2}\int_0^t\int_{\Gamma_s}|\nabla_{\Gamma_s}(u^\epsilon_{1}(s,x) - u^\epsilon_{2}(s,x))|^2\mathrm{dvol}_{\Gamma_s}(dx)ds,
\end{eqnarray*}
where we have used Lemma \ref{lemma2.1} in the second inequality.

Therefore, Gronwall's lemma yields
\begin{equation*}\label{eq79}
\sup_{t \in [0,T]}\int_{\Gamma_t} |u^\epsilon_{1}(t,x) - u^\epsilon_{2}(t,x)|^2 \mathrm{dvol}_{\Gamma_t}(dx) = 0,
\end{equation*}
which implies the uniqueness of equation \eqref{eq76}. The proof is complete.
\end{proof}

\vskip 0.2cm
Before the end of this section, we give a moment estimate for $\{u^\epsilon \}_{\epsilon \in (0,1)}$.
\begin{lemma}\label{2.3}
For any $\epsilon \in (0,1)$, let $u^\epsilon$ be the solution of equation \eqref{eq75}. Then $u^\epsilon$ satisfies the following moment estimate:
\begin{equation*}\label{eq80}
\mathbb{E}[\sup_{t \in [0,T]}\| u^\epsilon(t)\|^2_{L^2(\Gamma_t)}] + \epsilon\mathbb{E}[\int_0^T\int_{\Gamma_t}|\nabla_{\Gamma_t}u^\epsilon(t,x)|^2\mathrm{dvol}_{\Gamma_t}(dx)dt]
\leq C,
\end{equation*}
where the positive constant $C$ does not depend on $\epsilon$.
\end{lemma}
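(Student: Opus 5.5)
We apply the It\^o formula of Lemma \ref{lemma2.2} to $u^\epsilon$, with $m=1$, $\varrho\equiv 0$ (so $\Psi\equiv 0$), $\phi(y)=y^2$ and $\eta\equiv 1$. The solution $u^\epsilon$ of \eqref{eq75} solves an equation of type \eqref{eq61} with $R^{j}=\epsilon\underline{D}_j u^\epsilon - f_{\epsilon,j}((t,\cdot),u^\epsilon)$, $S=0$ and $\Upsilon=\sigma$. These coefficients lie in $L^2(\Omega;L^2([0,T];L^2(\Gamma_\cdot)))$ because $u^\epsilon\in L^2(\Omega;L^2([0,T];H^2(\Gamma_\cdot)))$ and $f_\epsilon$ is bounded. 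We read the $R$-term as in Remark \ref{remark1}. This gives
\begin{eqnarray*}
\|u^\epsilon(t)\|^2_{L^2(\Gamma_t)} + 2\epsilon\int_0^t\|\nabla_{\Gamma_s}u^\epsilon(s)\|^2_{L^2(\Gamma_s)}ds
&=& \|u^\epsilon_0\|^2_{L^2(\Gamma_0)} - \int_0^t\langle (u^\epsilon)^2\nabla_{\Gamma_s}\cdot v,1\rangle_{\Gamma_s}ds + 2\int_0^t\langle f_\epsilon(u^\epsilon)\cdot\nabla_{\Gamma_s}u^\epsilon,1\rangle_{\Gamma_s}ds\\
& & - 2\int_0^t\langle f_\epsilon(u^\epsilon)u^\epsilon, H\nu\rangle_{\Gamma_s}ds + 2\int_0^t\langle u^\epsilon,\sigma\rangle_{\Gamma_s}dB_s + \int_0^t\|\sigma(s)\|^2_{L^2(\Gamma_s)}ds .
\end{eqnarray*}
The $\epsilon$-boundary term $\epsilon\langle u^\epsilon\nabla_{\Gamma_s} u^\epsilon, H\nu\rangle$ vanishes because $\nabla_{\Gamma_s} u^\epsilon$ is tangent. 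The term with $f_\epsilon\cdot H\nu$ vanishes by (B1).

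The key step is to kill the flux term without any $\epsilon$-dependent constant. Set $F_\epsilon((s,x),\lambda)=\int_0^\lambda f_\epsilon((s,x),\tau)d\tau$. By the chain rule,
\[
\nabla_{\Gamma_s}\cdot F_\epsilon((s,x),u^\epsilon)=f_\epsilon(u^\epsilon)\cdot\nabla_{\Gamma_s}u^\epsilon+\int_0^{u^\epsilon}\nabla_{\Gamma_s}\cdot f_\epsilon((s,x),\tau)d\tau .
\]
The last integral is zero: by (B2), $\nabla_{\Gamma_s}\cdot f_\epsilon((s,x),\tau)=\gamma(\epsilon\tau^2)\nabla_{\Gamma_s}\cdot f=0$, since the cutoff $\gamma(\epsilon\tau^2)$ does not depend on $x$. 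Hence $\int_{\Gamma_s} f_\epsilon(u^\epsilon)\cdot\nabla_{\Gamma_s}u^\epsilon = \int_{\Gamma_s}\nabla_{\Gamma_s}\cdot F_\epsilon$. By Lemma \ref{lemma2.3} this equals $\int_{\Gamma_s}F_\epsilon\cdot H\nu$, and that is $0$ because $F_\epsilon$ is tangent by (B1).

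This step is the main point to check carefully, and it is also why we do not use $\mathrm{div}$-free arguments depending on $\epsilon$. The divergence theorem needs some regularity of $F_\epsilon(u^\epsilon)$. It holds because $u^\epsilon\in H^1$ and $f_\epsilon$ is $C^1$ with compact support in $u$; if needed, we approximate by smooth functions.

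The flux contribution therefore disappears. The remaining steps are routine. Take $\sup_{t}$ and then expectation. The drift is bounded by $\sup_t\|\nabla_{\Gamma_t}\cdot v\|_{L^\infty}\int_0^T\|u^\epsilon\|^2_{L^2}ds$ plus $T\sup\|\sigma\|^2_{L^\infty}|\Gamma_t|$, and $|\Gamma_t|$ is bounded uniformly in $t$. The initial term is at most $C_0^2\sup|\Gamma_0|$ by \eqref{eq58}. For the martingale we use Burkholder--Davis--Gundy and Young:
\[
\mathbb{E}\sup_t\Big|\int_0^t\langle u^\epsilon,\sigma\rangle dB_s\Big|\le C\,\mathbb{E}\Big(\int_0^T\|u^\epsilon\|^2_{L^2}ds\Big)^{1/2}\le \tfrac14\mathbb{E}\sup_t\|u^\epsilon(t)\|^2_{L^2}+C .
\]
The first term on the right is absorbed into the left-hand side, which is finite because $u^\epsilon\in L^2(\Omega;C([0,T];H^1))$. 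Gronwall's lemma then gives a bound with $C$ depending only on $T$, $v$, $\sigma$ and $C_0$, and not on $\epsilon$. This bounds both $\mathbb{E}\sup_t\|u^\epsilon(t)\|^2_{L^2(\Gamma_t)}$ and $\epsilon\,\mathbb{E}\int_0^T\|\nabla_{\Gamma_t} u^\epsilon\|^2_{L^2(\Gamma_t)}dt$.
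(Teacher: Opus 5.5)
Your proposal is correct and follows the paper's proof essentially step for step. Both arguments apply the It\^o formula of Lemma \ref{lemma2.2} with $\phi(y)=y^2$, $\eta\equiv 1$, $\Psi\equiv 0$. Both eliminate the flux term exactly through the primitive $\int_0^{u^\epsilon}f_\epsilon\,d\tau$, using (B1), (B2) and Lemma \ref{lemma2.3}, and both finish with Burkholder--Davis--Gundy, Young and Gronwall. The differences are cosmetic: you track the $H\nu$ terms from Remark \ref{remark1} explicitly, and you absorb $\tfrac14\mathbb{E}\sup_t\|u^\epsilon(t)\|^2_{L^2(\Gamma_t)}$ into the left-hand side, whereas the paper bounds the square root directly by $C+C\int_0^T\mathbb{E}\sup_{r\le s}\|u^\epsilon(r)\|^2_{L^2(\Gamma_r)}ds$ before applying Gronwall.
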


\begin{proof}
By It\^{o}'s formula on moving hypersurfaces (see Lemma \ref{lemma2.2}), we obtain that for every $t \in [0,T]$,
\begin{eqnarray}\label{eq81}
  & & \|u^\epsilon(t) \|^2_{L^2(\Gamma_t)} + 2\epsilon\int_0^t\int_{\Gamma_s}|\nabla_{\Gamma_s}u^\epsilon(s,x)|^2\mathrm{dvol}_{\Gamma_s}(dx)ds\nonumber\\
&=& \| u^\epsilon_0\|^2_{L^2(\Gamma_0)} - \int_0^t\int_{\Gamma_s} |u^\epsilon(s,x)|^2\nabla_{\Gamma_s}\cdot v(s,x) \mathrm{dvol}_{\Gamma_s}(dx)ds
   + 2\int_0^t\int_{\Gamma_s} u^\epsilon(s,x)\sigma(s,x)\mathrm{dvol}_{\Gamma_s}(dx)dB_s \nonumber\\
   & & + \int_0^t\int_{\Gamma_s} \sigma^2(s,x)\mathrm{dvol}_{\Gamma_s}(dx)ds
       + 2\int_0^t\int_{\Gamma_s} \nabla_{\Gamma_s}u^\epsilon(s,x) \cdot f_\epsilon((s,x), u^\epsilon(s,x))\mathrm{dvol}_{\Gamma_s}(dx)ds.
\end{eqnarray}

Notice that
\begin{eqnarray*}\label{eq82}
   & &  \nabla_{\Gamma_t} \cdot \big( \int_0^{u^\epsilon(t,x)} f_\epsilon((t,x),\tau)d\tau \big) \nonumber\\
&=& f_\epsilon((t,x),u^\epsilon(t,x)) \cdot \nabla_{\Gamma_t}u^\epsilon(t,x) + \int_0^{u^\epsilon(t,x)} \nabla_{\Gamma_t} \cdot \big(f_\epsilon((t,x),\tau)\big)d\tau\nonumber\\
&=& f_\epsilon((t,x),u^\epsilon(t,x)) \cdot \nabla_{\Gamma_t}u^\epsilon(t,x).
\end{eqnarray*}
Thus, by Lemma \ref{lemma2.3} and Assumption \ref{Assumption1}(B1), we derive that
\begin{equation}\label{eq83}
\int_0^t\int_{\Gamma_s} \nabla_{\Gamma_s}u^\epsilon(s,x)\cdot f_\epsilon((s,x), u^\epsilon(s,x))\mathrm{dvol}_{\Gamma_s}(dx)ds=0.
\end{equation}

In combination with \eqref{eq81} and \eqref{eq83}, and using the Burkholder--Davis--Gundy inequality, H\"older inequality and Young's inequality, we further obtain
\begin{eqnarray*}\label{eq84}
& & \mathbb{E}[\sup_{t \in [0,T]}\|u^\epsilon(t) \|^2_{L^2(\Gamma_t)}] + 2\epsilon\mathbb{E}[\int_0^T\int_{\Gamma_s}|\nabla_{\Gamma_s}u^\epsilon(s,x)|^2\mathrm{dvol}_{\Gamma_s}(dx)ds]\nonumber\\
&\leq& \| u^\epsilon_0\|^2_{L^2(\Gamma_0)}  + C\mathbb{E}[\int_0^T\|u^\epsilon(s)\|^2_{L^2(\Gamma_s)}ds]
   + C\mathbb{E}[(\int_0^T|\int_{\Gamma_s} u^\epsilon(s,x)\sigma(s,x)\mathrm{dvol}_{\Gamma_s}(dx)|^2ds)^\frac{1}{2}]
   +C  \nonumber\\
&\leq &  \| u^\epsilon_0\|^2_{L^2(\Gamma_0)}+  C\int_0^T\mathbb{E}[\sup_{r\in [0,s]}\|u^\epsilon(r)\|^2_{L^2(\Gamma_r)}]ds + C. \nonumber\\
\end{eqnarray*}
Then by Gronwall's lemma and \eqref{eq58}, we obtain that there exists a positive constant $C$ independent of $\epsilon$ such that
\begin{eqnarray*}\label{eq85}
\mathbb{E}[\sup_{t \in [0,T]}\|u^\epsilon(t) \|^2_{L^2(\Gamma_t)}] + 2\epsilon\mathbb{E}[\int_0^T\int_{\Gamma_s}|\nabla_{\Gamma_s}u^\epsilon(s,x)|^2\mathrm{dvol}_{\Gamma_s}(dx)ds]
\leq C.
\end{eqnarray*}

\end{proof}

\section{A priori estimates for the viscous problem}\label{A priori estimates for the regularized problem}
For any $\epsilon \in (0,1)$, let $u^\epsilon$ be the solution of equation \eqref{eq75}. In this section, we will further derive a priori estimates for $u^\epsilon$ that are independent of $\epsilon$. These estimates are essential for proving the existence of solutions to the stochastic conservation law \eqref{eq74}. Recall that $f_\epsilon$ is defined by \eqref{buchong11}. By Assumption \ref{Assumption1}(B1), $f_\epsilon$ has the local representation
\begin{equation}\label{buchong7}
f_\epsilon((t,x),u) =a_{i}^{\epsilon, \alpha}((t,x),u)\frac{\partial X^\alpha_t}{\partial \theta_i}(X^{\alpha}_t(x)^{-1}), \quad ((t,x),u) \in \Big(\bigcup_{t \in [0,T]} \{t\}\times V_t^{\alpha}\Big) \times \mathbb{R},
\end{equation}
where $a_{i}^{\epsilon, \alpha}: \Big(\bigcup_{t \in [0,T]} \{t\}\times V_t^{\alpha}\Big) \times \mathbb{R} \rightarrow \mathbb{R} $, $1 \leq \alpha \leq N$,
and by Assumption \ref{Assumption1}(B3), there exists a positive constant $L_1$ independent of $\epsilon$ such that for all $((t,x), u) \in Q_T \times \mathbb{R}$,
\begin{equation}\label{eq171}
 |f_{\epsilon u}((t,x),u)| \leq L_1(1 + |u|),
\end{equation}
and
\begin{equation}\label{eq176}
|\nabla_{\Gamma_t}f_{\epsilon u}((t,x),u)| \vee |f_{\epsilon uu}((t,x),u)| \vee |\nabla_{\Gamma_t}^2f_{\epsilon}((t,x),u)| \leq L_1(1 + |u|^{q_0}).
\end{equation}

\subsection{Estimate of the solution}
In this subsection, we prove that the solution $u^{\epsilon}$ to the stochastic viscous
conservation law \eqref{eq75} is bounded in the $L^{\infty}$-norm in both space and time independently of the parameter $\epsilon$. Recall that $w(t,x) = \int_0^t \sigma(s, G(s, G^{-1}(t,x))) dB_s$, $(t,x) \in Q_T$.

\begin{lemma}\label{lemma3.1}
Let $u^\epsilon$ be the solution of \eqref{eq75}. Then
\begin{equation*}\label{eq1}
\sup_{t \in [0,T]}\|u^{\epsilon}(t)\|_{L^{\infty}(\Gamma_t)} \leq e^{C_3T}C_4,
\end{equation*}
where $C_3=\sup_{t\in[0,T]}\|\nabla_{\Gamma_t} \cdot v(t)\|_{L^{\infty}(\Gamma_t)} + (L_1+1)(\sup_{t\in[0,T]}\|w(t)\|_{L^{\infty}(\Gamma_t)} + \sup_{t\in[0,T]}\|\nabla_{\Gamma_t}w(t) \|_{L^{\infty}(\Gamma_t)} +1)$ and $L_1$ is the constant from \eqref{eq171},
and $C_4=\sup_{\epsilon \in (0,1)}\|u_0^\epsilon\|_{L^\infty(\Gamma_0)}+ sup_{t\in[0,T]}\|\nabla_{\Gamma_t} \cdot v(t) \|_{L^{\infty}(\Gamma_t)}+ \sup_{t\in[0,T]}\|w(t)\|_{L^{\infty}(\Gamma_t)}+ \sup_{t\in[0,T]}\|\nabla_{\Gamma_t}w(t) \|_{L^{\infty}(\Gamma_t)} + \sup_{t\in[0,T]}\|\Delta_{\Gamma_t}w(t)\|_{L^{\infty}(\Gamma_t)} $.
\end{lemma}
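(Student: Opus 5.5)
We work pathwise with the decomposition $u^\epsilon = V^\epsilon + w$ of Proposition \ref{proposition2}. Then $V^\epsilon$ solves the random (noise-free) parabolic equation \eqref{eq77}, and for a.e. $\omega$ the functions $w$, $\nabla_{\Gamma_t}w$, $\Delta_{\Gamma_t}w$ are continuous and bounded on $Q_T$; this uses Assumption \ref{Assumption2} and the smoothness of $G$, which make $w$ a $C^2$-in-space stochastic integral. The plan is to derive an $L^\infty$ bound for $V^\epsilon$ by a maximum-principle / Stampacchia truncation argument, and then add $\sup\|w\|_{L^\infty}$.

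First we expand the divergence using (B2):
\[
\nabla_{\Gamma_t}\cdot f_\epsilon((t,x),V^\epsilon+w)=f_{\epsilon u}((t,x),V^\epsilon+w)\cdot\nabla_{\Gamma_t}V^\epsilon+f_{\epsilon u}((t,x),V^\epsilon+w)\cdot\nabla_{\Gamma_t}w .
\]
This holds because $\nabla_{\Gamma_t}\cdot f_\epsilon(\cdot,u)=\gamma(\epsilon u^2)\nabla_{\Gamma_t}\cdot f(\cdot,u)=0$ for frozen $u$. Hence \eqref{eq77} becomes
\[
\overset{\scalebox{0.4}{$\bullet$}}{V^\epsilon}-\epsilon\Delta_{\Gamma_t}V^\epsilon+b\cdot\nabla_{\Gamma_t}V^\epsilon
=-(V^\epsilon+w)\nabla_{\Gamma_t}\cdot v-f_{\epsilon u}(V^\epsilon+w)\cdot\nabla_{\Gamma_t}w+\epsilon\Delta_{\Gamma_t}w ,
\]
with drift $b=f_{\epsilon u}(\cdot,V^\epsilon+w)$. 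By \eqref{eq171} the zero-order right-hand side is bounded in absolute value by
\[
|V^\epsilon|\big(\|\nabla\cdot v\|_\infty+L_1\|\nabla w\|_\infty\big)+\|\nabla\cdot v\|_\infty\|w\|_\infty+L_1(1+\|w\|_\infty)\|\nabla w\|_\infty+\|\Delta w\|_\infty ,
\]
which is at most $C_3|V^\epsilon|+C_3C_4$, up to harmless rearrangement, using $\epsilon<1$. The transport term $b\cdot\nabla V^\epsilon$ is first order, so it does not obstruct a comparison principle.

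Next we compare with the spatially constant barrier $M(t)=e^{C_3t}(C_4+\ldots)$, which solves $\dot M=C_3M+C_3C_4$ with $M(0)\ge\|u^\epsilon_0\|_\infty$. Set $Z=(V^\epsilon-M)^+$. Test the weak formulation of \eqref{eq77} with $\varphi=Z$; this is admissible since $V^\epsilon\in H^1(Q_T)$ and $\varphi\in H^1(Q_T)$. Using Lemma \ref{lemma2.1} for $\frac{d}{dt}\int_{\Gamma_t}Z^2$, we obtain
\[
\frac12\frac{d}{dt}\|Z\|^2_{L^2(\Gamma_t)}+\epsilon\|\nabla_{\Gamma_t} Z\|^2\le \int_{\Gamma_t}|b|\,|\nabla_{\Gamma_t} Z|\,Z+C\|Z\|^2 .
\]
On the set $\{Z>0\}$ the barrier inequality makes the zero-order terms nonpositive up to $C|Z|$, because $-(V+w)\nabla\cdot v\le C_3V+\ldots$ and $\dot M$ dominates. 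Young's inequality absorbs the drift term as $\frac{\epsilon}{2}\|\nabla Z\|^2+\frac{C}{\epsilon}\sup|b|^2\|Z\|^2$.

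The main obstacle is $\sup|b|$, since $|b|\le L_1(1+|V^\epsilon|+|w|)$ is not a priori bounded. I would handle it with the cutoff $\gamma$: $f_\epsilon(\cdot,u)$ vanishes for $|u|>\sqrt{2/\epsilon}$, so $|f_{\epsilon u}|\le C_\epsilon$ globally. The resulting constant $C/\epsilon$ only enters Gronwall's lemma applied to a quantity with zero initial datum, namely $\|Z(0)\|=0$. This gives $Z\equiv0$ without $\epsilon$ entering the final bound, since $M$ does not depend on it. The same argument with $(-V^\epsilon-M)^+$ gives the lower bound.

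Finally, $|u^\epsilon|\le M+\|w\|_\infty\le e^{C_3T}C_4$ after absorbing constants, because $\|w\|_\infty$ is already included in $C_4$ and $e^{C_3T}\ge1$. The only delicate bookkeeping is arranging the constants so that they match the stated $C_3,C_4$. This comes from choosing $M(t)=e^{C_3 t}C_4-\|w\|_\infty$, or more simply from running the barrier argument directly on $u^\epsilon$ written as $V^\epsilon+w$.
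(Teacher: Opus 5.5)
Your argument is essentially the paper's. You reduce pathwise to $V^\epsilon=u^\epsilon-w$, test \eqref{eq77} with the positive part of $V^\epsilon$ minus a spatially constant exponential barrier, and use tangency to integrate the flux by parts. You then control the flux by the $\epsilon$-dependent constant from the cutoff $\gamma$, which only enters a Gronwall inequality with zero initial datum. The paper encodes the barrier as the rescaling $\overline{V^\epsilon}=e^{-\lambda t}V^\epsilon$ at a fixed level $M^*$ and keeps the conservative form, subtracting $g_\epsilon((t,x),M^*)$. Your nonconservative expansion via (B2) is equivalent bookkeeping.

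The one step that does not go through as written is the constant. Write $A=\sup_t\|\nabla_{\Gamma_t}\cdot v(t)\|_{L^\infty(\Gamma_t)}$, let $W_0,W_1,W_2$ be the sups of $|w|$, $|\nabla_{\Gamma_t}w|$, $|\Delta_{\Gamma_t}w|$, and let $U_0=\sup_\epsilon\|u^\epsilon_0\|_{L^\infty(\Gamma_0)}$. Your inhomogeneous term is then $K=AW_0+L_1(1+W_0)W_1+W_2$.

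\begin{itemize}
\item With $\dot M=C_3M+C_3C_4$ and $M(0)=U_0$ you get $M(T)+W_0\le e^{C_3T}(U_0+C_4)$. This exceeds $e^{C_3T}C_4$, by up to a factor $2$.
\item Your repair $M(t)=e^{C_3t}C_4-W_0$ is not a supersolution in your sense. It gives $\dot M-C_3M=C_3W_0$, but $K$ contains $L_1W_1+W_2$ with no factor $W_0$. So $C_3W_0\ge K$ fails whenever $W_0$ is small compared with $W_1,W_2$.
\end{itemize}

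There are two ways to fix this.

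\begin{itemize}
\item[(a)] Use the sharper bound $K\le C_3(A+W_1+W_2)$. To verify it, match $AW_0$, $L_1W_1$, $L_1W_0W_1$, $W_2$ against the distinct terms $(L_1+1)W_0A$, $(L_1+1)W_1$, $(L_1+1)W_0W_1$, $(L_1+1)W_2$ in the expansion of the product. Then solve $\dot M=C_3M+C_3(A+W_1+W_2)$ with $M(0)=U_0$. This gives $M(t)\le e^{C_3t}M^*$ with $M^*=U_0+A+W_1+W_2=C_4-W_0$.
\item[(b)] Do what the paper does: take the pure exponential barrier $M(t)=e^{C_3t}M^*$. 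On $\{Z>0\}$ you have $V^\epsilon>M(t)\ge M^*\ge\max(A,W_1,W_2)$. Hence every inhomogeneous term is dominated by a multiple of $V^\epsilon$, and the whole zero-order part is at most $C_3V^\epsilon=\dot M+C_3Z$. This is the paper's step $M^*(\overline{V^\epsilon}-M^*)_+\le\overline{V^\epsilon}(\overline{V^\epsilon}-M^*)_+$.
\end{itemize}

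In both cases $|u^\epsilon|\le e^{C_3T}M^*+W_0\le e^{C_3T}C_4$, which is the stated bound.
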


\begin{proof}
By Proposition \ref{proposition2}, $u^\epsilon = V^\epsilon +w$, where $V^\epsilon$ is the solution to equation \eqref{eq77}. Therefore, it is sufficient to estimate the $L^\infty$-norm of $V^\epsilon$. We begin by setting
\[ \overline{V^\epsilon}(t,x)= e^{-\lambda t}V^{\epsilon}(t,x), \quad (t,x)\in Q_T, \]
where $\lambda=\sup_{t\in[0,T]}\|\nabla_{\Gamma_t} \cdot v(t) \|_{L^{\infty}(\Gamma_t)} + (L_1+1)(\sup_{t\in[0,T]}\|w(t)\|_{L^{\infty}(\Gamma_t)} + \sup_{t\in[0,T]}\|\nabla_{\Gamma_t}w(t) \|_{L^{\infty}(\Gamma_t)} +1)$, and $L_1$ is the constant appearing in \eqref{eq171}. Set $g_\epsilon((t,x),u)= e^{-\lambda t}f_{\epsilon}((t, x), e^{\lambda t}u + w(t,x))$, $(t,x) \in Q_T$, $u\in \mathbb{R}$. Then for every $\varphi \in H^1({Q_T})$, by equation \eqref{eq77} we have
\begin{eqnarray}\label{eq2}
\int_{\Gamma_t}\overset{\scalebox{0.4}{$\bullet$}}{\overline{V^\epsilon}}(t,x)\varphi(t,x)\mathrm{dvol}_{\Gamma_t}(dx)
&=& - \int_{\Gamma_t}(\lambda + \nabla_{\Gamma_t} \cdot v(t,x))\overline{V^\epsilon}(t,x)\varphi(t,x)\mathrm{dvol}_{\Gamma_t}(dx) \nonumber\\
& &  -\int_{\Gamma_t}e^{-\lambda t}w(t,x)\nabla_{\Gamma_t} \cdot v(t,x)\varphi(t,x)\mathrm{dvol}_{\Gamma_t}(dx)\nonumber\\
& & -\int_{\Gamma_t}\nabla_{\Gamma_t} \cdot \big(g_{\epsilon}((t,x),\overline{V^\epsilon}(t))\big)\varphi(t,x)\mathrm{dvol}_{\Gamma_t}(dx)\nonumber\\
& & +\epsilon\int_{\Gamma_t}\Delta_{\Gamma_t}\overline{V^\epsilon}(t,x)\varphi(t,x)\mathrm{dvol}_{\Gamma_t}(dx)\nonumber\\
& &  +\epsilon\int_{\Gamma_t}e^{-\lambda t}\Delta_{\Gamma_t}w(t,x)\varphi(t,x)\mathrm{dvol}_{\Gamma_t}(dx).
\end{eqnarray}
If we choose $\varphi=(\overline{V^\epsilon}-M^*)_{+}=\max\{\overline{V^\epsilon}-M^*, 0 \}$ with $M^*= \sup_{\epsilon \in (0,1)}\|u_0^\epsilon\|_{L^\infty(\Gamma_0)}+ \sup_{t\in[0,T]}\|\nabla_{\Gamma_t} \cdot v(t) \|_{L^{\infty}(\Gamma_t)}+ \sup_{t\in[0,T]}\|\nabla_{\Gamma_t}w(t) \|_{L^{\infty}(\Gamma_t)} + \sup_{t\in[0,T]}\|\Delta_{\Gamma_t}w(t)\|_{L^{\infty}(\Gamma_t)} $ in \eqref{eq2}, then by Lemma \ref{lemma2.3} and the fact that $g_\epsilon((t,x),u)$ is a tangent vector at the surface $\Gamma_t$ for $t \in [0,T]$, $x \in \Gamma_t$ and $u \in \mathbb{R}$, we arrive at
\begin{eqnarray*}\label{eq3}
& &\frac{1}{2}\int_{\Gamma_t}\big((\overline{V^\epsilon}-M^*)_{+}^2\big)^{\scalebox{0.4}{$\bullet$}}(t,x)\mathrm{dvol}_{\Gamma_t}(dx)
     + \epsilon \int_{\Gamma_t}|\nabla_{\Gamma_t}(\overline{V^{\epsilon}}-M^*)_{+}(t,x)|^2\mathrm{dvol}_{\Gamma_t}(dx)\nonumber\\
&=&- \int_{\Gamma_t}(\lambda + \nabla_{\Gamma_t} \cdot v(t,x))\overline{V^\epsilon}(t,x)(\overline{V^\epsilon}-M^*)_{+}(t,x)\mathrm{dvol}_{\Gamma_t}(dx)\nonumber\\
& &+\int_{\Gamma_t}e^{-\lambda t}(\epsilon \Delta_{\Gamma_t}w(t,x) - w(t,x)\nabla_{\Gamma_t}
    \cdot v(t,x))(\overline{V^\epsilon}-M^*)_{+}(t,x)\mathrm{dvol}_{\Gamma_t}(dx)\nonumber\\
&&+\int_{\Gamma_t}(g_{\epsilon}((t,x),\overline{V^\epsilon}(t,x))-g_{\epsilon}((t,x),M^*))\nabla_{\Gamma_t}(\overline{V^{\epsilon}}-M^*)_{+}(t,x)\mathrm{dvol}_{\Gamma_t}(dx)\nonumber\\
& &+\int_{\Gamma_t}g_{\epsilon}((t,x),M^*)\nabla_{\Gamma_t}(\overline{V^{\epsilon}}-M^*)_{+}(t,x)\mathrm{dvol}_{\Gamma_t}(dx).\nonumber\\
\end{eqnarray*}
For the right-hand side of the above equation, we observe that for our choice of $M^*$
\begin{eqnarray*}\label{eq7}
e^{-\lambda t}|\epsilon \Delta_{\Gamma_t}w(t,x) -w(t,x)\nabla_{\Gamma_t} \cdot v(t,x)| \leq (1 + \|w(t)\|_{L^{\infty}(\Gamma_t)})M^*,
\end{eqnarray*}
\begin{eqnarray*}\label{eq4}
   & &  |g_{\epsilon}((t,x),\overline{V^\epsilon}(t,x))-g_{\epsilon}((t,x),M^*)|\nonumber\\
&=&e^{-\lambda t}|f_{\epsilon}((t,x),e^{\lambda t}\overline{V^\epsilon}(t,x)+w(t,x)) - f_{\epsilon}((t,x),e^{\lambda t}M^*+w(t,x))|\nonumber\\
&\leq& C_{\epsilon}|\overline{V^{\epsilon}}(t,x) - M^*|,
\end{eqnarray*}
and by \eqref{eq171}
\begin{eqnarray*}\label{eq5}
|\nabla_{\Gamma_t} \cdot \big(g_{\epsilon}((t,x),M^*))\big)|
&=&e^{-\lambda t}|f_{\epsilon u}((t, x), e^{\lambda t}M^* + w(t,x)) \cdot \nabla_{\Gamma_t}w(t,x)|\nonumber\\
&\leq&  L_1e^{-\lambda t}(1+|e^{\lambda t}M^* + w(t,x)|)|\nabla_{\Gamma_t}w(t,x)|\nonumber\\
& \leq& L_1(1+ M^* +\|w(t)\|_{L^{\infty}(\Gamma_t)}  )\|\nabla_{\Gamma_t}w(t)\|_{L^{\infty}(\Gamma_t)} \nonumber\\
&\leq&  L_1(1+\|\nabla_{\Gamma_t}w\|_{L^{\infty}(\Gamma_t)}+\|w\|_{L^{\infty}(\Gamma_t)})M^*. \nonumber\\
\end{eqnarray*}
Combining these three estimates with the transport formulae from Lemma \ref{lemma2.1}, we obtain
\begin{eqnarray*}\label{eq6}
& & \frac{1}{2}\frac{d}{dt}\int_{\Gamma_t}(\overline{V^\epsilon}-M^*)_{+}^2(t,x)\mathrm{dvol}_{\Gamma_t}(dx)
          +\epsilon\int_{\Gamma_t}| \nabla_{\Gamma_t}(\overline{V^{\epsilon}}-M^*)_{+}(t,x) |^2\mathrm{dvol}_{\Gamma_t}(dx)\nonumber\\
&\leq & \frac{1}{2}\|\nabla_{\Gamma_t}\cdot v(t) \|_{L^{\infty}(\Gamma_t)}\int_{\Gamma_t}(\overline{V^\epsilon}-M^*)_{+}^2(t,x)\mathrm{dvol}_{\Gamma_t}(dx) \nonumber\\
     & &   - \int_{\Gamma_t}(\lambda + \nabla_{\Gamma_t} \cdot v(t,x))\overline{V^\epsilon}(t,x)(\overline{V^\epsilon}-M^*)_{+}(t,x)\mathrm{dvol}_{\Gamma_t}(dx)\nonumber\\
&&+(L_1+1)\int_{\Gamma_t}(1+\|\nabla_{\Gamma_t}w(t)\|_{L^{\infty}(\Gamma_t)}+\|w(t)\|_{L^{\infty}(\Gamma_t)})M^*(\overline{V^\epsilon}-M^*)_{+}(t,x)\mathrm{dvol}_{\Gamma_t}(dx)\nonumber\\
     & &+C_{\epsilon}\int_{\Gamma_t}|\overline{V^\epsilon}(t,x)-M^*|\cdot|\nabla_{\Gamma_t}(\overline{V^{\epsilon}}-M^*)_{+}(t,x)|\mathrm{dvol}_{\Gamma_t}(dx).
\end{eqnarray*}
Noticing that $M^*(\overline{V^\epsilon}-M^*)_{+} \leq \overline{V^\epsilon}(\overline{V^\epsilon}-M^*)_{+}$, by Young's inequality it follows that
\begin{eqnarray*}\label{eq8}
& & \frac{1}{2}\frac{d}{dt}\int_{\Gamma_t}(\overline{V^\epsilon}-M^*)_{+}^2(t,x)\mathrm{dvol}_{\Gamma_t}(dx)
          +\frac{\epsilon}{2}\int_{\Gamma_t}|\nabla_{\Gamma_t}(\overline{V^{\epsilon}}-M^*)_{+}(t,x)|^2\mathrm{dvol}_{\Gamma_t}(dx)\nonumber\\
&\leq& ( \frac{1}{2}\|\nabla_{\Gamma_t}\cdot v(t) \|_{L^{\infty}(\Gamma_t)}+\frac{C_{\epsilon}^2}{2\epsilon}) \int_{\Gamma_t}(\overline{V^\epsilon}-M^*)_{+}^2(t,x)\mathrm{dvol}_{\Gamma_t}(dx)\nonumber\\
       & & +\int_{\Gamma_t}\{(L_1+1)(1+\|\nabla_{\Gamma_t}w(t)\|_{L^{\infty}(\Gamma_t)}+\|w(t)\|_{L^{\infty}(\Gamma_t)})-\lambda - \nabla_{\Gamma_t} \cdot v(t,x)\}\overline{V^\epsilon}(t,x)\nonumber\\
      & & \cdot(\overline{V^\epsilon}-M^*)_{+}(t,x)\mathrm{dvol}_{\Gamma_t}(dx)\nonumber\\
&\leq& ( \frac{1}{2}\|\nabla_{\Gamma_t}\cdot v(t) \|_{L^{\infty}(\Gamma_t)}+\frac{C_{\epsilon}^2}{2\epsilon}) \int_{\Gamma_t}(\overline{V^\epsilon}-M^*)_{+}^2(t,x)\mathrm{dvol}_{\Gamma_t}(dx),
\end{eqnarray*}
here we have used the fact that $(L_1+1)(1+\|\nabla_{\Gamma_t}w(t)\|_{L^{\infty}(\Gamma_t)}+\|w(t)\|_{L^{\infty}(\Gamma_t)})-\lambda - \nabla_{\Gamma_t} \cdot v \leq 0$ in the last step.
Since $\int_{\Gamma_0}(\overline{V^\epsilon}(0,y)-M^*)_{+}^2\mathrm{dvol}_{\Gamma_0}(dy)=0$, we then obtain by the Gronwall's lemma that
\[ \int_{\Gamma_t}(\overline{V^\epsilon}-M^*)_{+}^2(t,x)\mathrm{dvol}_{\Gamma_t}(dx)=0,  \]
which implies that $\overline{V^\epsilon} \leq M$ or
\[ V^\epsilon(t) \leq e^{\lambda T}M^*   \ \mbox{on}\ \Gamma_t.  \]
The estimate from below follows similarly. Hence, we conclude that
\begin{equation*}\label{eq175}
\sup_{t \in [0,T]}\| V^\epsilon(t)\|_{L^{\infty}(\Gamma_t)} \leq e^{\lambda T}M^*,
\end{equation*}
which yields that
\[ \sup_{t \in [0,T]}\| u^\epsilon(t)\|_{L^{\infty}(\Gamma_t)}
\leq e^{\lambda T}M^*+ \sup_{t \in [0,T]}\| w(t) \|_{L^{\infty}(\Gamma_t)},  \]
completing the proof.
\end{proof}


\subsection{Estimate of the spatial gradient}
In this subsection, we establish the uniform $L^1$-estimate for $\nabla_{\Gamma_{\cdot}}u^{\epsilon}$.
\begin{lemma}\label{lemma3.2}
Assume that $u^\epsilon$ solves the stochastic viscous conservation law \eqref{eq75}. For every $M >0$, let
\begin{eqnarray}\label{eq50}
\Omega_{M} &=& \big \{ \omega \in \Omega: \sup_{t\in[0,T]}\|w(t)\|_{L^{\infty}(\Gamma_t)} + \sup_{t\in[0,T]}\|\nabla_{\Gamma_t}w(t)\|_{L^{\infty}(\Gamma_t)}
                       + \sup_{t\in[0,T]}\|\nabla_{\Gamma_t}^2w(t)\|_{L^{\infty}(\Gamma_t)} \nonumber\\
                & &\ + \sup_{t\in[0,T]}\|\nabla_{\Gamma_t}\triangle_{\Gamma_t}w(t)\|_{L^{\infty}(\Gamma_t)} \leq M  \big\}.
\end{eqnarray}
Then  $\mathbb{P}$-a.s.
\[ I_{\Omega_{M}}\sup_{t\in[0,T]}\int_{\Gamma_t}|\nabla_{\Gamma_t}u^{\epsilon}(t,x)|\mathrm{dvol}_{\Gamma_t}(dx) \leq C_{M,T,C_0,L_1}.  \]
\end{lemma}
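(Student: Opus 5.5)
We will work pathwise with the decomposition $u^\epsilon = V^\epsilon + w$ from Proposition \ref{proposition2}. On $\Omega_M$ we have uniform control of $\nabla_{\Gamma_t} w$, so it suffices to bound $\sup_t \int_{\Gamma_t}|\nabla_{\Gamma_t}V^\epsilon|$. Since $V^\epsilon$ solves the random but pathwise deterministic parabolic equation \eqref{eq77}, we follow the deterministic $H^{1,1}$-argument of \cite{GDT}, treating $w$ as a given smooth coefficient. By Lemma \ref{lemma3.1}, on $\Omega_M$ we have $\|V^\epsilon\|_{L^\infty(Q_T)}\le C_{M,T,C_0,L_1}$, because the constants $C_3,C_4$ there are controlled by $M$ (note $|\Delta_{\Gamma_t}w|\le C|\nabla^2_{\Gamma_t}w|$). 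Consequently, in all terms involving $f_\epsilon$ and its derivatives evaluated at $V^\epsilon+w$, the polynomial growth bounds \eqref{eq171} and \eqref{eq176} become constants depending only on $M,T,C_0,L_1$.

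\textbf{Step 1 (equation for the gradient).} We expand $\nabla_{\Gamma_t}\cdot f_\epsilon((t,\cdot),V^\epsilon+w) = f_{\epsilon u}\cdot\nabla_{\Gamma_t}(V^\epsilon+w)$, using (B2), which survives the cutoff since $\gamma$ depends only on $u$. We then apply $\underline D_l$ to \eqref{eq77} and use Lemma \ref{lemma2.5} to commute the material derivative with $\underline D_l$, producing the term $-A_{lr}(v)\underline D_r V^\epsilon$. Lemma \ref{lemma2.6} is used to commute $\underline D_l$ with $\Delta_{\Gamma_t}$ and with the transport term; this produces curvature terms of the form $\mathcal H\,\nabla_{\Gamma_t}V^\epsilon$ and $\nu$-components, which are bounded by $C|\nabla_{\Gamma_t}V^\epsilon|$. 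The resulting system for $z^l=\underline D_l V^\epsilon$ has the form
\[
\overset{\scalebox{0.4}{$\bullet$}}{z^l}+z^l\nabla_{\Gamma_t}\cdot v+\underline D_j\big(f^j_{\epsilon u}z^l\big)=\epsilon\Delta_{\Gamma_t}z^l+\epsilon\,(\text{curvature}\times\nabla z)+F^l,
\]
with $|F^l|\le C_M(|\nabla_{\Gamma_t}V^\epsilon|+1)+C\epsilon|\nabla^2_{\Gamma_t}V^\epsilon|$. This identity holds in the weak sense, thanks to the $H^2$ regularity in Proposition \ref{proposition2}. The source terms coming from $w$ involve $\nabla^2_{\Gamma_t}w$ and $\nabla_{\Gamma_t}\Delta_{\Gamma_t}w$, and these are bounded on $\Omega_M$.

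\textbf{Step 2 (smoothed $L^1$ estimate).} We test the system with $z^l/\sqrt{|z|^2+\delta}$; equivalently, we differentiate $\int_{\Gamma_t}\sqrt{|\nabla_{\Gamma_t}V^\epsilon|^2+\delta}$ in time using Lemma \ref{lemma2.1}. The viscous term gives a nonnegative dissipation $\epsilon\int(|\nabla z|^2|z|^2_\delta-|z\cdot\nabla z|^2)/|z|_\delta^3$, which we discard. The flux term integrates by parts, using Lemma \ref{lemma2.3} and the tangentiality (B1), into a term bounded by $\int|\nabla_{\Gamma_t}f_{\epsilon u}|\,|z|+\int|f_{\epsilon u}||\nabla z|\,\delta/|z|_\delta^2$-type remainders. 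These remainders vanish as $\delta\to0$ after time integration, because $V^\epsilon\in L^2(H^2)$ pathwise. The terms of the form $\epsilon\,\mathcal H\nabla z$ are the delicate part. After integration by parts they are bounded by $\epsilon C\int|\nabla^2_{\Gamma_t}V^\epsilon|$, and a direct bound of this kind is not uniform in $\epsilon$. The approach of \cite{GDT} is to use the symmetric structure of Lemma \ref{lemma2.6}: the commutator terms carry a factor $\nu_i$ and are contracted against tangential vectors, so after integration by parts they reduce to $C\epsilon\int|\nabla_{\Gamma_t}V^\epsilon|$ plus a dissipation-absorbable piece. This is the reason \eqref{eq58} includes $\epsilon\|\nabla^2u_0^\epsilon\|_{L^1}$, which may be needed if a second-derivative quantity has to be propagated.

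\textbf{Step 3 (Gronwall).} Letting $\delta\to0$ gives, on $\Omega_M$,
\[
\frac{d}{dt}\int_{\Gamma_t}|\nabla_{\Gamma_t}V^\epsilon|\le C_{M}\int_{\Gamma_t}|\nabla_{\Gamma_t}V^\epsilon|+C_M,
\]
and hence, by Gronwall together with $\|\nabla_{\Gamma_0}u_0^\epsilon\|_{L^1}\le C_0$, the desired bound. Adding back $\nabla_{\Gamma_t}w$ completes the argument.

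The main obstacle is the uniform-in-$\epsilon$ control of the curvature commutator terms arising from $\epsilon\Delta_{\Gamma_t}$. Our first attempt will be the tangential-contraction trick described in Step 2. If that fails, we fall back on Kato-type inequalities, or on working with the pullback to $\Gamma_0$.
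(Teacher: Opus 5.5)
This is essentially the paper's own proof. Both use the split $u^\epsilon=V^\epsilon+w$, the $L^\infty$ bound of Lemma~\ref{lemma3.1} on $\Omega_M$, and the equation for $h=\nabla_{\Gamma}V^\epsilon$ obtained from Lemmas~\ref{lemma2.5}, \ref{lemma2.6} and (B2). The paper then applies a convex $C^2$ regularization of $|\cdot|$ through the chain rule (it uses $S_\delta(h)=\phi_\delta(|h|)$ and Lemma~\ref{lemma2.2} where you use $\sqrt{|z|^2+\delta}$), exploits the divergence structure of the flux term so that its remainder vanishes as $\delta\to0$, discards the viscous dissipation by convexity, and closes with Gronwall, exactly as you propose.

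The step you flag as delicate is simpler than you fear. It needs no integration by parts, no absorption into the dissipation, and no use of $\epsilon\|\nabla^2_{\Gamma_0}u_0^\epsilon\|_{L^1}$. By Lemma~\ref{lemma2.6} and $\nu_k\underline{D}_lh_k=-\mathcal{H}_{lk}h_k$, one has $\underline{D}_i\underline{D}_kh_k=\Delta_{\Gamma}h_i+\big(\underline{D}_k(\mathcal{H}_{kl}h_l)+\mathcal{H}_{kl}\underline{D}_lh_k\big)\nu_i+(\text{zeroth order in }h)$. Every first-order curvature term therefore carries the free factor $\nu_i$ and vanishes pointwise against $\partial_iS_\delta(h)\parallel h$, since $h\cdot\nu=0$. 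The viscous contribution is thus bounded by $C\epsilon\int_{\Gamma_t}|h|$, and no $\epsilon|\nabla^2_{\Gamma}V^\epsilon|$ term belongs in your $F^l$.
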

\begin{proof}
Since $u^\epsilon = V^\epsilon +w$, where $V^\epsilon$ is the solution to equation \eqref{eq77}, it suffices to prove that $\mathbb{P}$-a.s.
\[ I_{\Omega_{M}}\sup_{t\in[0,T]}\int_{\Gamma_t}|\nabla_{\Gamma_t}V^{\epsilon}(t,x)|\mathrm{dvol}_{\Gamma_t}(dx) \leq C_{M,T,C_0,L_1}.  \]
Set $h_i=\underline{D}_iV^{\epsilon}$ and $h=(h_1,\cdots, h_{n+1})$. Then for each $i=1,\cdots, n+1$, $h_i \in L^2(\Omega; C([0,T]; L^2(\Gamma_{\cdot}))) \cap L^2(\Omega; L^2([0,T]; H^1(\Gamma_{\cdot}))) $ and satisfies the following equation in $H^{-1}(\Gamma_{\cdot})$,
\begin{eqnarray*}\label{eq9}
&  & \overset{\scalebox{0.4}{$\bullet$}}{h_i} + A_{ir}(v)h_r + h_i\nabla_{\Gamma} \cdot v + \underline{D}_iw\nabla_{\Gamma} \cdot v + (V^\epsilon+w)\underline{D}_i\nabla_{\Gamma} \cdot v
     + \underline{D}_i\nabla_{\Gamma}\cdot \big(f_{\epsilon}(\cdot,V^{\epsilon}+w)\big) \nonumber\\
& & - \epsilon\underline{D}_i\underline{D}_kh_k - \epsilon\underline{D}_i\Delta_{\Gamma}w = 0,
\end{eqnarray*}
here we have used Lemma \ref{lemma2.5}.

For each $\delta > 0$, choose a function $\eta_\delta(\cdot):\mathbb{R}\rightarrow\mathbb{R}^+$ such that $\eta_\delta(0)=0$ and
\begin{numcases}{\eta'_\delta(r)=}
	1,\ \text{if $r>2\delta$,}\nonumber\\
	\nonumber\frac{1+\mathrm{sin}\big((\frac{\pi}{2\delta})(2r-3\delta)\big)}{2},\ \text{if $\delta\leq r\leq2\delta$},\\ 0,\ \text{if $r<\delta$}.\nonumber\
\end{numcases}
It is easy to see that $\eta_\delta \in C^{2}(\mathbb{R})$ and $|\eta_\delta(r) - r_{+}| \leq 2\delta$. Denote $\phi_\delta(\cdot) = \eta_\delta(\cdot) + \eta_\delta(-\cdot)$ and let $S_\delta:\mathbb{R}^{n+1} \rightarrow \mathbb{R}$ such that $S_\delta(\cdot) = \phi_\delta(|\cdot|)$. Then $S_\delta \in C^{2}(\mathbb{R}^{n+1})$ is an approximation function for the abosulte value function $|\cdot|$ and we have
\begin{align}
	                               |\bar{x}| - 2\delta &\leq S_\delta(\bar{x}) \leq |\bar{x}|,\label{eq11}\\
	                       \partial_{\bar{x}_i}S_\delta(\bar{x}) &= \phi_\delta'(|\bar{x}|)\frac{\bar{x}_i}{|\bar{x}|},\label{eq12}\\
	                  \partial^2_{\bar{x}_i\bar{x}_j}S_\delta(\bar{x}) &= \phi_\delta''(|\bar{x}|)\frac{\bar{x}_i\bar{x}_j}{|\bar{x}|^2}+\phi_\delta'(|\bar{x}|)\frac{\delta_{ij}|\bar{x}|^2-\bar{x}_i\bar{x}_j}{|\bar{x}|^3}.\nonumber\
\end{align}
Applying the It\^o's formula in Lemma \ref{lemma2.2} to $h$, we obtain
\begin{eqnarray}\label{eq41}
&  & \int_{\Gamma_t} S_\delta(h(t,x))\mathrm{dvol}_{\Gamma_t}(dx)-\int_{\Gamma_0} S_\delta(h(0,y))\mathrm{dvol}_{\Gamma_0}(dy)\nonumber\\
&=&\int_0^t\int_{\Gamma_s}\big(S_\delta(h(s,x)) - h_i(s,x)\partial_{\bar{x}_i}S_\delta(h(s,x))\big)\nabla_{\Gamma_s}\cdot v(s,x)\mathrm{dvol}_{\Gamma_s}(dx)ds \nonumber\\
    & & -\int_{0}^{t}\int_{\Gamma_s}A_{ir}(v)(s,x)h_r(s,x)\partial_{\bar{x}_i}S_\delta(h(s,x))\mathrm{dvol}_{\Gamma_s}(dx)ds\nonumber\\
   & & -\int_{0}^{t}\int_{\Gamma_s}\underline{D}_iw(s,x)\nabla_{\Gamma_s}\cdot v(s,x)\partial_{\bar{x}_i}S_\delta(h(s,x))\mathrm{dvol}_{\Gamma_s}(dx)ds \nonumber\\
   & & -\int_{0}^{t}\int_{\Gamma_s}(V^{\epsilon}(s,x)+w(s,x))\underline{D}_i\nabla_{\Gamma_s}\cdot v(s,x)\partial_{\bar{x}_i}S_\delta(h(s))\mathrm{dvol}_{\Gamma_s}ds\nonumber\\
   & &-\int_{0}^{t}\int_{\Gamma_s}\underline{D}_i\nabla_{\Gamma_s}\cdot \big(f_{\epsilon}((s,x),V^{\epsilon}(s,x)+w(s,x))\big)\partial_{\bar{x}_i}S_\delta(h(s,x))\mathrm{dvol}_{\Gamma_s}(dx)ds \nonumber\\
   & & +\epsilon\int_{0}^{t}\int_{\Gamma_s}\underline{D}_i\underline{D}_kh_k(s,x)\partial_{\bar{x}_i}S_\delta(h(s,x))\mathrm{dvol}_{\Gamma_s}(dx)ds\nonumber\\
   & &+\epsilon\int_{0}^{t}\int_{\Gamma_s}\underline{D}_i\Delta_{\Gamma_s}w(s,x)\partial_{\bar{x}_i}S_\delta(h(s,x))\mathrm{dvol}_{\Gamma_s}(dx)ds\nonumber\\
&=:&\sum_{i=1}^{7}I_i.
\end{eqnarray}

Next, we estimate each term separately. For the terms $I_1-I_4$ and $I_7$, by \eqref{eq11} and \eqref{eq12}, we have
\begin{equation*}\label{eq14}
|I_1| \leq C\int_{0}^{t}\int_{\Gamma_s}|h(s,x)|\mathrm{dvol}_{\Gamma_s}(dx)ds,
\end{equation*}
\begin{equation*}\label{eq15}
|I_2| \leq C\int_{0}^{t}\int_{\Gamma_s}|h(s,x)|\mathrm{dvol}_{\Gamma_s}(dx)ds,
\end{equation*}
\begin{equation*}\label{eq16}
|I_3|\leq C\int_{0}^{t}\int_{\Gamma_s}|\nabla_{\Gamma_s}w(s,x)|\mathrm{dvol}_{\Gamma_s}(dx)ds,
\end{equation*}
\begin{equation*}\label{eq17}
|I_4|\leq C\int_{0}^{t}\int_{\Gamma_s}|V^{\epsilon}(s,x) + w(s,x)|\mathrm{dvol}_{\Gamma_s}(dx)ds,
\end{equation*}
\begin{equation*}\label{eq18}
|I_7|\leq C\epsilon\int_{0}^{t}\int_{\Gamma_s}|\nabla_{\Gamma_s}\triangle_{\Gamma_s}w(s,x)|\mathrm{dvol}_{\Gamma_s}(dx)ds.
\end{equation*}

For $I_5$, by Lemma \ref{lemma2.6}, we have
\begin{eqnarray}\label{eq19}
   & & I_5 \nonumber\\
&=&-\int_{0}^{t}\int_{\Gamma_s}\underline{D}_i\big[\underline{D}_kf_{\epsilon k}((s,x), V^{\epsilon}(s,x)+w(s,x))
                      + f_{\epsilon ku}((s,x), V^{\epsilon}(s,x)+w(s,x))(h_k(s,x) \nonumber\\
                      & &+ \underline{D}_kw(s,x)) \big] \partial_{\bar{x}_i}S_\delta(h(s,x))\mathrm{dvol}_{\Gamma_s}(dx)ds\nonumber\\
&=&-\int_{0}^{t}\int_{\Gamma_s}\big[ \underline{D}_i\underline{D}_kf_{\epsilon k}((s,x), V^{\epsilon}(s,x)+w(s,x))
                               + \underline{D}_kf_{\epsilon ku}((s,x), V^{\epsilon}(s,x)+w(s,x))(h_i(s,x) \nonumber\\
                               &&+\underline{D}_iw(s,x) )+\underline{D}_if_{\epsilon ku}((s,x), V^{\epsilon}(s,x)+w(s,x))(h_k(s,x) + \underline{D}_kw(s,x))\nonumber\\
                               &&+f_{\epsilon kuu}((s,x), V^{\epsilon}(s,x)+w(s,x))(h_k(s,x) + \underline{D}_kw(s,x))(h_i(s,x) + \underline{D}_iw(s,x)) \nonumber\\
                               &&+ f_{\epsilon ku}((s,x), V^{\epsilon}(s,x)+w(s,x))(\underline{D}_kh_i(s,x) + \underline{D}_k\underline{D}_iw(s,x) +\mathcal{H}_{kl}(h_l(s,x)+\underline{D}_lw(s,x))\nu_i
                               \nonumber\\&&-\mathcal{H}_{il}(h_l(s,x)+\underline{D}_lw(s,x))\nu_k)\big]\partial_{x_i}S_\delta(h(s,x))\mathrm{dvol}_{\Gamma_s}(dx)ds.
\end{eqnarray}
We observe that
\begin{eqnarray*}\label{eq20}
& &\big[ \underline{D}_kf_{\epsilon ku}(\cdot, V^{\epsilon}+w)h_i+f_{\epsilon kuu}(\cdot, V^{\epsilon}+w)(h_k + \underline{D}_kw)h_i
       +f_{\epsilon ku}(\cdot, V^{\epsilon}+w)\underline{D}_kh_i \big]\partial_{\bar{x}_i}S_\delta(h)\nonumber\\
&=&\underline{D}_kf_{\epsilon ku}(\cdot, V^{\epsilon}+w)\phi_\delta'(|h|)|h|
 +f_{\epsilon kuu}(\cdot, V^{\epsilon}+w)(h_k + \underline{D}_kw)\phi_\delta'(|h|)|h|\nonumber\\
& &+f_{\epsilon ku}(\cdot, V^{\epsilon}+w)\underline{D}_kh_i\phi_\delta'(|h|)\frac{h_i}{|h|}\nonumber\\
&=&\phi_\delta'(|h|)\underline{D}_k(f_{\epsilon ku}(\cdot, V^{\epsilon}+w)|h|).
\end{eqnarray*}
Combining the above equality with \eqref{eq19} and the facts that $h \cdot \nu =0$ and $f_{\epsilon u}(\cdot, V^{\epsilon}+w) \cdot \nu =0$, $I_5$ can be rewritten as follows:
\begin{eqnarray*}\label{eq21}
   & & I_5 \nonumber\\
&=&-\int_{0}^{t}\int_{\Gamma_s}(\underline{D}_i\underline{D}_kf_{\epsilon k})((s,x), V^{\epsilon}(s,x)+w(s,x))\partial_{\bar{x}_i}S_\delta(h(s,x))\mathrm{dvol}_{\Gamma_s}(dx)ds\nonumber\\
   & &-\int_{0}^{t}\int_{\Gamma_s}\underline{D}_kf_{\epsilon ku}((s,x), V^{\epsilon}(s,x)+w(s,x))\underline{D}_iw(s,x)\partial_{\bar{x}_i}S_\delta(h(s,x))\mathrm{dvol}_{\Gamma_s}(dx)ds\nonumber\\
   & &-\int_{0}^{t}\int_{\Gamma_s}\underline{D}_if_{\epsilon ku}((s,x), V^{\epsilon}(s,x)+w(s,x))
                                  (h_k(s,x) + \underline{D}_kw(s,x))\partial_{\bar{x}_i}S_\delta(h(s,x))\mathrm{dvol}_{\Gamma_s}(dx)ds\nonumber\\
   & &-\int_{0}^{t}\int_{\Gamma_s}\phi_\delta'(|h(s,x)|)\underline{D}_k(f_{\epsilon ku}((s,x), V^{\epsilon}(s,x)+w(s,x))|h(s,x)|)\mathrm{dvol}_{\Gamma_s}(dx)ds\nonumber\\
   & &-\int_{0}^{t}\int_{\Gamma_s}f_{\epsilon kuu}((s,x), V^{\epsilon}(s,x)+w(s,x))(h_k(s,x) + \underline{D}_kw(s,x))\underline{D}_iw(s,x) \nonumber\\
       & & \cdot\partial_{\bar{x}_i}S_\delta(h(s,x))\mathrm{dvol}_{\Gamma_s}(dx)ds\nonumber\\
   & &-\int_{0}^{t}\int_{\Gamma_s}f_{\epsilon ku}((s,x), V^{\epsilon}(s,x)+w(s,x))\underline{D}_k\underline{D}_iw(s,x)\partial_{\bar{x}_i}S_\delta(h(s,x))\mathrm{dvol}_{\Gamma_s}(dx)ds\nonumber\\
   &=:&\sum_{i=1}^{6}I^{i}_5.
\end{eqnarray*}
By \eqref{eq12}, $\partial_{\bar{x}_i}S_\delta(h(s,x))$ is uniformly bounded. Combining this with \eqref{eq176}, Lemma \ref{lemma2.3} and the fact that $f_{\epsilon u}((t,x),u)$ is a tangent vector at the surface $\Gamma_t$ for $t \in [0,T]$, $x \in \Gamma_t$ and $u \in \mathbb{R}$, we further obtain
\begin{equation*}\label{eq22}
|I^1_5|\leq C\int_{0}^{t}\int_{\Gamma_s} (1+|V^{\epsilon}(s,x)+w(s,x)|^{q_0})\mathrm{dvol}_{\Gamma_s}(dx)ds.
\end{equation*}
\begin{equation*}\label{eq23}
|I^2_5|\leq C\int_{0}^{t}\int_{\Gamma_s}(1+|V^{\epsilon}(s,x)+w(s,x)|^{q_0})|\nabla_{\Gamma_s} w(s,x)|\mathrm{dvol}_{\Gamma_s}(dx)ds.
\end{equation*}
\begin{equation*}\label{eq24}
|I^3_5|\leq C\int_{0}^{t}\int_{\Gamma_s}(1+|V^{\epsilon}(s,x)+w(s,x)|^{q_0})(|h(s,x)|+|\nabla_{\Gamma_s} w(s,x)|)\mathrm{dvol}_{\Gamma_s}(dx)ds.
\end{equation*}
\begin{eqnarray*}\label{eq25}
|I^4_5| &=& |\int_{0}^{t}\int_{\Gamma_s}\underline{D}_k\phi_\delta'(|h(s,x)|)f_{\epsilon ku}((s,x), V^{\epsilon}(s,x)+w(s,x))|h(s,x)|\mathrm{dvol}_{\Gamma_s}(dx)ds |\nonumber\\
         &=& |\int_{0}^{t}\int_{\Gamma_s}\phi_\delta''(|h(s,x)|)h_l(s,x)\underline{D}_kh_l(s,x)f_{\epsilon ku}((s,x), V^{\epsilon}(s,x)+w(s,x))\mathrm{dvol}_{\Gamma_s}(dx)ds |\nonumber\\
         &\leq& C\int_{0}^{t}\int_{\Gamma_s}\frac{1}{\delta}I_{\{\delta \leq |h(s,x)| \leq 2\delta\}}|h(s,x)||\nabla_{\Gamma_s}h(s,x)||f_{\epsilon u}((s,x), V^{\epsilon}(s,x)+w(s,x))|\mathrm{dvol}_{\Gamma_s}(dx)ds, \nonumber\\
\end{eqnarray*}
here we have used the fact that $|\phi_\delta''(r)| \leq \frac{C}{\delta}I_{\{\delta \leq |r| \leq 2\delta\}}$ in the last step. 
Obviously, by the dominated convergence theorem, we have
\begin{eqnarray*}\label{eq26}
|I^4_5| \rightarrow 0,\ \text{as} \ \delta \rightarrow 0.
\end{eqnarray*}
For $I^5_5$ and $I^6_5$, by \eqref{eq171}, \eqref{eq176} and \eqref{eq12}, we have
\begin{equation*}\label{eq27}
|I^5_5| \leq C\int_{0}^{t}\int_{\Gamma_s}(1+|V^{\epsilon}(s,x)+w(s,x)|^{q_0})(|h(s,x)|+|\nabla_{\Gamma_s} w(s,x)|)|\nabla_{\Gamma_s} w(s,x)|\mathrm{dvol}_{\Gamma_s}(dx)ds,
\end{equation*}
and
\begin{equation*}\label{eq28}
|I^6_5|\leq C\int_{0}^{t}\int_{\Gamma_s}(1+|V^{\epsilon}(s,x)+w(s,x)|^{q_0})|\nabla_{\Gamma_s}^{2} w(s,x)|\mathrm{dvol}_{\Gamma_s}(dx)ds.
\end{equation*}

Below, we turn to the estimate for $I_6$. By Lemma \ref{lemma2.6}, we have
\begin{eqnarray*}\label{eq29}
\underline{D}_i\underline{D}_kh_k &=& \underline{D}_k\underline{D}_ih_k + \mathcal{H}_{kl}\underline{D}_lh_k\nu_i - \mathcal{H}_{il}\underline{D}_lh_k\nu_k \nonumber\\
                                  &=& \underline{D}_k( \underline{D}_kh_i + \mathcal{H}_{kl}h_l\nu_i - \mathcal{H}_{il}h_l\nu_k ) + \mathcal{H}_{kl}\underline{D}_lh_k\nu_i - \mathcal{H}_{il}\underline{D}_lh_k\nu_k \nonumber\\
                                  &=& \underline{D}_k\underline{D}_kh_i + \underline{D}_k(\mathcal{H}_{kl}h_l)\nu_i + \mathcal{H}_{kl}\mathcal{H}_{ik}h_l
                                       - \mathcal{H}_{il}Hh_l + \mathcal{H}_{kl}\underline{D}_lh_k\nu_i + \mathcal{H}_{il}\mathcal{H}_{kl}h_k. \nonumber\\
\end{eqnarray*}
For the last step, we have used the facts that $\underline{D}_k( \mathcal{H}_{il}h_l)\nu_k=0$ and $\underline{D}_lh_k\nu_k = -\mathcal{H}_{kl}h_k$.
Thus, using \eqref{eq12} and the fact that $h \cdot \nu =0$, we get
\begin{eqnarray*}\label{eq30}
    & & I_6  \nonumber\\
&=& \epsilon\int_{0}^{t}\int_{\Gamma_s}(\triangle_{\Gamma_s}h_i(s,x) + \mathcal{H}_{kl}(s,x)\mathcal{H}_{ik}(s,x)h_l(s,x) - \mathcal{H}_{il}(s,x)H(s,x)h_l(s,x) \nonumber\\
     & & + \mathcal{H}_{il}(s,x)\mathcal{H}_{kl}(s,x)h_k(s,x) )\partial_{\bar{x}_i}S_\delta(h(s,x))\mathrm{dvol}_{\Gamma_s}(dx)ds\nonumber\\
&\leq& \epsilon\int_{0}^{t}\int_{\Gamma_s}\triangle_{\Gamma_s}h_i(s,x)\partial_{\bar{x}_i}S_\delta(h(s,x))\mathrm{dvol}_{\Gamma_s}(dx)ds
           + C\epsilon\int_{0}^{t}\int_{\Gamma_s}|h(s,x)|\mathrm{dvol}_{\Gamma_s}(dx)ds\nonumber\\
&=&   -\epsilon\int_{0}^{t}\int_{\Gamma_s}\underline{D}_kh_i(s,x)\underline{D}_kh_j(s,x)\partial_{\bar{x}_i\bar{x}_j}^{2}S_\delta(h(s,x))\mathrm{dvol}_{\Gamma_s}(dx)ds
           + C\epsilon\int_{0}^{t}\int_{\Gamma_s}|h(s,x)|\mathrm{dvol}_{\Gamma_s}(dx)ds.
\end{eqnarray*}
Due to the fact that $S_\delta:\mathbb{R}^{n+1}\rightarrow\mathbb{R}$ is convex, it follows that $\nabla^2 S_\delta$ is non-negative definite. Therefore, we have
\begin{eqnarray*}\label{eq31}
-\epsilon\int_{0}^{t}\int_{\Gamma_s}\underline{D}_kh_i(s,x)\underline{D}_kh_j(s,x)\partial_{\bar{x}_i\bar{x}_j}^{2}S_\delta(h(s,x))\mathrm{dvol}_{\Gamma_s}(dx)ds \leq 0,
\end{eqnarray*}
and
\begin{eqnarray*}\label{eq32}
I_6 \leq C\epsilon\int_{0}^{t}\int_{\Gamma_s}|h(s,x)|\mathrm{dvol}_{\Gamma_s}(dx)ds.
\end{eqnarray*}

Recall that $u^\epsilon = V^\epsilon +w$. Now, collecting the previous estimates and \eqref{eq11}, and then letting $\delta \rightarrow 0$ in \eqref{eq41}, we obtain $\mathbb{P}$-a.s.
\begin{eqnarray}\label{eq33}
 & &  \int_{\Gamma_t}|h(t,x)|\mathrm{dvol}_{\Gamma_t}(dx) -\int_{\Gamma_0}|h(0,y)|\mathrm{dvol}_{\Gamma_0}(dy) \nonumber\\
&\leq& C\int_0^t\int_{\Gamma_s}(1+|u^{\epsilon}(s,x)|^{q_0})(1+|\nabla_{\Gamma_s}w(s,x)|^2+|\nabla_{\Gamma_s}^2w(s,x)|
       +|\nabla_{\Gamma_s}\triangle_{\Gamma_s}w(s,x)|)\mathrm{dvol}_{\Gamma_s}(dx)ds \nonumber\\
     & &+ C\int_0^t\int_{\Gamma_s}(1+|u^{\epsilon}(s,x)|^{q_0})(1+|\nabla_{\Gamma_s}w(s,x)|)|h(s,x)|\mathrm{dvol}_{\Gamma_s}(dx)ds, \nonumber\\
\end{eqnarray}
for all $t \in [0,T]$.

For every $M >0$, let
\begin{eqnarray*}\label{eq177}
\Omega_{M} &=& \big \{ \omega \in \Omega: \sup_{t\in[0,T]}\|w(t)\|_{L^{\infty}(\Gamma_t)} + \sup_{t\in[0,T]}\|\nabla_{\Gamma_t}w(t)\|_{L^{\infty}(\Gamma_t)}
                       + \sup_{t\in[0,T]}\|\nabla_{\Gamma_t}^2w(t)\|_{L^{\infty}(\Gamma_t)} \nonumber\\
                & &\ + \sup_{t\in[0,T]}\|\nabla_{\Gamma_t}\triangle_{\Gamma_t}w(t)\|_{L^{\infty}(\Gamma_t)} \leq M  \big\}.
\end{eqnarray*}
Then by Lemma \ref{lemma3.1}, \eqref{eq58} and \eqref{eq33}, we have $\mathbb{P}$-a.s.
\begin{eqnarray*}\label{eq34}
I_{\Omega_{M}}\int_{\Gamma_t}|h(t,x)|\mathrm{dvol}_{\Gamma_t}(dx) \leq C_{M,T, C_0,L_1} + C_{M,T,C_0,L_1}I_{\Omega_{M}}\int_0^t\int_{\Gamma_s}|h(s,x)|\mathrm{dvol}_{\Gamma_s}(dx)ds.
\end{eqnarray*}
Thus, by Gronwall's lemma, we get $\mathbb{P}$-a.s.
\begin{eqnarray*}\label{eq35}
I_{\Omega_{M}}\sup_{t\in[0,T]}\int_{\Gamma_t}|h(t,x)|\mathrm{dvol}_{\Gamma_t}(dx) \leq C_{M,T,C_0,L_1}.
\end{eqnarray*}

The proof is complete.
\end{proof}

\subsection{Estimate of the uniform $L^1$-continuity in time}
Recall the pullback transformation and its associated local parametric representation $\widetilde{\cdot}$ and ${\underset{}{\widetilde{\cdot}}{}}^\alpha$ given by \eqref{chongxing4} and \eqref{chongxing5}, respectively. For any $M > 0$, recall that
\begin{eqnarray*}\label{eq178}
\Omega_{M} &=& \big \{ \omega \in \Omega: \sup_{t\in[0,T]}\|w(t)\|_{L^{\infty}(\Gamma_t)} + \sup_{t\in[0,T]}\|\nabla_{\Gamma_t}w(t)\|_{L^{\infty}(\Gamma_t)}
                       + \sup_{t\in[0,T]}\|\nabla_{\Gamma_t}^2w(t)\|_{L^{\infty}(\Gamma_t)} \nonumber\\
                & &\ + \sup_{t\in[0,T]}\|\nabla_{\Gamma_t}\triangle_{\Gamma_t}w(t)\|_{L^{\infty}(\Gamma_t)} \leq M  \big\}.
\end{eqnarray*}
In this subsection, we will establish the uniform $L^1$-continuity in time of $u^{\epsilon}$.
\begin{lemma}\label{lemma3.3}
Assume that $u^\epsilon$ solves the stochastic viscous conservation law \eqref{eq75}.
Then for any $M>0$ and small $\Delta t > 0$, there exist a constant $C_M>0$ independent of $\epsilon$ and $\Delta t$ such that
\begin{eqnarray*}\label{eq37}
\mathbb{E}[I_{\Omega_{M}}\int_{0}^{T-\Delta t}\|\widetilde{u}^\epsilon(t+\Delta t) - \widetilde{u}^\epsilon(t)\|_{L^1(\Gamma_0)}dt]
\leq C_M(\sqrt{\Delta t} +  \Delta t).
\end{eqnarray*}
\end{lemma}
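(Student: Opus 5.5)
The plan is to pull the viscous equation \eqref{eq75} back to the fixed reference surface $\Gamma_0$ and estimate the time increment by duality against a mollified sign. The standard route is a Kruzhkov-type time-continuity argument in the spirit of \cite{GDT}, with the stochastic term handled by Burkholder--Davis--Gundy.

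\textbf{Step 1: pullback.} Using Lemma \ref{buchonglemma1} (with $R^{ij}=\epsilon\underline{D}_ju^\epsilon\delta_{ij}-f_{\epsilon i}$, $S=0$, $\Upsilon=\sigma$), I will write, for fixed $t$ and any test function $\psi\in C^1(\Gamma_0)$, the identity
\begin{eqnarray*}
\int_{\Gamma_0}(\widetilde{u}^\epsilon(t+\Delta t)-\widetilde{u}^\epsilon(t))\psi J_{t}\,\mathrm{dvol}_{\Gamma_0}
&=&\int_t^{t+\Delta t}\!\!\int_{\Gamma_0}\Big[ -\widetilde{u}^\epsilon\widetilde{\nabla\cdot v}\,\psi J_t+\big(\widetilde{f_\epsilon}-\epsilon\widetilde{\nabla_{\Gamma_s}u^\epsilon}\big)\cdot \mathcal{B}_s\psi \Big]ds\\
& &+\int_t^{t+\Delta t}\!\!\int_{\Gamma_0}\widetilde{\sigma}\psi J_t\,\mathrm{dvol}_{\Gamma_0}dB_s + \mathcal{E},
\end{eqnarray*}
where $J_s$ is the Jacobian $\sqrt{g^\alpha_s}/\sqrt{g^\alpha_0}$ (patched with the partition of unity), $\mathcal{B}_s\psi$ collects the pulled-back tangential gradient of $\psi$ (bounded by $C(|\psi|+|\nabla_{\Gamma_0}\psi|)$ since $G\in C^2$), and $\mathcal{E}$ is the error from freezing $J_s$ at time $t$, bounded by $C\Delta t\sup_s\|u^\epsilon(s)\|_{L^1}\|\psi\|_\infty$. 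Freezing $J$ is harmless because $J_s$ is Lipschitz in $s$ and bounded above and below.

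\textbf{Step 2: mollified test function.} Take $\psi=\rho_h*\mathrm{sign}(\widetilde{u}^\epsilon(t+\Delta t)-\widetilde{u}^\epsilon(t))$, with mollification done chartwise, so that $|\psi|\le1$ and $|\nabla_{\Gamma_0}\psi|\le C/h$. Then
\[
\|\widetilde{u}^\epsilon(t+\Delta t)-\widetilde{u}^\epsilon(t)\|_{L^1(\Gamma_0)}\le C\int_{\Gamma_0}(\cdots)\psi J_t + C h\big(\|\nabla_{\Gamma_0}\widetilde{u}^\epsilon(t+\Delta t)\|_{L^1}+\|\nabla_{\Gamma_0}\widetilde{u}^\epsilon(t)\|_{L^1}\big).
\]
On $\Omega_M$ the last term is at most $C_Mh$ by Lemma \ref{lemma3.2}. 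The drift terms are bounded on $\Omega_M$ by $C_M\Delta t(1+1/h)$: I use Lemma \ref{lemma3.1} to bound $u^\epsilon$ and $f_\epsilon(u^\epsilon)$ in $L^\infty$. For the viscous term I use $\epsilon\int|\nabla u^\epsilon||\mathcal B\psi|\le \epsilon C(1+1/h)\sup_s\|\nabla u^\epsilon\|_{L^1}$, and the bound $\epsilon\le1$ keeps it of the same order.

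\textbf{Step 3: stochastic term (main obstacle).} The difficulty is that $\psi$ depends on $\widetilde{u}^\epsilon(t+\Delta t)$, so it is not $\mathcal F_t$-measurable and the stochastic integral cannot simply be conditioned. I will avoid this by bounding crudely:
\[
\Big|\int_{\Gamma_0}\psi J_t\int_t^{t+\Delta t}\widetilde\sigma\,dB_s\Big|\le C\Big\|\int_t^{t+\Delta t}\widetilde\sigma(s,\cdot)dB_s\Big\|_{L^1(\Gamma_0)}.
\]
This is legitimate because $\sigma$ is deterministic, so the stochastic integral can be taken pointwise in $y$ before pairing (stochastic Fubini). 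Its expectation is at most $C\sqrt{\Delta t}$ by BDG or the It\^o isometry, uniformly in $\epsilon$, and dropping $I_{\Omega_M}\le1$ costs nothing.

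\textbf{Step 4: conclusion.} Collecting the bounds, taking expectations, integrating over $t\in[0,T-\Delta t]$, and choosing $h=\sqrt{\Delta t}$ gives $C_M(\Delta t+\sqrt{\Delta t}+\Delta t/h+h)\le C_M(\sqrt{\Delta t}+\Delta t)$. The two remaining technical points are the chartwise mollification on $\Gamma_0$ and the careful handling of the Jacobian. Alternatively, one can apply the It\^o formula of Lemma \ref{lemma2.2} with $\Psi=w$ to $V^\epsilon$, which has no noise, and treat $w(t+\Delta t)-w(t)$ separately. That route yields the same $\sqrt{\Delta t}$ from $\mathbb E\|\widetilde w(t+\Delta t)-\widetilde w(t)\|_{L^1}$ and avoids the measurability issue entirely. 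I would adopt it if the direct route becomes messy.
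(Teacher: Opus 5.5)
Your proposal is correct and follows essentially the paper's route: pull the equation back to the reference surface, pair the time increment with a mollified sign of itself, bound the drift by $C_M\Delta t/\delta$ via Lemmas~\ref{lemma3.1}--\ref{lemma3.2}, the mollification error by $C_M\delta$ via the uniform $L^1$ gradient bound, and the noise by $C\sqrt{\Delta t}$, then take $\delta=\sqrt{\Delta t}$. The paper works chartwise with the conservative variable $\widetilde{u}^{\epsilon,\alpha}_t\sqrt{g^\alpha_t}$ rather than freezing the Jacobian, which is only a bookkeeping difference. Your treatment of the noise term, taking the stochastic integral pointwise and bounding the pairing by $\|\int_t^{t+\Delta t}\widetilde{\sigma}\,dB_s\|_{L^1(\Gamma_0)}$, is in fact cleaner than the paper's direct Burkholder--Davis--Gundy estimate of $I_4$. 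That estimate is not justified as written, because the test function $\varphi^\alpha_\delta(t,\cdot)$ depends on $\widetilde{u}^{\epsilon,\alpha}_{t+\Delta t}$ and is therefore not adapted.
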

\begin{proof}
By Lemma \ref{buchonglemma1} and \eqref{buchong7}, the localized process $\widetilde{u}^{\epsilon,\alpha}$ satisfies the following equation in the sense of distribution over $U^\alpha$:
\begin{eqnarray}\label{eq38}
d \big(\widetilde{u}^{\epsilon,\alpha}_t(\theta)\sqrt{g^\alpha_t(\theta)}\big) &=&\epsilon\frac{\partial}{\partial\theta_j}\Big(g_{t,\alpha}^{ij}(\theta)\sqrt{g^\alpha_t(\theta)}\frac{\partial \widetilde{u}^{\epsilon,\alpha}_t}{\partial \theta_i}(\theta)\Big)dt
    -\frac{\partial}{\partial \theta_i}\Big(\widetilde{a}^{\epsilon,\alpha}_i(t,\theta,\widetilde{u}^{\epsilon,\alpha}_t(\theta))\sqrt{g^\alpha_t(\theta)} \Big)dt\nonumber\\
& & +\widetilde{\sigma}^\alpha_t( \theta)\sqrt{g^\alpha_t(\theta)}dB_t,
\end{eqnarray}
where $\widetilde{a}^{\epsilon,\alpha}_i(t,\theta,\widetilde{u}^{\epsilon,\alpha}_t(\theta))= a^{\epsilon,\alpha}_i((t,X^{\alpha}_t(\theta)),\widetilde{u}^{\epsilon,\alpha}_t(\theta))$. 
Fix small $\Delta t > 0$. For each $t \in [0, T- \Delta t]$, we set $\widetilde{w}^{\epsilon,\alpha}(t,\cdot):=\widetilde{u}^{\varepsilon,\alpha}_{t+\Delta t}(\cdot)\sqrt{g^\alpha_{t+\Delta t}(\cdot)}-\widetilde{u}^{\epsilon,\alpha}_{t}(\cdot)\sqrt{g^\alpha_{t}(\cdot)}$. We also take some bounded domain $U^{\alpha}_1$ such that $supp(\chi^{\alpha}\circ X^{\alpha})\subset U_1^\alpha\subset\subset U^\alpha $ and for each $\delta > 0$, set
\[ U^{\alpha}_{1,\delta}:= \{ \theta \in U^{\alpha}_1: dist(\theta, \partial U^{\alpha}_1) \geq \delta   \}.  \]

Let $J:\mathbb{R}^n\rightarrow\mathbb{R}$ be the standard mollifier defined by
\begin{align}\label{eq39}
J(x):=\left\{
\begin{aligned}
 &C\mathrm{exp}\big(\frac{1}{|x|^2-1}\big), \quad \text{if} \ |x|<1,\\
 &0, \quad \text{if} \ |x|\geq 1,\\
\end{aligned}
\right.
\end{align}
where the constant $C>0$ is chosen so that $\int_{\mathbb{R}^n}J(x)dx=1$. For each $\delta > 0$, we define
\begin{equation*}\label{eq179}
 J_{\delta}(x):= \frac{1}{\delta^n}J(\frac{x}{\delta}), \quad x \in \mathbb{R}^n,
\end{equation*}
and
$$\varphi^\alpha_\delta(t,\beta):=\int_{U^\alpha_{1,\delta}} J_{\delta}(\beta-\theta)\mathrm{sgn}(\widetilde{w}^{\varepsilon,\alpha}(t,\theta))d\theta, \quad \beta \in U^{\alpha}.$$
It is easy to see that $\varphi_\delta^\alpha\in L^\infty(0,T;C_c^\infty(U^\alpha))$ and
\begin{eqnarray}\label{eq44}
\|\varphi_\delta^\alpha\|_{L^\infty(U^\alpha)}+\delta\|\nabla \varphi^\alpha_\delta\|_{L^\infty(U^\alpha)}\leq C,
\end{eqnarray}
uniformly in $t$, for some constant $C>0$ independent of $\delta > 0$.

By the equation \eqref{eq38}, it follows that
\begin{eqnarray}\label{eq40}
   \int_{U_1^\alpha}\widetilde{w}^{\epsilon,\alpha}(t,\beta)\varphi^\alpha_\delta(t,\beta)d\beta
&=&-\epsilon\int_{t}^{t+\Delta t}\int_{U^\alpha_1}g^{ij}_{s,\alpha}(\beta)\sqrt{g_s^\alpha(\beta)}
       \frac{\partial\widetilde{u}^{\epsilon,\alpha}_s}{\partial \beta_i}(\beta)\frac{\partial \varphi^\alpha_\delta}{\partial \beta_j}(t,\beta)d\beta ds\nonumber\\
  & &+\int_{t}^{t+\Delta t}\int_{U^\alpha_1}\widetilde{a}^{\epsilon,\alpha}_i(s,\beta,\widetilde{u}^{\epsilon,\alpha}_s(\beta))\sqrt{g_s^\alpha(\beta)}
        \frac{\partial \varphi^\alpha_\delta}{\partial \beta_i}(t,\beta)d\beta ds\nonumber\\
  & &+\int_{U_1^\alpha}\Big(\int_t^{t+\Delta t} \widetilde{\sigma}^{\alpha}_s(\beta)\sqrt{g_s^\alpha(\beta)} dB_s\Big)\varphi_\delta^\alpha(t,\beta)d\beta.
\end{eqnarray}
Integrating \eqref{eq40} in $t$ from $0$ to $T - \Delta t$ yields
\begin{eqnarray*}\label{eq46}
  &  & \int_{0}^{T-\Delta t}\int_{U_1^\alpha}|\widetilde{w}^{\epsilon,\alpha}(t,\beta)|d\beta dt\nonumber\\
 &=& \int_{0}^{T-\Delta t}\int_{U_1^\alpha}\widetilde{w}^{\epsilon,\alpha}(t,\beta)
                                            \big(\mathrm{sgn}(\widetilde{w}^{\epsilon,\alpha}(t,\beta))-\varphi_\delta^\alpha(t,\beta)\big)d\beta dt\nonumber\\
    & & +\int_{0}^{T-\Delta t}\int_{U_1^\alpha}\widetilde{w}^{\epsilon,\alpha}(t,\beta)\varphi_\delta^\alpha(t,\beta) d\beta dt\nonumber\\
 &=& \int_{0}^{T-\Delta t}\int_{U_1^\alpha}\widetilde{w}^{\epsilon,\alpha}(t,\beta)\big(\mathrm{sgn}(\widetilde{w}^{\epsilon,\alpha}(t,\beta))-\varphi_\delta^\alpha(t,\beta)\big)d\beta dt\nonumber\\
   & &-\epsilon\int_{0}^{T-\Delta t}\int_{t}^{t+\Delta t}\int_{U_1^\alpha}g^{ij}_{s,\alpha}(\beta)\sqrt{g_s^\alpha(\beta)}\frac{\partial \widetilde{u}^{\epsilon,\alpha}_s}{\partial \beta_j}(\beta)\frac{\partial \varphi^\alpha_\delta}{\partial \beta_i}(t,\beta)d\beta ds dt\nonumber\\
   & &+\int_{0}^{T-\Delta t}\int_{t}^{t+\Delta
   t}\int_{U_1^\alpha}\widetilde{a}^{\epsilon,\alpha}_i(s,\beta,\widetilde{u}^{\epsilon,\alpha}_s(\beta))\sqrt{g_s^\alpha(\beta)}\frac{\partial \varphi^\alpha_\delta}{\partial \beta_i}(t,\beta)d\beta dsdt\nonumber\\
   & &+\int_{0}^{T-\Delta t}\int_{U_1^\alpha}\Big(\int_t^{t+\Delta t}\widetilde{\sigma}^{\alpha}_s(\beta)\sqrt{g_s^\alpha(\beta)}  dB_s\Big)\varphi_\delta^\alpha(t,\beta)d\beta dt\nonumber\\
 &=:& \sum_{i=1}^4I_i.
\end{eqnarray*}
Thus, for every $M>0$, we have
\begin{eqnarray}\label{eq43}
    \mathbb{E}[I_{\Omega_{M}}\int_{0}^{T-\Delta t}\int_{U_1^\alpha}|\widetilde{w}^{\epsilon,\alpha}(t,\beta)|d\beta dt]
= \sum_{i=1}^4\mathbb{E}[I_{\Omega_{M}}I_i].
\end{eqnarray}

Now, we examine these terms separately. For the term $\mathbb{E}[I_{\Omega_{M}}I_2]$, by \eqref{eq44} and Lemma \ref{lemma3.2}, we have
\begin{eqnarray}\label{eq42}
       \big|\mathbb{E}[ I_{\Omega_{M}}I_2]\big|
&\leq& \frac{C_{T, U^\alpha_1} \epsilon \Delta t}{\delta}\mathbb{E}\big[ I_{\Omega_{M}}\sup_{s\in[0,T]}\int_{U^\alpha_1}|\nabla\widetilde{u}^{\epsilon,\alpha}_s(\beta)|d\beta \big]\nonumber\\
&\leq& C_{T, U^\alpha_1,M,C_0,L_1}\epsilon\frac{\Delta t}{\delta}.
\end{eqnarray}

Thanks to the polynomial growth of $f_\epsilon$, we can deduce that for any $(s,\beta,u)\in [0,T] \times U^\alpha_1 \times \mathbb{R}$,
\[|\widetilde{a}^{\epsilon,\alpha}(s,\beta,u)|\leq C_{T,U_1^\alpha}(1+|u|^{q_0}).\]
Combining the above inequality with \eqref{eq44} and Lemma \ref{lemma3.1}, and using an argument similar to that proving \eqref{eq42} shows that
\begin{eqnarray}\label{eq45}
   &  & \big| \mathbb{E}[I_{\Omega_{M}}I_3] \big|\nonumber\\
&\leq&  C_{T, U^\alpha_1}\Big(\mathbb{E}\big[I_{\Omega_{M}}\int_{0}^{T - \Delta t}\int_t^{t + \Delta t}\int_{U^\alpha_1} |\widetilde{a}^{\epsilon,\alpha}(s,\beta,\widetilde{u}^{\epsilon,\alpha}_s(\beta))|d\beta dsdt\big]\Big)
        \times \Big(\sup_{t\in[0,T]}\|\nabla \varphi_\delta^\alpha(t,\beta)\|_{L^\infty(U^\alpha)} \Big)\nonumber\\
&\leq& C_{T, U^\alpha_1,M,C_0,L_1}\frac{\Delta t}{\delta}.
\end{eqnarray}

Concerning $\mathbb{E}[I_{\Omega_{M}}I_4]$, by the Burkholder--Davis--Gundy inequality, we have
\begin{eqnarray}\label{eq47}
 \big|\mathbb{E}[ I_{\Omega_{M}} I_4 ]\big|
&\leq&  \mathbb{E}\big[\int_{0}^{T-\Delta t} \big|\int_t^{(t+ \Delta t)}\int_{U_1^\alpha} \widetilde{\sigma}^\alpha_s(\beta)\sqrt{g_s^\alpha(\beta)}\varphi_\delta^\alpha(t,\beta)d\beta dB_s \big| dt\big]\nonumber\\
&\leq& C\int_0^{T-\Delta t}\mathbb{E}\big[ (\int_t^{(t+\Delta t)}(\int_{U_1^\alpha}\widetilde{\sigma}^\alpha_s(\beta)\sqrt{g_s^\alpha(\beta)}\varphi_\delta^\alpha(t,\beta)d\beta)^2 ds)^{\frac{1}{2}}  \big]dt\nonumber\\
&\leq& C_{T, U_1^\alpha}\sqrt{\Delta t}.
\end{eqnarray}

For the remaining term $\mathbb{E}[I_{\Omega_{M}}I_1]$, we see that
\begin{eqnarray}\label{eq48}
      &  & \big|\mathbb{E}[I_{\Omega_{M}}I_1]\big|\nonumber\\
&\leq& \mathbb{E}\big[I_{\Omega_{M}}\int_{0}^{T-\Delta t}\int_{U_{1,2\delta}^\alpha}\int_{U_{1,\delta}^\alpha}\delta^{-d}J\big(\frac{\beta-\theta}{\delta}\big)
\Big|\big|\widetilde{w}^{\epsilon,\alpha}(t,\beta)\big|-\widetilde{w}^{\epsilon,\alpha}(t,\beta)\mathrm{sgn}(\widetilde{w}^{\epsilon,\alpha}(t,\theta))\Big|d\theta d\beta dt \big]\nonumber\\
     & & +C\mathbb{E}\big[I_{\Omega_{M}}\int_{0}^{T-\Delta t}\int_{U_1^\alpha\text{\textbackslash} U_{1,2\delta}^\alpha}|\widetilde{w}^{\epsilon,\alpha}(t,\beta)|d\beta dt \big]\nonumber\\
&\leq & 2\mathbb{E}\big[I_{\Omega_{M}}\int_{0}^{T-\Delta t}\int_{U_{1,2\delta}^\alpha}\int_{U_{1,\delta}^\alpha}\delta^{-d}J\big(\frac{\beta-\theta}{\delta}\big)\Big|\widetilde{w}^{\epsilon,\alpha}(t,\beta)
                -\widetilde{w}^{\epsilon,\alpha}(t,\theta)\Big| d\theta d\beta dt \big] +  C_{T,U_1^{\alpha},M, C_0,L_1}\delta\nonumber\\
&\leq& 4\mathbb{E}\big[I_{\Omega_{M}}\int_{0}^{T}\int_{U_{1,2\delta}^\alpha}\int_{B(0,1)}J\big(z\big)
                 \Big|\widetilde{u}^{\epsilon,\alpha}_t(\beta)\sqrt{g^{\alpha}_t(\beta)}-\widetilde{u}^{\epsilon,\alpha}_t(\beta-z\delta)\sqrt{g^{\alpha}_t(\beta-z\delta)}\Big|dz d\beta dt \big] \nonumber\\
      & & +  C_{T,U_1^{\alpha},M, C_0,L_1}\delta\nonumber\\
&\leq&  C_{T,U_1^{\alpha},M, C_0,L_1}\delta,
\end{eqnarray}
where the second inequality follows from Lemma \ref{lemma3.1} and the fact that $||a|-a\mathrm{sgn}(b)|\leq 2|a-b|$ for any $a,b\in\mathbb{R}$, and the last inequality follows, because of Lemma \ref{lemma3.1} and Lemma \ref{lemma3.2}.

Substituting \eqref{eq42}-\eqref{eq48} into \eqref{eq43}, we get that for every $M > 0$
\begin{eqnarray*}\label{eq51}
   & &  \mathbb{E}\Big[I_{\Omega_{M}}\int_{0}^{T-\Delta t}\int_{U_1^\alpha}|\widetilde{w}^{\epsilon,\alpha}(t,\beta)|d\beta dt\Big]\nonumber\\
&=& \mathbb{E}\Big[I_{\Omega_{M}}\int_{0}^{T-\Delta t}\int_{U_1^\alpha}\big| \widetilde{u}^{\epsilon,\alpha}_{t+\Delta t}(\beta)\sqrt{g^\alpha_{t+\Delta t}(\beta)}-\widetilde{u}^{\epsilon,\alpha}_t(\beta)\sqrt{g^\alpha_{t}(\beta)} \big|d\beta dt\Big]\nonumber\\
&\leq& C_{T,U_1^\alpha,M,C_0,L_1}\big(\delta+\sqrt{\Delta t}+(1+\epsilon)\frac{\Delta t}{\delta}\big).
\end{eqnarray*}
It follows that
\begin{eqnarray*}\label{eq52}
& & \mathbb{E}\Big[I_{\Omega_{M}}\int_{0}^{T-\Delta t}\int_{U_1^\alpha}\big| \widetilde{u}^{\epsilon,\alpha}_{t+\Delta t}(\beta)
                                                                                      -\widetilde{u}^{\epsilon,\alpha}_t(\beta) \big|d\beta dt\Big]\nonumber\\
&\leq& C_{T,U^{\alpha}_1}\mathbb{E}\Big[I_{\Omega_{M}}\int_{0}^{T-\Delta t}\int_{U_1^\alpha}\big| \widetilde{u}^{\epsilon,\alpha}_{t+\Delta t}(\beta)
\sqrt{g^\alpha_{t+\Delta t}(\beta)}-\widetilde{u}^{\epsilon,\alpha}_t(\beta)\sqrt{g^\alpha_{t}(\beta)} \big|d\beta dt\Big] \nonumber\\
   & & + C_{T,U^{\alpha}_1}\mathbb{E}\Big[I_{\Omega_{M}}\int_{0}^{T-\Delta t}\int_{U_1^\alpha}\big| \widetilde{u}^{\epsilon,\alpha}_t(\beta)\big| \cdot \big| \sqrt{g^\alpha_{t+\Delta t}(\beta)}-\sqrt{g^\alpha_{t}(\beta)} \big| d\beta dt\Big]\nonumber\\
&\leq& C_{T,U_1^\alpha,M,C_0,L_1}\big(\delta+\sqrt{\Delta t}+(1+\epsilon)\frac{\Delta t}{\delta} + \Delta t\big) .
\end{eqnarray*}

Finally, taking $\delta = \sqrt{\Delta t}$, we can deduce that
\begin{eqnarray*}\label{eq53}
& & \mathbb{E}\Big[I_{\Omega_{M}}\int_{0}^{T-\Delta t}\big\| \widetilde{u}^{\epsilon}(t+\Delta t)
                                                                                 -\widetilde{u}^{\epsilon}(t) \big\|_{L^1(\Gamma_0)} dt\Big]\nonumber\\
&=&\sum_{\alpha =1}^{N}\mathbb{E}\Big[I_{\Omega_{M}}\int_{0}^{T-\Delta t}\int_{U^{\alpha}_1}\chi^{\alpha}(X^{\alpha}(\beta))\sqrt{g^\alpha(\beta)}\big| \widetilde{u}^{\epsilon,\alpha}_{t+\Delta t}(\beta)-\widetilde{u}^{\epsilon,\alpha}_{t}( \beta) \big|d\beta dt\Big]\nonumber\\
&\leq& C_M(\sqrt{\Delta t} +  \Delta t),
\end{eqnarray*}
for some positive constant $C_M$ independent of $\epsilon$ and $\Delta t$.

The proof is complete.
\end{proof}

\section{Existence of the solution of the conservation law}\label{Existence for the conservation law}
In this section, we prove the existence of a martingale entropy solution to the stochastic conservation law \eqref{eq74} with initial value $u_0 \in H^{1,1}(\Gamma_0) \cap L^{\infty}(\Gamma_0)$.
\begin{proposition}\label{Thm1}
Let Assumptions \ref{Assumption1} and \ref{Assumption2} hold. Then for every $u_0 \in H^{1,1}(\Gamma_0) \cap L^{\infty}(\Gamma_0)$, there exists a martingale entropy solution $({\Omega}^*,{\mathcal{F}}^*,\{{\mathcal{F}}_t^*\}_{t \in [0,T]},\mathbb{P}^*,u^*,B^*)$ to the stochastic conservation law \eqref{eq74} started from $u_0$. Moreover, $\mathbb{P}^*$-a.s.
\begin{equation}\label{eq193}
\|u^*\|_{L^{\infty}(Q_T)} \leq e^{C^*}(1+ \| u_0 \|_{L^\infty(\Gamma_0)}),
\end{equation}
where
\begin{eqnarray*}
C^*&=& (T+1)(L+1)(1+\sup_{t\in[0,T]}\|\nabla_{\Gamma_t} \cdot v\|_{L^{\infty}(\Gamma_t)} +\sup_{t\in[0,T]}\|w^*(t)\|_{L^{\infty}(\Gamma_t)}+ \sup_{t\in[0,T]}\|\nabla_{\Gamma_t}w^*(t)\|_{L^{\infty}(\Gamma_t)} \nonumber\\
      & & + \sup_{t\in[0,T]}\|\Delta_{\Gamma_t}w^*(t)\|_{L^{\infty}(\Gamma_t)}  ),
\end{eqnarray*}
and $w^{*}(t,x) = \int_0^t \sigma(s, G(s, G^{-1}(t,x))) dB^{*}_s, \ (t,x) \in Q_T$.
\end{proposition}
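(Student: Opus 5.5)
The proof proceeds by vanishing viscosity combined with a stochastic compactness (Skorokhod) argument. It concludes with Lemma \ref{lemma2.4}, which turns a.e. convergence of viscous solutions into the entropy inequality \eqref{eq72} for the limit.

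\emph{Step 1: approximation and tightness.} First choose regularized data $u_0^\epsilon \in H^2(\Gamma_0)$ satisfying \eqref{eq58} with $u_0^\epsilon\to u_0$ a.e. and in $L^1(\Gamma_0)$; for instance, mollify $u_0$ via the heat semigroup on $\Gamma_0$ at time $\epsilon$. This is possible because $u_0\in H^{1,1}\cap L^\infty$. Let $u^\epsilon$ be the solution from Proposition \ref{proposition2}, and work with the pullbacks $\widetilde u^\epsilon$ on the fixed surface $[0,T]\times\Gamma_0$. Since $\sigma\in C^4$, the process $w$ has continuous derivatives up to order three in space, so $\mathbb{P}(\Omega_M)\to1$ as $M\to\infty$. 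On $\Omega_M$, Lemma \ref{lemma3.1}, Lemma \ref{lemma3.2} and Lemma \ref{lemma3.3} give the following uniform bounds: an $L^\infty$ bound, a $\sup_t$ bound on the $L^1$ norm of the spatial gradient, and a modulus of $L^1$ time continuity. By a Kolmogorov--Riesz--Fréchet criterion on $\Gamma_0$ (working chart by chart with the partition of unity), the sets
\[K_{M,R}=\{\xi:\ \|\xi\|_{L^\infty}\le R,\ \sup_t\|\nabla\xi\|_{L^1}\le R,\ \textstyle\int_0^{T-h}\|\xi(t+h)-\xi(t)\|_{L^1}dt\le R(\sqrt h+h)\ \forall h\}\]
are compact in $L^1([0,T];L^1(\Gamma_0))$. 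Chebyshev's inequality then gives $\mathbb{P}(\widetilde u^\epsilon\notin K_{M,R})\le \mathbb{P}(\Omega_M^c)+\mathbb{E}[I_{\Omega_M}\cdots]/R$, which is small uniformly in $\epsilon$. Hence the laws of $(\widetilde u^\epsilon,B)$ are tight on $L^1([0,T];L^1(\Gamma_0))\times C([0,T])$.

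\emph{Step 2: Skorokhod and identification.} By Prokhorov and Skorokhod, there are a probability space $(\Omega^*,\mathcal F^*,\mathbb P^*)$, random variables $(u^{*\epsilon_k},B^{*\epsilon_k})$ with the same laws as $(u^{\epsilon_k},B)$, and $(u^*,B^*)$, such that $u^{*\epsilon_k}\to u^*$ in $L^1$ and $B^{*\epsilon_k}\to B^*$ uniformly, $\mathbb P^*$-a.s. Passing to a subsequence gives a.e. convergence on $Q_T$. Take $\mathcal F^*_t$ to be the augmented filtration generated by $(u^*|_{[0,t]},B^*|_{[0,t]})$. Then $B^*$ is an $\mathcal F^*_t$-Brownian motion, since independence of increments from the past passes to the limit through equality of laws. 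Because the weak formulation \eqref{eq76} is a measurable functional of $(u,B)$ once the stochastic integral is expressed by integration by parts against the smooth deterministic integrand, each $u^{*\epsilon_k}$ solves \eqref{eq75} driven by $B^{*\epsilon_k}$. Moreover $w^{*\epsilon_k}\to w^*$ in $C^2$ a.s. as well, since $w$ depends continuously on $B$ after the same integration by parts.

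\emph{Step 3: entropy inequality and bound.} Now rerun the argument of Lemma \ref{lemma2.4} on $\Omega^*$. The identity \eqref{eq69} holds for $u^{*\epsilon_k}$ with $\Psi$ built from $B^{*\epsilon_k}$. Replacing $f_\epsilon$ by $f$ is harmless: by Lemma \ref{lemma3.1}, $\epsilon|u^\epsilon|^2<1$ for small $\epsilon$ pathwise, so $f_\epsilon=f$ along the solution. The deterministic terms converge by dominated convergence, using the uniform $L^\infty$ bound. The viscous terms vanish because $\epsilon\|\nabla u^\epsilon\|_{L^1}\to0$, which follows from Lemma \ref{lemma3.2} on $\Omega_M$.

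\textbf{Main obstacle.} The key difficulty is passing to the limit in the stochastic integral $\int_0^T\langle\phi'(u^{*\epsilon_k}-\Psi^{\epsilon_k})(\sigma-\varrho),\eta\rangle dB^{*\epsilon_k}_s$ when both integrand and integrator change with $k$. I would use the standard lemma that if $H^k\to H$ in probability in $L^2(0,T)$ and $B^k\to B$ in probability in $C$, with $H^k$ adapted to the filtration of $B^k$, then $\int H^kdB^k\to\int HdB$ in probability. A subsequence then yields a.s. convergence, so the inequality \eqref{eq73} survives the limit. Predictability of $u^*$ follows from adaptedness together with $L^1$-continuity in time, or from taking a predictable version.

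Finally, \eqref{eq193} follows by lower semicontinuity of the $L^\infty$ norm under a.e. convergence, using Lemma \ref{lemma3.1} for $u^{*\epsilon_k}$ with the bound $C_4\le C(1+\|u_0\|_\infty+\dots)$. One also bounds $e^{C_3T}C_4\le e^{C^*}(1+\|u_0\|_\infty)$ by absorbing $C_4$'s additional terms via $x\le e^x$.
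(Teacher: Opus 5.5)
Your proposal follows the same route as the paper: uniform bounds from Lemmas \ref{lemma3.1}--\ref{lemma3.3}, tightness, Prokhorov--Skorokhod, the limit passage of Lemma \ref{lemma2.4}, and \eqref{eq193} from Lemma \ref{lemma3.1}; it is in fact more careful than the paper about the $k$-dependent Brownian motions $B^{*\epsilon_k}$, the convergence of the stochastic integrals, and the replacement of $f_\epsilon$ by $f$. The one step to patch is the Chebyshev bound for the clause ``for all $h$'' in $K_{M,R}$, because Lemma \ref{lemma3.3} controls $\omega_\epsilon(h)=\int_0^{T-h}\|\widetilde u^\epsilon(t+h)-\widetilde u^\epsilon(t)\|_{L^1(\Gamma_0)}dt$ only for each fixed $h$: use the subadditivity $\omega_\epsilon(h)\le\omega_\epsilon(s)+\omega_\epsilon(h-s)$ to get $\omega_\epsilon(h)\le \frac{2}{h}\int_0^h\omega_\epsilon(s)\,ds\le 2h^{a}\int_0^T s^{-1-a}\omega_\epsilon(s)\,ds$ for some $a<\frac12$, whose expectation on $\Omega_M$ is bounded uniformly in $\epsilon$ by Lemma \ref{lemma3.3}, and define $K_{M,R}$ with the modulus $Rh^{a}$ instead of $R(\sqrt h+h)$.
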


Before proving Proposition \ref{Thm1}, we prepare the following result.
\begin{lemma}\label{lemma3.4}
Assume that $u^\epsilon$ solves the stochastic viscous conservation law \eqref{eq75}. Then $\{u^\epsilon\}_{\epsilon \in (0,1)}$ is tight in $L^1([0,T]; L^1(\Gamma_\cdot))$.
\end{lemma}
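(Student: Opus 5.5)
We transport everything to the fixed reference surface $\Gamma_0$ through the pullback $u\mapsto \widetilde u$ of \eqref{chongxing4}. Tightness in $L^1([0,T];L^1(\Gamma_\cdot))$ is, by definition of that space, tightness of the laws of $\{\widetilde u^\epsilon\}$ in $L^1([0,T];L^1(\Gamma_0))$. Since $G(t,\cdot)$ is a $C^2$ diffeomorphism with Jacobians bounded above and below uniformly in $t$, the $L^1$ norms of $u^\epsilon(t)$ and $\nabla_{\Gamma_t}u^\epsilon(t)$ on $\Gamma_t$ are comparable to the $L^1$ norms of $\widetilde u^\epsilon(t)$ and $\nabla_{\Gamma_0}\widetilde u^\epsilon(t)$ on $\Gamma_0$. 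We then use a Fréchet--Kolmogorov type compactness criterion on $\Gamma_0$. For a compact manifold, a set $K\subset L^1([0,T];L^1(\Gamma_0))$ is relatively compact if it satisfies three conditions uniformly over $K$:
\begin{itemize}
\item[(a)] it is bounded in $L^\infty([0,T];H^{1,1}(\Gamma_0))$;
\item[(b)] it is bounded in $L^\infty$;
\item[(c)] $\sup_{z\in K}\int_0^{T-\Delta t}\|z(t+\Delta t)-z(t)\|_{L^1(\Gamma_0)}dt\to0$ as $\Delta t\to0$.
\end{itemize}
This follows from the compact embedding $H^{1,1}(\Gamma_0)\hookrightarrow L^1(\Gamma_0)$ (Rellich on the compact manifold, via the finite atlas) together with Simon's compactness lemma.

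Fix $\eta>0$. The first step is to choose $M$ so that $\mathbb{P}(\Omega_M^c)<\eta/2$. This is possible because $w(t,x)=\int_0^t\sigma(s,G(s,G^{-1}(t,x)))dB_s$ is a stochastic integral of a $C^4$ integrand in the space variable. Kolmogorov's continuity criterion, or BDG applied to the spatial derivatives followed by a Sobolev embedding on $\Gamma_0$, shows that $w,\nabla w,\nabla^2 w,\nabla\Delta w$ have a.s. finite sup norms over $Q_T$, so $\mathbb{P}(\Omega_M)\to1$ as $M\to\infty$. On $\Omega_M$, Lemma \ref{lemma3.1} gives $\sup_t\|u^\epsilon(t)\|_{L^\infty}\le C_M$, and Lemma \ref{lemma3.2} gives $\sup_t\|\nabla_{\Gamma_t}u^\epsilon(t)\|_{L^1}\le C_M$. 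Both constants are independent of $\epsilon$.

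For the time-continuity condition we use Lemma \ref{lemma3.3} together with Markov's inequality. Pick a sequence $\Delta t_k\downarrow0$ with $C_M(\sqrt{\Delta t_k}+\Delta t_k)\le 2^{-2k}$. Define
\[
K_M=\Big\{z:\ \|z\|_{L^\infty}\le C_M,\ \sup_t\|z(t)\|_{H^{1,1}(\Gamma_0)}\le C_M,\ \sup_{\Delta t\le \Delta t_k}\int_0^{T-\Delta t}\|z(t+\Delta t)-z(t)\|_{L^1}dt\le 2^{-k}\ \forall k\ge k_0\Big\}.
\]
Lemma \ref{lemma3.3} controls a single $\Delta t$, whereas $K_M$ requires control of the supremum over $\Delta t\le\Delta t_k$. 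We handle this by noting that the averaged estimate holds for all $\Delta t\le\Delta t_k$ with the same constant. We then either define $K_M$ using the condition only at the points $\Delta t_k$, combined with a dyadic comparison $\int\|z(t+\Delta t)-z(t)\|\le$ the sum over dyadic pieces, or we replace it by the equivalent mollified-in-time criterion. Markov's inequality and a union bound then give
\[
\mathbb{P}\big(\Omega_M\cap\{\widetilde u^\epsilon\notin K_M\}\big)\le\sum_{k\ge k_0}2^{k}\,2^{-2k}<\eta/2
\]
for $k_0$ large. Hence $\mathbb{P}(\widetilde u^\epsilon\notin K_M)<\eta$ uniformly in $\epsilon$.

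The main obstacle is making the compactness criterion rigorous on the surface while using only the averaged (integral-in-time) continuity estimate. We would prove relative compactness of $K_M$ directly: localize with the partition of unity $\chi^\alpha$ to the charts $U^\alpha$, and apply Fréchet--Kolmogorov in $L^1((0,T)\times U^\alpha)$. Space translations are controlled by the $H^{1,1}$ bound, and time translations by the defining condition of $K_M$. Closedness of $K_M$ in $L^1$ follows from lower semicontinuity of the norms involved.
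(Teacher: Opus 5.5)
Your route is valid but differs from the paper's. The paper never builds compact sets. After pulling back to $\Gamma_0$, it derives two in-probability statements:
\begin{itemize}
\item \eqref{eq55}, via Chebyshev on $\Omega_M$ with Lemmas \ref{lemma3.1}--\ref{lemma3.2}, plus $\mathbb{P}(\Omega_M^c)\le C/M$ from first moments of the sup-norms of $w$ and its derivatives; only an $L^1(0,T;H^{1,1}(\Gamma_0))$ bound is used;
\item \eqref{eq57}, via Chebyshev with Lemma \ref{lemma3.3}.
\end{itemize}
It then appeals to a ready-made stochastic Simon-type tightness criterion (Lemma 5.2 in \cite{MST}) together with the compact embedding $H^{1,1}(\Gamma_0)\hookrightarrow L^1(\Gamma_0)$. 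Since \eqref{eq57} holds only for each fixed $\Delta t$, that cited criterion absorbs exactly the difficulty you identify.

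You instead re-derive the criterion: Simon's lemma for the deterministic compactness, then Markov's inequality and a union bound over a sequence of shifts. This is more self-contained and needs only a.s.\ finiteness of the $w$-norms. The cost is that you must pass from control at the discrete shifts $\Delta t_k$ to all small shifts, and you leave this half-done. As written, $K_M$ imposes a supremum over $\Delta t\le\Delta t_k$, which Markov's inequality does not deliver. Moreover, a sequence $\Delta t_k$ merely ``picked'' to satisfy your inequality does not support a comparison argument.

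The clean fix uses the subadditivity $\omega_z(a+b)\le\omega_z(a)+\omega_z(b)$ of $\omega_z(h)=\int_0^{T-h}\|z(t+h)-z(t)\|_{L^1(\Gamma_0)}dt$:
\begin{itemize}
\item take geometric shifts, e.g.\ $\Delta t_k=c_M16^{-k}$ with $C_M(\sqrt{c_M}+c_M)\le 1$, so your bound $C_M(\sqrt{\Delta t_k}+\Delta t_k)\le 2^{-2k}$ holds;
\item put into $K_M$ only the conditions $\omega_z(\Delta t_k)\le 2^{-k}$ for $k\ge k_0$, which your Markov/union-bound estimate does control;
\item expand an arbitrary small $\Delta t$ in base $16$, using $L^1$-continuity of translations for the tail, to get $\omega_z(\Delta t)\le 15\sum_{k\ge k_1}2^{-k}$ with $k_1\to\infty$ as $\Delta t\to 0$, uniformly on $K_M$.
\end{itemize}

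Two minor points. The $L^\infty$ condition (b) is superfluous for Simon's lemma. You also need not show $K_M$ is closed; your lower-semicontinuity justification is in fact wrong for the $H^{1,1}$ constraint, whose $L^1$-limits may only be $BV$. Relative compactness suffices, since each law then gives mass at least $1-\eta$ to the compact set $\overline{K_M}$.
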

\begin{proof}
Recall that
\[\widetilde{u}^\epsilon(t,y):=u^\epsilon(t,G(t,y)), \quad (t,y)\in[0,T] \times \Gamma_{0}.\]
We only need to show that
$\{\widetilde{u}^\epsilon\}_{\epsilon\in(0,1)}$ is tight in $L^1([0,T]; L^1(\Gamma_0))$.

Recall $\Omega_{M}$ given by \eqref{eq50}. For any $M, N>0$, by Chebyshev's inequality and Lemmas \ref{lemma3.1} and \ref{lemma3.2}, we have
\begin{eqnarray*}\label{eq54}
&  &\sup_{\epsilon \in (0,1)}\mathbb{P}\big(\int_0^T \| \widetilde{u}^\epsilon(t)\|_{H^{1,1}(\Gamma_0)}dt > N\big)\nonumber\\
&\leq& \sup_{\epsilon \in (0,1)}\mathbb{P}\big(I_{\Omega_{M}}\int_0^T \| \widetilde{u}^\epsilon(t)\|_{H^{1,1}(\Gamma_0)}dt > N\big) + \mathbb{P}(\Omega_{M}^{c})\nonumber\\
&\leq& \frac{\sup_{\epsilon \in (0,1)}\mathbb{E}\big[I_{\Omega_{M}}\int_0^T \| \widetilde{u}^\epsilon(t)\|_{H^{1,1}(\Gamma_0)}\big]}{N}   +\frac{1}{M}\Big\{\mathbb{E}[\sup_{t\in[0,T]}\|w(t)\|_{L^{\infty}(\Gamma_t)}] + \mathbb{E}[\sup_{t\in[0,T]}\|\nabla_{\Gamma_t}w(t)\|_{L^{\infty}(\Gamma_t)}]\nonumber\\
                  & & + \mathbb{E}[\sup_{t\in[0,T]}\|\nabla_{\Gamma_t}^2w(t)\|_{L^{\infty}(\Gamma_t)}] + \mathbb{E}[\sup_{t\in[0,T]}\|\nabla_{\Gamma_t}\triangle_{\Gamma_t}w(t)\|_{L^{\infty}(\Gamma_t)}]\Big\}\nonumber\\
&\leq& \frac{C_{M,T,C_0, L_1}}{N} + \frac{C}{M}.
\end{eqnarray*}
Because the constant $M$ is arbitrary, it follows that
\begin{equation}\label{eq55}
\lim_{N \rightarrow \infty}\sup_{\epsilon \in (0,1)}\mathbb{P}\big(\int_0^T \| \widetilde{u}^\epsilon_t\|_{H^{1,1}(\Gamma_0)}dt > N\big)=0.
\end{equation}

For any $\varsigma, M>0$ and small constant $\Delta t > 0$, by Chebyshev's inequality and Lemma \ref{lemma3.3}, we have
\begin{eqnarray*}\label{eq56}
  &  & \sup_{\epsilon \in (0,1)}\mathbb{P}\big(\int_0^{T- \Delta t}\|\widetilde{u}^\epsilon(t+\Delta t) -\widetilde{u}^\epsilon(t)\|_{L^1(\Gamma_0)}dt > \varsigma\big)\nonumber\\
&\leq& \sup_{\epsilon \in (0,1)}\mathbb{P}\big(I_{\Omega_{M}}\int_0^{T- \Delta t}\|\widetilde{u}^\epsilon(t+\Delta t) -\widetilde{u}^\epsilon(t)\|_{L^1(\Gamma_0)}dt > \varsigma\big)
        + \mathbb{P}(\Omega_{M}^{c})\nonumber\\
&\leq& \frac{\sup_{\epsilon \in (0,1)}\mathbb{E}\big[I_{\Omega_{M}}\int_0^{T- \Delta t}\|\widetilde{u}^\epsilon(t+\Delta t) -\widetilde{u}^\epsilon(t)\|_{L^1(\Gamma_0)}dt\big]}{\varsigma}+\frac{1}{M}\Big\{\mathbb{E}[\sup_{t\in[0,T]}\|w(t)\|_{L^{\infty}(\Gamma_t)}] \nonumber\\
       & &+ \mathbb{E}[\sup_{t\in[0,T]}\|\nabla_{\Gamma_t}w(t)\|_{L^{\infty}(\Gamma_t)}] + \mathbb{E}[\sup_{t\in[0,T]}\|\nabla_{\Gamma_t}^2w(t)\|_{L^{\infty}(\Gamma_t)}] + \mathbb{E}[\sup_{t\in[0,T]}\|\nabla_{\Gamma_t}\triangle_{\Gamma_t}w(t)\|_{L^{\infty}(\Gamma_t)}]\Big\}\nonumber\\
&\leq& \frac{C_M(\sqrt{\Delta t} +  \Delta t)}{\varsigma} + \frac{C}{M},
\end{eqnarray*}
which implies that
\begin{equation}\label{eq57}
\lim_{\Delta t \rightarrow 0^{+}}\sup_{\epsilon \in (0,1)}\mathbb{P}\big(\int_0^{T- \Delta t}\|\widetilde{u}^\epsilon(t+\Delta t) -\widetilde{u}^\epsilon(t)\|_{L^1(\Gamma_0)}dt > \varsigma\big)=0.
\end{equation}
Therefore, by \eqref{eq55}, \eqref{eq57} and using the fact that $H^{1,1}(\Gamma_0)$ is compactly embedded into $L^1(\Gamma_0)$, together with the criterion for tightness of laws in $L^1([0,T]; L^1(\Gamma_0))$ (see Lemma 5.2 in \cite{MST}), we deduce that $\{\widetilde{u}^\epsilon\}_{\epsilon\in(0,1)}$ is tight in $L^1([0,T];L^1(\Gamma_0))$, completing the proof.

\end{proof}

Now we turn to the proof of Proposition \ref{Thm1}.
\begin{proof}
Let $u_0 \in H^{1,1}(\Gamma_0)\cap L^{\infty}(\Gamma_0)$. Then there exists a family $\{ u_0^\epsilon \}_{\epsilon \in (0,1)}$ satisfying \eqref{eq58},
\[ \| u_0^\epsilon\|_{L^\infty(\Gamma_0)} \leq  \| u_0 \|_{L^\infty(\Gamma_0)}, \]
and $u^\epsilon_0 \rightarrow u_0$ a.e. on $\Gamma_0$. Let $u^\epsilon$ be the solution to equation \eqref{eq75} with initial value $u^\epsilon_0$. By Lemma \ref{lemma3.4}, the family of laws $\{\mathcal{L}(u^\epsilon, B)\}_{\epsilon \in (0,1)}$ of the random vectors $\{(u^\epsilon, B) \}_{\epsilon \in (0,1)}$ is tight in $L^1([0,T]; L^1(\Gamma_{\cdot})) \times C([0,T]; \mathbb{R})$. By Prokhorov's theorem and Skorokhod's representation theorem, there exists a probability space $({\Omega}^*,\mathcal{F}^*,{\mathbb{P}}^*)$ and a sequence of $L^1([0,T]; L^1(\Gamma_{\cdot})) \times C([0,T]; \mathbb{R})$-valued random vectors $\{({u}^{\epsilon_n,*},B^*)\}_{n\geq 1}$ and a $L^1([0,T]; L^1(\Gamma_{\cdot})) \times C([0,T]; \mathbb{R})$-valued random vector $(u^*,B^*)$ such that $\mathcal{L}({u}^{\epsilon_n,*},B^*) = \mathcal{L}(u^{\epsilon_n}, B)$, and
\begin{equation*}
 {u}^{\epsilon_n,*} \rightarrow u^* \quad \text{in} \ L^1([0,T]; L^1(\Gamma_{\cdot})),  \ {\mathbb{P}}^*-a.s.
\end{equation*}


By Lemma \ref{lemma3.1}, we have
\begin{eqnarray*}
      \sup_{t \in [0,T]}\|u^{\epsilon_n, *}(t) \|_{L^{\infty}(\Gamma_t)}
 \leq e^{C^*}(1+ \| u_0 \|_{L^\infty(\Gamma_0)}),
\end{eqnarray*}
where
\begin{eqnarray*}
C^*&=& (T+1)(L+1)(1+\sup_{t\in[0,T]}\|\nabla_{\Gamma_t} \cdot v\|_{L^{\infty}(\Gamma_t)} +\sup_{t\in[0,T]}\|w^*(t)\|_{L^{\infty}(\Gamma_t)}+ \sup_{t\in[0,T]}\|\nabla_{\Gamma_t}w^*(t)\|_{L^{\infty}(\Gamma_t)} \nonumber\\
      & & + \sup_{t\in[0,T]}\|\Delta_{\Gamma_t}w^*(t)\|_{L^{\infty}(\Gamma_t)}  ),
\end{eqnarray*}
and $w^{*}(t,x) = \int_0^t \sigma(s, G(s, G^{-1}(t,x))) dB^{*}_s, \ (t,x) \in Q_T$.
It follows that
$\mathbb{P}^*$-a.s. $u^{*} \in L^{\infty}(Q_T)$ satisfies \eqref{eq193}. Moreover, for any $p \geq 1$,
\begin{equation}\label{eq194}
\mathbb{E}^{\mathbb{P}^*}[\sup_{n \in \mathbb{N}}\sup_{t \in [0,T]}\|u^{\epsilon_n, *}(t)\|^p_{L^{\infty}(\Gamma_t)}]<\infty.
\end{equation}
Therefore, by the dominated convergence theorem, we deduce that
\begin{equation}\label{eq196}
{u}^{\epsilon_n,*} \rightarrow u^* \ \text{in}\ L^1(\Omega^{*} \times Q_T; \mathbb{R}).
\end{equation}
By Lemma \ref{2.3}, we also have
\begin{equation}\label{eq195}
 \sup_{n \in \mathbb{N}}\epsilon_n \mathbb{E}^{\mathbb{P}^*}[\int_{0}^{T}\int_{\Gamma_t}|\nabla_{\Gamma_t} u^{\epsilon_n, *}(t)|^2\mathrm{dvol}_{\Gamma_t}dt] <\infty.
 \end{equation}
Combining \eqref{eq194}, \eqref{eq196} \eqref{eq195}, and arguing similarly to the proof of Lemma \ref{lemma2.4}, we conclude that $u^{*}$ fulfills the entropy condition \eqref{eq73} in Definition \ref{definition1}.

Thus, $({\Omega}^*,{\mathcal{F}}^*,{\mathcal{F}}_t^*,\mathbb{P}^*,u^*,B^*)$ is a martingale entropy solution to equation \eqref{eq74}.
\end{proof}

\section{Uniqueness for the conservation law}\label{Uniqueness for the conservative law}
In this section, we are going to prove the uniqueness of generalized entropy solutions as defined in Definition \ref{definition1}. Let us begin by introducing some definitions and basics. Recall that $\{ \chi^\alpha\}_{\alpha \leq N}$ is a partition of unity with regard to the open cover $\{ V^\alpha = X^\alpha(U^\alpha) \}_{\alpha \leq N}$ on $\Gamma_0$, and $\{\chi^\alpha_t= \chi^\alpha \circ G^{-1}(t, \cdot)\}_{\alpha \leq N}$ is a partition of unity with regard to the open cover $\{ V_t^{\alpha}=G(t, V^\alpha) \}_{\alpha \leq N}$ on $\Gamma_t$. Meanwhile, we also recall the pullback transformation and its associated local parametric representation $\widetilde{\cdot}$ and ${\underset{}{\widetilde{\cdot}}{}}^\alpha$ given by \eqref{chongxing4} and \eqref{chongxing5}, respectively.

For any $1\leq \alpha \leq N$, and sufficiently small $\delta> 0$, we denote by
\[ U^{\alpha}_{2,\delta} = \{y \in U^\alpha: dist(U^\alpha_2, y) < \delta  \}, \]
where $ U^\alpha_2 = supp(\chi^{\alpha}\circ X^{\alpha})$.
Obviously, $U^\alpha_2 \subset U^\alpha_{2,\delta} \subset \subset U^\alpha$.

Under Assumption \ref{Assumption1}(B1), for every $1 \leq \alpha \leq N$, $f$ has the local representation
\[  f((t,x),u) =a_{i}^{\alpha}((t,x),u)\frac{\partial X^\alpha_t}{\partial \theta_i}(X^{\alpha}_t(x)^{-1}), \quad ((t,x),u) \in \Big(\bigcup_{t \in [0,T]} \{t\}\times V_t^{\alpha}\Big) \times \mathbb{R}, \]
where $a_{i}^{\alpha}: \Big(\bigcup_{t \in [0,T]} \{t\}\times V_t^{\alpha}\Big) \times \mathbb{R} \rightarrow \mathbb{R} $. For simplicity, we also set $\widetilde{a}_{i}^{\alpha}(t,\theta,u) = a_{i}^{\alpha}((t, X_t^{\alpha}(\theta)), u )$, $(t,\theta,u) \in [0,T]\times U^\alpha \times \mathbb{R}$. Moreover, by Assumption \ref{Assumption1}(B3), $\widetilde{a}^{\alpha}$ satisfies the following conditions:
\begin{equation}\label{eq96}
|\widetilde{a}^\alpha_u(t,\theta,u)| \vee |\nabla_{\theta}\widetilde{a}^\alpha_u(t,\theta,u)| \vee |\frac{\partial \widetilde{a}^\alpha_u}{\partial t}(t,\theta,u)|  \leq C_{T,U^\alpha_{2,\delta}} (1+ |u|),
\end{equation}
and
\begin{eqnarray}\label{eq181}
      & &|\nabla_{\theta}\widetilde{a}^{\alpha}(t,\theta,u)| \vee |\frac{\partial \widetilde{a}^\alpha}{\partial t}(t,\theta,u)| \vee |\frac{\partial^2 \widetilde{a}^\alpha}{\partial t^2}(t,\theta,u)| \vee |\nabla_{\theta}(\frac{\partial \widetilde{a}^\alpha}{\partial t})(t,\theta,u)| \nonumber\\
      & & \vee |\nabla_{\theta}^2\widetilde{a}^{\alpha}(t,\theta,u)|  \vee |\widetilde{a}^\alpha_{uu}(t,\theta,u)|
\leq C_{T,U^\alpha_{2,\delta}}(1+ |u|^{q_0}),
\end{eqnarray}
for any $(t, \theta) \in [0,T]\times U_{2,\delta}^\alpha$ and $u \in \mathbb{R}$.

For any $1\leq \alpha \leq N$ and $1 \leq i\leq n$, define $F^{\alpha}_i: [0,T] \times U^\alpha \times \mathbb{R} \times \mathbb{R} \rightarrow \mathbb{R}$ as
\[ F^\alpha_i(t,\theta,u_1,u_2) = \mathrm{sign}(u_1 - u_2)\big(\widetilde{a}_i^\alpha(t,\theta,u_1+ \widetilde{w}^\alpha_t(\theta)) - \widetilde{a}_i^\alpha(t,\theta,u_2+\widetilde{w}^\alpha_t(\theta))\big),  \]
$(t,\theta,u_1,u_2) \in [0,T] \times U^\alpha \times \mathbb{R} \times \mathbb{R}$, where $\widetilde{w}^\alpha_t(\theta) = \int_0^t \widetilde{\sigma}^{\alpha}_s(\theta)dB_s$.

By Assumption \ref{Assumption2} and the Kolmogorov's continuity criterion (see Theorem A.3.3 in \cite{RM}), for every $\kappa \in (0, \frac{1}{2})$, there exists a non-negative random variable $C_{\kappa}$ such that with probability one, for all $1 \leq \alpha \leq N$ and $t,s \in [0,T]$
\begin{equation}\label{eq183}
\| \widetilde{w}^\alpha_t - \widetilde{w}^\alpha_s  \|_{L^{\infty}(U^\alpha_{2,\delta})} + \| \nabla_{\theta}\widetilde{w}^\alpha_t - \nabla_{\theta}\widetilde{w}^\alpha_s  \|_{L^{\infty}(U^\alpha_{2,\delta})}\leq C_{\kappa}|t-s|^{\kappa}.
\end{equation}

To prove uniqueness, we need to show that the initial value is approached in the following sense.
\begin{lemma}\label{lemma4.1}
Let Assumptions \ref{Assumption1} and \ref{Assumption2} hold. Assume that $u$ is a generalized entropy solution to the stochastic conservation law \eqref{eq74} with initial value $u_0 \in L^\infty(\Gamma_0)$. Then we have $\mathbb{P}$-a.s.
\[ \lim_{h \rightarrow 0}\frac{1}{h} \int_0^h \int_{\Gamma_0}|V(t, G(t,y)) - u_0(y)|\mathrm{dvol}_{\Gamma_0}(dy)dt = 0,   \]
where $V=u- w$.
\end{lemma}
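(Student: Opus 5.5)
The plan is to derive, from the Kruzkov entropy inequality \eqref{eq87} of Remark \ref{remark3}, a one-sided ``initial trace'' inequality for each constant $k$. We then remove the constant by Kruzkov's trick: put $k=u_0(z)$, multiply by a mollifier in $z$, and integrate. Throughout we use that, $\mathbb{P}$-a.s., $V=u-w$ is bounded on $Q_T$ (Definition \ref{definition1}(i)). We also use that $w$ and $\nabla_{\Gamma_t}w$ are bounded on $Q_T$ (Assumption \ref{Assumption2} together with \eqref{eq183}), and that $v$ and $G$ are smooth.

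\emph{Step 1 (test functions following the flow).} For $\varphi\in C^2(\Gamma_0)$ with $\varphi\ge0$ and $h>0$, let $\rho_h(t)=1$ on $[0,h]$, $\rho_h(t)=2-t/h$ on $[h,2h]$, and $\rho_h=0$ afterwards (smoothed if necessary). Define $\eta(t,G(t,y))=\rho_h(t)\varphi(y)$. Then $\overset{\scalebox{0.4}{$\bullet$}}{\eta}=\rho_h'(t)\,\varphi\circ G^{-1}(t,\cdot)$ and $|\nabla_{\Gamma_t}\eta|\le C\|\varphi\|_{C^1}\rho_h$. Insert this $\eta$ into \eqref{eq87}. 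Every term except the initial term and the $\overset{\scalebox{0.4}{$\bullet$}}{\eta}$ term is an integral over $[0,2h]$ of a bounded integrand, so it is $O(h\|\varphi\|_{C^1})$ with a random constant depending on $\|u\|_{L^\infty(Q_T)}$, $|k|$, $w$ and $\nabla w$. Pulling back to $\Gamma_0$ via $G$ introduces a Jacobian $J_t=1+O(t)$. This yields
\[
\frac1h\int_h^{2h}\!\!\int_{\Gamma_0}|V(t,G(t,y))-k|\,\varphi(y)\,\mathrm{dvol}_{\Gamma_0}(dy)\,dt\le\int_{\Gamma_0}|u_0-k|\,\varphi\,\mathrm{dvol}_{\Gamma_0}+C(1+|k|)h\|\varphi\|_{C^1}.
\]
A second cutoff (equal to $1$ on $[0,h/2]$ and decreasing on $[h/2,h]$, say) controls the window $[0,h]$ in the same way; alternatively one can replace $h$ by $h/2^j$ and sum.

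\emph{Step 2 (null sets).} Inequality \eqref{eq87} holds a.s. for each fixed $k$ and $\eta$, and the null set may depend on them. We therefore first take $k\in\mathbb{Q}$, $h\in\mathbb{Q}_+$, and $\varphi$ from a countable family that is dense in $C^1(\Gamma_0)$. Both sides are continuous in $k$, since $|V-k|$ is $1$-Lipschitz in $k$. They are also continuous in $\varphi$ with respect to the $C^1$ norm. Hence the inequality holds on a single full-measure event for all $k\in\mathbb{R}$, all $h$ and all $\varphi$; for general $h$ we use monotone comparison with nearby rational $h$.

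\emph{Step 3 (doubling in space).} We work chartwise with the partition of unity $\{\chi^\alpha\}$. Take $\varphi(y)=\chi^\alpha(y)J_\delta\big((X^\alpha)^{-1}(y)-(X^\alpha)^{-1}(z)\big)$, which satisfies $\|\varphi\|_{C^1}\le C\delta^{-n-1}$. Set $k=u_0(z)$, which is bounded by $\|u_0\|_{L^\infty}$, and integrate in $z$ over the chart. By the triangle inequality,
\[
|V(t,G(t,y))-u_0(y)|\le|V(t,G(t,y))-u_0(z)|+|u_0(z)-u_0(y)|.
\]
Summing over $\alpha$ then gives
\[
\frac1h\int_0^h\!\!\int_{\Gamma_0}|V(t,G(t,y))-u_0(y)|\,dy\,dt\le 2\sum_\alpha\iint \chi^\alpha(y)J_\delta(\cdot)\,|u_0(z)-u_0(y)|\,dz\,dy+C\,h\,\delta^{-1},
\]
where $C$ is a random constant; the factor $\delta^{-n}$ in $\|\varphi\|_{C^1}$ is absorbed by the $z$-integral. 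We first let $h\to0$, with $\delta$ fixed, and then let $\delta\to0$. The double integral tends to $0$ by $L^1$-continuity of translations, since $u_0\in L^\infty\subset L^1$. This proves the lemma.

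The main obstacle is expected to be Step 1, specifically tracking the flux and source terms uniformly in $k$ and $\varphi$. The points to check are: the $\nabla_{\Gamma}\eta$ term is $O(h)$ only because $\rho_h$ is supported in $[0,2h]$; the $\overset{\scalebox{0.4}{$\bullet$}}{\eta}$ term must come out exactly as the time average, which is why $\eta$ is transported along $G$; and the Jacobian must be handled in the pull-back. The measurability and uniformity issue in Step 2 is the other delicate point, and it is resolved by the countable-dense argument described there.
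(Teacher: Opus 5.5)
Your proposal is correct and follows essentially the paper's route: the paper's test function $\chi^\alpha_t(x)\gamma_h(t)J_\delta((X^\alpha_t)^{-1}(x)-\beta)$ is exactly a flow-transported $\rho(t)\varphi(y)$ of your type, it is inserted into \eqref{eq87}, $k$ is set to $\widetilde{u}_0^\alpha(\beta)$, one integrates in $\beta$, and then lets $h\to0$ followed by $\delta\to0$. Your countable-density argument for the null sets makes explicit a point the paper passes over; of your two options for reaching the window $[0,h]$, only the dyadic summation works as written (a cutoff decreasing on $[h/2,h]$ only controls $[h/2,h]$), whereas the paper gets $[0,h]$ directly by taking $\gamma_h(0)=2$ with $\gamma_h'\le -\frac{1}{h}I_{[0,h]}$, at the harmless cost of a factor $2$ on the initial term.
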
	

\begin{proof}


Using the partition of unity on $\Gamma_0$, it is enough to prove that $\mathbb{P}$-a.s.
\begin{eqnarray*}\label{eq100}
\lim_{h \rightarrow 0}\frac{1}{h} \int_0^h\int_{U^\alpha}|\widetilde{V}^\alpha_s(\theta) - \widetilde{u}_0^\alpha(\theta)|\widetilde{\chi}^{\alpha}(\theta) d\theta ds = 0, \quad \forall 1 \leq \alpha \leq N.
\end{eqnarray*}

Apart from a null set, we may assume that $u \in L^\infty(Q_T)$, \eqref{eq87} in Remark \ref{remark3} is satisfied for $V$, and \eqref{eq183} holds.

For any sufficient small $h > 0$, let $\gamma_h: [0,T] \rightarrow \mathbb{R}_{+}$ be a smooth function satisfying
\begin{equation}\label{eq92}
\gamma_h(0)=2, \quad \gamma_h\leq 2I_{[0,h]}, \quad \gamma^{\prime}_h \leq -\frac{1}{h}I_{[0,h]}.
\end{equation}

Now, fix $1 \leq \alpha \leq N$ and sufficiently small $\delta > 0$. For any $\beta \in U^\alpha_{2,\delta}$ and sufficiently small $h>0$, define $\eta: Q_T \rightarrow \mathbb{R}$ by
\begin{numcases}{\eta(t,x)=}
  \chi^\alpha_t(x)\gamma_h(t)J_{\delta}\bigl((X^\alpha_t)^{-1}(x) - \beta\bigr), & $(t,x)\in \bigcup_{t \in [0,T]} \{t\}\times V_t^{\alpha}$,\nonumber\\
  0, & $(t,x) \in Q_T\setminus\bigcup_{t \in [0,T]} \{t\}\times V_t^{\alpha}$,\nonumber
\end{numcases}
where $J_{\delta}(x) = \frac{1}{\delta^d}J(\frac{x}{\delta})$ and $J$ is the standard mollifier defined in \eqref{eq39}. Obviously, $\eta \in C^1(Q_T)$, $\eta \geq 0$ and $\eta(T)=0$.
Then using Remark \ref{remark3} and by a change of the variables, we get that for all $k \in \mathbb{R}$
\begin{eqnarray}\label{eq93}
& & 2\int_{U^\alpha_2}|\widetilde{u}_0^\alpha(\theta) - k|\widetilde{\chi}^{\alpha}(\theta)J_{\delta}(\theta - \beta)\sqrt{g^\alpha_0(\theta)}d\theta\\
& & - \int_0^T\int_{U^\alpha_2}\mathrm{sign}(\widetilde{V}^\alpha_s(\theta) - k)(k + \widetilde{w}^\alpha_s(\theta))\widetilde{\nabla_{\Gamma_s}\cdot v}^\alpha(\theta)\widetilde{\chi}^{\alpha}(\theta)\gamma_h(s)J_{\delta}(\theta - \beta)\sqrt{g^\alpha_s(\theta)} d\theta ds\nonumber\\
& & + \int_0^T\int_{U^\alpha_2} |\widetilde{V}^\alpha_s(\theta) - k|\widetilde{\chi}^{\alpha}(\theta)\gamma^{\prime}_h(s)J_{\delta}(\theta - \beta)\sqrt{g^\alpha_s(\theta)} d\theta ds\nonumber\\
& & + \int_0^T\int_{U^\alpha_2} F^\alpha_i(s,\theta,\widetilde{V}^\alpha_s(\theta),k)\frac{\partial \widetilde{\chi}^{\alpha}}{\partial \theta_i}(\theta) \gamma_h(s)J_{\delta}(\theta - \beta)\sqrt{g^\alpha_s(\theta)} d\theta ds\nonumber\\
& & + \int_0^T\int_{U^\alpha_2} F^\alpha_i(s,\theta,\widetilde{V}^\alpha_s(\theta),k)\widetilde{\chi}^{\alpha}(\theta)\gamma_h(s)\frac{\partial J_{\delta}(\theta - \beta)}{\partial\theta_i}\sqrt{g^\alpha_s(\theta)} d\theta ds\nonumber\\
& & - \int_0^T\int_{U^\alpha_2}\mathrm{sign}(\widetilde{V}^\alpha_s(\theta) - k)\widetilde{a}^\alpha_{iu}(s,\theta, k + \widetilde{w}^\alpha_s(\theta))\frac{\partial\widetilde{w}^\alpha_s}{\partial \theta_i}(\theta)\widetilde{\chi}^{\alpha}(\theta)\gamma_h(s)J_{\delta}(\theta - \beta)\sqrt{g^\alpha_s(\theta)} d\theta ds \geq 0.\nonumber\
\end{eqnarray}


We now set $k=\widetilde{u}_0^\alpha(\beta)$ in \eqref{eq93}, then integrate the resulting inequality over $U^\alpha_{2,\delta}$ with respect to $\beta$. Using \eqref{eq92} and \eqref{eq96}, we arrive at
\begin{eqnarray}\label{eq94}
      &  & \frac{C_{T, U^\alpha_2}}{h} \int_0^h\int_{U^\alpha_{2,\delta}}\int_{U^\alpha_2}|\widetilde{V}^\alpha_s(\theta) - \widetilde{u}_0^\alpha(\beta)|\widetilde{\chi}^{\alpha}(\theta)J_{\delta}(\theta - \beta) d\theta d\beta ds  \\
&\leq& C_{U^\alpha_2} \int_{U^\alpha_{2,\delta}}\int_{U^\alpha_2} |\widetilde{u}_0^\alpha(\theta) - \widetilde{u}_0^\alpha(\beta)|J_{\delta}(\theta - \beta)d\theta d\beta \nonumber\\
& & + C_{T, U^\alpha_2} \int_0^{2h}\int_{U^\alpha_{2,\delta}}\int_{U^\alpha_2} |\widetilde{u}_0^\alpha(\beta)+\widetilde{w}^\alpha_s(\theta)| J_{\delta}(\theta - \beta)d\theta d\beta ds  \nonumber\\
& & + C_{T, U^\alpha_2}  \int_0^{2h}\int_{U^\alpha_{2,\delta}}\int_{U^\alpha_2}(1+ |\widetilde{u}^\alpha_s(\theta) | + |\widetilde{u}_0^\alpha(\beta)+\widetilde{w}^\alpha_s(\theta)|)|\widetilde{V}^\alpha_s(\theta)-\widetilde{u}_0^\alpha(\beta)|J_{\delta}(\theta - \beta) d\theta d\beta ds   \nonumber\\
& & + C_{T, U^\alpha_2}  \int_0^{2h}\int_{U^\alpha_{2,\delta}}\int_{U^\alpha_2}(1+ |\widetilde{u}^\alpha_s(\theta) | + |\widetilde{u}_0^\alpha(\beta)+\widetilde{w}^\alpha_s(\theta)|)|\widetilde{V}^\alpha_s(\theta)-\widetilde{u}_0^\alpha(\beta)|\nabla_{\theta} J_{\delta}(\theta - \beta)| d\theta d\beta ds   \nonumber\\
& & +  C_{T, U^\alpha_2} \int_0^{2h}\int_{U^\alpha_{2,\delta}}\int_{U^\alpha_2}(1+ |\widetilde{u}_0^\alpha(\beta)+\widetilde{w}^\alpha_s(\theta)|)|\nabla_{\theta}\widetilde{w}^\alpha_s(\theta)|J_{\delta}(\theta - \beta) d\theta d\beta ds. \nonumber\
\end{eqnarray}
Hence, letting $h \rightarrow 0$ in \eqref{eq94}, we obatin
\begin{eqnarray}\label{eq97}
         & & \limsup_{h \rightarrow 0}\frac{1}{h} \int_0^h\int_{U^\alpha_{2,\delta}}\int_{U^\alpha_2}|\widetilde{V}^\alpha_s(\theta) - \widetilde{u}_0^\alpha(\beta)|\widetilde{\chi}^{\alpha}(\theta)J_{\delta}(\theta - \beta) d\theta d\beta ds  \nonumber\\
 & \leq & C_{U^\alpha_2} \int_{U^\alpha_{2,\delta}}\int_{U^\alpha_2} |\widetilde{u}_0^\alpha(\theta) - \widetilde{u}_0^\alpha(\beta)|J_{\delta}(\theta - \beta)d\theta d\beta.
\end{eqnarray}
Due to the fact that
\begin{eqnarray*}\label{eq98}
& & \frac{1}{h} \int_0^h\int_{U^\alpha_2}|\widetilde{V}^\alpha_s(\theta) - \widetilde{u}_0^\alpha(\theta)|\widetilde{\chi}^{\alpha}(\theta) d\theta ds\nonumber\\
&\leq& \frac{1}{h} \int_0^h\int_{U^\alpha_{2,\delta}}\int_{U^\alpha_2}|\widetilde{V}^\alpha_s(\theta) - \widetilde{u}_0^\alpha(\beta)|\widetilde{\chi}^{\alpha}(\theta)J_{\delta}(\theta - \beta) d\theta d\beta ds \nonumber\\
     & &+  C_{U^\alpha_2}\int_{U^\alpha_{2,\delta}}\int_{U^\alpha_2}| \widetilde{u}_0^\alpha(\beta) - \widetilde{u}_0^\alpha(\theta)|J_{\delta}(\theta - \beta) d\theta d\beta,
\end{eqnarray*}
combining with \eqref{eq97} yields
\begin{eqnarray}\label{eq99}
       & & \limsup_{h \rightarrow 0}\frac{1}{h} \int_0^h\int_{U^\alpha_2}|\widetilde{V}^\alpha_s(\theta) - \widetilde{u}_0^\alpha(\theta)|\widetilde{\chi}^{\alpha}(\theta) d\theta ds  \nonumber\\
&\leq& C_{U^\alpha_2}\int_{U^\alpha_{2,\delta}}\int_{U^\alpha_2} |\widetilde{u}_0^\alpha(\theta) - \widetilde{u}_0^\alpha(\beta)|J_{\delta}(\theta - \beta)d\theta d\beta.
\end{eqnarray}
Finally, letting $\delta \rightarrow 0$ in the right-hand side of \eqref{eq99}, we obtain
\begin{eqnarray*}\label{eq101}
 \limsup_{h \rightarrow 0}\frac{1}{h} \int_0^h\int_{U^\alpha_2}|\widetilde{V}^\alpha_s(\theta) - \widetilde{u}_0^\alpha(\theta)|\widetilde{\chi}^{\alpha}(\theta) d\theta ds  = 0,
\end{eqnarray*}
completing the proof.
\end{proof}

\begin{proposition}\label{Thm2}
Let Assumptions \ref{Assumption1} and \ref{Assumption2} hold. Suppose that $u_1$, $u_2$ are two general entropy solutions to the stochastic conservation law \eqref{eq74} with initial values $u_{1,0}$, $u_{2,0} \in L^{\infty}(\Gamma_0)$, respectively. Then there exists a positive constant $C_{1}$ such that $\mathbb{P}$-a.s.
\begin{eqnarray*}
    & & \mathrm{ess\,sup}_{t\in [0,T]}\|u_{1}(t) - u_{2}(t)\|_{L^1(\Gamma_t)} \nonumber\\
&\leq& C_{1} e^{C_{1} (1+ \|u_1\|_{L^\infty(Q_T)} + \|u_2\|_{L^\infty(Q_T)})}\|u_{1,0} - u_{2,0}\|_{L^1(\Gamma_0)}.
\end{eqnarray*}
In particular, the generalized entropy solution to equation \eqref{eq74} is unique.
\end{proposition}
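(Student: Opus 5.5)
The plan is to pass to the shifted variables $V_1=u_1-w$ and $V_2=u_2-w$, where $w(t,x)=\int_0^t\sigma(s,G(s,G^{-1}(t,x)))dB_s$ is common to both solutions. By Remark \ref{remark3}, $\mathbb{P}$-a.s. each $V_m$ satisfies the pathwise Kruzhkov inequality \eqref{eq87}, which contains no stochastic integral. Since $u_1-u_2=V_1-V_2$, it suffices to prove an $L^1$-contraction for $V_1,V_2$, and this can be done for a fixed $\omega$ off a null set. On that set we also assume $u_m\in L^\infty(Q_T)$, that \eqref{eq183} holds, and that Lemma \ref{lemma4.1} holds for both solutions.

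\textbf{Doubling of variables.} We localize with the partition of unity $\chi^\alpha_t$ and work in the charts $U^\alpha$ on the pulled-back functions $\widetilde V^\alpha_{m,t}(\theta)$. For $(s,\theta)$ and $(t,\beta)$ we use test functions of the form
\[
\eta(s,\theta;t,\beta)=\psi\Big(\frac{s+t}{2}\Big)\,\widetilde\chi^\alpha\Big(\frac{\theta+\beta}{2}\Big)\,\rho_{h}(s-t)\,J_\delta(\theta-\beta),
\]
with $\psi\ge0$ smooth in time and $\psi(T)=0$.

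1. Apply \eqref{eq87} for $V_1$ in $(s,\theta)$ with $k=\widetilde V^\alpha_{2,t}(\beta)$, and apply the analogous inequality for $V_2$ in $(t,\beta)$ with $k=\widetilde V^\alpha_{1,s}(\theta)$, written in the local form \eqref{eq93}.
2. Integrate both inequalities over the other pair of variables and add them.
3. The time-derivative terms combine into $\psi'$, since the derivatives falling on $\rho_h$ cancel. The flux terms on $J_\delta$ are rewritten using $F^\alpha_i$ and the antisymmetry of $\nabla J_\delta$.

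\textbf{The main obstacle.} In the flux terms the two flux functions are evaluated at different points, $(s,\theta,\cdot+\widetilde w^\alpha_s(\theta))$ and $(t,\beta,\cdot+\widetilde w^\alpha_t(\beta))$, and these terms are multiplied by $|\nabla J_\delta|\sim\delta^{-1}$. The difference is controlled by \eqref{eq96}, \eqref{eq181} and the Hölder continuity \eqref{eq183} of $w$ and $\nabla w$. This produces error terms of size $C(1+\|u_1\|_\infty+\|u_2\|_\infty)^{q_0}(|s-t|^\kappa+|\theta-\beta|)\delta^{-1}$, which vanish if we first let $h\to0$ with $\delta$ fixed and then let $\delta\to0$. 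The same mechanism handles the difference between the two terms $\mathrm{sign}(\cdot)f_u(k+w)\cdot\nabla w$ and the two divergence terms $(k+w)\nabla\cdot v$. After taking the limit, these pair up into
\[
\mathrm{sign}(V_1-V_2)\big[(V_1-V_2)\nabla\cdot v+(f_u(V_1+w)-f_u(V_2+w))\cdot\nabla w\big]\eta,
\]
which is bounded by $C(1+\|u_1\|_\infty+\|u_2\|_\infty)|V_1-V_2|\,\eta$ using (B3)(ii). I expect this step to be the most delicate. If the order of limits $h\to0$ then $\delta\to0$ does not make the flux error vanish, I would instead choose $h=\delta^{1/\kappa'}$ so that $|s-t|^\kappa\delta^{-1}\to0$. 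The terms involving derivatives of $\widetilde\chi^\alpha$ and of $\sqrt{g}$ are harmless; after summing over $\alpha$, they recombine, by Lemma \ref{lemma2.3} and (B1), into $\mathrm{sign}(V_1-V_2)(f(u_1)-f(u_2))\cdot\nabla_{\Gamma}\psi$, which vanishes because $\psi$ depends only on time.

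\textbf{Conclusion.} Summing over $\alpha$ gives, for all nonnegative $\psi\in C^1_c([0,T))$,
\[
\int_0^T\psi'(t)\|V_1(t)-V_2(t)\|_{L^1(\Gamma_t)}dt+C(1+\|u_1\|_\infty+\|u_2\|_\infty)\int_0^T\psi(t)\|V_1(t)-V_2(t)\|_{L^1(\Gamma_t)}dt+\psi(0)\|u_{1,0}-u_{2,0}\|_{L^1(\Gamma_0)}\ge0 .
\]
The initial term is obtained from Lemma \ref{lemma4.1}, using $\psi$ that equals $1$ on $[0,t_0]$ near the origin and the triangle inequality. Choosing $\psi$ to approximate $I_{[0,t]}$ and applying Gronwall's lemma yields the bound for a.e. $t$, with constant $C_1e^{C_1(1+\|u_1\|_\infty+\|u_2\|_\infty)}$. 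Uniqueness follows by taking $u_{1,0}=u_{2,0}$.
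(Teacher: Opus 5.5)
Your overall architecture is the paper's: shift by the common $w$ to get the pathwise Kruzhkov inequality \eqref{eq87}, double variables in the charts, send the time mollification to zero before the spatial one, recover the initial datum from Lemma \ref{lemma4.1}, and close with Gronwall. Using a symmetric test function instead of the paper's one-sided kernel $j_{\delta_0}$ with $\widetilde\chi^\alpha(\beta)\widetilde\eta^\alpha_s(\beta)$ makes no difference.

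The gap is in the step you flag as the main obstacle. Consider the terms carrying $\nabla J_\delta$. The spatial part of the mismatch between $\widetilde a^\alpha(s,\theta,\cdot+\widetilde w^\alpha_s(\theta))$ and $\widetilde a^\alpha(t,\beta,\cdot+\widetilde w^\alpha_t(\beta))$ is of first order in $\theta-\beta$. But $\int|\theta-\beta|\,|\nabla J_\delta(\theta-\beta)|\,d\theta$ is bounded, not small. So the $|\theta-\beta|\delta^{-1}$ contributions are not error terms: they have a nonzero limit however $h$ is coupled to $\delta$. Your fallback $h=\delta^{1/\kappa'}$ only disposes of the $|s-t|^\kappa\delta^{-1}$ part. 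The limit has to be computed, in three moves:
\begin{itemize}
\item expand to first order in $\theta-\beta$;
\item use $\int(\theta_j-\beta_j)\,\partial_{\theta_i}J_\delta(\theta-\beta)\,d\theta=-\delta_{ij}$;
\item move $\widetilde V^\alpha_{1,t}(\theta)$ to $\widetilde V^\alpha_{1,s}(\beta)$, using $L^1$-continuity of translations and the Lipschitz property of $u\mapsto\mathrm{sign}(u-k)(g(u)-g(k))$.
\end{itemize}

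Carrying this out changes your limit in two ways.
\begin{itemize}
\item The first-order contribution from the $\theta$-dependence of $\widetilde w^\alpha$ cancels the two source terms $-\mathrm{sign}(\cdot)\,\widetilde a^\alpha_{iu}(k+\widetilde w^\alpha)\,\partial_i\widetilde w^\alpha$. In the paper this is Step 3 of Lemma \ref{lemma5.4}, where $I_5+I_{8,3}+I_6+I_{7,3}\to0$ because the combination involves $\partial_{\theta_i}\big[(\widetilde w^\alpha_s(\beta)-\widetilde w^\alpha_t(\theta))J_\delta(\theta-\beta)\big]$. So the term $\mathrm{sign}(V_1-V_2)(f_u(V_1+w)-f_u(V_2+w))\cdot\nabla w$ in your limit should not be there.
\item The $\theta$-dependence of the chart coefficients $\widetilde a^\alpha(t,\theta,\cdot)$ produces, by the same mechanism, the term $-\mathrm{sign}(V_1-V_2)\big(\partial_{\beta_i}\widetilde a^\alpha_i(s,\beta,\widetilde u^\alpha_{1,s})-\partial_{\beta_i}\widetilde a^\alpha_i(s,\beta,\widetilde u^\alpha_{2,s})\big)$ (Steps 1 and 4). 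Your limit does not account for it.
\end{itemize}

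Your final Gronwall inequality still holds, because the correct limit is again bounded by $C(1+\|u_1\|_{L^\infty(Q_T)}+\|u_2\|_{L^\infty(Q_T)})|u_1-u_2|$ via \eqref{eq96}. But as written, the central step of the doubling argument rests on a false estimate and must be replaced by this computation.

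A smaller point of the same kind: your claim that the $\partial\widetilde\chi^\alpha$ and $\sqrt{g}$ terms recombine into $\nabla_\Gamma\psi$ after summing over $\alpha$ is not justified. With the product weights $\sqrt{g^\alpha_s(\theta)}\sqrt{g^\alpha_t(\beta)}$, which you need for the $\rho_h'$ terms to cancel, the limiting density is $g^\alpha$, not $\sqrt{g^\alpha}$. The claim is also unnecessary: these terms are $O(|u_1-u_2|)$ and can be absorbed into the Gronwall term, as the paper does.
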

\begin{proof}
Let $u_1$, $u_2$ be two generalized entropy solutions to equation \eqref{eq74} with initial values $u_{1,0}$, $u_{2,0} \in L^\infty(\Gamma_0)$, respectively. Define
\[ V_1= u_1-w, \quad  V_2 = u_2 - w.\]

Remark that $u_1$, $u_2 \in L^\infty(Q_T)$, both $V_1$ and $V_2$ satisfy \eqref{eq87} in Remark \ref{remark3}, and \eqref{eq183} holds.

Let $j: \mathbb{R} \rightarrow \mathbb{R}_{+}$ be a smooth function satisfying
\[ supp(j) \subset [-1, 0]\  \text{and}  \ \int_{-1}^{0} j(r)dr=1. \]
For any $\delta_0 > 0$, let $j_{\delta_0}(r) = \frac{1}{\delta_0}j(\frac{r}{\delta_0}) $. Recall that $J$ is the standard mollifier defined in \eqref{eq39}, and for any $\delta>0$, $J_{\delta}(x) = \frac{1}{\delta^d}J(\frac{x}{\delta})$.
Now, fix $1 \leq \alpha \leq N$ and sufficient small two positive constants $\delta$ and $\delta_0$. For a test function $\eta \in C^1(Q_T)$ with $\eta \geq 0$ and $\eta(\cdot, T)= 0$, we define $\eta_{\delta, \delta_0}: Q_T \times Q_T \rightarrow \mathbb{R}$ by
\begin{numcases}{\eta_{\delta,\delta_0}(t,x; s,y)=}
  j_{\delta_0}(t -s)J_{\delta}\bigl((X^\alpha_t)^{-1}(x) - (X^\alpha_s)^{-1}(y)\bigr)\chi^\alpha_s(y)\eta(s,y), & $(t,x; s,y)\in \Lambda_T$,\nonumber\\
  0, & $(t,x; s,y) \in (Q_T\times Q_T) \setminus \Lambda_T$,\nonumber
\end{numcases}
where $\Lambda_T= (\bigcup_{t \in [0,T]} \{t\}\times V_t^{\alpha}) \times (\bigcup_{s \in [0,T]} \{s\}\times V_s^{\alpha})$.
Note that $j_{\delta_0}(t-s) \neq 0$ only if $s - \delta_0 \leq t \leq s$, and therefore $\eta_{\delta, \delta_0}(t,x; s,y) = 0$ outside $(s - \delta_0)\vee 0 \leq t \leq s$.

Denote $U_T^\alpha = [0,T] \times U^\alpha$ and fix $(s, y)=(s,X_s^\alpha(\beta)) \in \bigcup_{s \in [0,T]} \{s\}\times V_s^{\alpha} $, $\beta \in U^\alpha$. Replacing the test function $\eta$ in Remark \ref{remark3} by $\eta_{\delta, \delta_0}(\cdot, \cdot, s, y)$ ($(s,y)$ fixed), and using the local coordinates, we have for all $k \in \mathbb{R}$,
\begin{eqnarray*}\label{eq102}
& & \int_{U^\alpha} |\widetilde{u}_{1,0}^\alpha(\theta) - k|j_{\delta_0}(-s)J_{\delta}(\theta - \beta)\widetilde{\chi}^{\alpha}(\beta)\widetilde{\eta}^{\alpha}_s(\beta)\sqrt{g^\alpha_0(\theta)}d\theta \nonumber\\
& & - \int_{U_T^\alpha} \mathrm{sign}(\widetilde{V}^\alpha_{1,t}(\theta) -k)(k+ \widetilde{w}^\alpha_t(\theta))\widetilde{\nabla_{\Gamma_t}\cdot v}^\alpha(\theta)j_{\delta_0}(t-s)J_{\delta}(\theta - \beta)\widetilde{\chi}^{\alpha}(\beta)\widetilde{\eta}^{\alpha}_s(\beta)\sqrt{g^\alpha_t(\theta)}d\theta dt \nonumber\\
& & + \int_{U_T^\alpha}  |\widetilde{V}^\alpha_{1,t}(\theta) -k|j^{\prime}_{\delta_0}(t-s)J_{\delta}(\theta - \beta)\widetilde{\chi}^{\alpha}(\beta)\widetilde{\eta}^{\alpha}_s(\beta)\sqrt{g^\alpha_t(\theta)}d\theta dt \nonumber\\
& & - \int_{U_T^\alpha} \mathrm{sign}(\widetilde{V}^\alpha_{1,t}(\theta) -k)\widetilde{a}^\alpha_{iu}(t,\theta,k+\widetilde{w}^\alpha_t(\theta))\frac{\widetilde{w}^\alpha_t}{\partial \theta_i}(\theta)j_{\delta_0}(t-s)J_{\delta}(\theta - \beta)\widetilde{\chi}^{\alpha}(\beta)\widetilde{\eta}^{\alpha}_s(\beta)\sqrt{g^\alpha_t(\theta)}d\theta dt \nonumber\\
& & + \int_{U_T^\alpha} F^\alpha_i(t,\theta,\widetilde{V}^\alpha_{1,t}(\theta),k)j_{\delta_0}(t-s)\frac{\partial J_{\delta}(\theta - \beta)}{\partial \theta_i}\widetilde{\chi}^{\alpha}(\beta)\widetilde{\eta}^{\alpha}_s(\beta)\sqrt{g^\alpha_t(\theta)}d\theta dt \geq 0. 
\end{eqnarray*}

Let $k = \widetilde{V}^\alpha_{2,s}(\beta)$ in the above inequality. Multiply both sides by $\sqrt{g^\alpha_s(\beta)}$ and integrate over $U_T^\alpha$ with respect to $(s,\beta)$ to obtain
\begin{eqnarray}\label{eq103}
& & \int_{U_T^\alpha}\int_{U^\alpha} |\widetilde{u}_{1,0}^\alpha(\theta) - \widetilde{V}^\alpha_{2,s}(\beta) |j_{\delta_0}(-s)J_{\delta}(\theta - \beta)\widetilde{\chi}^{\alpha}(\beta)\widetilde{\eta}^{\alpha}_s(\beta)\sqrt{g^\alpha_0(\theta)}\sqrt{g^\alpha_s(\beta)}d\theta d\beta ds \nonumber\\
& & + \int_{U_T^\alpha}\int_{U_T^\alpha} \Big\{-\mathrm{sign}(\widetilde{V}^\alpha_{1,t}(\theta) -\widetilde{V}^\alpha_{2,s}(\beta))(\widetilde{V}^\alpha_{2,s}(\beta)+ \widetilde{w}^\alpha_t(\theta))\widetilde{\nabla_{\Gamma_t}\cdot v}^\alpha(\theta)j_{\delta_0}(t-s)J_{\delta}(\theta - \beta)\widetilde{\chi}^{\alpha}(\beta)\widetilde{\eta}^{\alpha}_s(\beta)\nonumber\\
& &  + |\widetilde{V}^\alpha_{1,t}(\theta) -\widetilde{V}^\alpha_{2,s}(\beta)|j^{\prime}_{\delta_0}(t-s)J_{\delta}(\theta - \beta)\widetilde{\chi}^{\alpha}(\beta)\widetilde{\eta}^{\alpha}_s(\beta) \nonumber\\
& &  - \mathrm{sign}(\widetilde{V}^\alpha_{1,t}(\theta) -\widetilde{V}^\alpha_{2,s}(\beta))\widetilde{a}^\alpha_{iu}(t,\theta,\widetilde{V}^\alpha_{2,s}(\beta)+\widetilde{w}^\alpha_t(\theta))\frac{\widetilde{w}^\alpha_t}{\partial \theta_i}(\theta)j_{\delta_0}(t-s)J_{\delta}(\theta - \beta)\widetilde{\chi}^{\alpha}(\beta)\widetilde{\eta}^{\alpha}_s(\beta) \nonumber\\
& & + F^\alpha_i(t,\theta,\widetilde{V}^\alpha_{1,t}(\theta),\widetilde{V}^\alpha_{2,s}(\beta))j_{\delta_0}(t-s)\frac{\partial J_{\delta}(\theta - \beta)}{\partial \theta_i}\widetilde{\chi}^{\alpha}(\beta)\widetilde{\eta}^{\alpha}_s(\beta) \Big\} \sqrt{g^\alpha_t(\theta)}\sqrt{g^\alpha_s(\beta)}d\theta dt d\beta ds
\nonumber\\
& & \geq 0.
\end{eqnarray}

Now, taking $\eta_{\delta, \delta_0}(t,x, \cdot, \cdot)$ ($(t,x)$ fixed) as a test function in \eqref{eq87} and proceeding analogously for the generalized entropy solution $u_{2}(s,y)$ we also get
\begin{eqnarray}\label{eq104}
& & \int_{U_T^\alpha}\int_{U_T^\alpha} \Big\{ -\mathrm{sign}(\widetilde{V}^\alpha_{2,s}(\beta) -\widetilde{V}^\alpha_{1,t}(\theta))(\widetilde{V}^\alpha_{1,t}(\theta)+\widetilde{w}^\alpha_s(\beta))\widetilde{\nabla_{\Gamma_s}\cdot v}^\alpha(\beta)j_{\delta_0}(t-s)J_{\delta}(\theta - \beta)\widetilde{\chi}^{\alpha}(\beta)\widetilde{\eta}^{\alpha}_s(\beta) \nonumber\\
& &  - |\widetilde{V}^\alpha_{2,s}(\beta) -\widetilde{V}^\alpha_{1,t}(\theta)|j^{\prime}_{\delta_0}(t-s)J_{\delta}(\theta - \beta)\widetilde{\chi}^{\alpha}(\beta)\widetilde{\eta}^{\alpha}_s(\beta) \nonumber\\
& &  + |\widetilde{V}^\alpha_{2,s}(\beta) -\widetilde{V}^\alpha_{1,t}(\theta)|j_{\delta_0}(t-s)J_{\delta}(\theta - \beta)\widetilde{\chi}^{\alpha}(\beta)\frac{\partial \widetilde{\eta}^{\alpha}_s }{\partial s}(\beta) \nonumber\\
& &  - \mathrm{sign}(\widetilde{V}^\alpha_{2,s}(\beta) -\widetilde{V}^\alpha_{1,t}(\theta))\widetilde{a}^\alpha_{iu}(s,\beta,\widetilde{V}^\alpha_{1,t}(\theta)+\widetilde{w}^\alpha_s(\beta))\frac{\widetilde{w}^\alpha_s}{\partial \beta_i}(\beta)j_{\delta_0}(t-s)J_{\delta}(\theta - \beta)\widetilde{\chi}^{\alpha}(\beta)\widetilde{\eta}^{\alpha}_s(\beta) \nonumber\\
& &  - F^\alpha_i(s,\beta,\widetilde{V}^\alpha_{2,s}(\beta),\widetilde{V}^\alpha_{1,t}(\theta))j_{\delta_0}(t-s)\frac{\partial J_{\delta}(\theta - \beta)}{\partial \theta_i}\widetilde{\chi}^{\alpha}(\beta)\widetilde{\eta}^{\alpha}_s(\beta)\nonumber\\
& &  +  F^\alpha_i(s,\beta,\widetilde{V}^\alpha_{2,s}(\beta),\widetilde{V}^\alpha_{1,t}(\theta))j_{\delta_0}(t-s)J_{\delta}(\theta - \beta)\frac{\partial \widetilde{\chi}^{\alpha} }{\partial \beta_i}(\beta)\widetilde{\eta}^{\alpha}_s(\beta)\nonumber\\
& &  +  F^\alpha_i(s,\beta,\widetilde{V}^\alpha_{2,s}(\beta),\widetilde{V}^\alpha_{1,t}(\theta))j_{\delta_0}(t-s)J_{\delta}(\theta - \beta)\widetilde{\chi}^{\alpha}(\beta)\frac{\partial \widetilde{\eta}^{\alpha}_s }{\partial \beta_i}(\beta) \Big\}  \sqrt{g^\alpha_t(\theta)}\sqrt{g^\alpha_s(\beta)} d\beta ds d\theta dt \nonumber\\
& & \geq 0.
\end{eqnarray}
Adding \eqref{eq103} to \eqref{eq104}, we get
\begin{eqnarray}\label{eq105}
& &  \int_{U_T^\alpha}\int_{U^\alpha} |\widetilde{u}_{1,0}^\alpha(\theta) - \widetilde{V}^\alpha_{2,s}(\beta) |j_{\delta_0}(-s)J_{\delta}(\theta - \beta)\widetilde{\chi}^{\alpha}(\beta)\widetilde{\eta}^{\alpha}_s(\beta)\sqrt{g^\alpha_0(\theta)}\sqrt{g^\alpha_s(\beta)}d\theta d\beta ds \nonumber\\
& & + \int_{U_T^\alpha}\int_{U_T^\alpha}\Big\{ \big\{\mathrm{sign}(\widetilde{V}^\alpha_{1,t}(\theta) - \widetilde{V}^\alpha_{2,s}(\beta))\big( \widetilde{V}^\alpha_{1,t}(\theta)\widetilde{\nabla_{\Gamma_s}\cdot v}^\alpha(\beta) - \widetilde{V}^\alpha_{2,s}(\beta)\widetilde{\nabla_{\Gamma_t}\cdot v}^\alpha(\theta) \big)j_{\delta_0}(t-s)\nonumber\\
& & \cdot J_{\delta}(\theta - \beta)\widetilde{\chi}^{\alpha}(\beta)\widetilde{\eta}^{\alpha}_s(\beta) \big\} + \big\{ \mathrm{sign}(\widetilde{V}^\alpha_{1,t}(\theta) - \widetilde{V}^\alpha_{2,s}(\beta))\big( \widetilde{w}^\alpha_{s}(\beta)\widetilde{\nabla_{\Gamma_s}\cdot v}^\alpha(\beta) - \widetilde{w}^\alpha_{t}(\theta)\widetilde{\nabla_{\Gamma_t}\cdot v}^\alpha(\theta) \big)\nonumber\\
& &  \cdot j_{\delta_0}(t-s)J_{\delta}(\theta - \beta)\widetilde{\chi}^{\alpha}(\beta)\widetilde{\eta}^{\alpha}_s(\beta) \big\} + \big\{ |\widetilde{V}^\alpha_{2,s}(\beta) -\widetilde{V}^\alpha_{1,t}(\theta)|j_{\delta_0}(t-s)J_{\delta}(\theta - \beta)\widetilde{\chi}^{\alpha}(\beta)\frac{\partial \widetilde{\eta}^{\alpha}_s }{\partial s}(\beta) \big\} \nonumber\\
& &   - \big\{\mathrm{sign}(\widetilde{V}^\alpha_{1,t}(\theta)
-\widetilde{V}^\alpha_{2,s}(\beta))\widetilde{a}^\alpha_{iu}(t,\theta,\widetilde{V}^\alpha_{2,s}(\beta)+\widetilde{w}^\alpha_t(\theta))\frac{\widetilde{w}^\alpha_t}{\partial \theta_i}(\theta)j_{\delta_0}(t-s)J_{\delta}(\theta - \beta)\widetilde{\chi}^{\alpha}(\beta)\widetilde{\eta}^{\alpha}_s(\beta) \big\} \nonumber\\
& &   + \big\{ \mathrm{sign}(\widetilde{V}^\alpha_{1,t}(\theta)- \widetilde{V}^\alpha_{2,s}(\beta) )\widetilde{a}^\alpha_{iu}(s,\beta,\widetilde{V}^\alpha_{1,t}(\theta)+\widetilde{w}^\alpha_s(\beta))\frac{\widetilde{w}^\alpha_s}{\partial \beta_i}(\beta)j_{\delta_0}(t-s)J_{\delta}(\theta - \beta)\widetilde{\chi}^{\alpha}(\beta)\widetilde{\eta}^{\alpha}_s(\beta) \big\}\nonumber\\
& &   + \big\{\mathrm{sign}(\widetilde{V}^\alpha_{1,t}(\theta) -\widetilde{V}^\alpha_{2,s}(\beta))\big( \widetilde{a}^\alpha_{i}(t,\theta,\widetilde{u}^\alpha_{1,t}(\theta)) - \widetilde{a}^\alpha_{i}(s,\beta,\widetilde{V}^\alpha_{1,t}(\theta)+\widetilde{w}^\alpha_s(\beta)) \big)j_{\delta_0}(t-s)
  \frac{\partial J_{\delta}(\theta - \beta)}{\partial \theta_i} \nonumber\\
& & \cdot \widetilde{\chi}^{\alpha}(\beta)\widetilde{\eta}^{\alpha}_s(\beta) \big\}+ \big\{\mathrm{sign}(\widetilde{V}^\alpha_{1,t}(\theta) - \widetilde{V}^\alpha_{2,s}(\beta) )\big( \widetilde{a}^\alpha_{i}(s,\beta,\widetilde{u}^\alpha_{2,s}(\beta)) - \widetilde{a}^\alpha_{i}(t,\theta,\widetilde{V}^\alpha_{2,s}(\beta)+\widetilde{w}^\alpha_t(\theta)) \big)j_{\delta_0}(t-s) \nonumber\\
& & \cdot \frac{\partial J_{\delta}(\theta - \beta)}{\partial\theta_i}\widetilde{\chi}^{\alpha}(\beta)\widetilde{\eta}^{\alpha}_s(\beta) \big\}
    + \big\{ F^\alpha_i(s,\beta,\widetilde{V}^\alpha_{2,s}(\beta),\widetilde{V}^\alpha_{1,t}(\theta))j_{\delta_0}(t-s)J_{\delta}(\theta - \beta)\frac{\partial \widetilde{\chi}^{\alpha} }{\partial \beta_i}(\beta)\widetilde{\eta}^{\alpha}_s(\beta) \big\}\nonumber\\
 & & + \big\{ F^\alpha_i(s,\beta,\widetilde{V}^\alpha_{2,s}(\beta),\widetilde{V}^\alpha_{1,t}(\theta))j_{\delta_0}(t-s)J_{\delta}(\theta - \beta)\widetilde{\chi}^{\alpha}(\beta)\frac{\partial \widetilde{\eta}^{\alpha}_s }{\partial \beta_i}(\beta) \big\} \Big\}  \sqrt{g^\alpha_t(\theta)}\sqrt{g^\alpha_s(\beta)}  d\theta dtd\beta ds \nonumber\\
 & & =: \sum_{i=1}^{10}I_i \geq 0.
\end{eqnarray}

We claim that $\mathbb{P}$-a.s.
\begin{eqnarray}\label{eq157}
      & & \limsup_{\delta\rightarrow 0}\limsup_{\delta_0\rightarrow 0}\sum_{i=1}^{10}I_i \nonumber\\
&= &  \int_{U^\alpha}|\widetilde{u}_{1,0}^\alpha(\beta) - \widetilde{u}^\alpha_{2,0}(\beta) |\widetilde{\chi}^{\alpha}(\beta)\widetilde{\eta}^{\alpha}_0(\beta)g^\alpha_0(\beta)d\beta \nonumber\\
     & & + \int_{U_T^\alpha}|\widetilde{V}^\alpha_{1,s}(\beta) -\widetilde{V}^\alpha_{2,s}(\beta)| \widetilde{\nabla_{\Gamma_s}\cdot v}^\alpha(\beta)\widetilde{\chi}^{\alpha}(\beta)\widetilde{\eta}^{\alpha}_s(\beta)g^{\alpha}_s(\beta)d\beta ds \nonumber\\
     & & + \int_{U_T^\alpha} |\widetilde{V}^\alpha_{2,s}(\beta) -\widetilde{V}^\alpha_{1,s}(\beta)|\widetilde{\chi}^{\alpha}(\beta)\frac{\partial \widetilde{\eta}^{\alpha}_s }{\partial s}(\beta) g^\alpha_s(\beta) d\beta ds \nonumber\\
     & & - \int_{U_T^\alpha} \mathrm{sign}(\widetilde{V}^\alpha_{1,s}(\beta) -\widetilde{V}^\alpha_{2,s}(\beta))\big( \frac{\partial \widetilde{a}^\alpha_{i}}{\partial \beta_i}(s, \beta,\widetilde{u}^\alpha_{1,s}(\beta)) - \frac{\partial \widetilde{a}^\alpha_{i}}{\partial \beta_i}(s, \beta,\widetilde{u}^\alpha_{2,s}(\beta)) \big)\widetilde{\chi}^{\alpha}(\beta)\widetilde{\eta}^{\alpha}_s(\beta)g^\alpha_s(\beta)d\beta ds  \nonumber\\
     & & +\int_{U_T^\alpha}F^\alpha_i(s,\beta,\widetilde{V}^\alpha_{1,s}(\beta),\widetilde{V}^\alpha_{2,s}(\beta))\frac{\partial \widetilde{\chi}^{\alpha} }{\partial \beta_i}(\beta)\widetilde{\eta}^{\alpha}_s(\beta)g^\alpha_s(\beta) d\beta ds  \nonumber\\
     & & + \int_{U_T^\alpha} F^\alpha_i(s,\beta,\widetilde{V}^\alpha_{1,s}(\beta),\widetilde{V}^\alpha_{2,s}(\beta))\widetilde{\chi}^{\alpha}(\beta)\frac{\partial \widetilde{\eta}^{\alpha}_s }{\partial \beta_i}(\beta)g^\alpha_s(\beta) d\beta ds.
\end{eqnarray}
The proof of the above claim is long and it is provided in the Appendix.
Now, we continue to finish the proof of the Proposition admitting the claim \eqref{eq157}.
Since
\[ t \mapsto \int_{\Gamma_t}|u_{1}(t,x) -u_{2}(t,x)|\mathrm{dvol}_{\Gamma_t}(dx) \in L^1([0,T]),\]
there exists a set $\mathcal{E} \subset [0,T]$ such that the Lebesgue measure of $[0,T]\backslash \mathcal{E}$ is zero and each $t \in \mathcal{E}$ is a Lebesgue point of this function. We fix an arbitrary $t \in \mathcal{E}\cap (0,T)$. For sufficiently small $h>0$, let $a_h: [0,T] \rightarrow \mathbb{R}_{+}$ be a smooth function satisfying
\[ a_h(0)=2, \quad a^{\prime}_h \leq -\frac{1}{h}I_{[t, t+h]}, \quad  a_h \leq 2I_{[0, t+2h]}.  \]
In view of \eqref{eq96}, set $\eta(s,x) = a_h(s)$ in \eqref{eq157} and sum over $\alpha$ to obtain 
\begin{eqnarray*}\label{eq158}
    & & \frac{1}{h}\int_t^{t+h} \int_{\Gamma_s}|u_{1}(s,x) - u_{2}(s,x)|\mathrm{dvol}_{\Gamma_s}(dx)ds \nonumber\\
&\leq& C_{T,U^{1}_2,\cdots,U^{N}_2}\int_{\Gamma_0} |u_{1,0}(y) - u_{2,0}(y)|\mathrm{dvol}_{\Gamma_0}(dy)\nonumber\\
       &  &+ C_{T,U^{1}_2,\cdots,U^{N}_2}(1+N)(1+ \|u_1\|_{L^\infty(Q_T)} + \|u_2\|_{L^\infty(Q_T)})\int_0^{t+2h}\int_{\Gamma_s} |u_{1}(s,x) - u_{2}(s,x)|\mathrm{dvol}_{\Gamma_s}(dx)ds.\nonumber\\
\end{eqnarray*}
Letting $h\rightarrow 0$, we get
\begin{eqnarray*}\label{eq159}
      & & \int_{\Gamma_t}|u_{1}(t,x) - u_{2}(t,x)| \mathrm{dvol}_{\Gamma_t}(dx)\nonumber\\
 &\leq& C_{T,U^{1}_2,\cdots,U^{N}_2}\int_{\Gamma_0} |u_{1,0}(y) - u_{2,0}(y)|\mathrm{dvol}_{\Gamma_0}(dy) \nonumber\\
        & & + C_{T,U^1_2,\cdots,U^N_2}(1+N)(1+ \|u_1\|_{L^\infty(Q_T)} + \|u_2\|_{L^\infty(Q_T)})\int_0^{t}\int_{\Gamma_s} |u_{1}(s,x) - u_{2}(s,x)|\mathrm{dvol}_{\Gamma_s}(dx)ds. \nonumber\\
\end{eqnarray*}
By the Gronwall's lemma, we obtain
\begin{eqnarray*}\label{eq161}
    & & \mathrm{ess\,sup}_{t\in [0,T]}\|u_{1}(t) - u_{2}(t)\|_{L^1(\Gamma_t)} \nonumber\\
&\leq& C_{T,U^{1}_2,\cdots,U^{N}_2}e^{C_{T,U^1_2,\cdots,U^N_2}(1+N)(1+ \|u_1\|_{L^\infty(Q_T)} + \|u_2\|_{L^\infty(Q_T)})}\|u_{1,0} - u_{2,0}\|_{L^1(\Gamma_0)},
\end{eqnarray*}
which completes the proof.
\end{proof}
	
\section{Appendix}

In this section, we provide the proof of the claim \eqref{eq157}, which is completed by several Lemmas below.
\begin{lemma}\label{lemma5.1}
\begin{equation*}\label{eq106}
\lim_{(\delta_0,\delta) \rightarrow (0,0)}I_1 = \int_{U^\alpha}|\widetilde{u}_{1,0}^\alpha(\beta) - \widetilde{u}^\alpha_{2,0}(\beta) |\widetilde{\chi}^{\alpha}(\beta)\widetilde{\eta}^{\alpha}_0(\beta)g^\alpha_0(\beta)d\beta.
\end{equation*}
\end{lemma}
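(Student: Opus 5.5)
The plan is to exploit the fact that the time kernel collapses onto $s=0$. Since $\mathrm{supp}(j)\subset[-1,0]$, the factor $j_{\delta_0}(-s)$ vanishes unless $s\in[0,\delta_0]$. On that interval it satisfies $\int_0^{\delta_0}j_{\delta_0}(-s)\,ds=1$ and $0\le j_{\delta_0}(-s)\le \|j\|_{\infty}/\delta_0$. I would then reduce $I_1$, in three successive replacements, to the stated limit. Each replacement produces an error that tends to zero uniformly in the other parameter, so the joint limit $(\delta_0,\delta)\to(0,0)$ is legitimate. Throughout, $u_{1,0},u_{2,0}\in L^\infty(\Gamma_0)$ and $u_2\in L^\infty(Q_T)$ (pathwise), $\widetilde{\chi}^\alpha$ and $\widetilde{\eta}^\alpha$ are bounded, and $\sqrt{g^\alpha_s}$ is bounded above and below and Lipschitz on $[0,T]\times U^\alpha_{2,\delta}$. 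All integrands are therefore bounded, and $\beta$ effectively ranges over the support $U^\alpha_2$ of $\widetilde{\chi}^\alpha$.

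\textbf{Step 1 (spatial replacement).} In $I_1$ I replace $\widetilde{u}^\alpha_{1,0}(\theta)$ by $\widetilde{u}^\alpha_{1,0}(\beta)$ and $\sqrt{g^\alpha_0(\theta)}$ by $\sqrt{g^\alpha_0(\beta)}$.
\begin{itemize}
\item By the triangle inequality and $\int_0^{\delta_0}j_{\delta_0}(-s)\,ds=1$, the first replacement costs at most $C\int_{U^\alpha_2}\int_{U^\alpha}|\widetilde{u}^\alpha_{1,0}(\theta)-\widetilde{u}^\alpha_{1,0}(\beta)|J_\delta(\theta-\beta)\,d\theta\, d\beta$. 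This tends to $0$ as $\delta\to0$, independently of $\delta_0$, by continuity of translations in $L^1$.
\item The second replacement costs $O(\delta)$, because $|\theta-\beta|\le\delta$ on the support of $J_\delta$.
\end{itemize}
After both replacements, $\int J_\delta(\theta-\beta)\,d\theta=1$ for $\beta\in U^\alpha_2$ and $\delta$ small, so the $\theta$-integration drops out. The resulting expression no longer depends on $\delta$:
\[
\int_0^{\delta_0}j_{\delta_0}(-s)\int_{U^\alpha}|\widetilde{u}^\alpha_{1,0}(\beta)-\widetilde{V}^\alpha_{2,s}(\beta)|\,\widetilde{\chi}^\alpha(\beta)\widetilde{\eta}^\alpha_s(\beta)\sqrt{g^\alpha_0(\beta)}\sqrt{g^\alpha_s(\beta)}\,d\beta\, ds .
\]

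\textbf{Step 2 (initial trace).} I replace $\widetilde{V}^\alpha_{2,s}(\beta)$ by $\widetilde{u}^\alpha_{2,0}(\beta)$. Using the bound $j_{\delta_0}(-s)\le \|j\|_\infty/\delta_0$ together with the boundedness of the other factors, the error is at most
\[
\frac{C}{\delta_0}\int_0^{\delta_0}\int_{U^\alpha}|\widetilde{V}^\alpha_{2,s}(\beta)-\widetilde{u}^\alpha_{2,0}(\beta)|\,\widetilde{\chi}^\alpha(\beta)\,d\beta\, ds .
\]
This tends to $0$ $\mathbb{P}$-a.s. by Lemma \ref{lemma4.1} applied to $u_2$ with $h=\delta_0$, after localizing by the partition of unity exactly as in that proof. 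This is the only genuinely nontrivial input, and I expect it to be the main obstacle. It is precisely where the weak initial trace, in the averaged sense, of a generalized entropy solution is needed, since no time continuity of $V_2$ at $s=0$ is available otherwise.

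\textbf{Step 3 (continuity in time of the weights).} Finally, I replace $\widetilde{\eta}^\alpha_s(\beta)\sqrt{g^\alpha_s(\beta)}$ by $\widetilde{\eta}^\alpha_0(\beta)\sqrt{g^\alpha_0(\beta)}$. The error is $O(\delta_0)$ because $\eta\in C^1(Q_T)$ and $G\in C^2$. Integrating out $s$ then yields exactly
\[
\int_{U^\alpha}|\widetilde{u}^\alpha_{1,0}(\beta)-\widetilde{u}^\alpha_{2,0}(\beta)|\,\widetilde{\chi}^\alpha(\beta)\widetilde{\eta}^\alpha_0(\beta)g^\alpha_0(\beta)\,d\beta,
\]
which is the claimed limit.
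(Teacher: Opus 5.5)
Your proposal is correct and is essentially the paper's argument. The paper bounds $|I_1-\int_{U^\alpha}|\widetilde{u}^\alpha_{1,0}-\widetilde{u}^\alpha_{2,0}|\widetilde{\chi}^\alpha\widetilde{\eta}^\alpha_0 g^\alpha_0\,d\beta|$ with a single triangle inequality into the same pieces you use: the $L^1$ translation modulus of $\widetilde{u}^\alpha_{1,0}$, the averaged initial trace $\frac{1}{\delta_0}\int_0^{\delta_0}\int|\widetilde{V}^\alpha_{2,s}-\widetilde{u}^\alpha_{2,0}|\widetilde{\chi}^\alpha\,d\beta\,ds$ controlled by Lemma \ref{lemma4.1}, an $O(\delta_0)$ term from the time regularity of $\widetilde{\eta}^\alpha_s\sqrt{g^\alpha_s}$, and an $O(\delta)$ term from $\sqrt{g^\alpha_0(\theta)}-\sqrt{g^\alpha_0(\beta)}$; you simply carry these out as successive replacements instead.
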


\begin{proof}
Since $\widetilde{\eta}^{\alpha}_{\cdot}(\cdot)$ and $\sqrt{g^\alpha_{\cdot}(\cdot)}$ belong to $C^1([0,T]\times U^\alpha)$, we have
\begin{eqnarray}\label{eq108}
     &  & | I_1 - \int_{U^\alpha}|\widetilde{u}_{1,0}^\alpha(\beta) - \widetilde{u}^\alpha_{2,0}(\beta) |\widetilde{\chi}^{\alpha}(\beta)\widetilde{\eta}^{\alpha}_0(\beta)g^\alpha_0(\beta)d\beta |\nonumber\\
&\leq&   \int_{U_T^\alpha}\int_{U^\alpha} \big||\widetilde{u}_{1,0}^\alpha(\theta) - \widetilde{V}^\alpha_{2,s}(\beta) | - |\widetilde{u}_{1,0}^\alpha(\beta) - \widetilde{u}^\alpha_{2,0}(\beta) | \big|j_{\delta_0}(-s)J_{\delta}(\theta -\beta)\widetilde{\chi}^{\alpha}(\beta)\widetilde{\eta}^{\alpha}_s(\beta) \nonumber\\
         & & \cdot\sqrt{g^\alpha_0(\theta)}\sqrt{g^\alpha_s(\beta)}d\theta d\beta ds  \nonumber\\
     & & +\int_{U_T^\alpha}\int_{U^\alpha} |\widetilde{u}_{1,0}^\alpha(\beta) - \widetilde{u}^\alpha_{2,0}(\beta) |j_{\delta_0}(-s)
         J_{\delta}(\theta - \beta)\widetilde{\chi}^{\alpha}(\beta)\sqrt{g^\alpha_0(\theta)}
          |\widetilde{\eta}^{\alpha}_s(\beta)\sqrt{g^\alpha_s(\beta)}-\widetilde{\eta}^{\alpha}_0(\beta)
                 \sqrt{g^\alpha_0(\beta)}|d\theta d\beta ds\nonumber\\
     & & + \int_{U^\alpha}\int_{U^\alpha} |\widetilde{u}_{1,0}^\alpha(\beta) - \widetilde{u}^\alpha_{2,0}(\beta) |J_{\delta}(\theta - \beta)\widetilde{\chi}^{\alpha}(\beta)\widetilde{\eta}^{\alpha}_0(\beta)\sqrt{g^\alpha_0(\beta)}|\sqrt{g^\alpha_0(\theta)} - \sqrt{g^\alpha_0(\beta)}|d\theta d\beta \nonumber\\
&\leq& C_{T,U^\alpha_2}\int_{U^\alpha_2}\int_{U^\alpha_{2,\delta}} |\widetilde{u}_{1,0}^\alpha(\theta) - \widetilde{u}_{1,0}^\alpha(\beta)|
       J_{\delta}(\theta - \beta)d\theta d\beta + \frac{C_{T,U^\alpha_2}}{\delta_0} \int_0^{\delta_0}\int_{U^\alpha_2}| \widetilde{u}^\alpha_{2,0}(\beta) - \widetilde{V}^\alpha_{2,s}(\beta) |\widetilde{\chi}^{\alpha}(\beta) d\beta ds \nonumber\\
     & & +C_{T,U^\alpha_2}\int_0^T j_{\delta_0}(-s)|s|ds +C_{U^\alpha_2}\int_{U^\alpha_2}\int_{U^\alpha_{2,\delta}}J_{\delta}(\theta - \beta)|\theta - \beta|d\theta d\beta \nonumber\\
&\leq& C_{T,U^\alpha_2}\int_{|r| < 1}\int_{U^\alpha_2}|\widetilde{u}_{1,0}^\alpha(\beta + \delta r) - \widetilde{u}_{1,0}^\alpha(\beta )|J(r)d\beta dr
       + \frac{C_{T,U^\alpha_2}}{\delta_0} \int_0^{\delta_0}\int_{U^\alpha_2}| \widetilde{u}^\alpha_{2,0}(\beta) - \widetilde{V}^\alpha_{2,s}(\beta) |\widetilde{\chi}^{\alpha}(\beta) d\beta ds \nonumber\\
       & & +C_{T,U^\alpha_2}\delta_0 + C_{U^\alpha_2}\delta.
\end{eqnarray}
Note that for all $|r|<1$,
\[\lim_{\delta \rightarrow 0}\int_{U^\alpha_2}|\widetilde{u}_{1,0}^\alpha(\beta + \delta r) - \widetilde{u}_{1,0}^\alpha(\beta )|d\beta = 0,\]
therefore by bounded convergence theorem, we have
\[\lim_{\delta \rightarrow 0}\int_{|r| < 1}\int_{U^\alpha_2}|\widetilde{u}_{1,0}^\alpha(\beta + \delta r) - \widetilde{u}_{1,0}^\alpha(\beta )|J(r)d\beta dr = 0.\]
Combining this with Lemma \ref{lemma4.1} and letting $(\delta_0,\delta) \rightarrow (0,0)$ in \eqref{eq108}, we obtain
\[ \limsup_{\delta \rightarrow 0}\limsup_{\delta_0 \rightarrow 0}| I_1 - \int_{U^\alpha}|\widetilde{u}_{1,0}^\alpha(\beta) - \widetilde{u}^\alpha_{2,0}(\beta) |\widetilde{\chi}^{\alpha}(\beta)\widetilde{\eta}^{\alpha}_0(\beta)g^\alpha_0(\beta)d\beta |=0,  \]
completing the proof.
\end{proof}

\begin{lemma}\label{lemma5.2}
It holds that
\begin{eqnarray*}\label{eq112}
 \lim_{(\delta_0, \delta)  \rightarrow (0, 0)} (I_2+I_3) = \int_{U_T^\alpha}|\widetilde{V}^\alpha_{1,s}(\beta) -\widetilde{V}^\alpha_{2,s}(\beta)| \widetilde{\nabla_{\Gamma_s}\cdot v}^\alpha(\beta)\widetilde{\chi}^{\alpha}(\beta)\widetilde{\eta}^{\alpha}_s(\beta)g^{\alpha}_s(\beta)d\beta ds.
\end{eqnarray*}
and
\begin{equation}\label{eq121}
  \lim_{(\delta_0, \delta)  \rightarrow (0, 0)} I_4
= \int_{U_T^\alpha} |\widetilde{V}^\alpha_{2,s}(\beta) -\widetilde{V}^\alpha_{1,s}(\beta)|\widetilde{\chi}^{\alpha}(\beta)\frac{\partial \widetilde{\eta}^{\alpha}_s }{\partial s}(\beta) g^\alpha_s(\beta) d\beta ds.
\end{equation}
\end{lemma}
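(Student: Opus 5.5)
Writing $\Sigma:=\mathrm{sign}(\widetilde V^\alpha_{1,t}(\theta)-\widetilde V^\alpha_{2,s}(\beta))$ and $K:=j_{\delta_0}(t-s)J_\delta(\theta-\beta)\widetilde\chi^\alpha(\beta)\widetilde\eta^\alpha_s(\beta)\sqrt{g^\alpha_t(\theta)}\sqrt{g^\alpha_s(\beta)}$, I would show that the integrands of $I_2+I_3$ and of $I_4$ differ from their diagonal counterparts by errors that vanish as $(\delta_0,\delta)\to(0,0)$, using only that $u_1,u_2\in L^\infty(Q_T)$, \eqref{eq183}, and the $C^1$ regularity of $\nabla_{\Gamma}\cdot v$, $\widetilde\eta$ and $\sqrt{g^\alpha_t}$. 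Throughout I fix $\omega$ outside the null set where these properties fail, and I use that the support of $K$ forces $|t-s|\le\delta_0$ and $|\theta-\beta|\le\delta$.

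\emph{Step 1 ($I_2+I_3$).} The integrand of $I_2$ is $\Sigma\big(\widetilde V_{1,t}(\theta)\widetilde{\nabla\cdot v}_s(\beta)-\widetilde V_{2,s}(\beta)\widetilde{\nabla\cdot v}_t(\theta)\big)K$. I add and subtract $\widetilde V_{2,s}(\beta)\widetilde{\nabla\cdot v}_s(\beta)$, which gives
\[
\Sigma\big(\widetilde V_{1,t}(\theta)-\widetilde V_{2,s}(\beta)\big)\widetilde{\nabla\cdot v}_s(\beta)=|\widetilde V_{1,t}(\theta)-\widetilde V_{2,s}(\beta)|\,\widetilde{\nabla\cdot v}_s(\beta).
\]
The remainder is $\Sigma\,\widetilde V_{2,s}(\beta)\big(\widetilde{\nabla\cdot v}_s(\beta)-\widetilde{\nabla\cdot v}_t(\theta)\big)$. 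Since $\nabla_{\Gamma}\cdot v$ is uniformly continuous on the compact set $[0,T]\times U^\alpha_{2,\delta}$ and $\widetilde V_2$ is bounded, this remainder is $O(\delta_0+\delta)$ uniformly, and the kernel $K$ has integral bounded uniformly in $\delta,\delta_0$. The term $I_3$ is handled the same way: $\widetilde w$ is Hölder continuous by \eqref{eq183} and $\nabla\cdot v$ is continuous, so $I_3=O(C_\kappa(\delta_0^\kappa+\delta))$ and it vanishes in the limit. This is exactly why the limit of $I_2+I_3$ contains no $w$ term.

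\emph{Step 2 ($I_4$).} The integrand of $I_4$ is $|\widetilde V_{2,s}(\beta)-\widetilde V_{1,t}(\theta)|\,j_{\delta_0}J_\delta\,\widetilde\chi\,\partial_s\widetilde\eta$ times the two volume factors; it no longer contains $j'_{\delta_0}$, since the $j'$ terms cancelled when \eqref{eq103} and \eqref{eq104} were added. The key step, shared by Steps 1 and 2, is the standard doubling-variables convergence
\[
\int\!\!\int |\widetilde V_{1,t}(\theta)-\widetilde V_{2,s}(\beta)|\,\Phi(t,\theta;s,\beta)\,j_{\delta_0}(t-s)J_\delta(\theta-\beta)\to\int |\widetilde V_{1,s}(\beta)-\widetilde V_{2,s}(\beta)|\,\Phi(s,\beta;s,\beta),
\]
for continuous bounded $\Phi$ supported in $\beta\in U^\alpha_2$. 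To prove it I use
\[
\big||a-b|-|c-b|\big|\le|a-c|,
\]
which bounds the error by three pieces. The first is $\int |\widetilde V_{1,t}(\theta)-\widetilde V_{1,s}(\beta)|\,j_{\delta_0}J_\delta\,d\theta\,dt\,d\beta\,ds$, which is small by continuity of translations in $L^1([0,T]\times U^\alpha_{2,\delta})$ for the $L^\infty$ (hence $L^1$) function $\widetilde V_1$. The second is the error from replacing $\Phi(t,\theta;s,\beta)$ by $\Phi(s,\beta;s,\beta)$, controlled by uniform continuity of $\Phi$. The third is the boundary effect near $s=0$: there $j_{\delta_0}(t-s)$, supported in $t\in[s-\delta_0,s]$, loses mass because $t\ge0$, but this affects only $s\in[0,\delta_0]$ and is $O(\delta_0)$ by boundedness. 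Applying this with $\Phi=\widetilde{\nabla\cdot v}\,\widetilde\chi\,\widetilde\eta\,\sqrt{g}\sqrt{g}$ and with $\Phi=\widetilde\chi\,\partial_s\widetilde\eta\,\sqrt g\sqrt g$ gives both limits, and $\sqrt{g_s(\beta)}\sqrt{g_s(\beta)}=g_s(\beta)$ appears on the diagonal.

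\emph{Main obstacle.} The hardest point is the translation-continuity step, which must be done jointly in $(t,\theta)$ because the two solutions are evaluated at different times and locations. I would first take $\delta_0\to0$ with $\delta$ fixed, using $L^1$ time-translation continuity of $\widetilde V_1$ on $[0,T]$ (extending it by zero if needed) together with the boundary control above. I would then take $\delta\to0$ using spatial translation continuity, checking that the cutoff $\widetilde\chi(\beta)$ keeps $\theta$ inside $U^\alpha_{2,\delta}$.
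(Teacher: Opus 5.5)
Your proposal is correct and follows essentially the same route as the paper. The add-and-subtract producing $|\widetilde V^\alpha_{1,t}(\theta)-\widetilde V^\alpha_{2,s}(\beta)|\,\widetilde{\nabla_{\Gamma_s}\cdot v}^\alpha(\beta)$, the H\"older-in-time and Lipschitz-in-space control of $\widetilde w^\alpha$ from \eqref{eq183} that kills $I_3$, the inequality $\bigl||a-b|-|c-b|\bigr|\le|a-c|$ combined with $L^1$ translation continuity, and the $O(\delta_0)$ boundary layer for $s\in[0,\delta_0]$ are exactly the ingredients of \eqref{eq114}--\eqref{eq119}, and $I_4$ is handled by the same argument. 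One small point: the lemma asserts a joint limit in $(\delta_0,\delta)$, so instead of taking the limits iteratively as in your last paragraph, split $|\widetilde V^\alpha_{1,t}(\theta)-\widetilde V^\alpha_{1,s}(\beta)|\le|\widetilde V^\alpha_{1,t}(\theta)-\widetilde V^\alpha_{1,s}(\theta)|+|\widetilde V^\alpha_{1,s}(\theta)-\widetilde V^\alpha_{1,s}(\beta)|$ as the paper does, so that each error term depends on only one parameter and the joint limit follows directly.
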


\begin{proof}
We first consider $I_2$. Due to the fact that $\widetilde{\nabla_{\Gamma_\cdot}\cdot v}^\alpha(\cdot)$, $\sqrt{g^\alpha_{\cdot}(\cdot)}$ and $\widetilde{\eta}^{\alpha}_{\cdot}(\cdot)$ belong to $C^1([0,T]\times U^\alpha)$, and $\widetilde{V}^\alpha_{1,\cdot}(\cdot)$, $\widetilde{V}^\alpha_{2,\cdot}(\cdot)$ are in $L^\infty([0,T]\times U^\alpha)$, we have
\begin{eqnarray}\label{eq114}
      & & \big |I_2 - \int_{U_T^\alpha}|\widetilde{V}^\alpha_{1,s}(\beta) -\widetilde{V}^\alpha_{2,s}(\beta)| \widetilde{\nabla_{\Gamma_s}\cdot v}^\alpha(\beta)\widetilde{\chi}^{\alpha}(\beta)\widetilde{\eta}^{\alpha}_s(\beta)g^{\alpha}_s(\beta)d\beta ds \big| \nonumber\\
&\leq& \int_{U_T^\alpha}\int_{U_T^\alpha}|\widetilde{V}^\alpha_{1,t}(\theta)|\cdot |\widetilde{\nabla_{\Gamma_s}\cdot v}^\alpha(\beta)|j_{\delta_0}(t-s)J_{\delta}(\theta - \beta)\widetilde{\chi}^{\alpha}(\beta)\widetilde{\eta}^{\alpha}_s(\beta)\sqrt{g^\alpha_s(\beta)}\nonumber\\
       & & \cdot |\sqrt{g^\alpha_t(\theta)} - \sqrt{g^\alpha_s(\beta)}| d\theta dtd\beta ds \nonumber\\
       & &+ \int_{U_T^\alpha}\int_{U_T^\alpha} |\widetilde{V}^\alpha_{2,s}(\beta)| \cdot \big| \widetilde{\nabla_{\Gamma_s}\cdot v}^\alpha(\beta)\sqrt{g^\alpha_s(\beta)} - \widetilde{\nabla_{\Gamma_t}\cdot v}^\alpha(\theta)\sqrt{g^\alpha_t(\theta)} \big| j_{\delta_0}(t-s)J_{\delta}(\theta - \beta) \nonumber\\
        & & \cdot \widetilde{\chi}^{\alpha}(\beta)\widetilde{\eta}^{\alpha}_s(\beta)\sqrt{g^\alpha_s(\beta)} d\theta dtd\beta ds \nonumber\\
        & & + \int_{U_T^\alpha}\int_{U_T^\alpha} \big| |\widetilde{V}^\alpha_{1,t}(\theta) - \widetilde{V}^\alpha_{2,s}(\beta)| - |\widetilde{V}^\alpha_{1,s}(\beta) - \widetilde{V}^\alpha_{2,s}(\beta)|  \big| \cdot |\widetilde{\nabla_{\Gamma_s}\cdot v}^\alpha(\beta)| j_{\delta_0}(t-s) \nonumber\\
        & &  \cdot J_{\delta}(\theta - \beta) \widetilde{\chi}^{\alpha}(\beta)\widetilde{\eta}^{\alpha}_s(\beta)g^\alpha_s(\beta) d\theta dtd\beta ds  \nonumber\\
        & & + \int_0^{\delta_0}\int_{U^\alpha}\int_0^T  |\widetilde{V}^\alpha_{1,s}(\beta) - \widetilde{V}^\alpha_{2,s}(\beta)| \cdot |\widetilde{\nabla_{\Gamma_s}\cdot v}^\alpha(\beta)| j_{\delta_0}(t-s)
         \widetilde{\chi}^{\alpha}(\beta)\widetilde{\eta}^{\alpha}_s(\beta)g^\alpha_s(\beta)dtd\beta ds  \nonumber\\
        & & + \int_0^{\delta_0}\int_{U^\alpha}|\widetilde{V}^\alpha_{1,s}(\beta) - \widetilde{V}^\alpha_{2,s}(\beta)| \cdot |\widetilde{\nabla_{\Gamma_s}\cdot v}^\alpha(\beta)| \widetilde{\chi}^{\alpha}(\beta)\widetilde{\eta}^{\alpha}_s(\beta)g^\alpha_s(\beta) d\beta ds   \nonumber\\
&\leq& C_{T,U^\alpha_2}\int_0^T\int_{U^\alpha_2}\int_0^T\int_{U^\alpha_{2,\delta}} (|\theta - \beta|+|t-s|)j_{\delta_0}(t-s)J_{\delta}(\theta - \beta) d\theta dtd\beta ds \nonumber\\
     & & + C_{T,U^\alpha_2}\int_0^T\int_{U^\alpha_2}\int_0^T\int_{U^\alpha_{2,\delta}} |\widetilde{V}^\alpha_{1,t}(\theta)- \widetilde{V}^\alpha_{1,s}(\beta)|j_{\delta_0}(t-s)J_{\delta}(\theta - \beta) d\theta dtd\beta ds
         + C_{T,U^\alpha_2}\delta_0 \nonumber\\
&\leq& C_{T,U^\alpha_2} (\delta+\delta_0)
      + C_{T,U^\alpha_2}\int_0^1\int_0^T\int_{U^\alpha_{2,\delta}}|\widetilde{V}^\alpha_{1,t}(\theta)- \widetilde{V}^\alpha_{1,t+\delta_0 \tilde{r}}(\theta)|I_{\{t+\delta_0 \tilde{r} \leq T\}}j(-\tilde{r})d\theta dtd\tilde{r} \nonumber\\
     & & + C_{T,U^\alpha_2}\int_{|r|<1}\int_0^T\int_{U^\alpha_2} |\widetilde{V}^\alpha_{1,s}(\beta+\delta r)- \widetilde{V}^\alpha_{1,s}(\beta)|J(r)d\beta dsdr.
\end{eqnarray}
Noting that for all $\tilde{r} \in [0,1]$ and $|r| <1$,
\[ \lim_{\delta_0 \rightarrow 0} \int_0^T\int_{U^\alpha_{2,\delta}}|\widetilde{V}^\alpha_{1,t}(\theta)- \widetilde{V}^\alpha_{1,t+\delta_0 \tilde{r}}(\theta)|I_{\{t+\delta_0 \tilde{r} \leq T\}}d\theta dt = 0, \]
and
\[\lim_{\delta \rightarrow 0} \int_0^T\int_{U^\alpha_2} |\widetilde{V}^\alpha_{1,s}(\beta+\delta r)- \widetilde{V}^\alpha_{1,s}(\beta)|d\beta ds =0,  \]
therefore by bounded convergence theorem, we have
\begin{eqnarray}\label{eq115}
\lim_{\delta_0 \rightarrow 0}\int_0^1\int_0^T\int_{U^\alpha_{2,\delta}}|\widetilde{V}^\alpha_{1,t}(\theta)- \widetilde{V}^\alpha_{1,t+\delta_0 \tilde{r}}(\theta)|I_{\{t+\delta_0 \tilde{r} \leq T\}}j(-\tilde{r})d\theta dtd\tilde{r}=0,
\end{eqnarray}
and
\begin{eqnarray}\label{eq116}
\lim_{\delta \rightarrow 0} \int_{|r|<1}\int_0^T\int_{U^\alpha_2} |\widetilde{V}^\alpha_{1,s}(\beta+\delta r)- \widetilde{V}^\alpha_{1,s}(\beta)|J(r)d\beta dsdr = 0.
\end{eqnarray}
By \eqref{eq115} and \eqref{eq116}, and letting $(\delta_0, \delta)  \rightarrow (0, 0)$ in \eqref{eq114}, we obtain
\begin{eqnarray}\label{eq117}
\lim_{(\delta_0,\delta) \rightarrow (0,0)}\big |\widetilde{I_2} - \int_{U_T^\alpha}|\widetilde{V}^\alpha_{1,s}(\beta) -\widetilde{V}^\alpha_{2,s}(\beta)| \widetilde{\nabla_{\Gamma_s}\cdot v}^\alpha(\beta)\widetilde{\chi}^{\alpha}(\beta)\widetilde{\eta}^{\alpha}_s(\beta)g^{\alpha}_s(\beta)d\beta ds \big| =0.
\end{eqnarray}

For the term $ I_3$, by \eqref{eq183} and the fact that $\widetilde{\nabla_{\Gamma_\cdot}\cdot v}^\alpha(\cdot)$, $\sqrt{g^\alpha_{\cdot}(\cdot)}$ and $\widetilde{\eta}^{\alpha}_{\cdot}(\cdot)$ belong to $C^1([0,T]\times U^\alpha)$, we get
\begin{eqnarray}\label{eq184}
       & & |I_3| \nonumber\\
&\leq& \int_{U_T^\alpha}\int_{U_T^\alpha} |\widetilde{w}^\alpha_{s}(\beta)\widetilde{\nabla_{\Gamma_s}\cdot v}^\alpha(\beta) - \widetilde{w}^\alpha_{t}(\theta)\widetilde{\nabla_{\Gamma_t}\cdot v}^\alpha(\theta)|j_{\delta_0}(t-s)J_{\delta}(\theta - \beta)\widetilde{\chi}^{\alpha}(\beta)\widetilde{\eta}^{\alpha}_s(\beta) \nonumber\\
      & & \cdot \sqrt{g^\alpha_t(\theta)}\sqrt{g^\alpha_s(\beta)}  d\theta dtd\beta ds \nonumber\\
&\leq& C_{T,U^\alpha_2}\int_0^T\int_{U^\alpha_2}\int_0^T\int_{U^\alpha_{2,\delta}}  |\widetilde{w}^\alpha_{s}(\beta)|\cdot |\widetilde{\nabla_{\Gamma_s}\cdot v}^\alpha(\beta) - \widetilde{\nabla_{\Gamma_t}\cdot v}^\alpha(\theta)| j_{\delta_0}(t-s)J_{\delta}(\theta - \beta)d\theta dtd\beta ds \nonumber\\
      & & + C_{T,U^\alpha_2}\int_0^T\int_{U^\alpha_2}\int_0^T\int_{U^\alpha_{2,\delta}} |\widetilde{\nabla_{\Gamma_t}\cdot v}^\alpha(\theta)|\cdot |\widetilde{w}^\alpha_{s}(\beta) -\widetilde{w}^\alpha_{t}(\theta)| j_{\delta_0}(t-s)J_{\delta}(\theta - \beta) d\theta dtd\beta ds \nonumber\\
&\leq& C_{T,U^\alpha_2,C_\kappa}\int_0^T\int_{U^\alpha_2}\int_0^T\int_{U^\alpha_{2,\delta}} (|\theta - \beta|+|t-s| + |t-s|^{\kappa})j_{\delta_0}(t-s)J_{\delta}(\theta - \beta) d\theta dtd\beta ds \nonumber\\
&\leq & C_{T,U^\alpha_2,C_\kappa}(\delta+\delta_0 +\delta_0^{\kappa} ).
\end{eqnarray}
Letting $(\delta_0, \delta)  \rightarrow (0, 0)$ in \eqref{eq184} yields
\begin{equation}\label{eq119}
\lim_{(\delta_0, \delta)  \rightarrow (0, 0)}|I_3|=0.
\end{equation}
Combining \eqref{eq117} with \eqref{eq119}, we obtain
\begin{equation*}\label{eq185}
\lim_{(\delta_0, \delta)  \rightarrow (0, 0)} (I_2+I_3) = \int_{U_T^\alpha}|\widetilde{V}^\alpha_{1,s}(\beta) -\widetilde{V}^\alpha_{2,s}(\beta)| \widetilde{\nabla_{\Gamma_s}\cdot v}^\alpha(\beta)\widetilde{\chi}^{\alpha}(\beta)\widetilde{\eta}^{\alpha}_s(\beta)g^{\alpha}_s(\beta)d\beta ds.
\end{equation*}

Since the proof of \eqref{eq121} is similar to that of \eqref{eq117}, we omit the details and thereby complete the proof of the Lemma.

\end{proof}

\vskip 0.2cm

Next we consider the limit of $I_5+I_6+I_7+I_8$ as $\delta_0 \rightarrow 0$ and $\delta \rightarrow 0$.


\begin{lemma}\label{lemma5.4}
It holds that
\begin{eqnarray}\label{eq122}
        & & \limsup_{\delta \rightarrow 0}\limsup_{\delta_0 \rightarrow 0} (I_5+I_6+I_7+I_8) \nonumber\\
&=&  \int_{U_T^\alpha} \mathrm{sign}(\widetilde{V}^\alpha_{1,s}(\beta) - \widetilde{V}^\alpha_{2,s}(\beta))\big( \frac{\partial\widetilde{a}^\alpha_{i}}{\partial \beta_i}(s,\beta,\widetilde{u}^\alpha_{2,s}(\beta)) - \frac{\partial\widetilde{a}^\alpha_{i}}{\partial\beta_i}(s,\beta,\widetilde{u}^\alpha_{1,s}(\beta) ) \big)\widetilde{\chi}^{\alpha}(\beta)\widetilde{\eta}^{\alpha}_s(\beta) g^\alpha_s(\beta) d\beta ds.\nonumber\\
\end{eqnarray}
\end{lemma}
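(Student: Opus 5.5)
This is the classical flux part of Kruzhkov's doubling argument, now carried out in local coordinates with the extra $w$-dependent terms. Write $a=\widetilde V^\alpha_{1,t}(\theta)$, $b=\widetilde V^\alpha_{2,s}(\beta)$ and $\sigma_{ab}=\mathrm{sign}(a-b)$. Put the weight $\Phi:=j_{\delta_0}(t-s)\widetilde\chi^\alpha(\beta)\widetilde\eta^\alpha_s(\beta)\sqrt{g^\alpha_t(\theta)}\sqrt{g^\alpha_s(\beta)}$.

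The first step is to pass to the limit $\delta_0\to0$ for fixed $\delta$. Using the Hölder bound \eqref{eq183} on $\widetilde w$ and the $C^1$-regularity of $\widetilde a^\alpha$ and $g^\alpha$, I would replace $t$ by $s$ everywhere inside $\widetilde a^\alpha$, $\widetilde w^\alpha$ and $\sqrt{g^\alpha}$. Each such replacement costs $O(\delta_0^\kappa/\delta)$, since $|\nabla J_\delta|\lesssim\delta^{-1}J$-type bounds hold after integration, and this vanishes as $\delta_0\to0$. I would handle $\widetilde V_{1,t}$ as in Lemma \ref{lemma5.2}, by $L^1$-continuity of translations in time. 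After this step the four terms become integrals over $(s,\theta,\beta)$ with $a=\widetilde V_{1,s}(\theta)$ and $b=\widetilde V_{2,s}(\beta)$.

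The second step is to regroup the fixed-$s$ expression. Adding $I_7$ and $I_8$ gives the flux combination
\[
\sigma_{ab}\Big[\widetilde a_i(\theta,a+w(\theta))-\widetilde a_i(\beta,a+w(\beta))+\widetilde a_i(\beta,b+w(\beta))-\widetilde a_i(\theta,b+w(\theta))\Big]\partial_{\theta_i}J_\delta(\theta-\beta).
\]
This is a difference quotient in $\theta-\beta$ of the map $y\mapsto \widetilde a_i(y,a+w(y))-\widetilde a_i(y,b+w(y))$. I would Taylor expand it around $\beta$ to first order, with $\theta-\beta=O(\delta)$ and a second-order remainder $O(\delta^2)|\nabla J_\delta|=O(\delta)$ by \eqref{eq181}. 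I would then use the identity $\int(\theta-\beta)_k\partial_{\theta_i}J_\delta(\theta-\beta)\,d\theta=-\delta_{ik}$. The first-order term gives, in the limit, $-\sigma_{ab}\big[\partial_{\beta_i}\widetilde a_i(\beta,\cdot)|_{a+w}-\partial_{\beta_i}\widetilde a_i(\beta,\cdot)|_{b+w}\big]$. By the chain rule this splits into the explicit $\beta$-derivative part and the part $\widetilde a_{iu}(\beta,a+w)\partial_i w-\widetilde a_{iu}(\beta,b+w)\partial_i w$. The second part is cancelled exactly by the limits of $I_5$ and $I_6$, because $I_5\to-\sigma_{ab}\widetilde a_{iu}(b+w)\partial_iw$ and $I_6\to+\sigma_{ab}\widetilde a_{iu}(a+w)\partial_iw$. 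For this I use the Lipschitz bound \eqref{eq96} on $\widetilde a_u$ and the $\theta\leftrightarrow\beta$ swap, which costs $O(\delta)$.

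The main obstacle is that the first-order Taylor term is still multiplied by $\sigma_{ab}$, which is evaluated at the two different points $\theta$ and $\beta$. One cannot simply let $\delta\to0$ there, because $J_\delta$ is integrated against $\partial J_\delta$, which is of order $1/\delta$. The step I would try first is the standard trick of writing the combination as $G(\theta,a,b)-G(\beta,a,b)$, where $G(y,a,b)=\sigma_{ab}[\widetilde a_i(y,a+w(y))-\widetilde a_i(y,b+w(y))]$. The function $G$ is Lipschitz in $y$ uniformly for bounded $a,b$, and this comes from $|\sigma_{ab}|\le1$ together with \eqref{eq181}. The difference quotient is therefore bounded by $C\,|\theta-\beta|\,|\nabla J_\delta|$, which is integrable uniformly in $\delta$.

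Then $\delta^{-1}(\theta-\beta)$-weighted averages converge. Using $L^1$-continuity of translations of $\widetilde V_1$ in space, I would replace $a=\widetilde V_{1,s}(\theta)$ by $\widetilde V_{1,s}(\beta)$ inside $G$ at a cost that is $o(1)$ after multiplication by the bounded kernel $|\theta-\beta||\nabla J_\delta|$. Here I use that $G$ is Lipschitz in $a$, with the jump of $\sigma$ absorbed by Kruzhkov's inequality $\big||x|-|y|\big|\le|x-y|$ applied to $\sigma_{ab}(F(a)-F(b))=|F(a)-F(b)|$ when $F$ is monotone. This last point is the delicate one, so I would use it only as needed and take the $\limsup$ rather than the limit, as the statement indicates. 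The result is \eqref{eq122} with $a,b$ evaluated at $(s,\beta)$.
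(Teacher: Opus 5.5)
Your architecture matches the paper's: Taylor-expand the flux difference in the space variable, use $\int(\theta-\beta)_k\partial_{\theta_i}J_\delta(\theta-\beta)\,d\theta=-\delta_{ik}$, cancel the chain-rule terms $\widetilde a^\alpha_{iu}\partial_i\widetilde w^\alpha$ against $I_5,I_6$, and freeze $\widetilde V^\alpha_1$ at $(s,\beta)$. There is, however, a genuine gap in how you handle the factor $\mathrm{sign}(a-b)$, where $a=\widetilde V^\alpha_{1,t}(\theta)$ and $b=\widetilde V^\alpha_{2,s}(\beta)$.

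In your first step you replace $\widetilde V^\alpha_{1,t}(\theta)$ by $\widetilde V^\alpha_{1,s}(\theta)$ in each of the four terms ``as in Lemma~\ref{lemma5.2}''. But $L^1$-continuity of time translations only controls integrands that are Lipschitz in $\widetilde V^\alpha_1$, such as $|\widetilde V^\alpha_1-\widetilde V^\alpha_2|$ there. Each of $I_5,\dots,I_8$ taken separately is $\mathrm{sign}(a-b)$ times a factor that does not vanish at $a=b$. Because $j_{\delta_0}$ is supported in $[-1,0]$, the termwise replacement really fails. Take $\widetilde V^\alpha_1=\widetilde V^\alpha_2$ independent of $\theta$ and strictly increasing in $t$. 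Then $\mathrm{sign}(a-b)=-1$ on the support of $j_{\delta_0}(t-s)$ for every $\delta_0$, while after your replacement it is $0$. So $I_5$ alone changes by a quantity that in general does not vanish as $\delta_0\to0$.

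For the same reason, the separate limits you assert for $I_5$ and $I_6$ are not justified by moving the arguments of $\widetilde a^\alpha_{iu}$ and $\partial_i\widetilde w^\alpha$ from $\theta$ to $\beta$. That move leaves the $\theta$-dependence inside the sign untouched. Your proposed remedy through monotonicity is also wrong. The maps $u\mapsto\widetilde a^\alpha_i,\ \widetilde a^\alpha_{iu},\ \partial_{\beta_i}\widetilde a^\alpha_i$ are not monotone, so $\mathrm{sign}(a-b)(F(a)-F(b))\neq|F(a)-F(b)|$ in general. Retreating to a $\limsup$ does not rescue a step that fails.

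The missing idea is to group terms \emph{before} any limit, so that the sign only ever multiplies a difference $F(a)-F(b)$. Then use the elementary bound $|\mathrm{sign}(a-b)(F(a)-F(b))-\mathrm{sign}(a'-b)(F(a')-F(b))|\le \mathrm{Lip}(F)\,|a-a'|$, which holds for every Lipschitz $F$ without monotonicity. This is the estimate the paper uses for $D_{1,3}$ in \eqref{eq144}, and it is what your remark that $G$ is Lipschitz in $a$ should rest on.

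Concretely, the paper splits off from $I_7,I_8$ the parts $I_{7,3},I_{8,3}$ that are linear in $\widetilde w^\alpha_t(\theta)-\widetilde w^\alpha_s(\beta)$. It pairs $I_5$ with $I_{8,3}$ and $I_6$ with $I_{7,3}$, producing the exact derivatives $\partial_{\theta_i}[(\widetilde w^\alpha_s(\beta)-\widetilde w^\alpha_t(\theta))J_\delta]$ and $\partial_{\beta_i}[\cdots]$ of \eqref{eq136}. In the Taylor term it writes $(\theta_j-\beta_j)\partial_{\theta_i}J_\delta=\partial_{\theta_i}((\theta_j-\beta_j)J_\delta)-\delta_{ij}J_\delta$. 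After this, every sign-containing piece is either small outright or of the form $\mathrm{sign}(a-b)(F(a)-F(b))$ against a kernel of bounded $L^1$ mass. So $\widetilde V^\alpha_{1,t}(\theta)$ can be frozen to $\widetilde V^\alpha_{1,s}(\beta)$ in one step, followed by an integration by parts in $\theta$.

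In your notation, this means adding $I_5+I_6$, with the non-sign arguments moved to $\beta$, to the chain-rule part of your first-order term. The sum is $\mathrm{sign}(a-b)[\widetilde a_{iu}(\beta,a+w)-\widetilde a_{iu}(\beta,b+w)]\,\partial_k w(\beta)\,\partial_{\theta_i}((\theta-\beta)_kJ_\delta)$. It tends to zero after freezing $a$ and integrating by parts.

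Two further points. First, freezing $a$ directly inside $G(\theta,a,b)-G(\beta,a,b)$ needs the mixed bound $C|\theta-\beta|\,|a-a'|$. Plain Lipschitz continuity of $G$ in $a$ only gives $|a-a'|/\delta$ against $|\nabla J_\delta|$. Second, your second-order remainder involves $\nabla^2\widetilde w^\alpha$, which is not among the bounds \eqref{eq183}. The paper's split avoids both issues.
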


\begin{proof}
We observe that
\begin{eqnarray*}\label{eq124}
    & &  I_7\nonumber\\
&=& \int_{U_T^\alpha}\int_{U_T^\alpha} \mathrm{sign}(\widetilde{V}^\alpha_{1,t}(\theta) -\widetilde{V}^\alpha_{2,s}(\beta))\big( \widetilde{a}^\alpha_{i}(t,\theta,\widetilde{u}^\alpha_{1,t}(\theta)) - \widetilde{a}^\alpha_{i}(s,\beta,\widetilde{u}^\alpha_{1,t}(\theta)) \big)j_{\delta_0}(t-s) \nonumber\\
   & & \cdot \frac{\partial J_{\delta}(\theta - \beta)}{\partial \theta_i} \widetilde{\chi}^{\alpha}(\beta)\widetilde{\eta}^{\alpha}_s(\beta)\sqrt{g^\alpha_t(\theta)}\sqrt{g^\alpha_s(\beta)}  d\theta dtd\beta ds  \nonumber\\
    & & + \int_{U_T^\alpha}\int_{U_T^\alpha}  \mathrm{sign}(\widetilde{V}^\alpha_{1,t}(\theta) -\widetilde{V}^\alpha_{2,s}(\beta))\big( \widetilde{a}^\alpha_{i}(s,\beta,\widetilde{u}^\alpha_{1,t}(\theta)) - \widetilde{a}^\alpha_{i}(s,\beta,\widetilde{V}^\alpha_{1,t}(\theta)+\widetilde{w}^\alpha_s(\beta))\big) j_{\delta_0}(t-s) \nonumber\\
   & &  \cdot \frac{\partial J_{\delta}(\theta - \beta)}{\partial \theta_i} \widetilde{\chi}^{\alpha}(\beta)\widetilde{\eta}^{\alpha}_s(\beta)\sqrt{g^\alpha_t(\theta)}\sqrt{g^\alpha_s(\beta)}d\theta dtd\beta ds \nonumber\\
&=& \int_{U_T^\alpha}\int_{U_T^\alpha} \mathrm{sign}(\widetilde{V}^\alpha_{1,t}(\theta) -\widetilde{V}^\alpha_{2,s}(\beta))\big( \widetilde{a}^\alpha_{i}(t,\theta,\widetilde{u}^\alpha_{1,t}(\theta)) - \widetilde{a}^\alpha_{i}(s,\beta,\widetilde{u}^\alpha_{1,t}(\theta)) \big)j_{\delta_0}(t-s) \nonumber\\
   & &  \cdot \frac{\partial J_{\delta}(\theta - \beta)}{\partial \theta_i} \widetilde{\chi}^{\alpha}(\beta)\widetilde{\eta}^{\alpha}_s(\beta)\sqrt{g^\alpha_t(\theta)}\sqrt{g^\alpha_s(\beta)}  d\theta dtd\beta ds  \nonumber\\
   & & + \int_{U_T^\alpha}\int_{U_T^\alpha}  \mathrm{sign}(\widetilde{V}^\alpha_{1,t}(\theta) -\widetilde{V}^\alpha_{2,s}(\beta))\big(\int_0^1 \widetilde{a}^\alpha_{iu}(s,\beta,\widetilde{u}^\alpha_{1,t}(\theta)+k(\widetilde{w}^\alpha_s(\beta) - \widetilde{w}^\alpha_t(\theta)) )  dk \big)\nonumber\\
   & & \cdot (\widetilde{w}^\alpha_t(\theta)-\widetilde{w}^\alpha_s(\beta))j_{\delta_0}(t-s)\frac{\partial J_{\delta}(\theta - \beta)}{\partial \theta_i} \widetilde{\chi}^{\alpha}(\beta)\widetilde{\eta}^{\alpha}_s(\beta)\sqrt{g^\alpha_t(\theta)}\sqrt{g^\alpha_s(\beta)}d\theta dtd\beta ds   \nonumber\\
&=& \int_{U_T^\alpha}\int_{U_T^\alpha} \mathrm{sign}(\widetilde{V}^\alpha_{1,t}(\theta) -\widetilde{V}^\alpha_{2,s}(\beta))\big( \widetilde{a}^\alpha_{i}(t,\theta,\widetilde{u}^\alpha_{1,t}(\theta)) - \widetilde{a}^\alpha_{i}(s,\beta,\widetilde{u}^\alpha_{1,t}(\theta)) \big)j_{\delta_0}(t-s) \nonumber\\
   & & \cdot \frac{\partial J_{\delta}(\theta - \beta)}{\partial \theta_i} \widetilde{\chi}^{\alpha}(\beta)\widetilde{\eta}^{\alpha}_s(\beta)\sqrt{g^\alpha_t(\theta)}\sqrt{g^\alpha_s(\beta)}  d\theta dtd\beta ds  \nonumber\\
   & & + \int_{U_T^\alpha}\int_{U_T^\alpha}  \mathrm{sign}(\widetilde{V}^\alpha_{1,t}(\theta) -\widetilde{V}^\alpha_{2,s}(\beta))\big(\int_0^1\int^{\widetilde{u}^\alpha_{1,t}(\theta)+k(\widetilde{w}^\alpha_s(\beta) - \widetilde{w}^\alpha_t(\theta))}_{\widetilde{V}^\alpha_{1,t}(\theta)+\widetilde{w}^\alpha_s(\beta)} \widetilde{a}^\alpha_{iuu}(s,\beta,u)du  dk \big)\nonumber\\
   & &  \cdot (\widetilde{w}^\alpha_t(\theta)-\widetilde{w}^\alpha_s(\beta))j_{\delta_0}(t-s)\frac{\partial J_{\delta}(\theta - \beta)}{\partial \theta_i} \widetilde{\chi}^{\alpha}(\beta)\widetilde{\eta}^{\alpha}_s(\beta)\sqrt{g^\alpha_t(\theta)}\sqrt{g^\alpha_s(\beta)}d\theta dtd\beta ds    \nonumber\\
   & & + \int_{U_T^\alpha}\int_{U_T^\alpha}  \mathrm{sign}(\widetilde{V}^\alpha_{1,t}(\theta) -\widetilde{V}^\alpha_{2,s}(\beta))\widetilde{a}^\alpha_{iu}(s,\beta, \widetilde{V}^\alpha_{1,t}(\theta)+\widetilde{w}^\alpha_s(\beta))
    (\widetilde{w}^\alpha_t(\theta)-\widetilde{w}^\alpha_s(\beta))j_{\delta_0}(t-s) \nonumber\\
   & &  \cdot\frac{\partial J_{\delta}(\theta - \beta)}{\partial \theta_i} \widetilde{\chi}^{\alpha}(\beta)\widetilde{\eta}^{\alpha}_s(\beta)\sqrt{g^\alpha_t(\theta)}\sqrt{g^\alpha_s(\beta)}d\theta dtd\beta ds  \nonumber\\
&=:& I_{7,1} + I_{7,2} + I_{7,3}.
\end{eqnarray*}
Similarly, for $I_8$, we have
\begin{eqnarray*}\label{eq132}
    & &  I_8 \nonumber\\
&=& \int_{U_T^\alpha}\int_{U_T^\alpha} \mathrm{sign}(\widetilde{V}^\alpha_{1,t}(\theta) -\widetilde{V}^\alpha_{2,s}(\beta))\big( \widetilde{a}^\alpha_{i}(s,\beta,\widetilde{u}^\alpha_{2,s}(\beta)) - \widetilde{a}^\alpha_{i}(t,\theta,\widetilde{u}^\alpha_{2,s}(\beta)) \big)j_{\delta_0}(t-s) \nonumber\\
   & & \cdot \frac{\partial J_{\delta}(\theta - \beta)}{\partial \theta_i} \widetilde{\chi}^{\alpha}(\beta)\widetilde{\eta}^{\alpha}_s(\beta)\sqrt{g^\alpha_t(\theta)}\sqrt{g^\alpha_s(\beta)}  d\theta dtd\beta ds  \nonumber\\
   & & + \int_{U_T^\alpha}\int_{U_T^\alpha}  \mathrm{sign}(\widetilde{V}^\alpha_{1,t}(\theta) -\widetilde{V}^\alpha_{2,s}(\beta))\big(\int_0^1\int^{\widetilde{u}^\alpha_{2,s}(\beta)+k(\widetilde{w}^\alpha_t(\theta) - \widetilde{w}^\alpha_s(\beta))}_{\widetilde{V}^\alpha_{2,s}(\beta)+\widetilde{w}^\alpha_t(\theta)} \widetilde{a}^\alpha_{iuu}(t,\theta,u)du  dk \big)\nonumber\\
   & &  \cdot (\widetilde{w}^\alpha_s(\beta)-\widetilde{w}^\alpha_t(\theta))j_{\delta_0}(t-s)\frac{\partial J_{\delta}(\theta - \beta)}{\partial \theta_i} \widetilde{\chi}^{\alpha}(\beta)\widetilde{\eta}^{\alpha}_s(\beta)\sqrt{g^\alpha_t(\theta)}\sqrt{g^\alpha_s(\beta)}d\theta dtd\beta ds    \nonumber\\
   & & + \int_{U_T^\alpha}\int_{U_T^\alpha}  \mathrm{sign}(\widetilde{V}^\alpha_{1,t}(\theta) -\widetilde{V}^\alpha_{2,s}(\beta))\widetilde{a}^\alpha_{iu}(t,\theta, \widetilde{V}^\alpha_{2,s}(\beta)+\widetilde{w}^\alpha_t(\theta))
    (\widetilde{w}^\alpha_s(\beta)-\widetilde{w}^\alpha_t(\theta))j_{\delta_0}(t-s) \nonumber\\
   & &  \cdot\frac{\partial J_{\delta}(\theta - \beta)}{\partial \theta_i} \widetilde{\chi}^{\alpha}(\beta)\widetilde{\eta}^{\alpha}_s(\beta)\sqrt{g^\alpha_t(\theta)}\sqrt{g^\alpha_s(\beta)}d\theta dtd\beta ds  \nonumber\\
&=:& I_{8,1} + I_{8,2} + I_{8,3}.
\end{eqnarray*}

Next, we divide the remaining proof of \eqref{eq122} into four steps.

{\bf Step 1:} In this step, we prove
\begin{eqnarray}\label{eq128}
    & & \limsup_{\delta \rightarrow 0}\limsup_{\delta_0 \rightarrow 0}\big|I_{7,1} -
\int_{U_T^\alpha}\int_{U_T^\alpha} \mathrm{sign}(\widetilde{V}^\alpha_{1,t}(\theta) -\widetilde{V}^\alpha_{2,s}(\beta))\frac{\partial \widetilde{a}^\alpha_{i}}{\partial \beta_j}(s, \beta,\widetilde{u}^\alpha_{1,t}(\theta))(\theta_j- \beta_j)j_{\delta_0}(t-s) \nonumber\\
& &  \cdot \frac{\partial J_{\delta}(\theta - \beta)}{\partial \theta_i}\widetilde{\chi}^{\alpha}(\beta) \widetilde{\eta}^{\alpha}_s(\beta)\sqrt{g^\alpha_t(\theta)}\sqrt{g^\alpha_s(\beta)}  d\theta dtd\beta ds  \big|
= 0,
\end{eqnarray}
and
\begin{eqnarray}\label{eq133}
    & & \limsup_{\delta \rightarrow 0}\limsup_{\delta_0 \rightarrow 0}\big|I_{8,1} -
\int_{U_T^\alpha}\int_{U_T^\alpha} \mathrm{sign}(\widetilde{V}^\alpha_{1,t}(\theta) -\widetilde{V}^\alpha_{2,s}(\beta))\frac{\partial \widetilde{a}^\alpha_{i}}{\partial \beta_j}(s, \beta,\widetilde{u}^\alpha_{2,s}(\beta))(\beta_j- \theta_j)j_{\delta_0}(t-s) \nonumber\\
& & \cdot \frac{\partial J_{\delta}(\theta - \beta)}{\partial \theta_i}\widetilde{\chi}^{\alpha}(\beta) \widetilde{\eta}^{\alpha}_s(\beta)\sqrt{g^\alpha_t(\theta)}\sqrt{g^\alpha_s(\beta)}  d\theta dtd\beta ds  \big|
= 0.
\end{eqnarray}

By Taylor's formula, we have
\begin{eqnarray}\label{eq125}
& & \widetilde{a}^\alpha_{i}(t,\theta,\widetilde{u}^\alpha_{1,t}(\theta)) - \widetilde{a}^\alpha_{i}(s,\beta,\widetilde{u}^\alpha_{1,t}(\theta)) \nonumber\\
&=& \frac{\partial \widetilde{a}^\alpha_{i}}{\partial s}(s, \beta,\widetilde{u}^\alpha_{1,t}(\theta))(t-s)
      + \frac{\partial \widetilde{a}^\alpha_{i}}{\partial \beta_j}(s, \beta,\widetilde{u}^\alpha_{1,t}(\theta))(\theta_j- \beta_j)\nonumber\\
  & & + \frac{1}{2}\frac{\partial^2 \widetilde{a}^\alpha_{i}}{\partial s^2}(s+l(t-s), \beta+l(\theta - \beta), \widetilde{u}^\alpha_{1,t}(\theta))(t-s)^2 \nonumber\\
  & & + \frac{\partial^2 \widetilde{a}^\alpha_{i}}{\partial s \partial \beta_j}(s+l(t-s), \beta+l(\theta - \beta), \widetilde{u}^\alpha_{1,t}(\theta))(t-s)(\theta_j- \beta_j)\nonumber\\
  & & + \frac{1}{2}\frac{\partial^2 \widetilde{a}^\alpha_{i}}{ \partial \beta_j\partial \beta_k}(s+l(t-s), \beta+l(\theta - \beta), \widetilde{u}^\alpha_{1,t}(\theta))(\theta_j- \beta_j)(\theta_k- \beta_k)\nonumber\\
&=:& \mathcal{A}_1+ \frac{\partial \widetilde{a}^\alpha_{i}}{\partial \beta_j}(s, \beta,\widetilde{u}^\alpha_{1,t}(\theta))(\theta_j- \beta_j) + \mathcal{A}_2 + \mathcal{A}_3 + \mathcal{A}_4,
\end{eqnarray}
where $l \in (0,1)$.
Thus, by \eqref{eq125}, \eqref{eq181} and the fact that $\widetilde{u}^\alpha_{1,\cdot}(\cdot) \in L^\infty([0,T]\times U^\alpha)$, we obtain
\begin{eqnarray}\label{eq126}
     & & \Big|I_{7,1} -
          \int_{U_T^\alpha}\int_{U_T^\alpha} \mathrm{sign}(\widetilde{V}^\alpha_{1,t}(\theta) -\widetilde{V}^\alpha_{2,s}(\beta))\frac{\partial \widetilde{a}^\alpha_{i}}{\partial \beta_j}(s, \beta,\widetilde{u}^\alpha_{1,t}(\theta))(\theta_j- \beta_j)j_{\delta_0}(t-s) \nonumber\\
           & &  \cdot \frac{\partial J_{\delta}(\theta - \beta)}{\partial \theta_i}\widetilde{\chi}^{\alpha}(\beta) \widetilde{\eta}^{\alpha}_s(\beta)\sqrt{g^\alpha_t(\theta)}\sqrt{g^\alpha_s(\beta)}  d\theta dtd\beta ds   \Big|  \nonumber\\
&=& \Big| \int_{U_T^\alpha}\int_{U_T^\alpha} \mathrm{sign}(\widetilde{V}^\alpha_{1,t}(\theta) -\widetilde{V}^\alpha_{2,s}(\beta))\big( \mathcal{A}_1 + \mathcal{A}_2 + \mathcal{A}_3 + \mathcal{A}_4\big)j_{\delta_0}(t-s)\frac{\partial J_{\delta}(\theta - \beta)}{\partial \theta_i} \nonumber\\
           & &  \cdot \widetilde{\chi}^{\alpha}(\beta) \widetilde{\eta}^{\alpha}_s(\beta)\sqrt{g^\alpha_t(\theta)}\sqrt{g^\alpha_s(\beta)}  d\theta dtd\beta ds \big|\nonumber\\
&\leq& C_{T,U^\alpha_{2,\delta}} \int_0^T\int_{U^\alpha_2}\int_0^T\int_{U^\alpha_{2,\delta}}(1+ |\widetilde{u}^\alpha_{1,t}(\theta)|^{q_0})\big( |t-s|+|t-s|^2 + |t-s|\cdot|\theta - \beta| + |\theta - \beta|^2 \big)\nonumber\\
       & &\cdot j_{\delta_0}(t-s)|\nabla_{\theta} J_{\delta}(\theta - \beta)|d\theta dtd\beta ds  \nonumber\\
&\leq& C_{T,U^\alpha_{2,\delta},q_0}(\frac{\delta_0}{\delta} + \frac{\delta_0^2}{\delta} + \delta_0 + \delta).
\end{eqnarray}
Letting $\delta_0 \rightarrow 0$, and then letting $\delta \rightarrow 0$ in \eqref{eq126}, we obtain \eqref{eq128}.

Similar to the above arguments, we can also derive \eqref{eq133}.
\vskip 0.2cm
{\bf Step 2:} In this step, we prove
\begin{eqnarray*}\label{eq131}
\limsup_{\delta \rightarrow 0}\limsup_{\delta_0 \rightarrow 0}|I_{7,2} | = 0,
\end{eqnarray*}
and
\begin{eqnarray*}\label{eq134}
\limsup_{\delta \rightarrow 0}\limsup_{\delta_0 \rightarrow 0}|I_{8,2} | = 0.
\end{eqnarray*}

By \eqref{eq181}, for any $(s,\beta) \in [0,T]\times U^{\alpha}_2$, we have
\begin{eqnarray*}\label{eq129}
      & & \big| \int_0^1\int^{\widetilde{u}^\alpha_{1,t}(\theta)+k(\widetilde{w}^\alpha_s(\beta) - \widetilde{w}^\alpha_t(\theta))}_{\widetilde{V}^\alpha_{1,t}(\theta)+\widetilde{w}^\alpha_s(\beta)} \widetilde{a}^\alpha_{iuu}(s,\beta,u)dudk \big| \nonumber\\
&\leq& C_{T,U^\alpha_{2,\delta}}\int_0^1 (1-k)(1 + |\widetilde{u}^\alpha_{1,t}(\theta)+k(\widetilde{w}^\alpha_s(\beta) - \widetilde{w}^\alpha_t(\theta))|^{q_0} + |\widetilde{V}^\alpha_{1,t}(\theta)+\widetilde{w}^\alpha_s(\beta)|^{q_0} )|\widetilde{w}^\alpha_t(\theta) - \widetilde{w}^\alpha_s(\beta)| dk \nonumber\\
&\leq& C_{T,U^\alpha_{2,\delta},q_0}(1+ |\widetilde{u}^\alpha_{1,t}(\theta)|^{q_0} + |\widetilde{w}^\alpha_s(\beta) - \widetilde{w}^\alpha_t(\theta)|^{q_0})|\widetilde{w}^\alpha_t(\theta) - \widetilde{w}^\alpha_s(\beta)|.
\end{eqnarray*}
Combining the above estimate with the fact that $\widetilde{u}^\alpha_{1,\cdot}(\cdot) \in L^\infty([0,T]\times U^\alpha)$ and \eqref{eq183}, we derive that
\begin{eqnarray*}\label{eq130}
        & & |I_{7,2} | \nonumber\\
&\leq& C_{T,U^\alpha_{2,\delta},q_0}\int_0^T\int_{U^\alpha_2}\int_0^T\int_{U^\alpha_{2,\delta}}(1+ |\widetilde{u}^\alpha_{1,t}(\theta)|^{q_0} + |\widetilde{w}^\alpha_s(\beta) - \widetilde{w}^\alpha_t(\theta)|^{q_0})|\widetilde{w}^\alpha_t(\theta) - \widetilde{w}^\alpha_s(\beta)|^2j_{\delta_0}(t-s) \nonumber\\
       & & \cdot|\nabla_{\theta} J_{\delta}(\theta - \beta)|d\theta dtd\beta ds  \nonumber\\
&\leq& C_{T,U^\alpha_{2,\delta},q_0,C_\kappa}\int_0^T\int_{U^\alpha_2}\int_0^T\int_{U^\alpha_{2,\delta}} (|\widetilde{w}^\alpha_t(\theta) - \widetilde{w}^\alpha_s(\theta)|^2 + |\widetilde{w}^\alpha_s(\theta) - \widetilde{w}^\alpha_s(\beta)|^2)j_{\delta_0}(t-s) \nonumber\\
       & & \cdot|\nabla_{\theta} J_{\delta}(\theta - \beta)|d\theta dtd\beta ds \nonumber\\
&\leq& C_{T,U^\alpha_{2,\delta},q_0,C_\kappa}(\frac{\delta_0^{2\kappa}}{\delta} + \delta),
\end{eqnarray*}
which implies that
\[\limsup_{\delta \rightarrow 0}\limsup_{\delta_0 \rightarrow 0}|I_{7,2} | = 0.\]
Similarly, on can show that
\[\limsup_{\delta \rightarrow 0}\limsup_{\delta_0 \rightarrow 0}|I_{8,2} | = 0.\]

{\bf Step 3:} In this step, we prove
\begin{equation*}\label{eq186}
\limsup_{\delta \rightarrow 0}\limsup_{\delta_0 \rightarrow 0}|I_5 + I_{8,3} + I_6+ I_{7,3}| = 0.
\end{equation*}

Note that
\begin{eqnarray}\label{eq136}
   & & I_5 + I_{8,3} + I_6+ I_{7,3}  \nonumber\\
&=& \int_{U_T^\alpha}\int_{U_T^\alpha} \mathrm{sign}(\widetilde{V}^\alpha_{1,t}(\theta) -\widetilde{V}^\alpha_{2,s}(\beta)) \widetilde{a}^\alpha_{iu}(t,\theta,\widetilde{V}^\alpha_{2,s}(\beta)+\widetilde{w}^\alpha_t(\theta))\frac{\partial((\widetilde{w}^\alpha_s(\beta) - \widetilde{w}^\alpha_t(\theta))J_{\delta}(\theta - \beta))}{\partial \theta_i} \nonumber\\
   & & \cdot j_{\delta_0}(t-s)\widetilde{\chi}^{\alpha}(\beta)\widetilde{\eta}^{\alpha}_s(\beta)\sqrt{g^\alpha_t(\theta)}\sqrt{g^\alpha_s(\beta)}  d\theta dtd\beta ds   \nonumber\\
   & & + \int_{U_T^\alpha}\int_{U_T^\alpha} \mathrm{sign}(\widetilde{V}^\alpha_{1,t}(\theta) -\widetilde{V}^\alpha_{2,s}(\beta)) \widetilde{a}^\alpha_{iu}(s,\beta,\widetilde{V}^\alpha_{1,t}(\theta)+\widetilde{w}^\alpha_s(\beta))\frac{\partial((\widetilde{w}^\alpha_s(\beta) - \widetilde{w}^\alpha_t(\theta))J_{\delta}(\theta - \beta))}{\partial \beta_i} \nonumber\\
   & & \cdot j_{\delta_0}(t-s)\widetilde{\chi}^{\alpha}(\beta)\widetilde{\eta}^{\alpha}_s(\beta)\sqrt{g^\alpha_t(\theta)}\sqrt{g^\alpha_s(\beta)}  d\theta dtd\beta ds   \nonumber\\
&=& \int_{U_T^\alpha}\int_{U_T^\alpha} \Big\{ \mathrm{sign}(\widetilde{V}^\alpha_{1,t}(\theta) -\widetilde{V}^\alpha_{2,s}(\beta)) \big(\widetilde{a}^\alpha_{iu}(t,\theta,\widetilde{V}^\alpha_{2,s}(\beta)+\widetilde{w}^\alpha_t(\theta)) - \widetilde{a}^\alpha_{iu}(s,\beta,\widetilde{V}^\alpha_{1,t}(\theta)+\widetilde{w}^\alpha_s(\beta))\big) \nonumber\\
   & & \cdot \frac{\partial((\widetilde{w}^\alpha_s(\beta) - \widetilde{w}^\alpha_t(\theta))J_{\delta}(\theta - \beta))}{\partial \theta_i} j_{\delta_0}(t-s)\widetilde{\chi}^{\alpha}(\beta)\widetilde{\eta}^{\alpha}_s(\beta)\sqrt{g^\alpha_t(\theta)}\sqrt{g^\alpha_s(\beta)} \Big\}d\theta dtd\beta ds  \nonumber\\
   & & +\int_{U_T^\alpha}\int_{U_T^\alpha} \Big\{ \mathrm{sign}(\widetilde{V}^\alpha_{1,t}(\theta) -\widetilde{V}^\alpha_{2,s}(\beta)) \widetilde{a}^\alpha_{iu}(s,\beta,\widetilde{V}^\alpha_{1,t}(\theta)+\widetilde{w}^\alpha_s(\beta))\big( \frac{\partial((\widetilde{w}^\alpha_s(\beta) - \widetilde{w}^\alpha_t(\theta))J_{\delta}(\theta - \beta))}{\partial \theta_i} \nonumber\\
   & & +\frac{\partial((\widetilde{w}^\alpha_s(\beta) - \widetilde{w}^\alpha_t(\theta))J_{\delta}(\theta - \beta))}{\partial \beta_i} \big)j_{\delta_0}(t-s)\widetilde{\chi}^{\alpha}(\beta)\widetilde{\eta}^{\alpha}_s(\beta)\sqrt{g^\alpha_t(\theta)}\sqrt{g^\alpha_s(\beta)} \Big\}d\theta dtd\beta ds  \nonumber\\
&=:& D_1+ D_2.
\end{eqnarray}
To estimate $D_1$, we further rewrite it as
\begin{eqnarray*}\label{eq137}
D_1&=&\int_{U_T^\alpha}\int_{U_T^\alpha} \Big\{ \mathrm{sign}(\widetilde{V}^\alpha_{1,t}(\theta) -\widetilde{V}^\alpha_{2,s}(\beta)) \big(\widetilde{a}^\alpha_{iu}(t,\theta,\widetilde{V}^\alpha_{2,s}(\beta)+\widetilde{w}^\alpha_t(\theta)) - \widetilde{a}^\alpha_{iu}(s,\beta,\widetilde{V}^\alpha_{2,s}(\beta)+\widetilde{w}^\alpha_t(\theta))\big) \nonumber\\
   & &\cdot \frac{\partial((\widetilde{w}^\alpha_s(\beta) - \widetilde{w}^\alpha_t(\theta))J_{\delta}(\theta - \beta))}{\partial \theta_i} j_{\delta_0}(t-s)\widetilde{\chi}^{\alpha}(\beta)\widetilde{\eta}^{\alpha}_s(\beta)\sqrt{g^\alpha_t(\theta)}\sqrt{g^\alpha_s(\beta)} \Big\} d\theta dtd\beta ds  \nonumber\\
   & & + \int_{U_T^\alpha}\int_{U_T^\alpha} \Big\{ \mathrm{sign}(\widetilde{V}^\alpha_{1,t}(\theta) -\widetilde{V}^\alpha_{2,s}(\beta)) \big(\widetilde{a}^\alpha_{iu}(s,\beta,\widetilde{V}^\alpha_{2,s}(\beta)+\widetilde{w}^\alpha_t(\theta)) - \widetilde{a}^\alpha_{iu}(s,\beta,\widetilde{V}^\alpha_{2,s}(\beta)+\widetilde{w}^\alpha_s(\beta))\big) \nonumber\\
   & &  \cdot \frac{\partial((\widetilde{w}^\alpha_s(\beta) - \widetilde{w}^\alpha_t(\theta))J_{\delta}(\theta - \beta))}{\partial \theta_i} j_{\delta_0}(t-s)\widetilde{\chi}^{\alpha}(\beta)\widetilde{\eta}^{\alpha}_s(\beta)\sqrt{g^\alpha_t(\theta)}\sqrt{g^\alpha_s(\beta)} \Big\} d\theta dtd\beta ds  \nonumber\\
   & & + \int_{U_T^\alpha}\int_{U_T^\alpha} \Big \{ \big\{  \mathrm{sign}(\widetilde{V}^\alpha_{1,t}(\theta) -\widetilde{V}^\alpha_{2,s}(\beta)) \big(\widetilde{a}^\alpha_{iu}(s,\beta,\widetilde{V}^\alpha_{2,s}(\beta)+\widetilde{w}^\alpha_s(\beta)) - \widetilde{a}^\alpha_{iu}(s,\beta,\widetilde{V}^\alpha_{1,t}(\theta)+\widetilde{w}^\alpha_s(\beta)\big) \nonumber\\
   & & - \mathrm{sign}(\widetilde{V}^\alpha_{1,s}(\beta) -\widetilde{V}^\alpha_{2,s}(\beta)) \big(\widetilde{a}^\alpha_{iu}(s,\beta,\widetilde{V}^\alpha_{2,s}(\beta)+\widetilde{w}^\alpha_s(\beta)) - \widetilde{a}^\alpha_{iu}(s,\beta,\widetilde{V}^\alpha_{1,s}(\beta)+\widetilde{w}^\alpha_s(\beta)\big) \big\} \nonumber\\
   & &  \cdot \frac{\partial((\widetilde{w}^\alpha_s(\beta) - \widetilde{w}^\alpha_t(\theta))J_{\delta}(\theta - \beta))}{\partial \theta_i} j_{\delta_0}(t-s)\widetilde{\chi}^{\alpha}(\beta)\widetilde{\eta}^{\alpha}_s(\beta)\sqrt{g^\alpha_t(\theta)}\sqrt{g^\alpha_s(\beta)} \Big\}d\theta dtd\beta ds  \nonumber\\
   & & + \int_{U_T^\alpha}\int_{U_T^\alpha} \Big\{ \mathrm{sign}(\widetilde{V}^\alpha_{1,s}(\beta) -\widetilde{V}^\alpha_{2,s}(\beta)) \big(\widetilde{a}^\alpha_{iu}(s,\beta,\widetilde{V}^\alpha_{2,s}(\beta)+\widetilde{w}^\alpha_s(\beta)) - \widetilde{a}^\alpha_{iu}(s,\beta,\widetilde{V}^\alpha_{1,s}(\beta)+\widetilde{w}^\alpha_s(\beta)\big)  \nonumber\\
   & & \cdot \frac{\partial((\widetilde{w}^\alpha_s(\beta) - \widetilde{w}^\alpha_t(\theta))J_{\delta}(\theta - \beta))}{\partial \theta_i} j_{\delta_0}(t-s)\widetilde{\chi}^{\alpha}(\beta)\widetilde{\eta}^{\alpha}_s(\beta)\sqrt{g^\alpha_t(\theta)}\sqrt{g^\alpha_s(\beta)} \Big\}d\theta dtd\beta ds  \nonumber\\
&=:& D_{1,1} + D_{1,2} + D_{1,3} + D_{1,4}.
\end{eqnarray*}

By \eqref{eq96}, for any $t, s \in [0,T]$, $\theta \in U^{\alpha}_{2,\delta}$, $\beta \in U^{\alpha}_2$ and $u \in \mathbb{R}$, we have
\begin{equation*}\label{eq187}
| \widetilde{a}^\alpha_{iu}(t,\theta,u) -  \widetilde{a}^\alpha_{iu}(s,\beta,u) | \leq C_{T,U^{\alpha}_{2,\delta}}(1+ |u|)(|t-s| + |\theta - \beta|).
\end{equation*}
Together with \eqref{eq183} and the fact that $\widetilde{V}^\alpha_{2,\cdot}(\cdot) \in L^\infty([0,T]\times U^\alpha)$, it follows that
\begin{eqnarray}\label{eq138}
    & & |D_{1,1}| \nonumber\\
&\leq& C_{T,U^{\alpha}_{2,\delta}} \int_0^T\int_{U^\alpha_2}\int_0^T\int_{U^\alpha_{2,\delta}} (1 + |\widetilde{V}^\alpha_{2,s}(\beta)+\widetilde{w}^\alpha_t(\theta)|)(|t-s|+|\theta - \beta|)(J_{\delta}(\theta - \beta)|\nabla_{\theta}\widetilde{w}^\alpha_t(\theta)|  \nonumber\\
       & & + |\nabla_{\theta}J_{\delta}(\theta - \beta)|\cdot|\widetilde{w}^\alpha_s(\beta) - \widetilde{w}^\alpha_t(\theta)|)j_{\delta_0}(t-s)d\theta dtd\beta ds  \nonumber\\
&\leq& C_{T,U^{\alpha}_{2,\delta},C_\kappa}\int_0^T\int_{U^\alpha_2}\int_0^T\int_{U^\alpha_{2,\delta}} (|t-s|+|\theta - \beta|)J_{\delta}(\theta - \beta)j_{\delta_0}(t-s)d\theta dtd\beta ds  \nonumber\\
      & & + C_{T,U^{\alpha}_{2,\delta},C_\kappa}\int_0^T\int_{U^\alpha_2}\int_0^T\int_{U^\alpha_{2,\delta}}(|t-s|+|\theta - \beta|)(|t-s|^\kappa+|\theta - \beta|)|\nabla_{\theta}J_{\delta}(\theta - \beta)|j_{\delta_0}(t-s)d\theta dtd\beta ds \nonumber\\
&\leq& C_{T,U^{\alpha}_{2,\delta},C_\kappa}(\frac{\delta_0^{\kappa+1}}{\delta} + \delta_0^{\kappa} + \delta_0 + \delta).
\end{eqnarray}
Hence, letting $\delta_0 \rightarrow 0$, and then letting $\delta \rightarrow 0$ in \eqref{eq138}, we obtain
\begin{eqnarray}\label{eq139}
\limsup_{\delta \rightarrow 0}\limsup_{\delta_0 \rightarrow 0}|D_{1,1}|=0.
\end{eqnarray}
Since for any $s \in [0,T]$, $\beta \in U^{\alpha}_{2,\delta}$ and $u_1, u_2 \in \mathbb{R}$,
\[ | \widetilde{a}^\alpha_{iu}(s,\beta,u_1) -  \widetilde{a}^\alpha_{iu}(s,\beta,u_2) | \leq C_{T,U^{\alpha}_{2,\delta}}(1+ |u_1|^{q_0} + |u_2|^{q_0})|u_1- u_2|, \]
an argument analogous to that used in the estimate of $D_{1,1}$ yields
\begin{eqnarray}\label{eq140}
\limsup_{\delta \rightarrow 0}\limsup_{\delta_0 \rightarrow 0}|D_{1,2}|=0.
\end{eqnarray}

By \eqref{eq181}, it is easy to verify that for any $s \in [0,T]$, $\beta \in U^{\alpha}_{2,\delta}$ and $u_1, u_2, u_3 \in \mathbb{R}$
\begin{eqnarray*}\label{eq143}
      & &  \big| \mathrm{sign}(u_1 -u_2)(\widetilde{a}^\alpha_{iu}(s,\beta,u_1) - \widetilde{a}^\alpha_{iu}(s,\beta,u_2)) - \mathrm{sign}(u_3 -u_2)(\widetilde{a}^\alpha_{iu}(s,\beta,u_3) - \widetilde{a}^\alpha_{iu}(s,\beta,u_2)) \big| \nonumber\\
&\leq& C_{T,U^{\alpha}_{2,\delta}}(1 + |u_1|^{q_0} + |u_3|^{q_0})|u_1 - u_3|.
\end{eqnarray*}
Then we have
\begin{eqnarray}\label{eq144}
       & &  |D_{1,3}|\nonumber\\
&\leq& C_{T,U^{\alpha}_{2,\delta}}\int_0^T\int_{U^\alpha_2}\int_0^T\int_{U^\alpha_{2,\delta}} (1 + |\widetilde{V}^\alpha_{1,t}(\theta)+\widetilde{w}^\alpha_s(\beta)|^{q_0} + |\widetilde{V}^\alpha_{1,s}(\beta)+\widetilde{w}^\alpha_s(\beta)|^{q_0})|\widetilde{V}^\alpha_{1,t}(\theta) - \widetilde{V}^\alpha_{1,s}(\beta)| \nonumber\\
      & & \cdot(J_{\delta}(\theta - \beta)|\nabla_{\theta}\widetilde{w}^\alpha_t(\theta)|+ |\nabla_{\theta}J_{\delta}(\theta - \beta)|\cdot|\widetilde{w}^\alpha_s(\beta) - \widetilde{w}^\alpha_t(\theta)|)j_{\delta_0}(t-s)d\theta dtd\beta ds \nonumber\\
&\leq& C_{T,U^{\alpha}_{2,\delta},C_\kappa, q_0}\int_0^T\int_{U^\alpha_2}\int_0^T\int_{U^\alpha_{2,\delta}} |\widetilde{V}^\alpha_{1,t}(\theta) - \widetilde{V}^\alpha_{1,s}(\beta)|J_{\delta}(\theta - \beta)j_{\delta_0}(t-s)d\theta dtd\beta ds\nonumber\\
      & & + C_{T,U^{\alpha}_{2,\delta},C_\kappa, q_0}\int_0^T\int_{U^\alpha_2}\int_0^T\int_{U^\alpha_{2,\delta}}  |\widetilde{V}^\alpha_{1,t}(\theta) - \widetilde{V}^\alpha_{1,s}(\beta)|\cdot|\nabla_{\theta}J_{\delta}(\theta - \beta)|(|t-s|^\kappa + |\theta - \beta|) \nonumber\\
      & & \cdot j_{\delta_0}(t-s)d\theta dtd\beta ds \nonumber\\
&\leq& C_{T,U^{\alpha}_{2,\delta},C_\kappa, q_0}\int_0^1\int_0^T\int_{U^\alpha_{2,\delta}}|\widetilde{V}^\alpha_{1,t}(\theta)- \widetilde{V}^\alpha_{1,t+\delta_0 \tilde{r}}(\theta)|I_{\{t+\delta_0 \tilde{r} \leq T\}}j(-\tilde{r})d\theta dtd\tilde{r} \nonumber\\
     & & + C_{T,U^{\alpha}_{2,\delta},C_\kappa, q_0}\int_{|r|<1}\int_0^T\int_{U^\alpha_2} |\widetilde{V}^\alpha_{1,s}(\beta+\delta r)- \widetilde{V}^\alpha_{1,s}(\beta)|J(r)d\beta dsdr  \nonumber\\
     & & + C_{T,U^{\alpha}_{2,\delta},C_\kappa, q_0}\frac{\delta_0^\kappa}{\delta} + C_{T,U^{\alpha}_{2,\delta},C_\kappa, q_0}\int_{|r|<1}\int_0^T\int_{U^\alpha_2} |\widetilde{V}^\alpha_{1,s}(\beta+\delta r)- \widetilde{V}^\alpha_{1,s}(\beta)|\nabla_{x}J(r)d\beta dsdr.  \nonumber\\
\end{eqnarray}
By \eqref{eq115} and \eqref{eq116}, we first let $\delta_0 \rightarrow 0$, and then take $\delta \rightarrow 0$ in \eqref{eq144} to obtain
\begin{equation}\label{eq147}
\limsup_{\delta\rightarrow 0}\limsup_{\delta_0 \rightarrow 0}|D_{1,3}| =0.
\end{equation}

Regarding $D_{1,4}$, using the divergence Theorem, \eqref{eq96} and the regularity of $\sqrt{g^\alpha}$, we have the following bound:
\begin{eqnarray*}\label{eq141}
    & &|D_{1,4}|\nonumber\\
&=&|\int_{U_T^\alpha}\int_{U_T^\alpha} \mathrm{sign}(\widetilde{V}^\alpha_{1,s}(\beta) -\widetilde{V}^\alpha_{2,s}(\beta)) \big(\widetilde{a}^\alpha_{iu}(s,\beta,\widetilde{V}^\alpha_{2,s}(\beta)+\widetilde{w}^\alpha_s(\beta)) - \widetilde{a}^\alpha_{iu}(s,\beta,\widetilde{V}^\alpha_{1,s}(\beta)+\widetilde{w}^\alpha_s(\beta)\big)  \nonumber\\
   & &\cdot (\widetilde{w}^\alpha_s(\beta) - \widetilde{w}^\alpha_t(\theta))J_{\delta}(\theta - \beta) j_{\delta_0}(t-s)\widetilde{\chi}^{\alpha}(\beta)\widetilde{\eta}^{\alpha}_s(\beta)\frac{\partial\sqrt{g^\alpha_t(\theta)}}{\partial \theta_i}\sqrt{g^\alpha_s(\beta)}  d\theta dtd\beta ds |\nonumber\\
&\leq& C_{T,U^\alpha_{2,\delta}} \int_0^T\int_{U^\alpha_2}\int_0^T\int_{U^\alpha_{2,\delta}}(1 + |\widetilde{u}^\alpha_{2,s}(\beta)| +|\widetilde{u}^\alpha_{1,s}(\beta)|)|\widetilde{w}^\alpha_s(\beta) - \widetilde{w}^\alpha_t(\theta)|J_{\delta}(\theta - \beta) j_{\delta_0}(t-s)d\theta dtd\beta ds, \nonumber\\
\end{eqnarray*}
and in the similar way as the treatment of $|I_{7,2}|$, we can prove that
\begin{eqnarray}\label{eq142}
\limsup_{\delta \rightarrow 0}\limsup_{\delta_0 \rightarrow 0}|D_{1,4}|=0.
\end{eqnarray}
Therefore, by \eqref{eq139}, \eqref{eq140}, \eqref{eq147} and \eqref{eq142}, we arrive at
\begin{eqnarray}\label{eq188}
\limsup_{\delta \rightarrow 0}\limsup_{\delta_0 \rightarrow 0}|D_{1}|=0.
\end{eqnarray}

Next we consider $D_2$. By \eqref{eq96}, we have
\begin{eqnarray*}\label{eq149}
       & &  |D_2|\nonumber\\
&\leq& C_{T,U^\alpha_{2,\delta}}\int_0^T\int_{U^\alpha_2}\int_0^T\int_{U^\alpha_{2,\delta}} (1 + |\widetilde{V}^\alpha_{1,t}(\theta) + \widetilde{w}^\alpha_s(\beta)|)|\nabla_{\beta}\widetilde{w}^\alpha_s(\beta) - \nabla_{\theta}\widetilde{w}^\alpha_s(\theta)| J_{\delta}(\theta - \beta) j_{\delta_0}(t-s)d\theta dtd\beta ds.\nonumber\\
\end{eqnarray*}
Then, using an argument similar to that for estimating $D_{1,3}$, we obtain
\begin{equation}\label{eq150}
\limsup_{\delta \rightarrow 0}\limsup_{\delta_0 \rightarrow 0}|D_{2}|=0.
\end{equation}
Combining \eqref{eq136}, \eqref{eq188} and \eqref{eq150}, it follows that
\[\limsup_{\delta \rightarrow 0}\limsup_{\delta_0 \rightarrow 0}|I_5 + I_{8,3} + I_6+ I_{7,3}| = 0. \]

{\bf Step 4:} In this step, we prove
\begin{eqnarray*}\label{eq189}
& & \limsup_{\delta \rightarrow 0}\limsup_{\delta_0 \rightarrow 0}| \mathcal{I} - \int_{U_T^\alpha} \mathrm{sign}(\widetilde{V}^\alpha_{1,s}(\beta) - \widetilde{V}^\alpha_{2,s}(\beta))\big( \frac{\partial\widetilde{a}^\alpha_{i}}{\partial \beta_i}(s,\beta,\widetilde{u}^\alpha_{2,s}(\beta)) - \frac{\partial\widetilde{a}^\alpha_{i}}{\partial\beta_i}(s,\beta,\widetilde{u}^\alpha_{1,s}(\beta) ) \big)\nonumber\\
     & & \cdot \widetilde{\chi}^{\alpha}(\beta)\widetilde{\eta}^{\alpha}_s(\beta) g^\alpha_s(\beta) d\beta ds  | = 0,\nonumber\\
\end{eqnarray*}
where
\begin{eqnarray*}\label{eq190}
\mathcal{I}&=& \int_{U_T^\alpha}\int_{U_T^\alpha} \mathrm{sign}(\widetilde{V}^\alpha_{1,t}(\theta) -\widetilde{V}^\alpha_{2,s}(\beta))\big( \frac{\partial \widetilde{a}^\alpha_{i}}{\partial \beta_j}(s, \beta,\widetilde{u}^\alpha_{1,t}(\theta)) - \frac{\partial \widetilde{a}^\alpha_{i}}{\partial \beta_j}(s, \beta,\widetilde{u}^\alpha_{2,s}(\beta)) \big)(\theta_j- \beta_j) \nonumber\\
     & &  \cdot j_{\delta_0}(t-s)\frac{\partial J_{\delta}(\theta - \beta)}{\partial \theta_i} \widetilde{\chi}^{\alpha}(\beta)\widetilde{\eta}^{\alpha}_s(\beta)\sqrt{g^\alpha_t(\theta)}\sqrt{g^\alpha_s(\beta)}  d\theta dtd\beta ds.
\end{eqnarray*}

We notice that
\begin{eqnarray*}\label{eq135}
\mathcal{I}
&=& \int_{U_T^\alpha}\int_{U_T^\alpha} \mathrm{sign}(\widetilde{V}^\alpha_{1,t}(\theta) -\widetilde{V}^\alpha_{2,s}(\beta))\big( \frac{\partial \widetilde{a}^\alpha_{i}}{\partial \beta_j}(s, \beta,\widetilde{u}^\alpha_{1,t}(\theta)) - \frac{\partial \widetilde{a}^\alpha_{i}}{\partial \beta_j}(s, \beta,\widetilde{u}^\alpha_{2,s}(\beta)) \big)j_{\delta_0}(t-s) \nonumber\\
     & & \cdot \frac{\partial ((\theta_j- \beta_j)J_{\delta}(\theta - \beta))}{\partial \theta_i} \widetilde{\chi}^{\alpha}(\beta)\widetilde{\eta}^{\alpha}_s(\beta)\sqrt{g^\alpha_t(\theta)}\sqrt{g^\alpha_s(\beta)}  d\theta dtd\beta ds   \nonumber\\
     & & - \int_{U_T^\alpha}\int_{U_T^\alpha} \mathrm{sign}(\widetilde{V}^\alpha_{1,t}(\theta) -\widetilde{V}^\alpha_{2,s}(\beta))\big( \frac{\partial \widetilde{a}^\alpha_{i}}{\partial \beta_i}(s, \beta,\widetilde{u}^\alpha_{1,t}(\theta)) - \frac{\partial \widetilde{a}^\alpha_{i}}{\partial \beta_i}(s, \beta,\widetilde{u}^\alpha_{2,s}(\beta)) \big)j_{\delta_0}(t-s) \nonumber\\
     & & \cdot J_{\delta}(\theta - \beta)\widetilde{\chi}^{\alpha}(\beta)\widetilde{\eta}^{\alpha}_s(\beta)\sqrt{g^\alpha_t(\theta)}\sqrt{g^\alpha_s(\beta)}  d\theta dtd\beta ds   \nonumber\\
&=:& \mathcal{I}_1 + \mathcal{I}_2.
\end{eqnarray*}

Following similar arguments to those used in the estimate of $D_1$, we conclude that
\begin{eqnarray*}\label{eq152}
\limsup_{\delta \rightarrow 0}\limsup_{\delta_0 \rightarrow 0}|\mathcal{I}_1|=0.
\end{eqnarray*}

Next we consider $\mathcal{I}_2$. Since 
\begin{eqnarray*}\label{eq153}
      & & |\mathcal{I}_2 -  \int_{U_T^\alpha} \mathrm{sign}(\widetilde{V}^\alpha_{1,s}(\beta) -\widetilde{V}^\alpha_{2,s}(\beta))\big( \frac{\partial \widetilde{a}^\alpha_{i}}{\partial \beta_i}(s, \beta,\widetilde{u}^\alpha_{2,s}(\beta)) - \frac{\partial \widetilde{a}^\alpha_{i}}{\partial \beta_i}(s, \beta,\widetilde{u}^\alpha_{1,s}(\beta)) \big)\widetilde{\chi}^{\alpha}(\beta)\widetilde{\eta}^{\alpha}_s(\beta)g^\alpha_s(\beta)d\beta ds   | \nonumber\\
&\leq& \int_{U_T^\alpha}\int_{U_T^\alpha} \big|\frac{\partial \widetilde{a}^\alpha_{i}}{\partial \beta_i}(s, \beta,\widetilde{u}^\alpha_{2,s}(\beta)) - \frac{\partial \widetilde{a}^\alpha_{i}}{\partial \beta_i}(s, \beta,\widetilde{u}^\alpha_{1,t}(\theta)) \big| j_{\delta_0}(t-s)J_{\delta}(\theta - \beta)\widetilde{\chi}^{\alpha}(\beta)\widetilde{\eta}^{\alpha}_s(\beta)\sqrt{g^\alpha_s(\beta)} \nonumber\\
      & & \cdot|\sqrt{g^\alpha_t(\theta)} - \sqrt{g^\alpha_s(\beta)}|d\theta dtd\beta ds \nonumber\\
      & & + \int_{U_T^\alpha}\int_{U_T^\alpha} \big|\frac{\partial \widetilde{a}^\alpha_{i}}{\partial \beta_i}(s, \beta,\widetilde{V}^\alpha_{1,t}(\theta) + \widetilde{w}^\alpha_{s}(\beta)) - \frac{\partial \widetilde{a}^\alpha_{i}}{\partial \beta_i}(s, \beta,\widetilde{u}^\alpha_{1,t}(\theta)) \big| j_{\delta_0}(t-s)J_{\delta}(\theta - \beta)\widetilde{\chi}^{\alpha}(\beta)
      \widetilde{\eta}^{\alpha}_s(\beta)\nonumber\\
      & & \cdot g^\alpha_s(\beta) d\theta dtd\beta ds \nonumber\\
      & & + \int_{U_T^\alpha}\int_{U_T^\alpha} \big|\mathrm{sign}(\widetilde{V}^\alpha_{1,t}(\theta) -\widetilde{V}^\alpha_{2,s}(\beta))\big( \frac{\partial \widetilde{a}^\alpha_{i}}{\partial \beta_i}(s, \beta,\widetilde{u}^\alpha_{2,s}(\beta) ) - \frac{\partial \widetilde{a}^\alpha_{i}}{\partial \beta_i}(s, \beta,\widetilde{V}^\alpha_{1,t}(\theta)+ \widetilde{w}^\alpha_{s}(\beta)) \big) \nonumber\\
      & & -  \mathrm{sign}(\widetilde{V}^\alpha_{1,s}(\beta) -\widetilde{V}^\alpha_{2,s}(\beta))\big( \frac{\partial \widetilde{a}^\alpha_{i}}{\partial \beta_i}(s, \beta,\widetilde{u}^\alpha_{2,s}(\beta)) - \frac{\partial \widetilde{a}^\alpha_{i}}{\partial \beta_i}(s, \beta,\widetilde{u}^\alpha_{1,s}(\beta)) \big)  \big| j_{\delta_0}(t-s)J_{\delta}(\theta - \beta)\widetilde{\chi}^{\alpha}(\beta)  \nonumber\\
      & & \cdot \widetilde{\eta}^{\alpha}_s(\beta)g^\alpha_s(\beta)  d\theta dtd\beta ds   \nonumber\\
       & & + \int_0^{\delta_0}\int_{U^\alpha}\big|\frac{\partial \widetilde{a}^\alpha_{i}}{\partial \beta_i}(s, \beta,\widetilde{u}^\alpha_{2,s}(\beta)) - \frac{\partial \widetilde{a}^\alpha_{i}}{\partial \beta_i}(s, \beta,\widetilde{u}^\alpha_{1,s}(\beta)) \big|\cdot (\int_0^Tj_{\delta_0}(t-s)dt +1) \widetilde{\chi}^{\alpha}(\beta)\widetilde{\eta}^{\alpha}_s(\beta)g^\alpha_s(\beta) d\beta ds,\nonumber\\
\end{eqnarray*}
using the similar arguments as in the proof of \eqref{eq117} in Lemma 7.3 and
in the proof of \eqref{eq142} and \eqref{eq144}, we infer that
\begin{eqnarray*}\label{eq154}
& & \limsup_{\delta \rightarrow 0}\limsup_{\delta_0 \rightarrow 0}|\mathcal{I}_2 -  \int_{U_T^\alpha} \mathrm{sign}(\widetilde{V}^\alpha_{1,s}(\beta) -\widetilde{V}^\alpha_{2,s}(\beta))\big( \frac{\partial \widetilde{a}^\alpha_{i}}{\partial \beta_i}(s, \beta,\widetilde{u}^\alpha_{2,s}(\beta)) - \frac{\partial \widetilde{a}^\alpha_{i}}{\partial \beta_i}(s, \beta,\widetilde{u}^\alpha_{1,s}(\beta)) \big) \nonumber\\
    & &\cdot \widetilde{\chi}^{\alpha}(\beta)\widetilde{\eta}^{\alpha}_s(\beta)g^\alpha_s(\beta)d\beta ds  | =0.
\end{eqnarray*}

Combining the above four steps yields \eqref{eq122}, which completes the proof.
\end{proof}

\vskip 0.2cm
Finally, we turn to the limit of $I_9$ and $I_{10}$ as $\delta_0 \rightarrow 0$ and $\delta\rightarrow 0$.
\begin{lemma}\label{lemma5.5}
\begin{eqnarray*}\label{eq155}
& &  \limsup_{\delta \rightarrow 0}\limsup_{\delta_0 \rightarrow 0} |I_9 - \int_{U_T^\alpha}\mathrm{sign}(\widetilde{V}^\alpha_{1,s}(\beta) - \widetilde{V}^\alpha_{2,s}(\beta) )\big( \widetilde{a}^\alpha_{i}(s,\beta,\widetilde{u}^\alpha_{1,s}(\beta) ) -\widetilde{a}^\alpha_{i}(s,\beta,\widetilde{u}^\alpha_{2,s}(\beta) ) \big) \nonumber\\
   & & \cdot \frac{\partial \widetilde{\chi}^{\alpha} }{\partial \beta}(\beta)\widetilde{\eta}^{\alpha}_s(\beta)g^\alpha_s(\beta) d\beta ds |=0,
\end{eqnarray*}
and
\begin{eqnarray*}\label{eq156}
& &  \limsup_{\delta \rightarrow 0}\limsup_{\delta_0 \rightarrow 0} |I_{10} - \int_{U_T^\alpha}\mathrm{sign}(\widetilde{V}^\alpha_{1,s}(\beta)- \widetilde{V}^\alpha_{2,s}(\beta) )\big( \widetilde{a}^\alpha_{i}(s,\beta,\widetilde{u}^\alpha_{1,s}(\beta))
- \widetilde{a}^\alpha_{i}(s,\beta,\widetilde{u}^\alpha_{2,s}(\beta)) \big) \nonumber\\
     & & \cdot \widetilde{\chi}^{\alpha}(\beta)\frac{\partial \widetilde{\eta}^{\alpha}_s }{\partial \beta_i}(\beta)g^\alpha_s(\beta) d\beta ds| = 0.
\end{eqnarray*}
\end{lemma}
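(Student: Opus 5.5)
Both limits come from the same three-step approximation: replace every doubled-variable quantity by its value on the diagonal $(t,\theta)=(s,\beta)$, and let the mollifiers integrate out. We write $I_9$ and $I_{10}$ with $F^\alpha_i(s,\beta,\widetilde{V}^\alpha_{2,s}(\beta),\widetilde{V}^\alpha_{1,t}(\theta))$. By definition, and using $\mathrm{sign}(a-b)=-\mathrm{sign}(b-a)$,
\[
F^\alpha_i(s,\beta,\widetilde{V}^\alpha_{2,s}(\beta),\widetilde{V}^\alpha_{1,t}(\theta))=\mathrm{sign}(\widetilde{V}^\alpha_{1,t}(\theta)-\widetilde{V}^\alpha_{2,s}(\beta))\big(\widetilde{a}^\alpha_i(s,\beta,\widetilde{V}^\alpha_{1,t}(\theta)+\widetilde{w}^\alpha_s(\beta))-\widetilde{a}^\alpha_i(s,\beta,\widetilde{u}^\alpha_{2,s}(\beta))\big).
\]
The target integrand is $G(s,\beta):=F^\alpha_i(s,\beta,\widetilde{V}^\alpha_{1,s}(\beta),\widetilde{V}^\alpha_{2,s}(\beta))$, since $\widetilde{V}^\alpha_{1,s}+\widetilde{w}^\alpha_s=\widetilde{u}^\alpha_{1,s}$. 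The weight $\frac{\partial\widetilde{\chi}^\alpha}{\partial\beta_i}(\beta)\widetilde{\eta}^\alpha_s(\beta)$ (resp. $\widetilde{\chi}^\alpha(\beta)\frac{\partial\widetilde{\eta}^\alpha_s}{\partial\beta_i}(\beta)$) depends only on $(s,\beta)$, is bounded, and is supported in $U^\alpha_2$. This is what makes these two terms easier than $I_5$–$I_8$: no derivative falls on $J_\delta$, so no factor $1/\delta$ appears.

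\textbf{Step 1 (three error terms).} We write $I_9$ minus the claimed limit as a sum of three errors.
\begin{itemize}
\item[(a)] Replace $\sqrt{g^\alpha_t(\theta)}$ by $\sqrt{g^\alpha_s(\beta)}$. Since $\sqrt{g^\alpha}\in C^1$ and $F$ is bounded (the $u_i$ are in $L^\infty$ and $\widetilde{a}^\alpha$ is continuous), this error is at most $C(\delta+\delta_0)$, exactly as in \eqref{eq114}.
\item[(b)] Replace the integrand $F^\alpha_i(s,\beta,\widetilde{V}^\alpha_{2,s}(\beta),\widetilde{V}^\alpha_{1,t}(\theta))$ by $G(s,\beta)$. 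The map $x\mapsto \mathrm{sign}(x-b)(\widetilde{a}^\alpha_i(s,\beta,x+c)-\widetilde{a}^\alpha_i(s,\beta,b+c))$ is locally Lipschitz, with constant bounded via \eqref{eq96} by $C(1+|x|+|b|+|c|)$. It is applied with $x=\widetilde{V}^\alpha_{1,t}(\theta)$ versus $x=\widetilde{V}^\alpha_{1,s}(\beta)$, with $b,c$ fixed. The difference is therefore bounded by $C\,|\widetilde{V}^\alpha_{1,t}(\theta)-\widetilde{V}^\alpha_{1,s}(\beta)|$, where $C$ depends on the $L^\infty$ norms of $u_1,u_2$ and on $\sup|\widetilde{w}|$, which is finite a.s. by \eqref{eq183}.
\item[(c)] Integrate out $\theta$ and $t$. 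After (a) and (b), the remaining integrand depends on $(t,\theta)$ only through $j_{\delta_0}(t-s)J_\delta(\theta-\beta)$. For $\beta\in U^\alpha_2$ and $\delta$ small, $\int J_\delta\,d\theta=1$. Also $\int_0^T j_{\delta_0}(t-s)\,dt=1$ except for $s\in[0,\delta_0)$ (the support of $j$ lies in $[-1,0]$, so $t\in[s-\delta_0,s]$), and that strip contributes $O(\delta_0)$ by boundedness, as in \eqref{eq114}.
\end{itemize}

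\textbf{Step 2 (the only genuinely limiting error).} This is the term from (b),
\[
C\int\!\!\int|\widetilde{V}^\alpha_{1,t}(\theta)-\widetilde{V}^\alpha_{1,s}(\beta)|\,j_{\delta_0}(t-s)J_\delta(\theta-\beta)\,d\theta\,dt\,d\beta\,ds.
\]
We split it through $\widetilde{V}^\alpha_{1,s}(\theta)$ and change variables $t=s+\delta_0\tilde r$, $\theta=\beta+\delta r$. This bounds it by the two $L^1$ translation quantities of \eqref{eq115} and \eqref{eq116}, which vanish by continuity of translations in $L^1$ together with bounded convergence. Taking $\limsup_{\delta_0\to0}$ first and then $\delta\to0$ proves the claim for $I_9$.

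\textbf{Step 3 ($I_{10}$).} The argument is identical with $\widetilde{\chi}^\alpha\,\partial_{\beta_i}\widetilde{\eta}^\alpha_s$ in place of $\partial_{\beta_i}\widetilde{\chi}^\alpha\,\widetilde{\eta}^\alpha_s$, both being bounded since $\eta\in C^1(Q_T)$.

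The main obstacle is Step 2: getting the Lipschitz bound in (b) uniformly, despite the sign function and the noise shift $\widetilde{w}^\alpha_s(\beta)$ versus $\widetilde{w}^\alpha_t(\theta)$. Here the shift is evaluated at $(s,\beta)$ in both the target and the approximant, so no Hölder estimate in $\kappa$ is needed. Everything reduces to $L^1$ continuity of $\widetilde{V}^\alpha_1$ under translations.
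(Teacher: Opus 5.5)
Your proposal is correct and follows the paper's route. The paper only says the proof is analogous to the $D_{1,3}$ estimate \eqref{eq147}: a local Lipschitz bound for the sign-type flux difference in the $\widetilde{V}^\alpha_1$ argument, followed by the $L^1$ translation limits \eqref{eq115}--\eqref{eq116}. That is exactly your Step 1(b) and Step 2, while your (a) and (c) supply the routine bookkeeping, as in \eqref{eq114}, that the paper omits. You also correctly note that the noise shift sits at $(s,\beta)$ in both the approximant and the limit, so no H\"older estimate from \eqref{eq183} is needed.
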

\begin{proof}
The proof of this lemma is analogous to that of \eqref{eq147} in Lemma \ref{lemma5.4}. We omit the details.
\end{proof}

\vskip 0.2cm
Now, putting the above Lemmas \ref{lemma5.1}-\ref{lemma5.5} together, letting $\delta_0 \rightarrow 0$, and then letting $\delta \rightarrow 0$ in \eqref{eq105}, we obtain the claim \eqref{eq157}.

\vskip 0.3cm
	
	\noindent{\bf Acknowledgements}\\
	This work is partially supported by the National Key R\&D
	Program of China (No. 2022 YFA1006001), the National Natural Science Foundation of China (No. 12131019, 12171321, 12371151, 12426655, 12501182), and the Fundamental Research Funds for the Central Universities (No. JZ2025HGQA0098).

\end{document}